\newtheorem{theorem}{Theorem}[section]
\newtheorem{proposition}[theorem]{Proposition}
\newtheorem{lemma}[theorem]{Lemma}
\newtheorem{corollary}[theorem]{Corollary}
\theoremstyle{definition}
\newtheorem{definition}{Definition}[section]
\providecommand{\NN}{\mathbb{N}}
\providecommand{\ZZ}{\mathbb{Z}}
\providecommand{\RR}{\mathbb{R}}
\providecommand{\cF}{\mathcal{F}}
\providecommand{\cG}{\mathcal{G}}
\providecommand{\cH}{\mathcal{H}}
\providecommand{\cX}{\mathcal{X}}
\providecommand{\wsup}{w_{\sup}}
\providecommand{\winfty}{w_{\infty}}
\providecommand{\shift}{\mathbb{S}}
\providecommand{\upset}[1]{\langle #1 \rangle}
\providecommand{\bin}{\mathrm{Bin}}
\providecommand{\unitcirc}{\mathbb{T}}
\providecommand{\dist}{\mathrm{dist}}
\providecommand{\ul}{\underline}
\DeclareMathOperator{\symdiff}{\triangle}
\DeclareMathOperator*{\EE}{\mathbb{E}}
\title{Ahlswede--Khachatrian Theorems: \\ Weighted, Infinite, and Hamming}
\author{Yuval Filmus\footnote{Department of Computer Science, Technion --- Israel Institute of Technology, Israel.}}
\begin{document}

\maketitle

\begin{abstract}
The seminal complete intersection theorem of Ahlswede and Khachatrian gives the maximum cardinality of a $k$-uniform $t$-intersecting family on $n$ points, and describes all optimal families. We extend this theorem to several other settings: the weighted case, the case of infinitely many points, and the Hamming scheme.

The weighted Ahlswede--Khachatrian theorem gives the maximal $\mu_p$ measure of a $t$-intersecting family on $n$ points, where $\mu_p(A) = p^{|A|} (1-p)^{n-|A|}$. As has been observed by Ahlswede and Khachatrian and by Dinur and Safra, this theorem can be derived from the classical one by a simple reduction. However, this reduction fails to identify the optimal families, and only works for $p < 1/2$. We translate the two original proofs of Ahlswede and Khachatrian to the weighted case, thus identifying the optimal families in all cases. We also extend the theorem to the case $p > 1/2$, using a different technique of Ahlswede and Khachatrian (the case $p = 1/2$ is Katona's intersection theorem). We then extend the weighted Ahlswede--Khachatrian theorem to the case of infinitely many points.

The Ahlswede--Khachatrian theorem on the Hamming scheme gives the maximum cardinality of a subset of $\ZZ_m^n$ in which any two elements $x,y$ have $t$~positions $i_1,\ldots,i_t$ such that $x_{i_j} - y_{i_j} \in \{-(s-1),\ldots,s-1\}$. We show that this case corresponds to $\mu_p$ with $p = s/m$, extending work of Ahlswede and Khachatrian, who considered the case $s = 1$. We also determine the maximum cardinality families. We obtain similar results for subsets of $[0,1]^n$, though in this case we are not able to identify all maximum cardinality families.

\end{abstract}

\section{Introduction} \label{sec:introduction}

\subsection{Background} \label{sec:background}

The Erd\H{o}s--Ko--Rado theorem~\cite{EKR}, a basic result in extremal combinatorics, states that when $k \leq n/2$, a $k$-uniform intersecting family on $n$ points contains at most $\binom{n-1}{k-1}$ sets; and furthermore, when $k < n/2$ the only families achieving these bounds are \emph{stars}, consisting of all sets containing some fixed point.

The original paper of Erd\H{o}s, Ko and Rado gave birth to an entire area in extremal combinatorics. Erd\H{o}s--Ko--Rado theorems have been proved in many other domains, such as vector spaces~\cite{FranklWilson}, permutations~\cite{EFP}, weighted families~\cite{Friedgut}, and many others (see the recent monograph~\cite{GodsilMeagher-book}).

A different research direction is extending the original theorem to $t$-intersecting families, in which every two sets are guaranteed to contain $t$ points in common rather than just one. Extending earlier results of Frankl~\cite{Frankl78}, Wilson~\cite{Wilson} showed that when $n \geq (t+1)(k-t-1)$, a $t$-intersecting family contains at most $\binom{n-t}{k-t}$ sets, and for $n > (t+1)(k-t-1)$ this is achieved uniquely by $t$-stars. When $n < (t+1)(k-t-1)$, there exist $t$-intersecting families which are larger than $t$-stars. Frankl~\cite{Frankl78} conjectured that for every $n,k,t$, the maximum families are always of the form
\[
 \cF_{t,r} = \{ S : |S \cap [t+2r]| \geq t+r \}.
\]
Frankl's conjecture was finally proved by Ahlswede and Khachatrian~\cite{AK2,AK3}, who gave two different proofs (see also the monograph~\cite{AhlswedeBlinovsky}). They also determined the maximum families under the condition that the intersection of all sets in the family is empty~\cite{AK1}, as well as the maximum $t$-intersecting families without a restriction on the size~\cite{AK4} (``Katona's theorem''). They also proved an analogous theorem for the Hamming scheme~\cite{AK5}.

A different generalization of the Ahlswede--Khachatrian theorem appears in the work of Dinur and Safra~\cite{DinurSafra} on the hardness of approximating vertex cover. Dinur and Safra were interested in the maximum $\mu_p$ measure of a $t$-intersecting family on $n$ points, where the $\mu_p$ measure is given by $\mu_p(A) = p^{|A|} (1-p)^{n-|A|}$. When $p \leq 1/2$, they related this question to the setting of the original Ahlswede--Khachatrian theorem with parameters $K,N$ satisfying $K/N \approx p$. A similar argument appears in work of Ahlswede--Khachatrian~\cite{AK4,AK5} in different guise. The $\mu_p$ setting has since been studied in many works, and has been used by Friedgut~\cite{Friedgut} and by Keller and Lifshitz~\cite{KellerLifshitz} to prove stability versions of the Ahlswede--Khachatrian theorem.

\subsection{Our results} \label{sec:results}

While not stated explicitly in either work, the methods of Dinur--Safra~\cite{DinurSafra} and Ahlswede--Khachatrian~\cite{AK5} give a proof of an Ahlswede--Khachatrian theorem in the $\mu_p$ setting for all $p < 1/2$, without any constraint on the number of points. More explicitly, let $w(n,t,p)$ be the maximum $\mu_p$-measure of a $t$-intersecting family on $n$ points, and let $\wsup(t,p) = \sup_n w(n,t,p)$. The techniques of Dinur--Safra and Ahlswede--Khachatrian show that when $\frac{r}{t+2r-1} \leq p \leq \frac{r+1}{t+2r+1}$, $\wsup(t,p) = \mu_p(\cF_{t,r})$. This theorem is incomplete, for three different reasons:
\begin{enumerate}
 \item The theorem describes $\wsup(t,p)$ rather than $w(n,t,p)$. If $n < t+2r$ then the family $\cF_{t,r}$ does not fit into $n$ points, and so $w(n,t,p) < \wsup(t,p)$.
 \item Whereas the classical Ahlswede--Khachatrian theorem describes all optimal families, the methods of Dinur--Safra and Ahlswede--Khachatrian cannot produce such a description in the $\mu_p$ case.
 \item The theorem only handles the case $p < 1/2$.
\end{enumerate}
Katona~\cite{Katona2} solved the case $p = 1/2$, which became known as ``Katona's theorem''. Ahlswede and Khachatrian gave a different proof~\cite{AK4}, and their technique applies also to the case $p > 1/2$. We complete the picture by finding $w(n,t,p)$ for all $n,t,p$ and determining all families achieving this bound. We do this by rephrasing the two original proofs~\cite{AK2,AK3} of the Ahlswede--Khachatrian theorem in the $\mu_p$ setting. Essentially the same proofs allow us to obtain the tighter results. Repeating the proofs in the easier setting of $\mu_p$ has the additional advantage of simplifying some of the notations and calculations in the proofs. Curiously, whereas the classical Ahlswede--Khachatrian theorem can be proven using either of the techniques described in~\cite{AK2,AK3}, our proof needs to use both. Part of these results appear in the author's PhD thesis~\cite{Filmus}.

We also generalize Katona's circle argument~\cite{Katona1} to the $\mu_p$ setting, thus giving a particularly simple proof of the Erd\H{o}s--Ko--Rado theorem in the $\mu_p$ setting (the work of Dinur and Friedgut~\cite{DinurFriedgut} contains a similar but much longer proof, and Friedgut~\cite{Friedgut05} gives a more complicated proof in the same style). 

The methods of Dinur--Safra and Ahlswede--Khachatrian calculate the maximum $\mu_p$ measure of a $t$-intersecting family on an unbounded number of points. It is natural to ask what happens when we allow \emph{infinitely} many points rather than just an unbounded number. Formally, let $\winfty(p,t)$ be the supremum $\mu_p$-measure of a $t$-intersecting subset of $\{0,1\}^{\aleph_0}$; we only consider $\mu_p$-measurable subsets. When $p < 1/2$, we obtain $\winfty(p,t) = \wsup(p,t)$, and moreover families achieving this depend (up to measure zero) on a finite number of points, and thus must be of the form $\cF_{t,r}$. We also have $\winfty(1/2,1) = \wsup(1/2,1) = 1/2$, and this is achieved only on families depending (up to measure zero) on a finite number of points. When $t > 1$, we have $\winfty(1/2,t) = \wsup(1/2,t) = 1/2$, but no family achieves this. Finally, when $p > 1/2$ we have $\winfty(p,t) = \wsup(p,t) = 1$, and in fact there is a family of $\mu_p$-measure~$1$ in which any two sets have infinite intersection.

Ahlswede and Khachatrian~\cite{AK5} considered a natural generalization of their theorem to the Hamming scheme. In this variant, a family is a subset of $\ZZ_m^n$, and a family $\cF$ is $t$-intersecting if any two vectors in $\cF$ agree on at least $t$ coordinates. Ahlswede and Khachatrian show, in effect, that this corresponds to $p = 1/m$. This setting thus constitutes a combinatorial analog of $\mu_p$ for $p$ of the form $1/m$. It is natural to ask whether we can find similar combinatorial analogs for other values of $p$. We show that it is indeed the case, by describing a combinatorial analog for $p = s/m$, where $s \leq m/2$. In this analog, we still consider subsets of $\ZZ_m^n$, but the definition of $t$-intersecting is different: two vectors $x,y$ are $t$-intersecting if there are $t$ positions satisfying $x_i - y_i \in \{-(s-1),\ldots,s-1\}$, the subtraction taking place in $\ZZ_m$. We show that the maximum cardinality of such a family is $w(n,t,s/m) m^n$. Moreover, when $s/m < 1/2$, all optimal families arise from families of the form $\cF_{t,r}$.

A similar setting, which works for all $p < 1/2$, is that in which $\ZZ_m$ is replaced by the unit circle $[0,1]$. We consider (Lebesgue) measurable subsets of $[0,1]^n$ in which any two vectors $x,y$ have $t$ coordinates satisfying $x_i - y_i \in [-p,p]$, the subtraction taking place on the circle (that is, modulo~$1$). We show that the maximum $\mu_p$ measure of such a family is $w(n,t,p)$. Unfortunately, our argument is not strong enough to determine all optimal families.

\paragraph{Organization of the paper} The main part of the paper is Section~\ref{sec:weighted}, in which we prove the Ahlswede--Khachatrian theorem in the $\mu_p$ setting. Following an exploration of Katona's circle argument in Section~\ref{sec:katona} (which is needed for subsequent sections), we prove the infinite version of the Ahlswede--Khachatrian theorem in Section~\ref{sec:infinite}, and a version for the Hamming scheme in Section~\ref{sec:hamming}. Finally, we describe in Section~\ref{sec:reduction} the arguments of Dinur--Safra and Ahlswede--Khachatrian, which lift the Ahlswede--Khachatrian theorem to the $\mu_p$ setting.

\paragraph{Notation} We will use $[n]$ for $\{1,\ldots,n\}$, and $\binom{[n]}{k}$ for all subsets of $[n]$ of size $k$. We also use the somewhat unorthodox notation $\binom{[n]}{\geq k}$ for all subsets of $[n]$ of size at least $k$. The set of all subsets of a set $A$ will be denoted $2^A$. The notation $\bin(n,p)$ is used for the binomial distribution with $n$ trials and success probability $p$.

\section{Weighted Ahlswede--Khachatrian theorem} \label{sec:weighted}

In this section we prove an Ahlswede--Khachatrian theorem for families of sets, measured according to the $\mu_p$ measures. We start with several basic definitions.

\begin{definition} \label{def:basic}
 A \emph{family on $n$ points} is a collection of subsets of $[n]$. A family $\cF$ is \emph{$t$-intersecting} if any $A,B \in \cF$ satisfy $|A \cap B| \geq t$. Two families $\cF,\cG$ are \emph{cross-$t$-intersecting} if any $A \in \cF$ and $B \in \cG$ satisfy $|A \cap B| \geq t$.
 
 We will refer to a $1$-intersecting family or a pair of cross-$1$-intersecting families as an \emph{intersecting} family or a pair of \emph{cross-intersecting} families.
\end{definition}

\begin{definition} \label{def:mup}
 For any $p \in (0,1)$ and any $n$, the product measure $\mu_p$ is a measure on the set of subsets of $[n]$ given by
\[
 \mu_p(A) = p^{|A|} (1-p)^{n-|A|}.
\]
\end{definition}

\begin{definition} \label{def:upset}
 A family $\cF$ on $n$ points is \emph{monotone} if whenever $A \in \cF$ and $B \supseteq A$ then $B \in \cF$. Given a family $\cF$, its \emph{up-set} $\cG$ is the smallest monotone family containing $\cF$, namely
\[
 \cG = \{ B \supseteq A : A \in \cF \}.
\]
 We also denote $\cG$ by $\upset{\cF}$.
\end{definition}

Note that if $\cF$ is $t$-intersecting then so it its up-set. It follows that if $\mu_p(\cF) = w(n,t,p)$ then $\cF$ is monotone.

One main goal of this section is to determine the two quantities $w(n,t,p)$ and $\wsup(t,p)$, which are defined as follows.

\begin{definition} \label{def:wntp}
 For $n \geq t \geq 1$ and $p \in (0,1)$, the parameter $w(n,t,p)$ is the maximum of $\mu_p(\cF)$ over all $t$-intersecting families on $n$ points.
\end{definition}

\begin{definition} \label{def:wsup}
 For $t \geq 1$ and $p \in (0,1)$, the parameter $\wsup(t,p)$ is given by
\[
 \wsup(t,p) = \sup_n w(n,t,p).
\]
\end{definition}

We can also define $\wsup(t,p)$ as a limit instead of a supremum.

\begin{lemma} \label{lem:wsup-lim}
 For $t \geq 1$ and $p \in (0,1)$,
\[
 \wsup(t,p) = \lim_{n\to\infty} w(n,t,p).
\]
\end{lemma}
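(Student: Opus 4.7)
The plan is to show that the sequence $(w(n,t,p))_{n \geq t}$ is non-decreasing, since then its limit coincides with its supremum (which is manifestly $\wsup(t,p)$) and both exist because the sequence is bounded above by $1$.

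To see monotonicity in $n$, I would take a $t$-intersecting family $\cF$ on $n$ points with $\mu_p(\cF) = w(n,t,p)$ and lift it to a $t$-intersecting family on $n+1$ points of the same $\mu_p$-measure. The natural lift is
\[
 \cF' = \{A, A \cup \{n+1\} : A \in \cF\} \subseteq 2^{[n+1]}.
\]
This is $t$-intersecting on $[n+1]$ because for $A_1, A_2 \in \cF$, the sets $A_1 \cup B_1$ and $A_2 \cup B_2$ (with $B_i \subseteq \{n+1\}$) intersect in at least $|A_1 \cap A_2| \geq t$ points. The measure calculation is a one-line computation: for each $A \in \cF$, the two lifted sets contribute $p^{|A|+1}(1-p)^{n-|A|} + p^{|A|}(1-p)^{n+1-|A|} = p^{|A|}(1-p)^{n-|A|}$ to $\mu_p^{(n+1)}(\cF')$, which is exactly the $\mu_p^{(n)}$-measure of $\{A\}$. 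Summing over $A \in \cF$ gives $\mu_p^{(n+1)}(\cF') = \mu_p^{(n)}(\cF)$, so $w(n+1,t,p) \geq w(n,t,p)$.

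With monotonicity in hand, the sequence $w(n,t,p)$ is non-decreasing and bounded above by $\mu_p(2^{[n]}) = 1$, so $\lim_{n\to\infty} w(n,t,p)$ exists and equals $\sup_n w(n,t,p) = \wsup(t,p)$. No step here seems to be a real obstacle; the only thing to be a bit careful about is the book-keeping showing that the lift preserves both the $t$-intersecting property and the measure, which amounts to the identity $(1-p) + p = 1$ applied on the new coordinate.
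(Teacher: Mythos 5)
Your proof is correct and is essentially identical to the paper's: the lift $\cF' = \{A, A\cup\{n+1\} : A \in \cF\}$ is exactly the family $\{S \subseteq [n+1] : S \cap [n] \in \cF\}$ used in the paper, and both arguments conclude by observing that $w(n,t,p)$ is non-decreasing in $n$. You have simply spelled out the measure computation and the $t$-intersecting check in more detail.
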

\begin{proof}
 If $\cF$ is a $t$-intersecting family on $n$ points then $\cF' = \{ S \subseteq [n+1] : S \cap [n] \in \cF \}$ is a $t$-intersecting family on $n+1$ points having the same $\mu_p$-measure. Therefore the sequence $w(n,t,p)$ is non-decreasing, and the lemma follows.
\end{proof}

As mentioned in the introduction, $\wsup(t,p)$ for $p < 1/2$ can be derived from the classical Ahlswede--Khachatrian theorem via the methods of Dinur--Safra and Ahlswede--Khachatrian, derivations that we describe in Section~\ref{sec:reduction}.

The other main goal of this section is finding all $t$-intersecting families on $n$ points which achieve the bound. The families, first described by Frankl~\cite{Frankl78}, are the following.

\begin{definition} \label{def:frankl}
 For $t \geq 1$ and $r \geq 0$, the \emph{$(t,r)$-Frankl family} on $n$ points is the $t$-intersecting family
\[
 \cF_{t,r} = \{ A \subseteq [n] : |A \cap [t+2r]| \geq t+r \}.
\]
 A family $\cF$ on $n$ points is \emph{equivalent} to a $(t,r)$-Frankl family if there exists a set $S \subseteq [n]$ of size $t+2r$ such that
\[
 \cF = \{ A \subseteq [n] : |A \cap S| \geq t+r \}.
\]
\end{definition}

We will prove the following theorem.

\begin{theorem} \label{thm:weighted-main}
 Let $n \geq t \geq 1$ and $p \in (0,1)$. If $\cF$ is $t$-intersecting then
\[
 \mu_p(\cF) \leq \max_{r\colon t+2r \leq n} \mu_p(\cF_{t,r}).
\]

 Moreover, unless $t = 1$ and $p \geq 1/2$, equality holds only if $\cF$ is equivalent to a Frankl family $\cF_{t,r}$.
 
 When $t = 1$ and $p > 1/2$, the same holds if $n + t$ is even, and otherwise $\cF = \cG \cup \binom{[n]}{\geq \frac{n+t+1}{2}}$ where $\cG \subseteq \binom{[n]}{\frac{n+t-1}{2}}$ contains exactly $\binom{n-1}{\frac{n+t-1}{2}}$ sets.
\end{theorem}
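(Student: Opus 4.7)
The plan is to rephrase the two original Ahlswede--Khachatrian proofs~\cite{AK2,AK3} in the $\mu_p$ setting for $p<1/2$, and to use the technique of~\cite{AK4} for $p\geq 1/2$. Since the up-set of a $t$-intersecting family is again $t$-intersecting with $\mu_p$-measure at least as large, I may assume $\cF$ is monotone. Applying the left-shifts $\shift_{i,j}$ for $i<j$---each of which replaces every $B\in\cF$ containing $j$ but not $i$ by $(B\setminus\{j\})\cup\{i\}$ when the latter is not already in $\cF$---preserves monotonicity, the $t$-intersection property, and $\mu_p$-measure, and reduces me to the case of a left-compressed family.

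For a left-compressed monotone $t$-intersecting family with $p<1/2$, I will run two complementary arguments. The first, modelled on~\cite{AK3}, partitions $\cF$ according to its trace on a well-chosen pair $\{i,j\}$; the $t$-intersection constraint forces rigid structural conditions on the slices, reducing $\cF$ to a monotone $t'$-intersecting family on $n-2$ points to which induction applies. The second, modelled on~\cite{AK2}, analyzes $\cF$ through its ``generating set'' of inclusion-minimal members and estimates $\mu_p(\cF)$ by direct comparison with Frankl families. Although either technique alone suffices in the classical setting, in the $\mu_p$ setting the two arguments cover different ranges of $(n,p)$, and only their combination yields $\mu_p(\cF)\leq\max_r\mu_p(\cF_{t,r})$ throughout $p\in(0,1/2)$.

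The case $p\geq 1/2$ is handled separately by adapting the technique of~\cite{AK4}: for $p=1/2$ this recovers Katona's theorem, and for $p>1/2$ the same argument gives the bound with the stated extremal families. When $t=1$ and $p\geq 1/2$ the extremal structure need not be Frankl: the step that would normally pin down the structure of $\cF$ in the ``middle layer'' has a one-dimensional slack whenever $n+t$ is odd, producing the stated family $\cG\cup\binom{[n]}{\geq(n+t+1)/2}$ with $\cG\subseteq\binom{[n]}{(n+t-1)/2}$ of cardinality $\binom{n-1}{(n+t-1)/2}$.

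For uniqueness outside this exceptional regime, equality must propagate back through each shift and each step of the inductive argument. Equality in a shift $\shift_{i,j}$ forces $\cF$ to be closed under the induced swap, so after reversing the shifts one recovers a family equivalent to some Frankl family; equality in the partition or generating-set step pins down the parameter $r$. The main obstacle will be the equality analysis at the boundary values $p=\frac{r}{t+2r-1}$ and $p=\frac{r+1}{t+2r+1}$, where $\mu_p(\cF_{t,r})$ equals $\mu_p(\cF_{t,r\pm 1})$; here the classical argument admits more than one locally optimal configuration, and ruling out non-Frankl extrema requires careful simultaneous tracking of the two candidate parameters through the induction.
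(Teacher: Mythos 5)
Your outline matches the paper's at the highest level: reduce via up-sets and left-shifts to monotone left-compressed families, combine the generating-set method of~\cite{AK2} with the pushing-pulling method of~\cite{AK3} for $p<1/2$, switch to the $\shift_{A,B}$-style shifting of~\cite{AK4} for $p\geq 1/2$, handle the $t=1$, $n+t$ odd exception via the middle-layer slack, and propagate equality back through the shifts for uniqueness. However, your description of the~\cite{AK3}-based step does not match what actually works in the $\mu_p$ setting. The paper's pushing-pulling argument is not a trace-on-a-pair decomposition with induction on $n-2$: it introduces the \emph{symmetric extent} $\ell$ (the largest $\ell$ such that $\cF$ is invariant under permutations of $[\ell]$), decomposes the ``boundary'' of $\cF$ into pieces $\cX_a$ according to intersection size with $[\ell]$, and applies an \emph{expansion} operation to pairs $\cX_a,\cX_b$ with $a+b=\ell+t$ to increase the measure unless $\ell$ equals the extent $m$. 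Also, the two techniques are not applied on ``different ranges of $(n,p)$''; they are applied to the same optimal family simultaneously and establish complementary facts---generating sets give the upper bound $m\leq t+2r$ on the extent, while pushing-pulling forces $\ell=m$ (full symmetry within the extent)---and it is the conjunction of these two structural facts that pins down $\cF=\cF_{t,r}$. (There is a grain of truth in your phrasing: each technique has a critical probability threshold, $\frac{m-t}{2(m-1)}$ for one and $\frac{\ell-t+2}{2(\ell+1)}$ for the other, and the thresholds interlock; but both arguments run on the same family.) Finally, the equality analysis at the transition points $p=\frac{r}{t+2r-1}$ is indeed more delicate, and in the paper it requires an explicit case analysis of a four-part decomposition of $\cF$ over the last two coordinates of the extent; your sketch correctly anticipates that this is where the work is, but leaves it unresolved.
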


When $t = 1$ and $p = 1/2$ there are many optimal families. For example, the families $\cF_{1,r}$ all have $\mu_{1/2}$-measure~$1/2$, as does the family $(\{ S : 1 \in S \}) \setminus \{\{1\}\} \cup \{\{2,\ldots,n\}\}$. 

Similarly, when $t = 1$, $p > 1/2$ and $n + 1$ is odd there are many optimal families. For example, both of the following families are optimal:
\begin{align*}
 &\{ S \subseteq [n] : |S| \geq \frac{n}{2} + 1 \} \cup \{ S \subseteq [n] : |S| = \frac{n}{2} \text{ and } 1 \notin S \}, \\
 &\{ S \subseteq [n] : |S| \geq \frac{n}{2} + 1 \} \cup \{ S \subseteq [n] : |S| = \frac{n}{2} \text{ and } 1 \in S \}.
\end{align*}

Our proof actually implies the following more detailed corollary, which can also be deduced from the theorem using Lemma~\ref{lem:frankl-mup} below.

\begin{corollary} \label{cor:wntp}
 Let $n \geq t \geq 1$. Define $r^\ast$ as the maximal integer satisfying $t + 2r^\ast \leq n$.
 
 If $t = 1$ then
\[
 w(n,1,p) =
 \begin{cases}
  p & p \leq \frac{1}{2}, \\
  \mu_p(\cF_{1,r^\ast}) & p \geq \frac{1}{2}.
 \end{cases}
\]
 Furthermore, if $\cF$ is an intersecting family of $\mu_p$-measure $w(n,1,p)$ for $p \in (0,1)$ then:
\begin{itemize}
 \item If $p < \frac{1}{2}$ then $\cF$ is equivalent to $\cF_{1,0}$.
 \item If $p > \frac{1}{2}$ and $n$ is odd then $\cF$ is equivalent to $\cF_{1,\frac{n-1}{2}}$.
 \item If $p > \frac{1}{2}$ and $n$ is even then $\cF = \cG \cup \binom{[n]}{\geq n/2+1}$, where $\cG$ contains half the sets in $\binom{[n]}{n/2}$: exactly one of each pair $A,[n] \setminus A$.
\end{itemize}
 
 If $t \geq 2$ then
\[
 w(n,t,p) =
 \begin{cases}
  \mu_p(\cF_{t,r}) & \frac{r}{t+2r-1} \leq p \leq \frac{r+1}{t+2r+1} \text{ for some } r < r^\ast, \\
  \mu_p(\cF_{t,r^\ast}) & \frac{r^\ast}{t+2r^\ast-1} \leq p.
 \end{cases}
\]
 Furthermore, if $\cF$ is a $t$-intersecting family of $\mu_p$-measure $w(n,t,p)$ for $p \in (0,1)$ then:
\begin{itemize}
 \item If $\frac{r}{t+2r-1} < p < \frac{r+1}{t+2r+1}$ for some $r < r^\ast$ then $\cF$ is equivalent to $\cF_{t,r}$.
 \item If $\frac{r^\ast}{t+2r^\ast-1} < p$ then $\cF$ is equivalent to $\cF_{t,r^\ast}$.
 \item If $p = \frac{r+1}{t+2r+1}$ for some $r < r^\ast$ then $\cF$ is equivalent to $\cF_{t,r}$ or to $\cF_{t,r-1}$.
\end{itemize}
\end{corollary}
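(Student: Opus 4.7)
The plan is to deduce the corollary from Theorem \ref{thm:weighted-main} by analyzing, for each $p$, which Frankl family $\cF_{t,r}$ with $t+2r \leq n$ achieves the maximum in the theorem, and then reading off the structural part from the equivalence clauses of the theorem. By Lemma \ref{lem:frankl-mup}, $\mu_p(\cF_{t,r}) = \Pr[\bin(t+2r,p) \geq t+r]$, which crucially does not depend on $n$, so the optimization is purely one-dimensional in $r$.

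The main computation is a sign analysis of
\[
\Delta_r(p) \;=\; \mu_p(\cF_{t,r+1}) - \mu_p(\cF_{t,r}).
\]
Writing $Y \sim \bin(t+2r+2,p)$ as $X+Z$ with $X \sim \bin(t+2r,p)$ and $Z \sim \bin(2,p)$ independent, and expanding $\Pr[Y \geq t+r+1] - \Pr[X \geq t+r]$ by conditioning on $Z$, one finds after routine algebra that
\[
 \Delta_r(p) \;=\; \Pr[X=t+r-1]\,p^2 - \Pr[X=t+r]\,(1-p)^2,
\]
which, after substituting the binomial probabilities and using $\binom{t+2r}{t+r-1}/\binom{t+2r}{t+r} = (t+r)/(r+1)$, has the same sign as $(t+2r+1)p-(r+1)$. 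Hence $r \mapsto \mu_p(\cF_{t,r})$ is strictly increasing above the threshold $p = (r+1)/(t+2r+1)$, strictly decreasing below it, and flat through equality.

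From here the formulas in the corollary follow by bookkeeping. For $t \geq 2$ the thresholds $(r+1)/(t+2r+1)$ form a strictly increasing sequence in $(0,1/2)$ converging to $1/2$, and all of them lie below $1/2$, so together with the monotone behaviour above $1/2$ they partition $(0,1)$ into the intervals appearing in the corollary; intersecting with the constraint $r \leq r^\ast$ yields the stated formula (with $\cF_{t,r^\ast}$ taking over once no further $\cF_{t,r}$ fits on $n$ points). For $t=1$ all thresholds collapse to $1/2$, giving the two-piece formula $w(n,1,p)=p$ for $p \leq 1/2$ and $w(n,1,p)=\mu_p(\cF_{1,r^\ast})$ for $p \geq 1/2$. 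The structural clauses are then direct translations of Theorem~\ref{thm:weighted-main}: in the interior of an interval the maximizer is unique, so any extremal $\cF$ is equivalent to the relevant $\cF_{t,r}$, while at a threshold two consecutive Frankl families tie and both equivalence classes are possible.

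The only mildly non-mechanical translation is the last bullet for $t=1$, $p>1/2$, $n$ even, where the theorem's degenerate branch (triggered because $n+t$ is odd) yields $\cF = \cG \cup \binom{[n]}{\geq n/2+1}$ with $\cG \subseteq \binom{[n]}{n/2}$ of size $\binom{n-1}{n/2}$. Since by pigeonhole any set of size $>n/2$ meets every set of size $\geq n/2$, the only intersection constraint left on $\cG$ is that it contain no complementary pair; but $\binom{[n]}{n/2}$ splits into exactly $\binom{n}{n/2}/2 = \binom{n-1}{n/2}$ complementary pairs, so $\cG$ must pick exactly one representative from each pair. I do not expect any real obstacle: the heart of the argument is the one-line monotonicity computation above, and everything else is assembling the pieces.
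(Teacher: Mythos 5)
Your proof is correct and takes essentially the same route the paper itself points to: deduce Corollary~\ref{cor:wntp} from Theorem~\ref{thm:weighted-main} by a one-dimensional monotonicity analysis of $r \mapsto \mu_p(\cF_{t,r})$ (the content of Lemma~\ref{lem:frankl-mup}), together with the observation that in the degenerate branch for $t=1$, $p>1/2$, $n$ even the size constraint on $\cG$ forces exactly one representative from each complementary pair. Your coupling-style derivation of the sign of $\mu_p(\cF_{t,r+1})-\mu_p(\cF_{t,r})$ via $\bin(t+2r+2,p)=\bin(t+2r,p)+\bin(2,p)$ is a mild repackaging of the direct subtraction in Lemma~\ref{lem:frankl-mup} and gives the same threshold $p_{t,r}=(r+1)/(t+2r+1)$.
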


As a corollary, we can compute $w(t,p)$.

\begin{corollary} \label{cor:wtp}
 We have
\[
 w(1,p) =
 \begin{cases}
  p & p \leq \frac{1}{2}, \\
  1 & p > \frac{1}{2}
 \end{cases}
\]

 For $t \geq 2$, we have
\[
 w(t,p) =
 \begin{cases}
  \mu_p(\cF_{t,r}) & \frac{r}{t+2r-1} \leq p \leq \frac{r+1}{t+2r+1}, \\
  \frac{1}{2} & p = \frac{1}{2}, \\
  1 & p > \frac{1}{2}.
 \end{cases}
\]
\end{corollary}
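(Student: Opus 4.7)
The plan is to pass to the limit in Corollary~\ref{cor:wntp}, using Lemma~\ref{lem:wsup-lim} to replace $\wsup(t,p)$ by $\lim_{n\to\infty} w(n,t,p)$. The cases of Corollary~\ref{cor:wntp} split naturally into a \emph{stable} regime, in which $w(n,t,p)$ is eventually independent of $n$, and a \emph{growing} regime, in which the optimal Frankl parameter $r^\ast$ keeps increasing with $n$.

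In the stable regime there is nothing to do beyond matching cases. For $t = 1$ and $p \leq \tfrac{1}{2}$ the corollary gives $w(n,1,p) = p$ for every $n$, so the limit is $p$. For $t \geq 2$ and $\tfrac{r}{t+2r-1} \leq p \leq \tfrac{r+1}{t+2r+1}$ with fixed $r \geq 0$, as soon as $n \geq t+2r$ we have $r^\ast \geq r$ and the corollary gives $w(n,t,p) = \mu_p(\cF_{t,r})$, so the limit is $\mu_p(\cF_{t,r})$. Since these intervals exhaust $[0,\tfrac{1}{2})$, only the ``leftover'' case $p \geq \tfrac{1}{2}$ (respectively $p > \tfrac{1}{2}$ when $t = 1$) remains for the growing regime.

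In the growing regime Corollary~\ref{cor:wntp} gives $w(n,t,p) = \mu_p(\cF_{t,r^\ast})$ with $r^\ast \to \infty$ as $n \to \infty$. I would rewrite this probabilistically as
\[
 \mu_p(\cF_{t,r^\ast}) = \Pr[\bin(t+2r^\ast,p) \geq t+r^\ast],
\]
and observe that the mean minus the threshold equals $r^\ast(2p-1) - t(1-p)$. When $p > \tfrac{1}{2}$ this gap grows linearly in $r^\ast$, so a Chernoff bound (equivalently, the law of large numbers) gives limit~$1$. When $p = \tfrac{1}{2}$ (only relevant for $t \geq 2$) the gap is the constant $-t/2$ while the standard deviation is $\Theta(\sqrt{r^\ast})$, so the central limit theorem gives limit~$\tfrac{1}{2}$.

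The whole argument is essentially bookkeeping of the cases from Corollary~\ref{cor:wntp}, and the only non-trivial inputs are the two standard probabilistic estimates above; accordingly no step poses a real obstacle once Corollary~\ref{cor:wntp} is in hand.
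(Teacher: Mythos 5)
Your proof takes essentially the same approach as the paper's: read off the stable cases from Corollary~\ref{cor:wntp} for $p<\tfrac12$, and for $p\geq\tfrac12$ write $w(n,t,p)=\mu_p(\cF_{t,r^\ast})=\Pr[\bin(t+2r^\ast,p)\geq t+r^\ast]$ with $r^\ast\to\infty$ and apply the central limit theorem (for $p=\tfrac12$) or a concentration bound (for $p>\tfrac12$). The only cosmetic differences are that you invoke Lemma~\ref{lem:wsup-lim} explicitly and use a Chernoff bound where the paper also cites the central limit theorem for $p>\tfrac12$; the substance is identical.
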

\begin{proof}
 We start with the case $p < 1/2$. If $t = 1$ then $w(n,1,p) = p$ for all $n \geq 1$, implying the formula for $w(1,p)$. If $t \geq 2$ and $\frac{r}{t+2r-1} \leq p \leq \frac{r+1}{t+2r+1}$ then $w(n,t,p) = \mu_p(\cF_{t,r})$ for all $n \geq t+2r$, implying the formula for $w(t,p)$.

 Suppose now that $p \geq 1/2$. In this case we have $w(t+2r,t,p) = w(t+2r+1,t,p) = \cF_{t,r}$, and $\mu_p(\cF_{t,r}) = \Pr[\bin(t+2r,p) \geq t+r]$. If $p = 1/2$ then
\[
 \mu_{\frac{1}{2}}(\cF_{t,r}) = \Pr[\bin(t+2r,\tfrac{1}{2}) \geq t+r] = \Pr[\bin(t+2r,\tfrac{1}{2}) \geq \tfrac{t}{2}+r] - o_r(1) = \frac{1}{2} - o_r(1),
\]
 using the central limit theorem, and so $w(t,1/2) = 1/2$. If $p > 1/2$ then for any $q \in (1/2,p)$, for large enough $r$ we have
\[
 \mu_p(\cF_{t,r}) = \Pr[\bin(t+2r,p) \geq t+r] \geq \Pr[\bin(t+2r,p) \geq q(t+2r)] \xrightarrow{r\to\infty} 1,
\]
 using the central limit theorem.
\end{proof}

Figure~\ref{fig:weighted} illustrates Corollary~\ref{cor:wntp} and Corollary~\ref{cor:wtp}.

\begin{figure}
  \centering
  \begin{subfigure}{0.45\textwidth}
    \includegraphics[width=\textwidth]{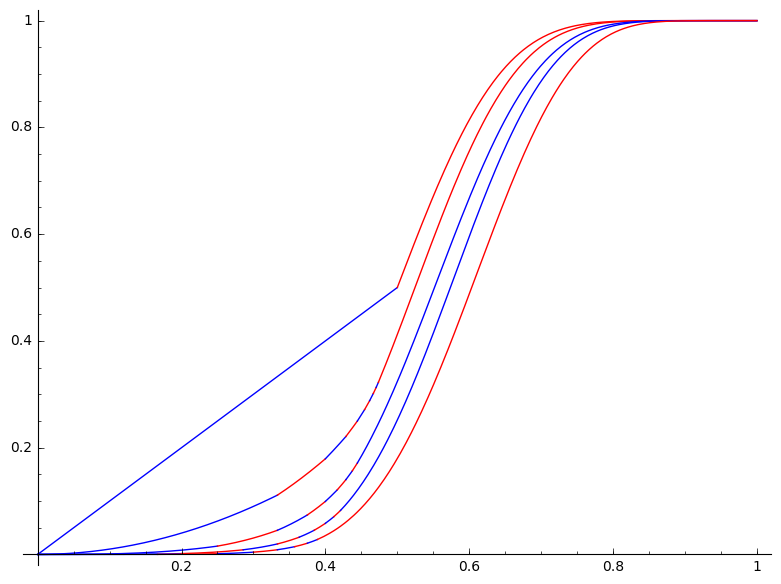}
  \end{subfigure}
  \begin{subfigure}{0.45\textwidth}
    \includegraphics[width=\textwidth]{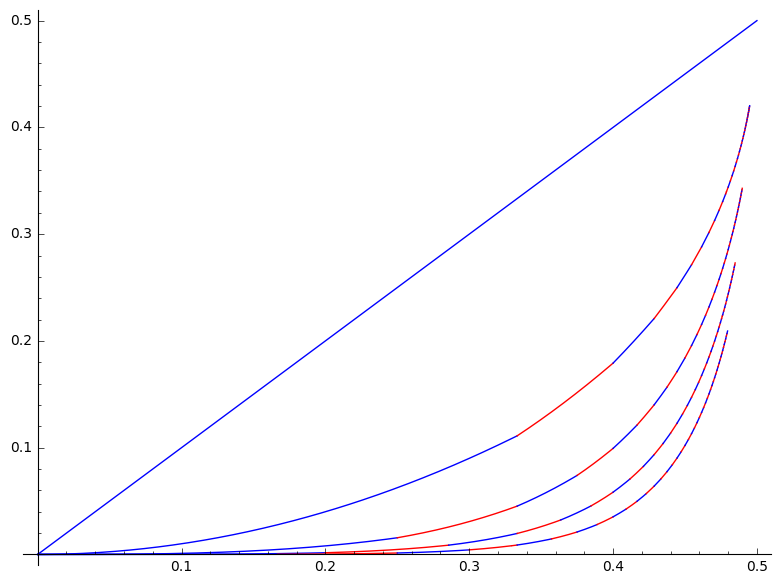}
  \end{subfigure}
  \caption{The function $w(20,t,p)$ for $1 \leq t \leq 5$ (left) and the function $w(t,p)$ for $1 \leq t \leq 5$ (right). In both cases, larger functions correspond to smaller $p$. The colors switch at each of the breakpoints $\frac{r}{t+2r-1}$ for $r \leq r^\ast$ (left) or for each $r$ (right).}
  \label{fig:weighted}
\end{figure}

\paragraph{Organization of the section} Section~\ref{sec:weighted-overview} gives a brief overview of the entire proof. Section~\ref{sec:frankl} analyzes the $\mu_p$-measures of the families $\cF_{t,r}$. The proof of the main theorem in the cases $p < 1/2$ and $p = 1/2$ appears in Section~\ref{sec:psmall} and Section~\ref{sec:phalf} (respectively), using techniques developed in Sections~\ref{sec:shifting}--\ref{sec:symmetrization}. Finally, the case $p > 1/2$ is handled in Section~\ref{sec:plarge}.

\subsection{Proof overview} \label{sec:weighted-overview}

The cases $p \leq 1/2$ and $p > 1/2$ have completely different proofs, which we describe separately.

When $p > 1/2$, the proof uses a shifting technique due to Ahlswede and Khachatrian~\cite{AK4}, which they developed with the case $p = 1/2$ (Katona's theorem) in mind. Their shifting is parametrized by two disjoint sets $A,B$ of sizes $|A| = s$ and $|B| = s+1$. If a set $S$ in the family contains $A$ and is disjoint from $B$, the shifting replaces $S$ with $(S \setminus A) \cup B$, unless the latter is already in the family. This shifting increases the $\mu_p$-measure of the family when $p > 1/2$, and if done correctly maintains the property of being $t$-intersecting. Eventually we reach a family which is stable under all such shifts. A simple argument shows that in a stable $t$-intersecting family, all sets have cardinality at least $\frac{n+t-1}{2}$. When $n+t$ is even, this directly yields the desired result. When $n+t$ is odd, the argument is a bit more subtle, involving a characterization of all maximum size $t$-intersecting subsets of $\binom{[t+2r+1]}{t+r}$. The entire argument is described in Section~\ref{sec:plarge}.

\medskip

The case $p \leq 1/2$ combines two different techniques of Ahlswede and Khachatrian: the technique of generating sets~\cite{AK2}, and the pushing-pulling technique~\cite{AK3}. In the classical setting, each of these techniques suffices by itself to prove the Ahlswede--Khachatrian, using the fact that if $\cF \subseteq \binom{[n]}{k}$ is $t$-intersecting then $\{ [n] \setminus S : S \in \cF \} \subseteq \binom{[n]}{n-k}$ is $(n-2k+t)$-intersecting. In our setting, this argument doesn't seem to work, and so we need to use both techniques together.

\paragraph{Shifting} The first step is to apply classical shifting to the family to make it left-compressed: if $A$ is in the family and $i < j$ are such that $j \in A$ and $i \notin A$, then $(A \setminus \{j\}) \cup \{i\}$ is also in the family. The shifting maintains the property of being $t$-intersecting and doesn't change the $\mu_p$-measure. This (standard) step is described in Section~\ref{sec:shifting}.

\paragraph{Generating sets} The first technique, that of generating sets, is used to show that an optimal family depends on few coordinates. We can assume without loss of generality that the family $\cF$ in question is monotone. If $\cG$ consists of all inclusion-minimal sets of $\cF$ (also known as \emph{minterms} or, following Ahlswede and Khachatrian, \emph{generating sets}), then $\cF$ simply consists of all supersets of sets in $\cG$. We can thus work with $\cG$ instead of $\cF$. The idea is that since $\cF$ is left-compressed, the number of coordinates that $\cF$ depends on is exactly the maximal number appearing in a set of $\cG$. Denote this number by $m$, and let $\cG^*$ consist of all sets in $\cG$ containing $m$.

Suppose that we would like to prove that if $\cF$ has maximum measure then it depends on at most $t+2r$ coordinates. If $m > t+2r$ then we would like to modify $\cF$ to a $t$-intersecting family of larger measure. Since the optimal family depends on fewer than $m$ coordinates, it is natural to modify $\cF$ by getting rid of its dependency on $m$. One way of doing this is by deleting $m$ from sets in $\cG^*$. Case-by-case analysis shows that the only reason the new family will not be $t$-intersecting is if we removed $m$ from two sets $A,B \in \cG^*$ which intersect in \emph{exactly} $t$ elements. Moreover, this can only happen if $|A| + |B| = m + t$. In view of this, we partition $\cG^*$ into sets $\cG^*_0,\ldots,\cG^*_m$ according to their size.

Going ahead with the idea of deleting $m$ from sets in $\cG^*$ while keeping in mind the need for the family to remain $t$-intersecting, we operate on pairs $\cG^*_a,\cG^*_b$ for $a + b = m + t$. If $a \neq b$ then we can delete $m$ from all sets in $\cG^*_a$ and, in order to guarantee that the result is $t$-intersecting, \emph{remove} all sets in $\cG^*_b$. Alternatively, we can delete $m$ from all sets in $\cG^*_b$, and remove all sets in $\cG^*_a$. Calculation shows that one of these two families has measure larger than $\cF$ whenever $p < 1/2$.

We cannot execute the preceding operation on $\cG^*_a$ when $a = \frac{m+t}{2}$. Instead, we choose an element $i < m$, delete $m$ from all sets in $\cG^*_a$ \emph{not} containing $i$, and remove all sets in $\cG^*_a$ containing $i$. The resulting family is $t$-intersecting for all $i < m$, and when $p < \frac{m-t}{2(m-1)}$, one of the new families has measure larger than $\cF$. This case cannot happen when $m = t+2r+1$, and when $m = t+2r+2$, the critical probability is $\frac{m-t}{2(m-1)} = \frac{r+1}{t+r+1}$, what we expect.

These arguments appear in Section~\ref{sec:generating-sets}.

\paragraph{Pushing-pulling} The second technique, that of pushing-pulling, is used to show that an optimal family is \emph{symmetric}, that is, invariant to permutations of coordinates it depends on; or in other words, a family of the form $\{S : |S \cap X| \geq s\}$. If a family is not completely symmetric then perhaps it is symmetric in at least some of its coordinates. We define $\ell$ to be the largest number such that the family is invariant under permutations of the first $\ell$ coordinates. The parameter $\ell$ plays a role similar to that of the parameter $m$ in the preceding technique.

Suppose that the family $\cF$ depends on $m > \ell$ coordinates. Since $\cF$ is left-compressed, the only thing preventing it from being more symmetric are sets $S \in \cF$ such that $(S \setminus \{i\}) \cup \{\ell+1\} \notin \cF$, where $i \in S$ and $\ell+1 \notin S$. We denote the collection of all these sets by $\cX$. The natural way of making $\cF$ more symmetric is to take some $A \in \cX$ and add $(A \setminus \{i\}) \cup \{\ell+1\}$ to $\cF$ for all $i \in A$; we call this operation \emph{expansion}. Case-by-case analysis shows that the only reason the new family will not be $t$-intersecting is if we applied this operation to two sets $A,B \in \cX$ having intersection \emph{exactly}~$t$. Moreover, this can only happen if $|A \cap [\ell]| + |B \cap [\ell]| = \ell + t$. In view of this, we partition $\cX$ into sets $\cX_0,\ldots,\cX_\ell$ according to the size of their intersection with $[\ell]$.

Mirroring our efforts in the preceding technique, we will operate on pairs $\cX_a,\cX_b$ for $a + b = \ell + t$. If $a \neq b$ then we can expand all sets in $\cX_a$ and delete all sets in $\cX_b$ or vice versa. In both cases the result will be $t$-intersecting. Calculation shows that for \emph{all} $p$, one of the two new families will have measure larger than $\cF$.

In order to handle $\cX_a$ for $a = \frac{\ell+t}{2}$ we will need a different approach: we expand $\cX_a$, but for both $\cX_a$ and its expansions we keep only sets containing some new common element $s \neq \ell+1$. Such a common element can be found if $m < n$ but not if $m = n$, and this corresponds to the fact that the optimal family $\cF_{t,r}$ satisfies $t+2r \leq n$. This operation increases the measure as long as $p > \frac{\ell-t+2}{2(\ell+1)}$. Assuming that $m = t+2r$, if $\ell = m-1$ then the new operation isn't needed (since $\ell+t$ is odd), and if $\ell = m-2$ then the critical probability is $\frac{\ell-t+2}{2(\ell+1)} = \frac{t}{t+2r-1}$, again what we expect.

These arguments appear in Section~\ref{sec:symmetrization}.

\paragraph{Completing the proof} When $\frac{r}{t+2r-1} < p < \frac{r+1}{t+2r+1}$, applying both techniques immediately shows that any left-compressed $t$-intersecting family of maximum measure must equal $\cF_{t,r}$. An arbitrary $t$-intersecting family can be made left-compressed by applying a sequence of shifts. Since the shifts don't affect the measure, this shows that any $t$-intersecting family has measure at most $\mu_p(\cF_{t,r})$. A more delicate argument (taken from~\cite{AK2} and appearing in Section~\ref{sec:shifting}) shows that if shifting results in a family equivalent to $\cF_{t,r}$ then the original family is also equivalent to $\cF_{t,r}$, and this shows that any $t$-intersecting family of maximum measure must be equivalent to $\cF_{t,r}$.

When $\frac{r^\ast}{t+2r^\ast-1} < p \leq 1/2$ and when $p = \frac{r}{t+2r-1}$ the application of the techniques becomes more subtle, but otherwise the approach is very similar.

The argument for $p < 1/2$ appears in Section~\ref{sec:psmall}, and the one for $p = 1/2$ appears in Section~\ref{sec:phalf}.

\subsection{Frankl families} \label{sec:frankl}

Before starting the proper part of the proof, it is prudent to study the Frankl families $\cF_{t,r}$. It is easy to see that they are all $t$-intersecting.  The following lemma analyzes their measures.

\begin{lemma} \label{lem:frankl-mup}
 Let $t \geq 1$ and $r \geq 0$ be parameters, and let $p_{t,r} = \frac{r+1}{t+2r+1}$. If $p < p_{t,r}$ then $\mu_p(\cF_{t,r}) > \mu_p(\cF_{t,r+1})$. If $p = p_{t,r}$ then $\mu_p(\cF_{t,r}) = \mu_p(\cF_{t,r+1})$. If $p > p_{t,r}$ then $\mu_p(\cF_{t,r}) < \mu_p(\cF_{t,r+1})$.
\end{lemma}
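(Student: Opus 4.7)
The plan is to express both measures in terms of a single binomial random variable and then compare. Since $\cF_{t,r}$ only constrains coordinates in $[t+2r]$, we have $\mu_p(\cF_{t,r}) = \Pr[\bin(t+2r,p) \geq t+r]$, and likewise $\mu_p(\cF_{t,r+1}) = \Pr[\bin(t+2r+2,p) \geq t+r+1]$. To align the two, I would couple the larger binomial with the smaller one: let $X \sim \bin(t+2r,p)$ and let $Y_1, Y_2$ be independent Bernoulli$(p)$ variables independent of $X$, so that $X + Y_1 + Y_2 \sim \bin(t+2r+2,p)$.

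Next I would condition on $Y_1 + Y_2 \in \{0,1,2\}$, obtaining
\[
\mu_p(\cF_{t,r+1}) = (1-p)^2 \Pr[X \geq t+r+1] + 2p(1-p)\Pr[X \geq t+r] + p^2 \Pr[X \geq t+r-1].
\]
Using $(1-p)^2 + 2p(1-p) + p^2 = 1$ to rewrite $\mu_p(\cF_{t,r}) = \Pr[X \geq t+r]$ as a convex combination with the same weights, and noting that $\Pr[X \geq t+r] - \Pr[X \geq t+r+1] = \Pr[X = t+r]$ while $\Pr[X \geq t+r-1] - \Pr[X \geq t+r] = \Pr[X = t+r-1]$, the three-term sum collapses to
\[
\mu_p(\cF_{t,r+1}) - \mu_p(\cF_{t,r}) = p^2 \Pr[X = t+r-1] - (1-p)^2 \Pr[X = t+r].
\]

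Finally, I would apply the standard binomial ratio $\Pr[X = t+r]/\Pr[X = t+r-1] = \frac{r+1}{t+r} \cdot \frac{p}{1-p}$ to factor out a positive quantity and reduce the sign question to the linear expression $(t+2r+1)p - (r+1)$. This vanishes exactly at $p = p_{t,r} = \frac{r+1}{t+2r+1}$, is negative for $p < p_{t,r}$, and is positive for $p > p_{t,r}$, giving the three claimed inequalities.

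There is essentially no hard step here: the whole argument is a short coupling plus a one-line algebraic check. The only place where one has to be a bit careful is in handling the edge case $r = 0$, where $\Pr[X = t+r-1] = \Pr[X = t-1]$ is still well-defined (and positive), so the formula and sign analysis go through unchanged.
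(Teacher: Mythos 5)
Your proof is correct, and it lands on exactly the same algebraic identity as the paper, just reached by a different route. The paper works combinatorially: it views both families on $t+2r+2$ points, identifies the two one-sided set differences $\cF_{t,r}\setminus\cF_{t,r+1}=\binom{[t+2r]}{t+r}$ and $\cF_{t,r+1}\setminus\cF_{t,r}=\binom{[t+2r]}{t+r-1}\times\{t+2r+1,t+2r+2\}$, and subtracts their measures. You instead couple $\bin(t+2r+2,p)$ with $\bin(t+2r,p)$ plus two extra Bernoullis and condition on the extras, which produces the same two-term expression
\[
\mu_p(\cF_{t,r+1})-\mu_p(\cF_{t,r}) = p^2\Pr[X=t+r-1] - (1-p)^2\Pr[X=t+r],
\]
matching (up to sign convention) the paper's displayed difference. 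Both then factor out a positive quantity and reduce to the linear condition $(t+2r+1)p-(r+1)\gtrless 0$. The coupling framing is arguably a bit more conceptual and avoids writing down the symmetric difference explicitly, but it buys nothing essential; the two arguments are the same computation in different clothing. Your care about $r=0$ is appropriate and the claim there ($\Pr[X=t-1]>0$ for $p\in(0,1)$) is correct.
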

\begin{proof}
 Suppose for simplicity that we consider the two families $\cF_{t,r}$ and $\cF_{t,r+1}$ as families on $t+2r+2$ points. It is then not too hard to check that $\cF_{t,r} \setminus \cF_{t,r+1} = \binom{[t+2r]}{t+r}$ and $\cF_{t,r+1} \setminus \cF_{t,r} = \binom{[t+2r]}{t+r-1} \times \{t+2r+1,t+2r+2\}$. Therefore
\begin{align*}
 \mu_p(\cF_{t,r}) - \mu_p(\cF_{t,r+1}) &=
 \binom{t+2r}{t+r} p^{t+r} (1-p)^{r+2} -
 \binom{t+2r}{t+r-1} p^{t+r+1} (1-p)^{r+1} \\ &=
 \binom{t+2r}{t+r-1} p^{t+r} (1-p)^{r+1}
 \left[ \frac{r+1}{t+r} (1-p) - p \right].
\end{align*}
The bracketed expression equals $\frac{r+1}{t+r} - \frac{t+2r+1}{t+r} p$, and so is positive for $p < \frac{r+1}{t+2r+1}$, negative for $p > \frac{r+1}{t+2r+1}$, and vanishes when $p = \frac{r+1}{t+2r+1}$.
\end{proof}

\begin{corollary} \label{cor:frankl-mup}
 Let $t \geq 2$. If $p_{t,r-1} < p < p_{t,r}$ then the maximum of $\mu_p(\cF_{t,s})$ is attained at $s = r$.
 
 If $p = p_{t,r}$ then the maximum of $\mu_p(\cF_{t,s})$ is attained at $s = r$ and $s = r+1$.
\end{corollary}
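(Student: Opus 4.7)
The plan is to use Lemma \ref{lem:frankl-mup} to analyze the sequence $s \mapsto \mu_p(\cF_{t,s})$ and locate its maximum. The lemma tells us exactly when the sequence increases or decreases: the sign of $\mu_p(\cF_{t,s+1}) - \mu_p(\cF_{t,s})$ is controlled by the comparison of $p$ with the threshold $p_{t,s} = \frac{s+1}{t+2s+1}$. So the proof reduces to understanding how the thresholds $p_{t,s}$ behave as $s$ varies.

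The first step is to verify that the sequence $p_{t,0}, p_{t,1}, p_{t,2}, \ldots$ is strictly increasing (this is the place where the hypothesis $t \geq 2$ is used). A one-line derivative computation gives
\[
\frac{\partial}{\partial s} \frac{s+1}{t+2s+1} = \frac{t-1}{(t+2s+1)^2},
\]
which is positive precisely when $t \geq 2$. Thus $p_{t,0} < p_{t,1} < p_{t,2} < \cdots$, and one checks $p_{t,s} \to 1/2$.

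Now suppose $p_{t,r-1} < p < p_{t,r}$ (with the convention $p_{t,-1} = 0$ to cover $r=0$). For every $s < r$ we have $p_{t,s} \leq p_{t,r-1} < p$, so Lemma \ref{lem:frankl-mup} gives $\mu_p(\cF_{t,s}) < \mu_p(\cF_{t,s+1})$; and for every $s \geq r$ we have $p < p_{t,r} \leq p_{t,s}$, so the same lemma gives $\mu_p(\cF_{t,s}) > \mu_p(\cF_{t,s+1})$. The sequence is therefore strictly unimodal with unique maximum at $s = r$. In the boundary case $p = p_{t,r}$ the same monotonicities hold for $s < r$ and for $s > r$, while at $s = r$ Lemma \ref{lem:frankl-mup} yields $\mu_p(\cF_{t,r}) = \mu_p(\cF_{t,r+1})$; hence the maximum is attained at both $s = r$ and $s = r+1$ and nowhere else.

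The only real step is the monotonicity of $p_{t,s}$, which is where $t \geq 2$ enters (for $t = 1$ one has $p_{1,s} \equiv 1/2$, so the unimodality argument collapses); everything else is immediate from Lemma \ref{lem:frankl-mup}.
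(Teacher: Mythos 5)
Your proof is correct, and it is the natural argument: the paper states Corollary~\ref{cor:frankl-mup} without proof, treating it as an immediate consequence of Lemma~\ref{lem:frankl-mup}, and your reconstruction supplies exactly the expected reasoning. The key observation — that $p_{t,s}$ is strictly increasing in $s$ precisely when $t \geq 2$ (one can also see this from the discrete difference $p_{t,s+1}-p_{t,s} = \frac{t-1}{(t+2s+1)(t+2s+3)}$, avoiding the continuous derivative) — combined with the sign criterion of Lemma~\ref{lem:frankl-mup} gives strict unimodality of $s \mapsto \mu_p(\cF_{t,s})$, and both cases of the corollary follow at once. You also correctly flag that $t=1$ degenerates since $p_{1,s} \equiv 1/2$, which is exactly why the paper treats $t=1$ separately. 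Nothing is missing.
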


Using this lemma one can deduce Corollary~\ref{cor:wntp} from Theorem~\ref{thm:weighted-main}.

\subsection{Shifting} \label{sec:shifting}

The proofs of Ahlswede and Khachatrian use the technique of \emph{shifting} to obtain families which are easier to analyze.

\begin{definition} \label{def:shifting}
 Let $\cF$ be a family on $n$ points and let $i,j \in [n]$ be two different indices. The shift operator $\shift_{i,j}$ acts on $\cF$ as follows. Let $\cF_{i,j}$ consist of all sets in $\cF$ containing $i$ but not $j$. Then
\begin{align*}
 \shift_{i,j}(\cF) = (\cF \setminus \cF_{i,j}) &\cup \{ A : A \in \cF_{i,j} \text{ and } (A \setminus \{i\}) \cup \{j\} \in \cF \} \\ &\cup \{ (A \setminus \{i\}) \cup \{j\} : A \in \cF_{i,j} \text{ and } (A \setminus \{i\}) \cup \{j\} \notin \cF \}.
\end{align*}
 In words, we try to ``shift'' each set $A \in \cF_{i,j}$ by replacing it with $A' = (A \setminus \{i\}) \cup \{j\}$. If $A' \notin \cF$ then we replace $A$ with $A'$, and otherwise we don't change $A$.
\end{definition}

Shifting clearly preserves the $\mu_p$-measure of the family, a property which we record.

\begin{lemma} \label{lem:shifting-mup}
 For any family $\cF$ and indices $i,j$ and for all $p \in (0,1)$, $\mu_p(\cF) = \mu_p(\shift_{i,j}(\cF))$.
\end{lemma}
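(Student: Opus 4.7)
The plan is to observe that the shift operator only exchanges certain sets of $\cF_{i,j}$ with their images under the map $A \mapsto A' := (A \setminus \{i\}) \cup \{j\}$, and that this map preserves cardinality, hence preserves $\mu_p$-measure set by set.

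More concretely, I would partition $\cF_{i,j}$ into the sets $\cF_{i,j}^{\mathrm{stay}} = \{A \in \cF_{i,j} : A' \in \cF\}$ and $\cF_{i,j}^{\mathrm{move}} = \{A \in \cF_{i,j} : A' \notin \cF\}$. Unpacking Definition~\ref{def:shifting} then gives the clean description
\[
\shift_{i,j}(\cF) = (\cF \setminus \cF_{i,j}^{\mathrm{move}}) \cup \{A' : A \in \cF_{i,j}^{\mathrm{move}}\}.
\]
The second piece is disjoint from the first by the very definition of $\cF_{i,j}^{\mathrm{move}}$ (each such $A'$ is not in $\cF$, hence not in $\cF \setminus \cF_{i,j}^{\mathrm{move}}$), and the map $A \mapsto A'$ is injective on $\cF_{i,j}^{\mathrm{move}}$.

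Since $|A'| = |A|$, Definition~\ref{def:mup} yields $\mu_p(\{A'\}) = \mu_p(\{A\})$ for every $A \in \cF_{i,j}^{\mathrm{move}}$. Summing over the two disjoint pieces gives
\[
\mu_p(\shift_{i,j}(\cF)) = \mu_p(\cF) - \sum_{A \in \cF_{i,j}^{\mathrm{move}}} \mu_p(\{A\}) + \sum_{A \in \cF_{i,j}^{\mathrm{move}}} \mu_p(\{A'\}) = \mu_p(\cF),
\]
as desired. There is no real obstacle here; the only thing to be careful about is verifying that the two pieces in the decomposition of $\shift_{i,j}(\cF)$ are genuinely disjoint (so that $\mu_p$ is additive over the union), which follows directly from the definition of $\cF_{i,j}^{\mathrm{move}}$.
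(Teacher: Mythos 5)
Your proof is correct, and it simply makes explicit the argument the paper leaves implicit: the paper states this lemma with the one-line justification that ``shifting clearly preserves the $\mu_p$-measure,'' and your decomposition of $\shift_{i,j}(\cF)$ as $(\cF \setminus \cF_{i,j}^{\mathrm{move}}) \cup \{A' : A \in \cF_{i,j}^{\mathrm{move}}\}$, together with the observations that $A \mapsto A'$ is injective, size-preserving, and lands outside $\cF$, is exactly the reason it is ``clear.''
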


Shifting preserves the property of being $t$-intersecting. This is a well-known property which we prove for completeness.

\begin{lemma} \label{lem:shifting-int}
 If $\cF$ is $t$-intersecting then so is $\shift_{i,j}(\cF)$ for any $i,j$.
\end{lemma}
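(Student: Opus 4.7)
The plan is to take two arbitrary sets $X, Y \in \shift_{i,j}(\cF)$ and show $|X \cap Y| \geq t$ by tracking where each of them came from in $\cF$. Every element of $\shift_{i,j}(\cF)$ is either (a) a set of $\cF$ not in $\cF_{i,j}$, (b) a set $A \in \cF_{i,j}$ that stayed put because $(A \setminus \{i\}) \cup \{j\}$ was already in $\cF$, or (c) a shifted set $A' = (A \setminus \{i\}) \cup \{j\}$ where $A \in \cF_{i,j}$ and $A' \notin \cF$. In cases (a) and (b) the set in $\shift_{i,j}(\cF)$ is itself in $\cF$; in case (c) the corresponding $A$ lies in $\cF$.

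I would then do a short case split. If neither $X$ nor $Y$ is of type (c), then both $X, Y \in \cF$, so $|X \cap Y| \geq t$ from the $t$-intersecting hypothesis on $\cF$. If both $X$ and $Y$ are of type (c), say $X = A'$ and $Y = B'$ with $A, B \in \cF_{i,j}$, then $|X \cap Y| = |A \cap B|$ since the swap $i \leftrightarrow j$ is performed symmetrically on both sets (both contain $i$ and not $j$ before the shift, both contain $j$ and not $i$ after), and $|A \cap B| \geq t$ because $A, B \in \cF$.

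The subtle case is when exactly one of them is shifted, say $X = A' = (A \setminus \{i\}) \cup \{j\}$ of type (c) and $Y \in \cF$ (of type (a) or (b)). A short analysis by whether $Y$ contains $i$ or $j$ shows the only configuration that could possibly fail is $i \in Y$ and $j \notin Y$: in every other configuration $|A' \cap Y| \geq |A \cap Y| \geq t$. In this problematic configuration, $Y \in \cF_{i,j}$; but then $Y$ was not replaced, which means $Y^* := (Y \setminus \{i\}) \cup \{j\}$ is in $\cF$. Comparing elementwise, $|A' \cap Y| = |A \cap Y^*|$ (the swap $i \leftrightarrow j$ is now performed on both $A$ and $Y$), and the right-hand side is $\geq t$ because both $A$ and $Y^*$ lie in $\cF$. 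This is the only nontrivial step; once it is established the lemma follows immediately.
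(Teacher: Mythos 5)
Your proposal is correct and follows essentially the same route as the paper: it splits into the same three cases (both sets in $\cF$, both shifted, one of each), and the "subtle" mixed case is resolved exactly as in the paper — by observing that the unshifted set $Y$, if it lies in $\cF_{i,j}$, must be stable (so $Y^* \in \cF$), and then comparing $|A' \cap Y| = |A \cap Y^*| \geq t$ via the $i \leftrightarrow j$ swap. The only difference is cosmetic labeling of the cases.
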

\begin{proof}
 Suppose $A,B \in \shift_{i,j}(\cF)$. There are three cases to consider.
 
 \textbf{Case 1: $A,B \in \cF$.} In this case it is clear that $|A \cap B| \geq t$.
 
 \textbf{Case 2: $A,B \notin \cF$.} In this case $j \in A,B$, $i \notin A,B$, and the two sets $A' = (A \setminus \{j\}) \cup \{i\}$ and $B' = (B \setminus \{j\}) \cup \{i\}$ are in $\cF$. It is not hard to check that $|A \cap B| = |A' \cap B'| \geq t$.
 
 \textbf{Case 3: $A \in \cF$ and $B \notin \cF$.} In this case $j \in B$, $i \notin B$, and $B' = (B \setminus \{j\}) \cup \{i\} \in \cF$. Thus $|A \cap B'| \geq t$. If $i \notin A$ or $j \in A$ then $|A \cap B| \geq |A \cap B'| \geq t$. If $i \in A$ and $j \notin A$ then $A' = (A \setminus \{j\}) \cup \{i\} \in \cF$ (since otherwise $A$ would have been shifted), and so $|A \cap B| \geq |A' \cap B'| \geq t$.
\end{proof}

By shifting $\cF$ repeatedly we can obtain a \emph{left-compressed} family. This is another well-known property which we prove for completeness.

\begin{definition} \label{def:left-compressed}
 A family $\cF$ on $n$ points is \emph{left-compressed} if whenever $A \in \cF$, $i \in A$, $j \notin A$, and $j < i$, then $(A \setminus \{i\}) \cup \{j\} \in \cF$. (Informally, we can \emph{shift} $i$ to $j$.)
\end{definition}

\begin{lemma} \label{lem:left-compressed}
 Let $\cF$ be a $t$-intersecting family on $n$ points. There is a left-compressed $t$-intersecting family $\cG$ on $n$ points with the same $\mu_p$-measure for all $p \in (0,1)$. Furthermore, $\cG$ can be obtained from $\cF$ by applying a sequence of shift operators.
\end{lemma}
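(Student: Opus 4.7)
The plan is to construct $\cG$ by repeatedly applying shift operators $\shift_{j,i}$ with $j < i$ until no such shift does anything, and to argue that this process terminates in finitely many steps and yields a left-compressed family.

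First I would set up a monovariant. Define
\[
 \Phi(\cH) = \sum_{A \in \cH} \sum_{a \in A} a,
\]
the sum of all elements appearing (with multiplicity) across all sets of the family. Observe that a single nontrivial application of $\shift_{i,j}$ with $j < i$ replaces some $A \in \cH_{i,j}$ with $(A \setminus \{i\}) \cup \{j\}$, strictly decreasing $\Phi$ by $i - j \geq 1$ for each set that actually moves. Since $\Phi$ is a nonnegative integer, only finitely many nontrivial shifts of this form can be applied in sequence.

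Next I would run the procedure: starting from $\cF_0 = \cF$, while there exist indices $j < i$ such that $\shift_{i,j}(\cF_k) \neq \cF_k$, set $\cF_{k+1} = \shift_{i,j}(\cF_k)$. By the monovariant argument this halts after finitely many steps, yielding a family $\cG$ satisfying $\shift_{i,j}(\cG) = \cG$ for all pairs $j < i$. By Lemma~\ref{lem:shifting-int} (applied inductively to each step), $\cG$ is $t$-intersecting, and by Lemma~\ref{lem:shifting-mup} we have $\mu_p(\cG) = \mu_p(\cF)$ for all $p \in (0,1)$. By construction, $\cG$ is obtained from $\cF$ by a finite sequence of shift operators.

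It remains to verify that the fixed-point property implies left-compressedness, which is essentially a rephrasing of the definitions. Suppose $A \in \cG$, $i \in A$, $j \notin A$, and $j < i$; then $A \in \cG_{i,j}$. Since $\shift_{i,j}(\cG) = \cG$, inspecting the definition of $\shift_{i,j}$ shows that every $A \in \cG_{i,j}$ must satisfy $(A \setminus \{i\}) \cup \{j\} \in \cG$ (otherwise the shift would have moved $A$, contradicting the fixed-point property). Hence $\cG$ is left-compressed. There is no real obstacle here; the only care needed is the monovariant to guarantee termination, which is handled by $\Phi$.
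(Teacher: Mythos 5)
Your proof is correct and follows essentially the same approach as the paper: define the potential $\Phi(\cH)=\sum_{A\in\cH}\sum_{a\in A}a$, note that any nontrivial $\shift_{i,j}$ with $j<i$ strictly decreases it, invoke nonnegativity and integrality for termination, and identify left-compressedness with the fixed-point condition. (One small slip: your opening sentence writes $\shift_{j,i}$, but the rest of your argument correctly uses $\shift_{i,j}$ with $j<i$, matching the paper's convention.)
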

\begin{proof}
 A family is left-compressed if $\shift_{i,j}(\cF) = \cF$ for all $j < i$, as a simple examination of the definitions reveals. Consider now the potential function $\Phi$, whose value on a family $\cH$ is the total sum of all elements appearing in all sets of $\cH$. It is easy to check that if $\shift_{i,j}(\cF) \neq \cF$ for some $j < i$ then $\Phi(\shift_{i,j}(\cH)) < \Phi(\cH)$. Since $\Phi$ is a non-negative integer, if we apply the shifts $\shift_{i,j}$ for all $j < i$ repeatedly on $\cF$, eventually we will reach a family whose potential is not decreased by any of the shifts, and this family is left-compressed.
\end{proof}

Lemma~\ref{lem:left-compressed} shows that in order to determine $w(n,t,p)$ it is enough to focus on left-compressed families. Moreover, since the up-set of a $t$-intersecting family is also $t$-intersecting, we will assume in most of what follows that $\cF$ is a monotone left-compressed $t$-intersecting family. We will show that except for the case $p \geq 1/2$ and $t = 1$, such a family can only have maximum $\mu_p$-measure if it is a Frankl family with the correct parameters. We will deduce that general $t$-intersecting families of measure $w(n,t,p)$ are equivalent to a Frankl family using the following lemma, whose proof closely follows the argument of Ahlswede and Khachatrian~\cite{AK2}.

\begin{lemma} \label{lem:shifting-frankl}
 Let $\cF$ be a monotone $t$-intersecting family on $n$ points, and let $i,j \in [n]$. If $\shift_{i,j}(\cF)$ is equivalent to $\cF_{t,r}$ then so is $\cF$.
\end{lemma}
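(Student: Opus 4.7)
The plan is to perform a case analysis on the position of $i$ and $j$ relative to the witness set $S$ (with $|S| = t+2r$) for which $\shift_{i,j}(\cF) = \{A : |A \cap S| \geq t+r\}$. In three of the four cases I can show that the shift was already trivial, so $\cF = \shift_{i,j}(\cF)$ is itself equivalent to $\cF_{t,r}$ via $S$. The genuinely interesting case is when exactly $j \in S$ and $i \notin S$, where the combinatorial work is concentrated.

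When $\{i,j\} \subseteq S$ or $\{i,j\} \cap S = \emptyset$, the target family is symmetric under swapping $i,j$, so $|A \cap S| = |A' \cap S|$ for $A' := (A \setminus \{i\}) \cup \{j\}$; had the shift actually moved some $A \in \cF$, then $A' \in \shift_{i,j}(\cF)$ and $A \notin \shift_{i,j}(\cF)$, but the symmetry makes these memberships equivalent, a contradiction. When $i \in S$ and $j \notin S$ one has $|A \cap S| = |A' \cap S| + 1$, so $A' \in \shift_{i,j}(\cF)$ forces $A \in \shift_{i,j}(\cF)$, again contradicting that $A$ was shifted. In the remaining case $i \notin S$, $j \in S$, I write $S = S^* \cup \{j\}$ and $S' = S^* \cup \{i\}$. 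Unpacking the shift, $\cF$ agrees with $\cG := \{A : |A \cap S| \geq t+r\}$ except possibly on ``ambiguous pairs'' $\{C, C'\}$ where $i \in C$, $j \notin C$, $|C \cap S^*| = t+r-1$, and $C' := C \symdiff \{i,j\}$; exactly one member of each pair lies in $\cF$. A short monotonicity argument shows that the choice within a pair depends only on the trace $T := C \cap S^* \in \binom{S^*}{t+r-1}$, so the deviation of $\cF$ from $\cG$ is encoded by a subset $\mathcal{T} \subseteq \binom{S^*}{t+r-1}$ collecting those traces whose pair has been ``flipped'' away from $\cG$.

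It then remains to show that $\mathcal{T}$ is either empty (whence $\cF = \cG$, equivalent to $\cF_{t,r}$ via $S$) or all of $\binom{S^*}{t+r-1}$ (whence $\cF = \{A : |A \cap S'| \geq t+r\}$, equivalent via $S'$). The $t$-intersecting hypothesis provides the constraint: for $T \in \mathcal{T}$ and $T_1 \notin \mathcal{T}$ the minimal sets $T \cup \{i\}$ and $T_1 \cup \{j\}$ both lie in $\cF$ and meet in exactly $|T \cap T_1|$ elements, forcing $|T \cap T_1| \geq t$; since $|T \cap T_1| \geq t-1$ is automatic from $|S^*| = t+2r-1$, the effective requirement is that $\mathcal{T}$ spans no edge of the ``trace graph'' $G$ on $\binom{S^*}{t+r-1}$ whose edges join pairs with $|T \cap T_1| = t-1$ (equivalently, $T \cup T_1 = S^*$). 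The main obstacle is therefore proving connectivity of $G$ in the range where uniqueness is expected: for $r = 0$ the graph is a single vertex and the claim is trivial; for $r \geq 1$ I would argue that any two traces with $|T \cap T'| \geq r$ share a common neighbour $T_1 = (S^* \setminus (T \cap T')) \cup Y$ for a suitable $Y \subseteq T \cap T'$ of size $|T \cap T'| - r$, and that arbitrary traces are joined by concatenating such two-step moves. Connectivity of $G$ forces $\mathcal{T}$ to be a union of components, hence $\emptyset$ or the whole set, completing the proof.
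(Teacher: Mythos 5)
Your proposal takes essentially the same route as the paper's proof. The four-way split on the position of $i,j$ relative to $S$, the reduction to the set $V=\binom{S^*}{t+r-1}$ of traces (the paper also calls it $V$), the observation that exactly one of $T\cup\{i\}$, $T\cup\{j\}$ lies in $\cF$ for each $T\in V$, and the reduction to connectivity of the graph $G$ on $V$ whose edges join traces meeting in exactly $t-1$ points, all match the printed argument; your set $\mathcal T$ is precisely $\{T\in V:T\cup\{i\}\in\cF\}$, and ``$\mathcal T$ is a union of components'' is the paper's ``if both types occur, connectivity produces an offending pair with intersection $t-1$.'' Your common-neighbour formula $T_1=(S^*\setminus(T\cap T'))\cup Y$ with $|Y|=|T\cap T'|-r$ also checks out: $|T_1|=t+r-1$ and $|T\cap T_1|=|T'\cap T_1|=t-1$.

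One caveat, which applies to your sketch and to the printed proof alike: the concatenation step needs each hop of a single-swap Johnson path to satisfy $|T_i\cap T_{i+1}|=t+r-2\geq r$, which is exactly $t\geq 2$. For $t=1$ the graph $G$ is a perfect matching (each $T$ is joined only to its complement $S^*\setminus T$), hence disconnected as soon as $r\geq 2$, and the lemma as stated actually fails there. For a concrete counterexample with $t=1$, $r=2$, $S=[5]$, $i=6$, $j=5$: the monotone intersecting family generated by $\left(\binom{[5]}{3}\setminus\{\{1,2,5\},\{3,4,5\}\}\right)\cup\{\{1,2,6\},\{3,4,6\}\}$ has $\shift_{6,5}(\cF)=\cF_{1,2}$, yet $\cF$ itself is not equivalent to any Frankl family since its minterms span six coordinates. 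This does not affect the rest of the paper, which only invokes the lemma with $t\geq 2$ or $r=0$, but both the lemma and your write-up should state the restriction and flag where $t\geq 2$ is used. (Incidentally, the paper's explicit intermediate vertex $\{2,\ldots,t,t+r+1,\ldots,t+2r\}$ overruns the ground set $[t+2r-1]$; for $t\geq 2$ one correct choice is $\{1,3,\ldots,t\}\cup\{t+r,\ldots,t+2r-1\}$.)
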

\begin{proof} 
 Let $S \subseteq [n]$ be the set of size $t+2r$ such that
\[
 \shift_{i,j}(\cF) = \{A \subseteq [n] : |A \cap S| \geq t+r\}.
\]
 
 Suppose first that $i,j \in S$ or $i,j \notin S$. If $A \in \shift_{i,j}(\cF)$ then $A \in \cF$, since otherwise $A$ would have originated from $A' = (A \setminus \{j\}) \cup \{i\}$, but that is impossible since $A' \in \shift_{i,j}(\cF)$. It follows that $\shift_{i,j}(\cF) \subseteq \cF$ and so $\shift_{i,j}(\cF) = \cF$, since shifting preserves cardinality. Therefore the lemma trivially holds.
 
 The case $i \in S$ and $j \notin S$ cannot happen. Indeed, consider some set $A \subseteq S$ containing $i$ but not $j$ of size $t+r$. Then $A \in \shift_{i,j}(\cF)$ and so, by definition of the shift, $A' = (A \setminus \{i\}) \cup \{j\} \in \shift_{i,j}(\cF)$. However, $|A' \cap S| = t+r-1$, and so $A' \notin \shift_{i,j}(\cF)$, and we reach a contradiction.
 
 It remains to consider the case $i \notin S$ and $j \in S$. Suppose first that $r = 0$. Then $S \in \shift_{i,j}(\cF)$, and so either $S \in \cF$ or $S' = (S \setminus \{j\}) \cup \{i\} \in \cF$. In both cases, since $\cF$ is monotone, it contains all supersets of $S$ or of $S'$. Since shifting preserves cardinality, $\cF$ must consist exactly of all supersets of $S$ or of $S'$, and thus is equivalent to a $(t,0)$-Frankl family.
 
 Suppose next that $r > 0$. Let $V$ be the collection of all subsets of $S \setminus \{j\}$ of size exactly $t+r-1$. For each $A \in V$ we have $A \cup \{j\} \in \shift_{i,j}(\cF)$, and so either $A \cup \{j\} \in \cF$ or $A \cup \{i\} \in \cF$.
 
 If $\cF$ contains $A \cup \{j\}$ for all $A \in V$ then $\cF$ contains all subsets of $S$ of size $t+r$ (since other subsets are not affected by the shift). Monotonicity forces $\cF$ to contain all of $\shift_{i,j}(\cF)$, and thus $\cF = \shift_{i,j}(\cF)$ as before.
 
 If $\cF$ contains $A \cup \{i\}$ for all $A \in V$, then in a similar way we deduce that $\cF$ is equivalent to the $(t,r)$-Frankl family based on $(S \setminus \{j\}) \cup \{i\}$.
 
 It remains to consider the case in which $\cF$ contains $A \cup \{i\}$ for some $A \in V$, and $B \cup \{j\}$ for some other $B \in V$. We will show that in this case, $\cF$ is not $t$-intersecting. Consider the graph on $V$ in which two vertices are connected if their intersection has the minimal size $t-1$. This graph is a generalized Johnson graph, and we show below that it is connected. This implies that there must be two sets $A,B$ satisfying $|A \cap B| = t-1$ such that $A \cup \{i\},B \cup \{j\} \in \cF$. Since $|A \cap B| = t-1$, we have reached a contradiction.
 
 To complete the proof, we prove that the graph is connected. For reasons of symmetry, it is enough to give a path connecting $x = \{1,\ldots,t+r-1\}$ and $y = \{2,\ldots,t+r\}$. Indeed, the vertex $\{2,\ldots,t,t+r+1,\ldots,t+2r\}$ is connected to both $x$ and $y$.
\end{proof}

The preceding lemmas allow us to reduce the proof of Theorem~\ref{thm:weighted-main} to the left-compressed case. The final lemma of this section abstracts this argument.

\begin{lemma} \label{lem:compression}
 Fix $n,t,p$. If every monotone left-compressed $t$-intersecting family on $n$ points has $\mu_p$-measure at most $q$, then every $t$-intersecting family on $n$ points has $\mu_p$-measure at most $q$.

 Suppose moreover that the only monotone left-compressed $t$-intersecting families on $n$ points having $\mu_p$-measure $q$ are $\cF_{t,r}$, where $r$ ranges over some set $R$. Then every $t$-intersecting family on $n$ points having $\mu_p$-measure $q$ is equivalent to a Frankl family $\cF_{t,r}$ for some $r \in R$.
\end{lemma}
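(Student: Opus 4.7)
The plan is to reduce the general case to the monotone left-compressed case by composing two standard moves: passing to the up-set and then shifting. Given any $t$-intersecting family $\cF$ on $n$ points, its up-set $\upset{\cF}$ is monotone, still $t$-intersecting, and satisfies $\mu_p(\upset{\cF}) \geq \mu_p(\cF)$. Applying Lemma~\ref{lem:left-compressed} to $\upset{\cF}$ produces a left-compressed $t$-intersecting family $\cH$ with $\mu_p(\cH) = \mu_p(\upset{\cF})$, obtained from $\upset{\cF}$ by a finite sequence of shifts $\shift_{i_1,j_1},\ldots,\shift_{i_L,j_L}$.

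A small auxiliary observation is needed, namely that the shift operator preserves monotonicity. This is a routine case-check on whether each of $i,j$ lies in a given set $A \in \shift_{i,j}(\cF)$ and in an arbitrary superset $C \supseteq A$, using the characterization of membership in $\shift_{i,j}(\cF)$ implicit in Definition~\ref{def:shifting}; the only case requiring any thought is when $C \in \cF_{i,j}$ while $A$ is not, and then either $A$ or its ``shift partner'' is contained in $(C \setminus \{i\}) \cup \{j\}$, so monotonicity of $\cF$ closes the argument (this is essentially a variation on the case analysis already used for Lemma~\ref{lem:shifting-int}). Granting this, every intermediate family produced in the shift sequence is monotone, and in particular $\cH$ is monotone, left-compressed, and $t$-intersecting. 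The hypothesis then gives $\mu_p(\cF) \leq \mu_p(\upset{\cF}) = \mu_p(\cH) \leq q$, which proves the first part.

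For the second part, assume $\mu_p(\cF) = q$. The above chain of inequalities must then be tight, so $\mu_p(\upset{\cF}) = \mu_p(\cF)$; since $\cF \subseteq \upset{\cF}$ and $\mu_p$ assigns positive mass to every subset of $[n]$, this forces $\cF = \upset{\cF}$, so $\cF$ is already monotone. Moreover $\mu_p(\cH) = q$, so by hypothesis $\cH = \cF_{t,r}$ for some $r \in R$. Setting $\cF_0 = \upset{\cF}$ and $\cF_k = \shift_{i_k,j_k}(\cF_{k-1})$, each $\cF_k$ is monotone by the previous paragraph, so Lemma~\ref{lem:shifting-frankl} applies at every step. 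Running the shift sequence in reverse and invoking the lemma repeatedly, we conclude inductively that $\cF_{L-1}, \cF_{L-2}, \ldots, \cF_0 = \cF$ are each equivalent to $\cF_{t,r}$, as desired.

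The only genuine work in this argument is the preservation of monotonicity under shifting; everything else is essentially bookkeeping that threads previously established lemmas together. I do not expect that verification to be an obstacle, since it is the same kind of four-case analysis (on which of $i,j$ belong to the sets under consideration) that already appears in Lemma~\ref{lem:shifting-int}.
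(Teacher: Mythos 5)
Your argument is correct, but orders the two reductions differently from the paper. You take the up-set of $\cF$ first and then shift to a left-compressed family; the paper shifts $\cF$ first (via Lemma~\ref{lem:left-compressed}) to obtain a left-compressed $\cG$ and only then forms $\cH = \upset{\cG}$. Because of your ordering, you need the additional fact that $\shift_{i,j}$ preserves monotonicity; this is true and provable by the case analysis you sketch, but it is an extra lemma the paper never states. In the equality case the paper instead argues that $\mu_p(\cG) = \mu_p(\cH) = q$ forces $\cG = \cH$, so $\cG$ is already monotone, and then runs Lemma~\ref{lem:shifting-frankl} backwards along the shift sequence from $\cF$ to $\cG$. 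The monotonicity of the intermediate families in that sequence (needed to apply Lemma~\ref{lem:shifting-frankl} at each step) is handled implicitly: since shifting preserves measure, each intermediate family has measure $q$, which in this setting is the maximum, so any non-monotone one would have an up-set of strictly larger measure, a contradiction. Your route makes the monotonicity requirement explicit and establishes it structurally, whereas the paper obtains it from measure-maximality; both are valid, yours is slightly more self-contained, but it genuinely requires that you fully work out the monotonicity-preservation lemma, which your writeup only gestures at.
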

\begin{proof}
 Suppose first that every monotone left-compressed $t$-intersecting family on $n$ points has $\mu_p$-measure at most $q$, and let $\cF$ be a $t$-intersecting family on $n$ points. Lemma~\ref{lem:left-compressed} gives a left-compressed $t$-intersecting family $\cG$, obtained from $\cF$ by applying a sequence of shifts, which has the same $\mu_p$-measure as $\cF$. Let $\cH$ be the up-set of $\cG$, which is a monotone left-compressed $t$-intersecting family. By assumption, $\mu_p(\cH) \leq q$, and so $\mu_p(\cF) = \mu_p(\cG) \leq \mu_p(\cH) \leq q$.
 
 Suppose next that the only monotone left-compressed $t$-intersecting families on $n$ points having $\mu_p$-measure $q$ are $\cF_{t,r}$ for $r \in R$, and suppose that $\cF$ has $\mu_p$-measure $q$. In that case necessarily $\cG = \cH$, and so $\cG = \cF_{t,r}$ for some $r \in R$, by assumption. Repeatedly applying Lemma~\ref{lem:shifting-frankl} (starting with the last shifting applied and working backwards), we see that $\cF$ is equivalent to $\cF_{t,r}$.
\end{proof}

\subsection{Generating sets} \label{sec:generating-sets}

The goal of the first part of the proof, which follows~\cite{AK2}, is to show that any monotone left-compressed $t$-intersecting family of maximum $\mu_p$-measure has to depend on a small number of points. We will use a representation of monotone families in which this property has a simple manifestation. Our definition is simpler than the original one, due to the simpler setting.

\begin{definition} \label{def:generating-sets}
 A family $\cF$ on $n$ points is \emph{non-trivial} if $\cF \notin \{\emptyset,2^{[n]}\}$.

 Let $\cF$ be a non-trivial monotone family. A \emph{generating set} is a set $S \in \cF$ such that no subset of $S$ belongs to $\cF$. The \emph{generating family} of $\cF$ consists of all generating sets of $\cF$. The \emph{extent} of $\cF$ is the maximal index appearing in a generating set of $\cF$. The \emph{boundary generating family} of $\cF$ consists of all generating sets of $\cF$ containing its extent.
 
 If $\cG$ is the generating family of $\cF$ then we use the notation $\cG^*$ for the boundary generating family of $\cF$. For each integer $a$, we use the notation $\cG^*_a$ for the subset of $\cG^*$ consisting of sets of size $a$.
\end{definition}

Generating sets are also known as \emph{minterms}. If $\cG$ is the generating family of $\cF$ then $\cG$ is an antichain and $\cF$ is the up-set of $\cG$ (and this gives an alternative definition of $\cG$). If $\cF$ has extent $m$ then $\cF$ depends only on the first $m$ coordinates: $S \in \cF$ iff $S \symdiff \{i\} \in \cF$ for all $i > m$. For this reason, for the rest of the section we treat a family having extent $m$ as a family on $m$ points.

One reason to focus on the boundary generating family of $\cF$ is the following simple observations.

\begin{lemma} \label{lem:gs-0a}
 Let $\cF$ be a non-trivial monotone left-compressed family of extent $m$ with generating family $\cG$ and boundary generating family $\cG^*$. For any subset $G \subseteq \cG^*$,
\[
 \upset{\cG \setminus G} = \cF \setminus G.
\]
\end{lemma}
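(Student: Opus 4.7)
The plan is to establish the two inclusions $\upset{\cG \setminus G} \subseteq \cF \setminus G$ and $\cF \setminus G \subseteq \upset{\cG \setminus G}$ separately. Throughout I treat $\cF$ as a family on $[m]$, following the convention stated just before the lemma.

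For $\upset{\cG \setminus G} \subseteq \cF \setminus G$, the containment in $\upset{\cG} = \cF$ is immediate, so it suffices to show that no element of $G$ belongs to $\upset{\cG \setminus G}$. If some $A \in G$ were a superset of $S \in \cG \setminus G$, then because $A$ is itself a generating set no proper subset of $A$ lies in $\cF$, forcing $S = A$; this contradicts $A \in G$ and $S \notin G$. This direction costs nothing beyond unpacking the definition of ``generating set''.

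For $\cF \setminus G \subseteq \upset{\cG \setminus G}$, take $B \in \cF \setminus G$ and pick any $S \in \cG$ with $S \subseteq B$. If $S \notin G$ we are done, so assume $S \in G \subseteq \cG^*$, whence $m \in S$. Since $B \notin G$ we have $B \neq S$, so $B \supsetneq S$, and there is some $j \in B \setminus S$; as $B \subseteq [m]$ and $m \in S$, necessarily $j < m$. Now I apply left-compression to $S$: with $m \in S$, $j \notin S$, and $j < m$, the shifted set $\tilde{S} := (S \setminus \{m\}) \cup \{j\}$ lies in $\cF$. Observe that $\tilde{S} \subseteq B$ and $m \notin \tilde{S}$, so any generating subset $S'' \subseteq \tilde{S}$ also avoids $m$, which means $S'' \notin \cG^*$ and in particular $S'' \notin G$. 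Since $S'' \subseteq \tilde{S} \subseteq B$, this exhibits $B \in \upset{\cG \setminus G}$.

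The only nontrivial step is the reverse inclusion, and its crux is left-compression. Without that hypothesis, $B$ could a priori contain only generating sets lying in $G$ and nothing would produce a replacement; left-compression supplies $\tilde{S}$, which sits inside $B$, avoids the extent $m$, and hence contributes a generating subset guaranteed to lie outside $\cG^* \supseteq G$.
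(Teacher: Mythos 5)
Your proof is correct and takes essentially the same route as the paper's: both directions go through the antichain observation and the use of left-compression to produce a set in $\cF$ inside the target that avoids $m$, hence contains a generating set outside $\cG^*$. The only cosmetic difference is that the paper works with $B \setminus \{m\}$ while you work directly with the shifted set $(S \setminus \{m\}) \cup \{j\}$; these serve the same purpose.
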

\begin{proof}
 Since $\cG$ is an antichain, no $A \in G$ is a superset of any other set in $\cG$. For this reason, $\upset{\cG \setminus G} \subseteq \cF \setminus G$.
 
 On the other hand, let $S \in \cF \setminus G$. If $S$ is not a superset of any $A \in G$ then clearly $S \in \upset{\cG \setminus G}$. If $S \supseteq A$ for some $A \in G$ then since $S \neq A$, there is an element $i \in S \setminus A$. The set $S' = S \setminus \{m\}$ is a superset of $(A \setminus \{m\}) \cup \{i\}$, and so $S' \in \cF$. Thus $S'$ is a superset of some $B \in \cG$. Since $m \notin S'$, necessarily $B \notin G$. As $S \supseteq B$, we conclude that $S \in \upset{\cG \setminus G}$.
\end{proof}

\begin{lemma} \label{lem:gs-0b}
 Let $\cF$ be a non-trivial monotone left-compressed family of extent $m$ with generating family $\cG$ and boundary generating family $\cG^*$. For any subset $G \subseteq \cG^*$,
\[
 \upset{(\cG \setminus G) \cup \{ A \setminus \{m\} : A \in G\}} = \cF \cup \{ A \setminus \{m\} : A \in G \}.
\]
\end{lemma}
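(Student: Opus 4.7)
The plan is to prove the claimed equality $\upset{(\cG \setminus G) \cup H} = \cF \cup H$, where I abbreviate $H = \{A \setminus \{m\} : A \in G\}$, by establishing the two inclusions separately, using Lemma~\ref{lem:gs-0a} and the left-compressedness of $\cF$.

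For the direction $\cF \cup H \subseteq \upset{(\cG \setminus G) \cup H}$, I would split $\cF$ as $(\cF \setminus G) \cup G$. By Lemma~\ref{lem:gs-0a}, $\cF \setminus G = \upset{\cG \setminus G}$, which is plainly contained in the LHS, and $H$ sits inside its own up-set. It remains only to place each $A \in G$ inside the LHS, which is immediate because $A \supseteq A \setminus \{m\} \in H$.

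For the reverse inclusion, I would take $S$ in the LHS and consider cases. Either $S$ contains some $B \in \cG \setminus G$, in which case $S \in \upset{\cG} = \cF$; or $S$ contains some set of the form $A \setminus \{m\}$ with $A \in G$. In the second case, if $m \in S$ then $S \supseteq A$ and again $S \in \cF$, while if $m \notin S$ and $S = A \setminus \{m\}$ then $S \in H$. The remaining subcase is $m \notin S$ and $S \supsetneq A \setminus \{m\}$: since the family is treated as living on $[m]$ and $S \subseteq [m] \setminus \{m\}$, there exists $i \in S \setminus (A \setminus \{m\})$ with $i < m$, and necessarily $i \notin A$. Now left-compression, applied to $A \in \cF$ with $m \in A$ and $i \notin A$, yields $(A \setminus \{m\}) \cup \{i\} \in \cF$, and $S$ contains this set, so $S \in \cF$.

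The main obstacle is precisely the last subcase: when $S$ strictly extends $A \setminus \{m\}$ by elements other than $m$, there is no a priori reason for $S$ to lie in $\cF$ just from the fact that we adjoined $A \setminus \{m\}$ to the generators. One genuinely needs the left-compressed hypothesis to manufacture an alternative generator of $\cF$ that $S$ covers. This is the same trick already used in the proof of Lemma~\ref{lem:gs-0a}, and I expect no additional subtleties beyond it.
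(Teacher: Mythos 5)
Your proof is correct and essentially reproduces the paper's argument: the easy inclusion follows from the definitions, and the hard inclusion reduces (after peeling off the cases where $S \supseteq A$ or $S = A \setminus \{m\}$) to the observation that for $S \supsetneq A \setminus \{m\}$ with $m \notin S$, some $i \in S \setminus A$ exists and left-compression produces $(A \setminus \{m\}) \cup \{i\} \in \cF$ below $S$. One small remark: your use of Lemma~\ref{lem:gs-0a} in the forward inclusion is correct but unnecessary overkill — the paper simply notes $\cG \setminus G \subseteq (\cG \setminus G) \cup H$ and each $A \in G$ covers $A \setminus \{m\} \in H$, so $\cG$ and hence $\cF = \upset{\cG}$ already sits inside the left-hand side.
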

\begin{proof}
 Denote by $\cF'$ the left-hand side. Clearly $\cF' \supseteq \cF \cup \{ A \setminus \{m\} : A \in G \}$.
 
 On the other hand, suppose that $S \in \cF' \setminus \cF$. Then for some $A \in G$, $S$ is a superset of $A \setminus \{m\}$ but not of $A$. In particular, $m \notin S$. We claim that $S = A \setminus \{m\}$. Otherwise, there exists an element $i \in S \setminus (A \setminus \{m\})$. Since $\cF$ is left-compressed, $A' = (A \setminus \{m\}) \cup \{i\} \in \cF$. Since $\cF$ is monotone and $S \supseteq A'$, we conclude that $S \in \cF$, contradicting our assumption. Thus $\cF' \setminus \cF = \{ A \setminus \{m\} : A \in G \}$.
\end{proof}

The following crucial observation drives the entire approach, and explains why we want to classify the sets in the boundary generating family according to their size.

\begin{lemma} \label{lem:gs-1}
 Let $\cF$ be a non-trivial monotone left-compressed $t$-intersecting family with extent $m$ and boundary generating family $\cG^*$. If $A,B \in \cG^*$ intersect in exactly $t$~elements then $|A| + |B| = m + t$.
\end{lemma}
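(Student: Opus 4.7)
The plan is a short argument by contradiction, relying only on the left-compressed property and the $t$-intersecting hypothesis. First I would observe that, since $A,B$ are generating sets of $\cF$ containing the extent $m$, both are subsets of $[m]$ and both contain $m$; hence $m \in A \cap B$ and $|A \cup B| \leq m$. By inclusion-exclusion, $|A| + |B| = |A \cup B| + |A \cap B| = |A \cup B| + t$, so the target equality $|A| + |B| = m+t$ is equivalent to the purely combinatorial statement $A \cup B = [m]$.

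To prove this, I would suppose for contradiction that some index $i \in [m-1]$ lies outside both $A$ and $B$. Since $\cF$ is left-compressed, $m \in A$, $i \notin A$, and $i < m$, the shifted set $A' := (A \setminus \{m\}) \cup \{i\}$ belongs to $\cF$. I then compute $|A' \cap B|$: the element $m$ was in $A \cap B$ but has been removed from $A$; the newly added element $i$ contributes nothing because $i \notin B$; and the remaining $t-1$ elements of $(A \cap B) \setminus \{m\}$ are still present. Hence $|A' \cap B| = t-1$, contradicting the $t$-intersecting property applied to the pair $A', B \in \cF$.

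There is essentially no genuine obstacle here: the whole argument rests on a single application of left-compressibility to produce a witness that would violate $t$-intersection whenever $A \cup B$ fails to exhaust $[m]$. The content of the lemma is that two generating sets of $\cF$ meeting in exactly $t$ elements must together cover $[m]$ --- precisely the combinatorial constraint that later justifies pairing $\cG^*_a$ with $\cG^*_b$ when $a+b = m+t$.
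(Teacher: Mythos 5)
Your proof is correct and follows essentially the same argument as the paper: reduce to showing $A \cup B = [m]$ via inclusion-exclusion, then use left-compression to shift $m$ to a putative missing index $i$ and derive a contradiction with $t$-intersection. The only cosmetic difference is that you shift $A$ where the paper shifts $B$; the argument is otherwise identical.
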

\begin{proof}
 We will show that $A \cup B = [m]$. It follows that
\[
 |A| + |B| = |A \cup B| + |A \cap B| = m + t.
\]
 Since $A \cup B \subseteq [m]$ and $m \in A \cap B$ by definition, we have to show that every element $i < m$ belongs to either $A$ or $B$. Suppose that some element $i$ belongs to neither. Since $\cF$ is left-compressed, the set $B' = (B \setminus \{m\}) \cup \{i\}$ also belongs to $\cF$. However, $|A \cap B'| = |A \cap B| - 1 = t - 1$, contradicting the assumption that $\cF$ is $t$-intersecting.
\end{proof}

Our goal now is to show that if $m$ is too large then we can remove the dependency on~$m$ while keeping the family $t$-intersecting and increasing its $\mu_p$-measure, for appropriate values of~$p$. The idea is to remove $m$ from sets in the boundary generating family. The only obstructions for doing so are sets $A,B$ in the boundary generating family whose intersection contains \emph{exactly} $t$~elements, and here we use Lemma~\ref{lem:gs-1} to guide us: this can only happen if $|A| + |B| = m + t$. Accordingly, our modification will involve generating sets in $\cG^*_a$ and $\cG^*_b$ for $a + b = m + t$. There are two cases to consider: $a \neq b$ and $a = b$. The first case is simpler.

\begin{lemma} \label{lem:gs-2}
 Let $\cF$ be a non-trivial monotone left-compressed $t$-intersecting family with extent $m$, generating family $\cG$, and boundary generating family $\cG^*$. Let $a \neq b$ be parameters such that $a + b = m + t$ and $\cG^*_a,\cG^*_b$ are not both empty. Consider the families $\cF_1 = \upset{\cG_1}$ and $\cF_2 = \upset{\cG_2}$, where
\begin{align*}
 \cG_1 &= (\cG \setminus (\cG^*_a \cup \cG^*_b)) \cup \{ S \setminus \{m\} : S \in \cG^*_b \}, \\
 \cG_2 &= (\cG \setminus (\cG^*_a \cup \cG^*_b)) \cup \{ S \setminus \{m\} : S \in \cG^*_a \}.
\end{align*}
 Both families $\cF_1,\cF_2$ are $t$-intersecting. Moreover, if $p < 1/2$ then $\max(\mu_p(\cF_1),\mu_p(\cF_2)) > \mu_p(\cF)$; and if $p = 1/2$, $\max(\mu_p(\cF_1),\mu_p(\cF_2)) \geq \mu_p(\cF)$, with equality only if $\mu_p(\cF_1) = \mu_p(\cF_2) = \mu_p(\cF)$. 
\end{lemma}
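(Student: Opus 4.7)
The plan has three stages: (i) verify that $\cF_1$ and $\cF_2$ are $t$-intersecting, relying on Lemma~\ref{lem:gs-1}; (ii) identify exactly which sets are added to and removed from $\cF$ in passing to $\cF_i$, in order to compute $\mu_p(\cF_i) - \mu_p(\cF)$ in closed form; (iii) add the two signed differences so that a factor of $1-2p$ emerges.

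For (i), since $\cF_1$ is monotone it suffices to check that $\cG_1$ is pairwise $t$-intersecting. Pairs inside $\cG \setminus (\cG^*_a \cup \cG^*_b)$ inherit $t$-intersection from $\cF$. When at least one generator has been shrunk from $S$ to $S\setminus\{m\}$ for some $S \in \cG^*_b$, the intersection drops by at most one, so only pairs whose original intersection equals $t$ are dangerous. By Lemma~\ref{lem:gs-1} such pairs $(S, A)$ satisfy $|A| + b = m+t$, i.e. $|A|=a$; but $\cG^*_a$ has been removed from $\cG_1$, and two generators $S_1, S_2 \in \cG^*_b$ would require $2b = m+t$, contradicting $a\neq b$. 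The argument for $\cF_2$ is symmetric.

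For (ii), the key is to prove, using left-compressedness, the exact identities $\cF_1 \setminus \cF = \{S\setminus\{m\}:S\in\cG^*_b\}$ and $\cF \setminus \cF_1 = \cG^*_a$. The first holds because if $A \supseteq S\setminus\{m\}$ with $A\notin\cF$, then $m\notin A$, and any extra element $i \in A\setminus(S\setminus\{m\})$ has $i < m$, so left-compressedness applied to $S$ (shifting $m$ to $i$) produces a subset of $A$ already in $\cF$, contradicting $A \notin \cF$ unless $A = S\setminus\{m\}$. The second identity is the step I expect to be the main obstacle: I must show that any $A \in \cF$ whose every $\cG$-generator lies in $\cG^*_a$ must actually equal a single such generator. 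If $A$ properly contained $B \in \cG^*_a$, pick $i \in A\setminus B$ (necessarily $i<m$); left-compressedness sends $B$ to $(B\setminus\{m\})\cup\{i\} \in \cF$, whose $m$-free location forces any generator of it to lie outside $\cG^*$, hence outside $\cG^*_a$, yielding a second $\cG$-generator below $A$ and giving a contradiction.

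Writing $\alpha = |\cG^*_a|$ and $\beta = |\cG^*_b|$, these identities give
\begin{align*}
 \mu_p(\cF_1) - \mu_p(\cF) &= \beta\, p^{b-1}(1-p)^{m-b+1} - \alpha\, p^a (1-p)^{m-a}, \\
 \mu_p(\cF_2) - \mu_p(\cF) &= \alpha\, p^{a-1}(1-p)^{m-a+1} - \beta\, p^b (1-p)^{m-b}.
\end{align*}
Adding and factoring yields
\[
 \mu_p(\cF_1) + \mu_p(\cF_2) - 2\mu_p(\cF) = (1-2p)\bigl[\alpha\, p^{a-1}(1-p)^{m-a} + \beta\, p^{b-1}(1-p)^{m-b}\bigr],
\]
with the bracket strictly positive since $\alpha + \beta > 0$ and $p \in (0,1)$. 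For $p < 1/2$ the right-hand side is strictly positive, so $\max(\mu_p(\cF_1),\mu_p(\cF_2)) > \mu_p(\cF)$; for $p = 1/2$ it vanishes, giving $\max \geq \mu_p(\cF)$ with equality forcing both $\mu_p(\cF_1) = \mu_p(\cF_2) = \mu_p(\cF)$.
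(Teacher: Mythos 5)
Your proposal is correct and follows essentially the same route as the paper: verify $t$-intersection via Lemma~\ref{lem:gs-1}, identify the exact symmetric difference $\cF_i \symdiff \cF$, and sum the two measure differences to extract a factor of $1-2p$. The only difference is that you re-derive the set-theoretic identities inline (the content of Lemmas~\ref{lem:gs-0a} and~\ref{lem:gs-0b}) rather than citing them, and express the sum directly instead of as an average, but these are cosmetic.
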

\begin{proof}
 We start by showing that $\cF_1$ (and so $\cF_2$) is $t$-intersecting. Clearly, it is enough to show that its generating family $\cG_1$ is $t$-intersecting. Suppose that $S,T \in \cG_1$. We consider several cases.
 
 If $S,T \in \cG$ then $|S \cap T| \geq t$ since $\cG$ is $t$-intersecting.
 
 If $S \in \cG$ and $T \notin \cG$ then $T' = T \cup \{m\} \in \cG$ and so $|T'| = b$. If $m \notin S$ then $|S \cap T| = |S \cap T'| \geq t$. If $m \in S$ then by construction $|S| \neq a$, and so $|S \cap T| = |S \cap T'| - 1 \geq t$, using Lemma~\ref{lem:gs-1}.
 
 If $S,T \notin \cG$ then $S' = S \cup \{m\} \in \cG$ and $T' = T \cup \{m\} \in \cG$, and so $|S'| = |T'| = b$. As in the preceding case, $|S \cap T| = |S' \cap T'| - 1 \geq t$ due to Lemma~\ref{lem:gs-1}.
 
 \smallskip
  
 Lemma~\ref{lem:gs-0a} and Lemma~\ref{lem:gs-0b} show that
\[
 \cF_1 = (\cF \setminus \cG^*_a) \cup \{ S \setminus \{m\} : S \in \cG^*_b \}.
\]
 Since  $\mu_p(S \setminus \{m\}) = \frac{1-p}{p} \mu_p(S)$ whenever $m \in S$,
\[
 \mu_p(\cF_1) = \mu_p(\cF) - \mu_p(\cG^*_a) + \frac{1-p}{p} \mu_p(\cG^*_b).
\]
 Similarly,
\[
 \mu_p(\cF_2) = \mu_p(\cF) - \mu_p(\cG^*_b) + \frac{1-p}{p} \mu_p(\cG^*_a).
\]
 Taking the average of both estimates, we obtain
\[
 \frac{\mu_p(\cF_1) + \mu_p(\cF_2)}{2} = \mu_p(\cF) + \frac{1}{2}\left(\frac{1-p}{p} - 1\right) (\mu_p(\cG^*_a) + \mu_p(\cG^*_b)).
\]
 When $p < 1/2$, the second term is positive, and so $\max(\mu_p(\cF_1), \mu_p(\cF_2)) > \mu_p(\cF)$. When $p = 1/2$ it vanishes, and so $\max(\mu_p(\cF_1),\mu_p(\cF_2)) \geq \mu_p(\cF)$.
\end{proof}

When $a = b$ the construction in Lemma~\ref{lem:gs-2} cannot be executed, and we need a more complicated construction. The new construction will only work for small enough $p$, mirroring the fact that the optimal families for larger $p$ depend on more points.

\begin{lemma} \label{lem:gs-3}
 Let $\cF$ be a non-trivial monotone left-compressed $t$-intersecting family with extent $m>1$, generating family $\cG$, and boundary generating family $\cG^*$. Suppose that $a = \frac{m+t}{2}$ is an integer and that $\cG^*_a$ is non-empty. For each $i \in [m-1]$, let $\cG^*_{a,i} = \{ S \in \cG^*_a : i \in S \}$, and define
\[
 \cG_i = (\cG \setminus \cG^*_a) \cup \{ S \setminus \{m\} : S \in \cG^*_a \setminus \cG^*_{a,i} \}.
\]
 All families $\cF_i = \upset{\cG_i}$ are $t$-intersecting. Moreover, if $p < \frac{m-t}{2(m-1)}$ then $\mu_p(\cF_i) > \mu_p(\cF)$ for some $i \in [m-1]$. 
\end{lemma}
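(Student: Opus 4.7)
The plan is to verify the two assertions separately, closely mirroring the structure of Lemma~\ref{lem:gs-2} but with the extra bookkeeping forced by the distinguished coordinate $i \in [m-1]$.

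\textbf{Step 1 ($t$-intersecting property).} Since $\cF_i = \upset{\cG_i}$ is $t$-intersecting iff $\cG_i$ is, I take $S,T \in \cG_i$ and split into three cases. If both $S,T \in \cG \setminus \cG^*_a$, we have $|S \cap T| \geq t$ because $\cG$ is $t$-intersecting. If $S \in \cG \setminus \cG^*_a$ and $T = T' \setminus \{m\}$ for some $T' \in \cG^*_a \setminus \cG^*_{a,i}$, then $T' \in \cG$ gives $|S \cap T'| \geq t$; the only worry is $m \in S$, in which case $S \in \cG^*$ and $|S| \neq a$, so Lemma~\ref{lem:gs-1} combined with $|T'| = a = (m+t)/2$ rules out $|S \cap T'| = t$, yielding $|S \cap T| = |S \cap T'| - 1 \geq t$. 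In the remaining case both $S = S' \setminus \{m\}$ and $T = T' \setminus \{m\}$ with $S',T' \in \cG^*_a \setminus \cG^*_{a,i}$: if $|S' \cap T'| = t$ then Lemma~\ref{lem:gs-1} forces $S' \cup T' = [m]$, but $i \notin S'$ and $i \notin T'$ contradicts $i \in [m-1] \subseteq [m]$, so $|S' \cap T'| \geq t+1$ and $|S \cap T| \geq t$.

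\textbf{Step 2 (measure computation).} I apply Lemma~\ref{lem:gs-0a} with $G = \cG^*_{a,i}$ to get $\upset{\cG \setminus \cG^*_{a,i}} = \cF \setminus \cG^*_{a,i}$, and then apply Lemma~\ref{lem:gs-0b} with $G = \cG^*_a \setminus \cG^*_{a,i}$ (viewed inside the family $\cG \setminus \cG^*_{a,i}$, which is still left-compressed and whose boundary generating family contains $\cG^*_a \setminus \cG^*_{a,i}$) to obtain
\[
 \cF_i = (\cF \setminus \cG^*_{a,i}) \cup \{ S \setminus \{m\} : S \in \cG^*_a \setminus \cG^*_{a,i} \}.
\]
The added sets are proper subsets of generating sets and therefore not in $\cF$, so this is a disjoint union. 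Using $\mu_p(S \setminus \{m\}) = \frac{1-p}{p} \mu_p(S)$ when $m \in S$, I obtain
\[
 \mu_p(\cF_i) - \mu_p(\cF) = \frac{1-p}{p} \mu_p(\cG^*_a \setminus \cG^*_{a,i}) - \mu_p(\cG^*_{a,i}) = \frac{1-p}{p} \mu_p(\cG^*_a) - \frac{1}{p} \mu_p(\cG^*_{a,i}).
\]

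\textbf{Step 3 (averaging).} Each $S \in \cG^*_a$ contains $m$ and has size $a$, so $|S \cap [m-1]| = a-1 = (m+t-2)/2$. Summing over $i$,
\[
 \sum_{i=1}^{m-1} \mu_p(\cG^*_{a,i}) = \frac{m+t-2}{2}\, \mu_p(\cG^*_a),
\]
so $\min_i \mu_p(\cG^*_{a,i}) \leq \frac{m+t-2}{2(m-1)} \mu_p(\cG^*_a)$. The assumption $p < \frac{m-t}{2(m-1)}$ is exactly $1-p > \frac{m+t-2}{2(m-1)}$, so for some $i \in [m-1]$ we get $\mu_p(\cG^*_{a,i}) < (1-p)\mu_p(\cG^*_a)$; plugging into the identity above yields $\mu_p(\cF_i) > \mu_p(\cF)$.

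\textbf{Main obstacle.} The only delicate point is the third subcase of Step~1: deleting $m$ from two generating sets of size $a$ could in principle destroy the $t$-intersecting property, and the proof escapes this only because $i$ is a witness in $[m-1]$ missing from both sets, contradicting the $A \cup B = [m]$ conclusion of Lemma~\ref{lem:gs-1}. Everything else reduces to bookkeeping with Lemmas~\ref{lem:gs-0a}, \ref{lem:gs-0b}, and a one-line averaging estimate.
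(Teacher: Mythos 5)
Your proof is correct and follows the same overall plan as the paper: verify that $\cG_i$ is $t$-intersecting by cases, express $\cF_i = (\cF \setminus \cG^*_{a,i}) \cup \{S \setminus \{m\} : S \in \cG^*_a \setminus \cG^*_{a,i}\}$ via Lemmas~\ref{lem:gs-0a} and~\ref{lem:gs-0b}, compute the measure change, and average over $i \in [m-1]$. The one genuine variation is in the third case of Step~1 (both $S = S' \setminus \{m\}$ and $T = T' \setminus \{m\}$): the paper invokes left-compression to produce $T'' = T \cup \{i\} \in \cF$ and concludes $|S \cap T| = |S' \cap T''| \geq t$, whereas you argue by inclusion-exclusion that $|S' \cap T'| = t$ would give $|S' \cup T'| = 2a - t = m$, forcing $S' \cup T' = [m]$, contradicting $i \notin S' \cup T'$. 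Your version is cleaner and does not need the left-compression hypothesis in this case. A minor attribution nit: Lemma~\ref{lem:gs-1} as stated gives only $|A| + |B| = m + t$; the conclusion $S' \cup T' = [m]$ comes from the elementary inclusion-exclusion plus $S', T' \subseteq [m]$, not from the lemma statement itself (though it does appear inside the proof of gs-1). You also quietly avoid a vacuous subcase ``$m \in S$ and $S \in \cG^*_a$'' that the paper includes but cannot arise, since $S \in \cG_i \cap \cG$ forces $S \in \cG \setminus \cG^*_a$.
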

\begin{proof}
 We start by showing that the families $\cF_i$ are $t$-intersecting. Clearly, it is enough to show that $\cG_i$ is $t$-intersecting. Let $S,T \in \cG_i$. We consider several cases.
 
 If $S,T \in \cG$ then $|S \cap T| \geq t$ since $\cG$ is $t$-intersecting.
 
 If $S \in \cG$ and $T \notin \cG$ then $T' = T \cup \{m\} \in \cG^*_a$ and $i \notin S$. If $m \notin S$ then $|S \cap T| = |S \cap T'| \geq t$. If $m \in S$ and $S \notin \cG^*_a$ then $|S \cap T| \geq |S \cap T'| - 1 \geq t$, according to Lemma~\ref{lem:gs-1}. If $m \in S$ and $S \in \cG^*_a$ then by construction $i \notin S$. Since $\cF$ is left-compressed, $S' = (S \setminus \{m\}) \cup \{i\} \in \cF$. Therefore $|S \cap T| = |S' \cap T'| \geq t$.
 
 If $S,T \notin \cG$ then $S' = S \cup \{m\}$ and $T' = T \cup \{m\}$ both belong to $\cG^*_a$, and $i$ belongs to neither. Since $\cF$ is left-compressed, $T'' = T \cup \{i\} \in \cF$, and so $|S \cap T| = |S' \cap T''| \geq t$.

 \smallskip

 Lemma~\ref{lem:gs-0a} and Lemma~\ref{lem:gs-0b} show that
\[
 \cF_i = (\cF \setminus \cG^*_{a,i}) \cup \{ S \setminus \{m\} : S \in \cG^*_a \setminus \cG^*_{a,i} \}.
\]
 Since $\mu_p(S \setminus \{m\}) = \frac{1-p}{p} \mu_p(S)$ whenever $m \in S$,
\begin{align*}
 \mu_p(\cF_i) &= \mu_p(\cF) - \mu_p(\cG^*_{a,i}) + \frac{1-p}{p} (\mu_p(\cG^*_a) - \mu_p(\cG^*_{a,i})) \\ &=
 \mu_p(\cF) + \frac{1-p}{p} \mu_p(\cG^*_a) - \frac{1}{p} \mu_p(\cG^*_{a,i}).
\end{align*}
 Averaging over all $i \in [m-1]$, we obtain
\[
 \frac{1}{m-1} \sum_{i=1}^{m-1} \mu_p(\cF_i) = \mu_p(\cF) + \frac{1-p}{p} \mu_p(\cG^*_a) - \frac{1}{p(m-1)} \sum_{i=1}^{m-1} \mu_p(\cG^*_{a,i}).
\]
 Since the sets in $\cG^*_a$ contain exactly $a$ elements, each set is counted $a-1$ times in $\sum_{i=1}^{m-1} \mu_p(\cG^*_{a,i})$, and so
\[
 \frac{1}{m-1} \sum_{i=1}^{m-1} \mu_p(\cF_i) = \mu_p(\cF) + \left(\frac{1-p}{p} - \frac{a-1}{p(m-1)}\right) \mu_p(\cG^*_a).
\]
 When $1-p > \frac{a-1}{m-1} = \frac{m+t-2}{2(m-1)}$, the bracketed quantity is positive, and so $\max_i \mu_p(\cF_i) > \mu_p(\cF)$.
\end{proof}

\subsection{Pushing-pulling} \label{sec:symmetrization}

The goal in the second part of the proof, which follows~\cite{AK3}, is to show that any monotone left-compressed $t$-intersecting family of maximum $\mu_p$-measure is symmetric within its extent, or in other words, of the form $\cF_{t,r}$.

The analog of extent in this part is the \emph{symmetric extent}.

\begin{definition} \label{def:sym-extent}
Let $\cF$ be a left-compressed family on $n$ points. Its \emph{symmetric extent}	is the largest integer $\ell$ such that $\shift_{ij}(\cF) = \cF$ for $i,j \leq \ell$.

If $\ell < n$ then the \emph{boundary} of $\cF$ is the collection
\[
 \cX = \{ A \in \cF : \ell+1 \notin A \text{ and } (A \setminus \{i\}) \cup \{\ell+1\} \text{ for some } i  \in A \cap [\ell] \}.
\]
In other words, $\cX$ consists of those sets in $\cF$ preventing it from having larger symmetric extent.

The definition of symmetric extension guarantees that $\cX$ can be decomposed as
\[
 \cX = \sum_{a=0}^\ell \binom{[\ell]}{a} \times \cX_a,
\]
where $\cX_a$ is a collection of subsets of $[n] \setminus [\ell+1]$, a notation we use below.
\end{definition}

The symmetric extent of a family is always bounded by its extent, apart from one trivial case.

\begin{lemma} \label{lem:extents}
Let $\cF$ be a non-trivial monotone family on $n$ points having extent $m$ and symmetric extent $\ell$. Then $\ell \leq m$.
\end{lemma}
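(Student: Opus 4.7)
The plan is to argue by contradiction: assume the symmetric extent satisfies $\ell \geq m+1$ and exhibit a proper subset of some generating set that already lies in $\cF$. Since $\cF$ is non-trivial and monotone, its generating family $\cG$ is non-empty and does not contain the empty set (otherwise $\cF$ would equal $2^{[n]}$), so the extent $m$ is at least $1$; and the claim is trivial when $m = n$, so I may assume $m+1 \leq n$ is a legitimate index. By the definition of extent, some generating set $A \in \cG$ contains $m$, and since $m$ is the maximum index appearing in any generating set, $m+1 \notin A$.

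The hypothesis $\ell \geq m+1$ gives $m, m+1 \leq \ell$, so by the definition of symmetric extent, $\shift_{m,m+1}(\cF) = \cF$. Since $A \in \cF$ has $m \in A$ and $m+1 \notin A$, stability of $\cF$ under $\shift_{m,m+1}$ forces $A' = (A \setminus \{m\}) \cup \{m+1\}$ to already lie in $\cF$.

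Because $\cF$ is the up-set of $\cG$, there exists $B \in \cG$ with $B \subseteq A'$. No generating set contains any index exceeding $m$, so $m+1 \notin B$, and hence $B \subseteq A' \setminus \{m+1\} = A \setminus \{m\} \subsetneq A$. This puts a proper subset of the generating set $A$ into $\cF$, contradicting the defining property of generating sets, and completes the contradiction. The only delicate points are the bookkeeping that non-triviality forces $m \geq 1$ and the disposal of the trivial case $m = n$; the substance of the argument is a direct unwinding of the definitions of extent, symmetric extent, and the shift operator, so I do not anticipate any genuine obstacle.
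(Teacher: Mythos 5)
Your proof is correct, but it takes a genuinely different route from the paper's. The paper argues globally: assuming $m < \ell$, it decomposes $\cF$ as $\bigcup_{i=0}^\ell \binom{[\ell]}{i} \times \cF_i$ using the symmetric structure on $[\ell]$, then uses the fact that $\cF$ depends only on $[m] \subsetneq [\ell]$ to show all layers $\cF_i$ coincide, forcing $\cF = 2^{[\ell]} \times \cF_0$; since $\cF$ then depends on no coordinate at all, it must be trivial, a contradiction. Your argument is local and more elementary: it isolates a single generating set $A \ni m$, applies one shift $\shift_{m,m+1}$ (legal since $\ell \geq m+1$), and observes that the stable image $A' = (A \setminus \{m\}) \cup \{m+1\} \in \cF$ must sit above some generating set $B$, which necessarily avoids $m+1$ and so lands inside $A \setminus \{m\}$, contradicting the inclusion-minimality of $A$. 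What the paper's decomposition buys is a structural picture of symmetric families that is reused throughout Section 2.5, whereas your argument gets the inequality with less machinery, directly from the interplay between the extent (via generating sets) and stability under a single shift. Your bookkeeping on the degenerate cases ($m = n$, $m \geq 1$) is sound, so I see no gap.
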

\begin{proof}
 The family $\cF$ has the general form
\[
 \cF = \bigcup_{i=0}^\ell \binom{[\ell]}{i} \times \cF_i,
\]
 where $\cF_1,\ldots,\cF_\ell$ are collections of subsets of $[n] \setminus [\ell]$. We claim that if $m < \ell$ then all $\cF_i$ are equal. Indeed, let $i < \ell$. For each $A \in \cF_i$, we have $[i] \cup A \in \cF$. Since the extent of $\cF$ is smaller than $\ell$, $[i] \cup \{\ell\} \cup A \in \cF$, implying $A \in \cF_{i+1}$. Similarly, for each $A \in \cF_{i+1}$ we have $[i] \cup \{\ell\} \cup A \in \cF$, and so $[i] \cup \{\ell\} \in \cF$, implying $A \in \cF_i$.
 
 We have shown that $\cF = 2^{[\ell]} \times \cF_0$. Since the extent of $m$ is at most $\ell$, necessarily $\cF = 2^{[n]}$.
\end{proof}

The following crucial observation is the counterpart of Lemma~\ref{lem:gs-1}.

\begin{lemma} \label{lem:sym-1}
Let $\cF$ be a left-compressed $t$-intersecting family of symmetric extent $\ell$ and boundary $\cX$. If $|A \cap B| = t$ for some $A,B \in \cX$ then $|A \cap [\ell]| + |B \cap [\ell]| = \ell + t$.
\end{lemma}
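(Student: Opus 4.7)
My plan is to split the desired equality $|A \cap [\ell]| + |B \cap [\ell]| = \ell + t$ into two set-theoretic inclusions, $A \cap B \subseteq [\ell]$ and $[\ell] \subseteq A \cup B$. Once both are in hand, inclusion-exclusion gives $|A \cap [\ell]| + |B \cap [\ell]| = |(A \cup B) \cap [\ell]| + |A \cap B \cap [\ell]| = \ell + t$, so these two inclusions are all that need proving.

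I would first establish $A \cap B \subseteq [\ell]$, by adapting the generating-sets argument of Lemma~\ref{lem:gs-1}. Suppose to the contrary that some element $m \in A \cap B$ lies outside $[\ell]$. Since $A, B \in \cX$, neither set contains $\ell + 1$, so in fact $m \geq \ell + 2$. Because $\cF$ is left-compressed and $\ell + 1 \notin A$, shifting $m$ down to $\ell + 1$ in $A$ produces $A' := (A \setminus \{m\}) \cup \{\ell + 1\} \in \cF$. Intersecting with $B$, the element $m$ drops out and $\ell + 1$ is not gained (since $\ell + 1 \notin B$), so $|A' \cap B| = t - 1$, contradicting that $\cF$ is $t$-intersecting.

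For the second inclusion, $[\ell] \subseteq A \cup B$, the key tool switches from left-compression to the symmetry of $\cF$ on $[\ell]$ afforded by the definition of symmetric extent: $\shift_{ij}(\cF) = \cF$ for all $i, j \leq \ell$. Suppose some $i \in [\ell]$ lies outside $A \cup B$. From the first step, $A \cap B \subseteq [\ell]$, and together with $|A \cap B| = t \geq 1$ this lets me pick an element $j \in A \cap B \cap [\ell]$; in particular $j \neq i$. Applying the transposition of $i$ and $j$ to $B$ yields $B' := (B \setminus \{j\}) \cup \{i\}$, which lies in $\cF$ by symmetry on $[\ell]$. Since $j \in A$ but $i \notin A$, the substitution costs one element of the intersection, giving $|A \cap B'| = t - 1$, another contradiction.

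I do not anticipate any serious obstacle. The only point needing attention is pairing the correct structural property of $\cF$ with the correct direction of the argument: left-compression is the right tool for pushing an overlapping element from beyond $[\ell]$ onto the unused slot $\ell + 1$ (where it vanishes from $B$), whereas symmetry on $[\ell]$ is the right tool for relocating an existing element of $A \cap B$ to a coordinate that $A$ happens to miss.
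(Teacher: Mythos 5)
Your proof is correct and follows essentially the same strategy as the paper: split the claim into $A \cap B \subseteq [\ell]$ (via left-compression, shifting an offending element onto $\ell+1$) and $[\ell] \subseteq A \cup B$ (via symmetric extent), then apply inclusion-exclusion. One small point in your favor: in the second half you explicitly pick $j \in A \cap B \cap [\ell]$ (using the first inclusion plus $t \geq 1$), which is exactly what makes $|A \cap B'| = t-1$ go through; the paper's wording only invokes the definition of $\cX$ to get $j \in A \cap [\ell]$, which by itself does not guarantee $j \in B$, so your version is slightly more careful at that step. Operating on $B$ rather than $A$, and citing symmetric extent uniformly rather than splitting into the cases $j<i$ and $j>i$, are cosmetic differences.
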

\begin{proof}
 We start by showing that $A \cap B \subseteq [\ell]$. Indeed, suppose that $i \in A \cap B$ for some $i > \ell$. Since neither of $A,B$ contains $\ell+1$, in fact $i > \ell+1$. Since $\cF$ is left-compressed, $A' = (A \setminus \{i\}) \cup \{\ell+1\} \in \cF$. However, $|A' \cap B| = |A \cap B| - 1 = t-1$, contradicting the assumption that $\cF$ is $t$-intersecting.
 
 Next, we show that $A \cup B \supseteq [\ell]$. Indeed, suppose that $i \notin A \cup B$ for some $i \in \ell$. By definition of $\cX$, the set $A$ must contain some element $j \in [\ell]$. By definition of symmetric extent (if $j < i$) or by the fact that $\cF$ is left-compressed (if $j > i$), $A' = (A \setminus \{j\}) \cup \{i\} \in \cF$. However, $|A' \cap B| = |A \cap B| - 1 = t-1$, contradicting the assumption that $\cF$ is $t$-intersecting.
 
 Finally, let $A' = A \cap [\ell]$ and $B' = B \cap [\ell]$. Since $A' \cap B' = A \cap B$ and $A' \cup B' = [\ell]$, we deduce that
\[
 |A'| + |B'| = |A' \cup B'| + |A' \cap B'| = \ell + t. \qedhere
\]
\end{proof}

Our goal now is to try to eliminate $\cX$, thus increasing the symmetric extent. We do this by trying to add $\binom{[\ell]}{a-1} \times \{\ell+1\} \times \cX_a$ to $\cF$. The obstructions are described by Lemma~\ref{lem:sym-1}, which explains why we decompose $\cX$ according to the size of the intersection with $[\ell]$. Accordingly, our modification will act on the sets in $\cX_a,\cX_b$ for $a + b = \ell + t$. As in the preceding section, we have to consider two cases, $a \neq b$ and $a = b$, and the first case is simpler.

\begin{lemma} \label{lem:sym-2}
Let $\cF$ be a left-compressed $t$-intersecting family on $n$ points of symmetric extent $\ell < n$. Let $a \neq b$ be parameters such that $a + b = \ell + t$ and $\cX_a,\cX_b$ are not both empty. Consider the two families
\begin{align*}
\cF_1 &= \left(\cF \setminus \binom{[\ell]}{b} \times \cX_b\right) \cup \binom{[\ell]}{a-1} \times \{\ell+1\} \times \cX_a, \\
\cF_2 &= \left(\cF \setminus \binom{[\ell]}{a} \times \cX_a\right) \cup \binom{[\ell]}{b-1} \times \{\ell+1\} \times \cX_b.
\end{align*}
Both families $\cF_1,\cF_2$ are $t$-intersecting. Moreover, if $t > 1$ then for all $p \in (0,1)$, $\max(\mu_p(\cF_1), \mu_p(\cF_2)) > \mu_p(\cF)$.
\end{lemma}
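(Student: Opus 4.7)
The plan is to adapt Lemma~\ref{lem:gs-2} to the pushing-pulling setting, with Lemma~\ref{lem:sym-1} playing the structural role that Lemma~\ref{lem:gs-1} plays there. I would first verify that $\cF_1$ is $t$-intersecting (the case of $\cF_2$ being symmetric) by casing on whether each of $X, Y \in \cF_1$ is \emph{old} (in $\cF \setminus \binom{[\ell]}{b} \times \cX_b$) or \emph{new} (an expansion $S \cup \{\ell+1\} \cup T$ with $|S| = a - 1$ and $T \in \cX_a$). Old-old is immediate. For new-new, each new set has parents $A = S' \cup T \in \cX$ with $|S'| = a$; since $a \neq b$ gives $2a \neq \ell + t$, Lemma~\ref{lem:sym-1} forces two distinct parents to intersect in at least $t+1$, which suffices because the intersection drops by at most one unit when passing from parents to expansions. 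The edge case $T_X = T_Y$ with $a = \ell$ (parent unique) is handled by noting that $a \neq b$ forces $\ell > t$, so direct counting yields $|X \cap Y| \geq \ell - 1 \geq t$.

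For the mixed case I split on whether $\ell+1 \in X$ (the old set). If yes, I choose a parent $A_Y = S_Y \cup \{j\} \cup T_Y$ with $j \in [\ell] \setminus S_Y$: either such a $j$ can be taken outside $S_X$, giving $|X \cap Y| = |X \cap A_Y| + 1$, or every such $j$ lies in $S_X$ and $|X \cap Y| = |X \cap A_Y|$; in both situations $|X \cap Y| \geq t$. If $\ell+1 \notin X$ and $X \in \cX$, then $|X \cap [\ell]| \neq b$ (otherwise $X$ would be removed), so Lemma~\ref{lem:sym-1} upgrades $|X \cap A_Y|$ to at least $t+1$, enough to absorb the missing $\ell+1$. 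If $\ell+1 \notin X$ and $X \notin \cX$, then by the definition of $\cX$ (the ``some $i$'' condition being promoted to ``all $i$'' via symmetric extent) the shift $X' = (X \setminus \{i\}) \cup \{\ell+1\}$ lies in $\cF$ for every $i \in S_X$; choosing $i \in S_X \cap S_Y$ when non-empty reduces to the preceding subcase, and if $S_X \cap S_Y = \emptyset$ the failure to find $j \notin S_X$ in $[\ell] \setminus S_Y$ pins down $S_X = [\ell] \setminus S_Y$, after which choosing $i = j$ with parent $A_Y = S_Y \cup \{i\} \cup T_Y$ gives $|X' \cap A_Y| = |T_X \cap T_Y|$, whence $|X \cap Y| = |T_X \cap T_Y| \geq t$.

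For the measure, the definition of $\cX_a$ ensures that every expansion $(S' \setminus \{i\}) \cup \{\ell+1\} \cup T$ lies outside $\cF$, so the added sets are genuinely new. A direct count yields
\[
\mu_p(\cF_1) - \mu_p(\cF) = \alpha\, \mu_p(\cX_a) - \beta\, \mu_p(\cX_b), \quad
\mu_p(\cF_2) - \mu_p(\cF) = \gamma\, \mu_p(\cX_b) - \delta\, \mu_p(\cX_a),
\]
with $\alpha = \binom{\ell}{a-1} p^a (1-p)^{\ell+1-a}$, $\beta = \binom{\ell}{b} p^b (1-p)^{\ell+1-b}$, $\gamma = \binom{\ell}{b-1} p^b (1-p)^{\ell+1-b}$, $\delta = \binom{\ell}{a} p^a (1-p)^{\ell+1-a}$, and $\mu_p(\cX_a), \mu_p(\cX_b)$ taken on $2^{[n] \setminus [\ell+1]}$. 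If $\mu_p(\cX_a) = 0$ or $\mu_p(\cX_b) = 0$, the other family strictly gains. Otherwise, assuming both differences are nonpositive and multiplying, the $p$ and $(1-p)$ factors cancel exactly, leaving $\binom{\ell}{a-1}\binom{\ell}{b-1} \leq \binom{\ell}{a}\binom{\ell}{b}$, i.e.\ $ab \leq (b-t+1)(a-t+1)$. Expanding and substituting $a+b = \ell+t$ collapses this to $(t-1)(\ell+1) \leq 0$, contradicting $t > 1$.

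The main technical hurdle is the mixed-case subcase with $\ell+1 \notin X$, $X \notin \cX$, and $S_X \cap S_Y = \emptyset$: a generic shift costs one unit of intersection by moving an index outside $S_Y$. The rescue is the structural equality $S_X = [\ell] \setminus S_Y$ (forced by the configuration), which lets us align the shift index $i$ with the parent index $j$ and recover the missing unit.
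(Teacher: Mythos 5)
Your proof is correct and follows the same two-part strategy as the paper (case analysis via Lemma~\ref{lem:sym-1} for the $t$-intersecting property, then a measure calculation), but it differs in the details of both halves. For the intersection check, the paper fixes an arbitrary parent index $i \in [\ell] \setminus T$ for the new set, and in the hardest mixed subcase uses the clean identity $S \cap T = S' \cap T'$ where $S' = (S \setminus \{i\}) \cup \{\ell+1\}$ and $T' = (T \setminus \{\ell+1\}) \cup \{i\}$ swap $i$ and $\ell+1$ in opposite directions; this avoids all of your sub-sub-casing on $S_X \cap S_Y$. Your organization (branching first on $\ell+1 \in X$ and then on $X \in \cX$, choosing the shift index $i$ and parent index $j$ adaptively) works but is noticeably heavier, and it omits the ``a suitable $j$ exists'' branch when $S_X \cap S_Y = \emptyset$ and $S_X \neq [\ell] \setminus S_Y$ (that branch is easy: $j \notin X$ and $\ell+1 \notin X$ give $X \cap Y = X \cap A_Y$, hence $|X \cap Y| \geq t$). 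Your ``edge case $T_X = T_Y$ with $a = \ell$'' in new-new is not actually needed: Lemma~\ref{lem:sym-1} applies even when the two parents coincide, since $\cF$ being $t$-intersecting gives $|A \cap A| = |A| \geq t$, and $a \neq (\ell+t)/2$ rules out equality. On the plus side, you correctly isolate and justify the fact — left implicit in the paper — that the expanded sets $\binom{[\ell]}{a-1} \times \{\ell+1\} \times \cX_a$ are disjoint from $\cF$ (the ``some $i$'' in the definition of $\cX$ promotes to ``all $i$'' by the symmetric-extent invariance), which is needed for the measure identity to be an equality. For the measure step, the paper takes a convex combination of the two identities with weights $\tfrac{\ell-a+1}{\ell-t+2}$ and $\tfrac{\ell-b+1}{\ell-t+2}$, getting $\mu_p(\cF) + \tfrac{t-1}{\ell-t+2}(m_a+m_b)$ directly; you multiply the two inequalities instead, which cancels all the $p$, $(1-p)$ factors and reduces to the clean combinatorial inequality $\binom{\ell}{a-1}\binom{\ell}{b-1} \leq \binom{\ell}{a}\binom{\ell}{b}$. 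Both are valid; your route is a nice alternative.
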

\begin{proof}
We start by showing that $\cF_1$ (and so $\cF_2$) is $t$-intersecting. Suppose that $S,T \in \cF_1$. We consider several cases.

If $S,T \in \cF$ then $|S \cap T| \geq t$ since $\cF$ is $t$-intersecting.

If $S \in \cF$ and $T \notin \cF$ then $T \in \binom{[\ell]}{a-1} \times \{\ell+1\} \times \cX_a$. Choose $i \in [\ell] \setminus T$ arbitrarily, and notice that $T' = (T \setminus \{\ell+1\}) \cup \{i\} \in \binom{[\ell]}{a} \times \cX_a$, and so $T' \in \cF$. If $i \notin S$ or $\ell+1 \in S$ then $|S \cap T| \geq |S \cap T'| \geq t$. Suppose therefore that $i \in S$ and $\ell+1 \notin S$. If $S' = (S \setminus \{i\}) \cup \{\ell+1\} \in \cF$ then $|S \cap T| = |S' \cap T'| \geq t$. Otherwise, by definition of $\cX$, $S \in \cX$. By definition of $\cF_1$, $|S \cap [\ell]| \neq b$, and so Lemma~\ref{lem:sym-1} shows that $|S \cap T| \geq |S \cap T'| - 1 \geq t$.

If $S,T \notin \cF$ then $S,T \in \binom{[\ell]}{a-1} \times \{\ell+1\} \times \cX_a$. Choose $i \in [\ell] \setminus S$ and $j \in [\ell] \setminus T$ arbitrarily, and define $S' = (S \setminus \{\ell+1\}) \cup \{i\}$ and $T' = (T \setminus \{\ell+1\}) \cup \{j\}$. As before, $S',T' \in \cX$, and so Lemma~\ref{lem:sym-1} shows that $|S' \cap T'| \geq t+1$. Since $S \cap T \supseteq ((S' \cap T') \setminus \{i,j\}) \cup \{\ell+1\}$, we see that $|S \cap T| \geq |S' \cap T'| - 1 \geq t$.

\smallskip

We calculate the measures of $\cF_1$ and $\cF_2$ in terms of the quantities $m_a = \mu_p(\binom{[\ell]}{a} \times \cX_a)$ and $m_b = \mu_p(\binom{[\ell]}{b} \times \cX_b)$:
\begin{align*}
 \mu_p(\cF_1) &= \mu_p(\cF) - m_b + \frac{\binom{\ell}{a-1}}{\binom{\ell}{a}} m_a = \mu_p(\cF) - m_b + \frac{a}{\ell-a+1} m_a, \\
 \mu_p(\cF_2) &= \mu_p(\cF) - m_a + \frac{\binom{\ell}{b-1}}{\binom{\ell}{b}} m_b = \mu_p(\cF) - m_a + \frac{b}{\ell-b+1} m_b.	
\end{align*}
Multiply the first inequality by $\frac{\ell-a+1}{\ell-t+2}$, the second inequality by $\frac{\ell-b+1}{\ell-t+2}$, and add; note that $\ell-t+2 = (\ell-a+1) + (\ell-b+1) > 0$. The result is
\begin{align*}
 \frac{\ell-a+1}{\ell-t+2} \mu_p(\cF_1) +
 \frac{\ell-b+1}{\ell-t+2} \mu_p(\cF_2) &=
 \mu_p(\cF) + \left[\frac{a}{\ell-t+2} - \frac{\ell-b+1}{\ell-t+2}\right] m_a + \left[\frac{b}{\ell-t+2} - \frac{\ell-a+1}{\ell-t+2}\right] m_b \\ &=
 \mu_p(\cF) + \frac{t-1}{\ell-t+2} (m_a + m_b),
\end{align*}
using $a + b = \ell + t$. We conclude that when $t > 1$, $\max(\mu_p(\cF_1),\mu_p(\cF_2)) > \mu_p(\cF)$.
\end{proof}

When $a = b$, the construction increases the extent $m$ (defined in the preceding section), and works only for large enough $p$.

\begin{lemma} \label{lem:sym-3}
Let $\cF$ be a non-trivial monotone left-compressed $t$-intersecting family on $n$ points of extent $m < n$ and symmetric extent $\ell$, and let $s \in [n]$ be an index satisfying $s > m$ and $s \neq \ell+1$ (such an element exists if $\ell < m$ or if $m \leq n-2$). Suppose that $a = \frac{\ell+t}{2}$ is an integer and that $\cX_a$ is non-empty. Let $\cX'_a = \{ S \in \cX_a : s \in S \}$ and define
\[
 \cF' = \left(\cF \setminus \binom{[\ell]}{a} \times \cX_a\right) \cup \binom{[\ell+1]}{a} \times \cX'_a.
\]
The family $\cF'$ is $t$-intersecting. Moreover, if $p > \frac{\ell-t+2}{2(\ell+1)}$ then $\mu_p(\cF') > \mu_p(\cF)$.

\end{lemma}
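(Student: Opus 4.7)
The plan mirrors the proof of Lemma~\ref{lem:sym-2}, verifying first that $\cF'$ is $t$-intersecting and then comparing $\mu_p$-measures. The essential new ingredient is the element $s$, which plays two roles. First, since $s > m$, $\cF$ is invariant under toggling $s$; so are $\cX$ and each stratum $\cX_a$, whose definitions involve only $\cF$ and coordinates in $[\ell+1]$. The bijection $Y \leftrightarrow Y \symdiff \{s\}$ between $\cX'_a$ and $\cX_a \setminus \cX'_a$ therefore yields that the $\mu_p$-measure of $\cX_a \setminus \cX'_a$ (viewed in $2^{[n] \setminus [\ell+1]}$) equals $\tfrac{1-p}{p}$ times that of $\cX'_a$. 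Second, every newly added set of $\cF'$ contains $s$, which will be the linchpin of the hardest sub-case below.

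For the $t$-intersecting check, rewrite $\cF' = \bigl(\cF \setminus \binom{[\ell]}{a} \times (\cX_a \setminus \cX'_a)\bigr) \cup \binom{[\ell]}{a-1} \times \{\ell+1\} \times \cX'_a$, with the added sets disjoint from $\cF$ by the symmetry of $\cX$ on $[\ell]$. For $S, T \in \cF'$ with $T \notin \cF$, write $T = T_1 \cup \{\ell+1\} \cup T_2$ (with $T_2 \in \cX'_a$) and introduce $T^{(j)} = T_1 \cup \{j\} \cup T_2 \in \cX$ for $j \in [\ell] \setminus T_1$. When $S \in \cF$, the easy sub-cases ($\ell+1 \in S$, or some $j \notin S$) reduce to $\cF$'s $t$-intersection via $T^{(j)}$. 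In the hard sub-case $\ell+1 \notin S$ and $[\ell] \setminus T_1 \subseteq S$, one has $|S \cap T| = |S \cap T^{(j)}| - 1$: if $S \notin \cX$, the symmetry-lifted property of $\cX$ combined with the inclusion-exclusion bound $|S \cap [\ell] \cap T^{(j)}| \geq 1$ produces $i_0 \in S \cap [\ell] \cap T^{(j)}$ such that $(S \setminus \{i_0\}) \cup \{\ell+1\} \in \cF$, forcing $|S \cap T^{(j)}| \geq t+1$ (else we reach a set in $\cF$ intersecting $T^{(j)}$ in only $t-1$ elements); if $S \in \cX$, Lemma~\ref{lem:sym-1} applied in the scenario $|S \cap T^{(j)}| = t$ forces $|S \cap [\ell]| = a$, membership in $\cF'$ then forces $s \in S$, and the direct count $|S \cap T| = |S \cap T_1| + |S \cap T_2| \geq (t-1) + 1 = t$ finishes.

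When both $S, T \notin \cF$, analogous companions $S^{(j)}, T^{(k)} \in \cX$ satisfy
\[
|S^{(j)} \cap T^{(k)}| = |S_1 \cap T_1| + [k \in S_1] + [j \in T_1] + [j = k] + |S_2 \cap T_2| \geq t.
\]
Choosing $j = k \in [\ell] \setminus (S_1 \cup T_1)$ dispatches the generic case; the residual case $S_1 \cup T_1 = [\ell]$ forces $|S_1 \cap T_1| = 2(a-1) - \ell = t - 2$, and here $s \in S_2 \cap T_2$ supplies the missing $|S_2 \cap T_2| \geq 1$, yielding $|S \cap T| = (t-2) + 1 + 1 = t$. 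The measure calculation is then bookkeeping: the added contribution is $\binom{\ell}{a-1} p^a (1-p)^{\ell-a+1}$ times the $\mu_p$-measure of $\cX'_a$, the removed contribution (via the $s$-toggling identity) is $\binom{\ell}{a} p^{a-1} (1-p)^{\ell-a+2}$ times the same, and the difference factors as a positive constant times $(\ell+1)p - (\ell - a + 1)$, positive precisely when $p > \tfrac{\ell - t + 2}{2(\ell+1)}$. The main obstacle is the residual case of the two-new-sets verification: unlike Lemma~\ref{lem:sym-2}, where $a \neq b$ makes Lemma~\ref{lem:sym-1} improve the intersection to at least $t+1$, here $a = b$ leaves a two-element deficit that only the guarantee $s \in S \cap T$ can absorb.
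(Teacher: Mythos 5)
Your proof follows the paper's structure closely: a three-case verification of $t$-intersection via companion sets, mirroring Lemma~\ref{lem:sym-2}, then a measure comparison. The measure computation is correct, and your direct-count treatment of the case $S,T\notin\cF$ --- splitting into $S_1\cup T_1\neq[\ell]$ versus $S_1\cup T_1=[\ell]$, where $S_1=S\cap[\ell]$, $T_1=T\cap[\ell]$ --- is a valid and arguably more transparent alternative to the paper's passage through $S''=S'\setminus\{s\}$ and $T''=T'\setminus\{s\}$. The one place that needs attention is the hardest sub-case of $S\in\cF$, $T\notin\cF$ with $S\in\cX$ and $|S\cap[\ell]|=a$: your direct count $|S\cap T|\geq(t-1)+1=t$ rests on the unstated lower bound $|S\cap T_1|\geq t-1$. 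This does hold, but only via the inclusion-exclusion step $|S\cap T^{(j)}\cap[\ell]|\geq|S\cap[\ell]|+|T^{(j)}\cap[\ell]|-\ell=2a-\ell=t$ (your $T^{(j)}$ is the paper's $T'$), so that $|S\cap T_1|\geq t-1$ after subtracting the element $j$; you never write this step down. Moreover, the scenario $|S\cap T^{(j)}|=t$ you are analyzing is vacuous once $s\in S\cap T_2$ is established: it forces $|S\cap T|=|S\cap T^{(j)}|-1=t-1$, contradicting your own count, so the correct conclusion is that the scenario cannot occur and hence $|S\cap T^{(j)}|\geq t+1$. The paper avoids this tangle by passing to $S'=S\setminus\{s\}\in\cF$ (valid since the extent $m<s$) and observing $|S\cap T|=|S'\cap T'|\geq t$ directly; adopting that step, or spelling out the inclusion-exclusion, would close this minor gap.
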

\begin{proof}
We start by showing that $\cF'$ is $t$-intersecting. Suppose that $S,T \in \cF'$. We consider several cases.

If $S,T \in \cF$ then $|S \cap T| \geq t$ since $\cF$ is $t$-intersecting.

If $S \in \cF$ and $T \notin \cF$ then $T \in \binom{[\ell+1]}{a} \times \cX'_a$ and $\ell+1,s \in T$. Choose $i \in [\ell] \setminus T$ arbitrarily, and notice that $T' = (T \setminus \{\ell+1\}) \cup \{i\} \in \binom{[\ell]}{a} \times \cX_a \in \cF$. If $i \notin S$ or $\ell+1 \in S$ then $|S \cap T| \geq |S \cap T'| \geq  t$. Suppose therefore that $i \in S$ and $\ell+1 \notin S$. If $S' = (S \setminus \{i\}) \cup \{\ell+1\} \in \cF$ then $|S \cap T| = |S' \cap T'| \geq t$. Otherwise, $S \in \cX$. If $|S \cap [\ell]| \neq a$ then Lemma~\ref{lem:sym-1} shows that $|S \cap T| \geq |S \cap T'|-1 \geq t$. If $|S \cap [\ell]| = a$ then by construction, $s \in S$. Since the extent of $\cF$ is $m < s$, also $S' = S \setminus \{s\} \in \cF$. Therefore $|S \cap T| \geq |S' \cap T'| \geq t$, since $s \in S \cap T$ but $s \notin S'$.

If $S,T \notin \cF$ then $S,T \in \binom{[\ell+1]}{a} \times \cX'_a$ and $\ell+1,s \in S,T$. Choose $i \in [\ell] \setminus S$ and $j \in [\ell] \setminus T$, so that $S' = (S \setminus \{\ell+1\}) \cup \{i\}$ and $T' = (T \setminus \{\ell+1\}) \cup \{j\}$ are both in $\cF$. By construction, $s$ belongs to $S$ and $T$ and so to $S'$ and $T'$. Since the extent of $\cF$ is~$m < s$, $S'' = S \setminus \{s\}$ and $T'' = T \setminus \{s\}$ also belong to $\cF$. Observe that $S \cap T \subseteq ((S'' \cap T'') \setminus \{i,j\}) \cup \{\ell+1,s\}$, and so $|S \cap T| \geq |S'' \cap T''| \geq t$.

\smallskip

We calculate the measure of $\cF'$ in terms of the quantity $m_a = \mu_p(\binom{[\ell]}{a} \times \cX_a)$:
\begin{align*}
 \mu_p(\cF') &= \mu_p(\cF) - m_a + p \frac{\binom{\ell+1}{a}}{\binom{\ell}{a}} m_a \\ &= \mu_p(\cF) - m_a + p \frac{\ell+1}{\ell+1-a} m_a \\ &= \mu_p(\cF) + \frac{a - (1-p)(\ell+1)}{\ell+1-a} m_a.
\end{align*}
Thus $\mu_p(\cF') > \mu_p(\cF)$ as long as $1-p < \frac{a}{\ell+1} = \frac{\ell+t}{2(\ell+1)}$.
\end{proof}

Lemma~\ref{lem:sym-3} cannot be applied when $m = n$. However, if $n$ has the correct parity, we can combine Lemma~\ref{lem:sym-3} with Lemma~\ref{lem:gs-2} to handle this issue.

\begin{lemma} \label{lem:sym-3plus}
Let $\cF$ be a non-trivial monotone left-compressed $t$-intersecting family on $n$ points of extent $m$ and symmetric extent $\ell$, where either $\ell < m$ or $m < n$.
If $n + t$ is even and $\frac{\ell-t+2}{2(\ell+1)} < p \leq \frac{1}{2}$ then there exists a $t$-intersecting family on $n$ points with larger $\mu_p$-measure.
\end{lemma}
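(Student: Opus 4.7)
Plan: For $t = 1$ the interval $\frac{\ell-t+2}{2(\ell+1)} < p \leq \frac{1}{2}$ collapses to $\frac{1}{2} < p \leq \frac{1}{2}$, so the lemma is vacuous; I will assume $t \geq 2$. The parenthetical in Lemma~\ref{lem:sym-3} shows that a valid $s$ exists precisely when $\ell < m$ or $m \leq n-2$, in which case Lemma~\ref{lem:sym-3} gives the conclusion directly. Together with the hypothesis ``$\ell < m$ or $m < n$'', this leaves only two residual configurations: $(A)$ $\ell = m = n-1$ and $(B)$ $m = n$ with $\ell < n$. In both cases $\cX$ is non-empty because $\ell < n$ forces some shift $\shift_{i,\ell+1}$ to fail to fix $\cF$, so I will aim to manipulate $\cX$.

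In case $(A)$, $\cF$ depends only on $[n-1]$ and is symmetric there, so $\cF = \{S : |S \cap [n-1]| \geq s\}$ for some $s \geq (n+t)/2$. The parity ``$n+t$ even'' then forces $\ell + t = n+t-1$ to be odd, so $(\ell+t)/2$ is not an integer. The boundary evaluates to $\cX = \binom{[n-1]}{s}$, whose only non-empty slice in the decomposition $\cX = \bigsqcup_a \binom{[\ell]}{a} \times \cX_a$ is at the integer $a = s$, which is automatically different from the non-integer $(\ell+t)/2$. I will apply Lemma~\ref{lem:sym-2} to the pair $(s, \ell+t-s)$; since $t \geq 2$, this produces a $t$-intersecting family of strictly larger $\mu_p$-measure.

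In case $(B)$, I again decompose $\cX = \bigsqcup_a \binom{[\ell]}{a} \times \cX_a$: if any non-empty $\cX_a$ occurs at $a \neq (\ell+t)/2$, Lemma~\ref{lem:sym-2} with the pair $(a, \ell+t-a)$ gives strict improvement as above. Otherwise $\cX$ concentrates at $a^* := (\ell+t)/2$ (so $\ell+t$ is even; combined with $n+t$ even this yields $\ell \leq n-2$). A key structural observation I will establish is that $\cX_{a^*}$ cannot be $\{\emptyset\}$: otherwise $[\ell]$-symmetry forces $\binom{[\ell]}{a^*} \subseteq \cF$, then the standard $t$-intersection count forces $|B \cap [\ell]| \geq a^*$ for every $B \in \cF$, and monotonicity then gives $\cF = \{S : |S \cap [\ell]| \geq a^*\}$, a family of extent $\ell < n$ contradicting $m = n$. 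So some non-empty $D \in \cX_{a^*}$ exists; I fix $s \in D$, which automatically lies in $[\ell+2, n]$.

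With this $s$, I will apply the construction of Lemma~\ref{lem:sym-3}: set $\cX'_{a^*} = \{D \in \cX_{a^*} : s \in D\}$ and define $\cF' = (\cF \setminus \binom{[\ell]}{a^*} \times \cX_{a^*}) \cup \binom{[\ell+1]}{a^*} \times \cX'_{a^*}$. Verifying that $\cF'$ is $t$-intersecting will follow the case analysis of Lemma~\ref{lem:sym-3}'s proof essentially verbatim, except that the step there invoking ``extent $m < s$, so $S \setminus \{s\} \in \cF$'' (which fails when $s \leq m = n$) I will replace by a direct size bound: in the critical sub-cases where at least one of $S, T$ lies in the added part $\binom{[\ell]}{a^*-1} \times \{\ell+1\} \times \cX'_{a^*}$, Lemma~\ref{lem:sym-1} supplies $|S_{[\ell]} \cap T_{[\ell]}| \geq t - 1$ (or $t - 2$ when both are added), and the common element $s$ in both tails contributes the remaining $+1$ (or $+1+1$) needed to force $|S \cap T| \geq t$. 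The hard part will be the measure-increase calculation: Lemma~\ref{lem:sym-3}'s formula is derived under the assumption that $\cX_{a^*}$ is cylindrical over $s$, which is automatic when $s > m$ but need not hold here. To overcome this I will precompose with measure-preserving applications of Lemma~\ref{lem:gs-2} at $p \leq \frac{1}{2}$, applied to pairs of boundary-generator sizes summing to $n + t$; these steps reduce the extent of $\cF$ below $n$ without decreasing $\mu_p$, after which Lemma~\ref{lem:sym-3}'s original formula applies and supplies the required strict increase throughout the hypothesized range $p > \frac{\ell-t+2}{2(\ell+1)}$.
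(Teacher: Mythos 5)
Your case $(A)$ is handled correctly; you apply Lemma~\ref{lem:sym-2} where the paper instead notes $m+t=n-1+t$ is odd and applies Lemma~\ref{lem:gs-2}. Both routes work.

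Case $(B)$, however, contains a genuine gap. The closing step asserts that repeated applications of Lemma~\ref{lem:gs-2}, with pairs of boundary-generator sizes summing to $n+t$, ``reduce the extent of $\cF$ below $n$ without decreasing $\mu_p$.'' This need not be possible: since $n+t$ is even, $(n+t)/2$ is an integer, and Lemma~\ref{lem:gs-2} requires $a\neq b$, so it can never touch $\cG^*_{(n+t)/2}$. If $\cG^*$ is concentrated at this single size, no gs-2 move exists and the extent stays $n$. A concrete instance landing exactly in your residual case $(B)$: take $t=2$, $n=6$, $\cF=\{S\subseteq[6]:|S|\geq 4,\ |S\cap[4]|\geq 3\}$. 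This family is monotone, left-compressed, $2$-intersecting, has extent $m=6$ and symmetric extent $\ell=4$, and its boundary generating family is $\cG^*=\{A\cup\{6\}:A\in\binom{[4]}{3}\}$, all of size $4=(n+t)/2$, so Lemma~\ref{lem:gs-2} offers no step whatsoever. (Here the primary route, your modified Lemma~\ref{lem:sym-3} with $s=6$, does in fact increase the measure, because $\cX_{a^*}=\{\{6\}\}$ is supported entirely on sets through $s$; but you explicitly flag the measure computation of the modified sym-3 as possibly failing and fall back on gs-2 in exactly those cases, and that fallback is broken. You neither prove the primary route always succeeds nor supply a working substitute when it might not.)

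The paper avoids both difficulties simultaneously by first extending to $\cF'=\cF\cup\cF\times\{n+1\}$ on $n+1$ points. This preserves $\mu_p$, makes the extent $n<n+1$, and makes $\cX_{a^*}$ cylindrical over the fresh coordinate $s=n+1$, so Lemma~\ref{lem:sym-3} applies literally and delivers a strict increase. Only afterwards is Lemma~\ref{lem:gs-2} invoked, and since $(n+1)+t$ is odd, every nonempty $\cG^*_a$ then pairs with a distinct $b$; iterating drives the extent back to $\leq n$. The odd parity of the enlarged ambient set is precisely what keeps Lemma~\ref{lem:gs-2} applicable, and it is this feature your argument is missing.
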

\begin{proof}
 Consider first the case $\ell < m$.
 If $m < n$ then the statement follows from Lemma~\ref{lem:sym-2} and Lemma~\ref{lem:sym-3}, so suppose that $m = n$. Let $\cF' = \cF \cup \cF \times \{n+1\}$, and note that this is a non-trivial monotone left-compressed $t$-intersecting family on $n+1$ points. We can apply Lemma~\ref{lem:sym-3} to obtain a non-trivial monotone left-compressed $t$-intersecting family $\cG$ on $n+1$ points satisfying $\mu_p(\cG) > \mu_p(\cF') = \mu_p(\cF)$. Since $n+1+t$ is odd, we can apply Lemma~\ref{lem:gs-2} repeatedly to obtain a non-trivial monotone $t$-intersecting family $\cH$ on $n+1$ points and extent $n$ which satisfies $\mu_p(\cH) \geq \mu_p(\cG) > \mu_p(\cF)$. Since $\cH$ has extent $n$, there is a $t$-intersecting family on $n$ points having the same $\mu_p$-measure.
 
 Consider next the case $\ell = m < n$. If $m \leq n-2$ then the statement follows from Lemma~\ref{lem:sym-2} and Lemma~\ref{lem:sym-3}, so suppose that $m = n-1$. In this case $m + t$ is odd, and so the statement follows from Lemma~\ref{lem:gs-2}.
\end{proof}

\subsection{The case $p < 1/2$} \label{sec:psmall}

In this section we prove Theorem~\ref{thm:weighted-main} in the case $p < 1/2$. In view of Lemma~\ref{lem:compression}, it suffices to consider monotone left-compressed families.

We first settle the case $t = 1$, which corresponds to the classical Erd\H{o}s--Ko--Rado theorem (in the weighted setting).

\begin{lemma} \label{lem:psmall-1}
Let $\cF$ be a monotone left-compressed intersecting family on $n$ points of maximum $\mu_p$-measure, for some $p \in (0,1/2)$. Then $\cF = \cF_{1,0}$.	
\end{lemma}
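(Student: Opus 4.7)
The plan is to show, using only the generating-sets machinery of Section~\ref{sec:generating-sets}, that the extent $m$ of $\cF$ must equal $1$, from which $\cF = \cF_{1,0}$ follows at once. First I would observe that $\cF$ is non-trivial: it is not empty because it has maximum $\mu_p$-measure (and $\cF_{1,0}$, for instance, is a valid intersecting family), and it is not $2^{[n]}$ because the latter is not $1$-intersecting. Hence the extent $m$ is well-defined and at least $1$.

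I would then argue by contradiction, assuming $m \geq 2$, and produce a $1$-intersecting family of strictly greater $\mu_p$-measure. Let $\cG^*$ be the boundary generating family; it is non-empty because some generating set of $\cF$ contains the index $m$. With $t = 1$, Lemma~\ref{lem:gs-1} tells us that the obstructions to ``removing'' $m$ come only from pairs of boundary generating sets whose sizes sum to $m + 1$. I would split into two cases. In \emph{Case A}, some $\cG^*_a$ with $a \neq (m+1)/2$ is non-empty. Then Lemma~\ref{lem:gs-2} applied to the pair $(a, m+1-a)$ yields (since $p < 1/2$) a $1$-intersecting family of strictly greater $\mu_p$-measure, contradicting optimality. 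In \emph{Case B}, every non-empty $\cG^*_a$ has $a = (m+1)/2$; in particular $m$ is odd, so $m \geq 3$. Lemma~\ref{lem:gs-3} now applies, and its required upper bound on $p$ is
\[
 \frac{m-t}{2(m-1)} = \frac{m-1}{2(m-1)} = \tfrac{1}{2},
\]
which coincides with our hypothesis $p < 1/2$. This again produces a $1$-intersecting family of strictly greater measure, a contradiction.

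Therefore $m = 1$. Any non-trivial monotone family of extent $1$ has generating family $\{\{1\}\}$, so $\cF = \cF_{1,0}$, as desired. The noteworthy feature (rather than an obstacle) here is the clean coincidence in Case B: the critical threshold of Lemma~\ref{lem:gs-3} degenerates to exactly $1/2$ when $t = 1$, so the generating-sets technique alone covers the entire range $p \in (0, 1/2)$ and none of the pushing--pulling machinery of Section~\ref{sec:symmetrization} is needed for the $t = 1$ case.
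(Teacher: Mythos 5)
Your proof is correct and matches the paper's approach exactly: the paper's one-line proof observes that with $t=1$ the threshold $\frac{m-t}{2(m-1)}$ in Lemma~\ref{lem:gs-3} equals $\frac{1}{2}$, so Lemmas~\ref{lem:gs-2} and~\ref{lem:gs-3} together force $m=1$ for all $p<1/2$, which is precisely your Cases~A and~B. Your writeup simply makes explicit the non-triviality check and the parity-of-$m$ case split that the paper leaves implicit.
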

\begin{proof}
 Let $m$ be the extent of $\cF$. Since $\frac{m-t}{2(m-1)} = 1/2$, Lemma~\ref{lem:gs-2} and Lemma~\ref{lem:gs-3} together show that $m = 1$, and so $\cF = \cF_{1,0}$.
\end{proof}

The case $t \geq 2$ requires more work.

\begin{lemma} \label{lem:psmall}
 Let $\cF$ be a monotone left-compressed $t$-intersecting family on $n$ points of maximum $\mu_p$-measure, for some $p \in (0,1/2)$ and $t > 1$. Let $r$ be the maximal integer satisfying $p \geq \frac{r}{t+2r-1}$ and $t+2r \leq n$. If $p \neq \frac{r}{t+2r-1}$ then $\cF = \cF_{t,r}$, and if $p = \frac{r}{t+2r-1}$ then $\cF \in \{\cF_{t,r},\cF_{t,r-1}\}$.
\end{lemma}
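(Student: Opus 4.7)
The plan combines the generating sets technique (Lemmas~\ref{lem:gs-2}, \ref{lem:gs-3}) with the pushing-pulling technique (Lemmas~\ref{lem:sym-2}, \ref{lem:sym-3plus}) to force $\cF$ into the form of a Frankl family, and then uses Corollary~\ref{cor:frankl-mup} to identify which one. Write $m$ for the extent of $\cF$ and $\ell$ for its symmetric extent; by Lemma~\ref{lem:extents}, $\ell \leq m$.

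The first step combines both techniques to constrain $m$, $\ell$, and $p$. On the generating sets side, maximality of $\cF$ with $p < 1/2$ together with Lemma~\ref{lem:gs-2} forbids distinct sizes $a \neq b$ with $a+b = m+t$ from both contributing to $\cG^*$. Since $\cG^*$ is nonempty, all of its sets share the common size $(m+t)/2$, so $m + t$ is even; writing $m = t+2r'$, Lemma~\ref{lem:gs-3} applied at this size yields $p \geq \frac{m-t}{2(m-1)} = \frac{r'}{t+2r'-1}$. On the pushing-pulling side, Lemma~\ref{lem:sym-2} (using $t \geq 2$) forbids distinct $a \neq b$ with $a+b = \ell+t$ from both contributing to $\cX$, so if $\ell < m$ then every $A \in \cX$ satisfies $|A \cap [\ell]| = (\ell+t)/2$, forcing $\ell + t$ even and hence $\ell \leq m - 2$. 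The hypotheses of Lemma~\ref{lem:sym-3plus} are met (the disjunction $\ell < m$ or $m < n$ holds by $\ell < m$; in the subcase $m = n$, the required parity of $n+t$ follows from $m+t$ being even), which yields $p \leq \frac{\ell-t+2}{2(\ell+1)}$. Since this bound is increasing in $\ell$ for $t \geq 2$, it is at most $\frac{r'}{t+2r'-1}$ whenever $\ell \leq m-2$, and combined with the lower bound we conclude that $\ell < m$ can only occur at the breakpoint $p = \frac{r'}{t+2r'-1}$.

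Therefore, whenever $p \neq \frac{r'}{t+2r'-1}$ we must have $\ell = m$, so $\cF$ is monotone and symmetric on $[m]$ and hence $\cF = \{S : |S \cap [m]| \geq s\}$ for some $s$. The $t$-intersecting property forces $s \geq (m+t)/2 = t+r'$, and maximality forces equality, giving $\cF = \cF_{t,r'}$. Direct comparison with the other Frankl families on $n$ points, via Corollary~\ref{cor:frankl-mup}, then identifies $r' = r$ --- except possibly at the breakpoint $p = p_{t,r-1}$, where $r'$ may equal $r-1$ instead.

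The main obstacle is the breakpoint $p = \frac{r}{t+2r-1}$: here the averaging in Lemma~\ref{lem:gs-3} is tight and the bound in Lemma~\ref{lem:sym-3plus} is sharp, so I must also rule out a hypothetical maximum $\cF$ with extent $m = t+2r$ but symmetric extent only $\ell = m-2$. The plan is to exploit the equality $\mu_p(\cF_i) = \mu_p(\cF)$ afforded by Lemma~\ref{lem:gs-3} at this $p$: each $\cF_i$ is also a maximum $t$-intersecting family, its extent is at most $m - 2$ (by the parity analysis above), and so the preceding paragraph applied to $\cF_i$ forces $\cF_i$ to be equivalent to $\cF_{t,r-1}$. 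Tracking how $\cF_i$ is derived from $\cF$ --- in particular, using which sets in $\cG^*$ were deleted versus shortened, together with the structure of $\cX$ from Lemma~\ref{lem:sym-1} --- then pins $\cF$ itself down to $\cF_{t,r}$ in the symmetric case $\ell = m$, or $\cF_{t,r-1}$ in the collapse case $\ell = m-2$.
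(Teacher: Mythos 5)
Your treatment of the non-breakpoint case is essentially the paper's: both techniques are applied to force $m+t$ and $\ell+t$ even, the critical-probability inequalities from Lemma~\ref{lem:gs-3} and Lemma~\ref{lem:sym-3plus} (or Lemma~\ref{lem:sym-3}) sandwich $p$ to conclude $\ell = m$, and the symmetric structure together with Lemma~\ref{lem:frankl-mup} identifies $\cF$ as the right Frankl family. Two small imprecisions there: when $\ell<m$ and $m<n$, Lemma~\ref{lem:sym-3plus} requires $n+t$ even which you haven't verified, but Lemma~\ref{lem:sym-3} applies directly in that case, so this is only a citation slip; and you should say ``$r'\leq r$ by maximality of $r$'' before invoking the monotonicity in $s$ of $\mu_p(\cF_{t,s})$ up to $r$.

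The genuine gap is in the collapse case $\ell = m-2$ at $p = \frac{r}{t+2r-1}$, which you correctly flag as the hard part but whose resolution remains a sketch with unrepairable holes. First, you claim each $\cF_i$ has extent at most $m-2$ ``by the parity analysis above,'' but that parity analysis rests on Lemma~\ref{lem:gs-2}, which requires the family to be left-compressed, and the families $\cF_i=\upset{\cG_i}$ produced by Lemma~\ref{lem:gs-3} are not shown (and are not in general) left-compressed; after left-compressing via Lemma~\ref{lem:left-compressed} the relation between $\cF_i$ and $\cF$ that your ``tracking'' would need is lost. Second, the tracking step itself is not carried out, and the stated conclusion is self-contradictory: if $\cF$ has extent $m=t+2r$, it cannot equal (or be equivalent to) $\cF_{t,r-1}$, whose extent is $t+2r-2$. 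What must be shown is that the collapse case is \emph{impossible}, not that it yields a second optimal family. The paper does this by a direct argument: after reducing to $m=n$, it decomposes $\cF$ explicitly according to the two coordinates outside $[\ell]$, writing $\cF$ as a union of four symmetric slices with thresholds $a\geq b,c\geq d$, and uses $t$-intersection to force $d\geq t+s-2$; the case $d\geq t+s-1$ gives $\cF\subseteq\cF_{t,s-1}$ (contradicting the assumed extent), while $d=t+s-2$ forces $a\geq t+s$, $b,c\geq t+s-1$ and hence $\cF\subseteq\cF_{t,s}$ (contradicting the assumed symmetric extent). You would need to replace your tracking plan with an argument of this kind.
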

\begin{proof}
 Our definition of $r$ guarantees that one of the following two alternatives holds: either $p < \frac{r+1}{t+2r-1}$, or $n \leq t+2r+1$.

 Let $m$ be the extent of $\cF$. We claim that $m \leq t+2r$. If $n \leq t+2r+1$ then Lemma~\ref{lem:gs-2} shows that $m+t$ is even, and so $m \leq t+2r$. Suppose therefore that $p < \frac{r+1}{t+2r-1}$ and $m > t+2r$. Lemma~\ref{lem:gs-2} shows that in fact $m \geq t+2r+2$, and so
\[
 \frac{m-t}{2(m-1)} = \frac{1}{2} - \frac{t-1}{2(m-1)} \geq \frac{1}{2} - \frac{t-1}{2(t+2r+1)} = \frac{r+1}{t+2r+1}.
\]
 Therefore Lemma~\ref{lem:gs-2} and Lemma~\ref{lem:gs-3} contradict the assumption that $\cF$ has maximum $\mu_p$-measure.
 
 We now turn to consider the symmetric extent $\ell$ of $\cF$. We first consider the case in which $p > \frac{r}{t+2r-1}$. We claim that in this case $\ell = m$. If $\ell < m$ then Lemma~\ref{lem:gs-2} and Lemma~\ref{lem:sym-2} show that both $m+t$ and $\ell+t$ are even, and so $\ell \leq m-2 \leq t+2r-2$. This implies that
\[
 \frac{\ell-t+2}{2(\ell+1)} = \frac{1}{2} - \frac{t-1}{2(\ell+1)} \leq \frac{1}{2} - \frac{t-1}{2(t+2r-1)} = \frac{r}{t+2r-1}.
\]
 Therefore Lemma~\ref{lem:sym-2} and Lemma~\ref{lem:sym-3plus} contradict the assumption that $\cF$ has maximum $\mu_p$-measure.
 
 We have shown that if $p > \frac{r}{t+2r-1}$ then $\ell = m \leq t+2r$, and moreover $m+t$ is even. Thus $\ell = m = t + 2s$ for some $s \leq r$. Since $\cF$ is $t$-intersecting, $\cF \subseteq \cF_{t,s}$ for some $s \leq r$. The fact that $\cF$ has maximum $\mu_p$-measure forces $\cF = \cF_{t,s}$. In view of Lemma~\ref{lem:frankl-mup}, necessarily $s = r$.
 
 The case $p = \frac{r}{t+2r-1}$ is slightly more complicated. Suppose first that $\ell = m$. In that case, as before, $\cF = \cF_{t,s}$ for some $s \leq r$. This time Lemma~\ref{lem:frankl-mup} shows that $s \in \{r,r-1\}$.
 
 Suppose next that $\ell < m$. The same argument as before shows that $\ell \geq m-2$. Lemma~\ref{lem:gs-2} and Lemma~\ref{lem:sym-2} show that both $\ell+t$ and $m+t$ are even, and so $\ell = m-2$ in this case. In the remainder of the proof, we show that this leads to a contradiction. To simplify notation, we will assume that $m = n$. As $m+t$ is even, we can write $m = t+2s$ for some $s \leq r$.
 
 Since $\cF$ is monotone and has symmetric extent $m-2$, it can be decomposed as follows:
\begin{gather*}
 \cF = \binom{[t+2s-2]}{\geq a} \cup \binom{[t+2s-2]}{\geq b} \times \{t+2s+1\} \cup \\ \binom{[t+2s-2]}{\geq c} \times \{t+2s\} \cup \binom{[t+2s-2]}{\geq d} \times \{t+2s-1,t+2s\}.
\end{gather*}
 Since the family is $t$-intersecting, we must have $2d-(t+2s-2)+2 \geq t$, and so $d \geq t+s-2$. If $d \geq t+s-1$ then monotonicity implies that $a,b,c \geq d \geq t+s-1$, and so $\cF \subseteq \cF_{t,s-1}$. Since $\cF$ has maximum $\mu_p$-measure, necessarily $\cF = \cF_{t,s-1}$, in which case the extent is $t+2s-2$, contrary to assumption.

 We conclude that $d = t+s-2$. The fact that $\cF$ is $t$-intersecting implies that $c+d-(t+2s-2)+1 \geq t$, and so $c \geq t+s-1$. Similarly $b \geq t+s-1$, and moreover $a+d-(t+2s-2) \geq t$, implying $a \geq t+s$. Thus $\cF \subseteq \cF_{t,s}$. Since $\cF$ has maximum $\mu_p$-measure, necessarily $\cF = \cF_{t,s}$, in which case the symmetric extent is $t+2s$, contrary to assumption.
\end{proof}

\subsection{The case $p = 1/2$} \label{sec:phalf}

In this section we prove Theorem~\ref{thm:weighted-main} in the case $p = 1/2$. This case is known as Katona's theorem, after Katona's paper~\cite{Katona2}. In view of Lemma~\ref{lem:compression}, it suffices to consider monotone left-compressed families.

We first settle the case $t = 1$.

\begin{lemma} \label{lem:phalf-1}
 If $\cF$ is intersecting then $\mu_{1/2}(\cF) \leq 1/2$.
\end{lemma}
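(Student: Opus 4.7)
The plan is to use the classical complementary pair argument, which is the standard proof of Katona's theorem for $t = 1$. The key observation is that for any subset $A \subseteq [n]$, the sets $A$ and $[n] \setminus A$ are disjoint, hence cannot both lie in an intersecting family $\cF$ (their intersection has size $0 < 1$).

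First I would note that we may assume $n \geq 1$: if $n = 0$ then $\cF \subseteq \{\emptyset\}$, but $\emptyset$ cannot belong to any intersecting family, so $\cF = \emptyset$ and $\mu_{1/2}(\cF) = 0 \leq 1/2$. For $n \geq 1$, the involution $A \mapsto [n] \setminus A$ on $2^{[n]}$ has no fixed points, so it partitions $2^{[n]}$ into $2^{n-1}$ complementary pairs. By the observation above, $\cF$ contains at most one member of each pair. Since $\mu_{1/2}$ is the uniform distribution on $2^{[n]}$, giving equal measure $2^{-n}$ to every subset, we obtain
\[
 \mu_{1/2}(\cF) \leq 2^{n-1} \cdot 2^{-n} = \frac{1}{2}.
\]

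There is essentially no obstacle here; the whole content is the pairing argument. The harder work in this subsection will come with the characterization of extremal families (including the additional family $(\{S : 1 \in S\}) \setminus \{\{1\}\} \cup \{\{2,\ldots,n\}\}$ mentioned after Theorem~\ref{thm:weighted-main}), but the bound itself follows immediately from complementation.
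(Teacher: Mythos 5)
Your proof is correct and uses essentially the same complementary-pair argument as the paper: the paper indexes the pairs by subsets of $[n-1]$ while you count orbits of the fixed-point-free involution $A \mapsto [n]\setminus A$, but these are the same partition into $2^{n-1}$ pairs, each contributing at most one set to $\cF$.
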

\begin{proof}
 Since $\cF$ is intersecting, for any set $A \subseteq [n-1]$ it can contain either $A$ or $[n] \setminus A$, but not both. Thus $\cF$ contains at most $2^{n-1}$ sets, implying $\mu_{1/2}(\cF) \leq 1/2$.
\end{proof}

The case $t \geq 2$ requires more work.

\begin{lemma} \label{lem:phalf}
 Let $\cF$ be a monotone left-compressed $t$-intersecting family on $n$ points of maximum $\mu_{1/2}$-measure, for some $t > 1$.	If $n \in \{t + 2r,t + 2r+1\}$ then $\cF = \cF_{t,r}$.
\end{lemma}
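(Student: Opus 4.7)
The plan is to follow Lemma~\ref{lem:psmall}'s strategy at $p = 1/2$, using that Lemmas~\ref{lem:sym-2}, \ref{lem:sym-3}, and \ref{lem:sym-3plus} all give \emph{strict} improvements at $p = 1/2$ when $t > 1$ (the threshold $\tfrac{\ell-t+2}{2(\ell+1)} < \tfrac12$ reduces to $t > 1$), while Lemma~\ref{lem:gs-2} gives only a non-strict one.

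When $n = t + 2r$, so $n + t$ is even, Lemma~\ref{lem:sym-3plus} applies strictly whenever $\ell < m$ or $m < n$; hence maximality forces $\ell = m = n$, making $\cF$ fully symmetric. Then $\cF = \binom{[n]}{\ge k}$, the $t$-intersection condition $2k \ge n + t = 2(t+r)$ combined with maximality forces $k = t + r$, and $\cF = \cF_{t,r}$.

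When $n = t + 2r + 1$, so $n + t$ is odd, Lemma~\ref{lem:sym-3plus} is unavailable, and my plan is to establish the extent bound $m \le n - 1 = t + 2r$. Once this holds, $\cF$ depends only on $[n-1]$; its restriction to $[n-1]$ is a monotone left-compressed maximum $t$-intersecting family on $[t + 2r]$, and the case just handled forces this restriction---hence $\cF$ itself---to equal $\cF_{t, r}$. To bound the extent, I split into sub-cases. If $\ell = m$, then $\cF = \binom{[m]}{\ge k}$ is fully symmetric on $[m]$; a direct binomial computation rules out $m + t$ odd (such families all have $\mu_{1/2}$-measure strictly below $\mu_{1/2}(\cF_{t, r})$, because $P(\mathrm{Bin}(m+1,\tfrac12) \ge k+1) = P(\mathrm{Bin}(m,\tfrac12) \ge k) - \tfrac12 P(\mathrm{Bin}(m,\tfrac12) = k)$), so $m + t$ must be even; this gives $m = t + 2s$ with $s \le r$, and maximality in $s$ forces $s = r$ and $m = t + 2r < n$. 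If $\ell < m$, Lemma~\ref{lem:sym-2} forces $\ell + t$ even and Lemma~\ref{lem:sym-3} applied with anchor $s = n$ rules out $\ell < m < n$, leaving only $\ell < m = n$ to exclude.

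The main obstacle is excluding $\ell < m = n$. Since $m + t = n + t$ is odd, every partition $a + b = m + t$ of a boundary size is into unequal parts, so Lemma~\ref{lem:gs-2} always applies; at $p = 1/2$ the identity $\mu_{1/2}(\cF_1) + \mu_{1/2}(\cF_2) = 2 \mu_{1/2}(\cF)$ combined with maximality forces $|\cG^*_a| = |\cG^*_b|$ for every such pair. Noting $\cG^*_n = \emptyset$ (otherwise $\cF = \{[n]\}$, far from maximum) forces the partner $\cG^*_t = \emptyset$, and iterating Lemma~\ref{lem:gs-2} through the remaining pairs---reshifting to left-compressed between steps via Lemma~\ref{lem:left-compressed}---progressively empties the boundary, yielding a maximum left-compressed family of extent strictly less than $n$, a contradiction with $m = n$. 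The last delicate point is upgrading the conclusion of this iteration from $\mu_{1/2}(\cF) = \mu_{1/2}(\cF_{t,r})$ to the literal equality $\cF = \cF_{t,r}$; I would handle this via an analog of Lemma~\ref{lem:shifting-frankl}'s rigidity for the equality case of Lemma~\ref{lem:gs-2}, showing that the only left-compressed preimage of $\cF_{t,r}$ under these operations is $\cF_{t,r}$ itself.
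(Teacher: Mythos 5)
Your $n = t+2r$ case is identical to the paper's: Lemma~\ref{lem:sym-3plus} forces $\ell = m = n$, full symmetry, and then the $t$-intersection bound pins down $\cF = \cF_{t,r}$. For $n = t+2r+1$ your overall strategy (reduce to the extent bound $m \le t+2r$, then cite the even case) also matches the paper, but the sub-case analysis by $\ell$ is more elaborate than needed: the paper observes directly that if $m \le t+2r$ then the $n=t+2r$ case applies, so only $m=n$ remains, and never needs to split on $\ell$. Your $\ell = m$ binomial computation and your $\ell < m < n$ argument via Lemmas~\ref{lem:sym-2}, \ref{lem:sym-3} are correct, but redundant given this observation.

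The genuine gap is in the crux case $m = n$. After correctly deriving $|\cG^*_a| = |\cG^*_b|$ for every pair $a+b = n+t$ (and $\cG^*_n = \cG^*_t = \emptyset$), you assert that ``iterating Lemma~\ref{lem:gs-2} \ldots progressively empties the boundary, yielding a maximum left-compressed family of extent strictly less than $n$, a contradiction with $m=n$.'' This does not follow as stated: each application of Lemma~\ref{lem:gs-2} produces a \emph{different} family, so a small extent for the final iterate is not a contradiction with the extent of $\cF$ itself. You notice this and defer to ``an analog of Lemma~\ref{lem:shifting-frankl}'s rigidity for the equality case of Lemma~\ref{lem:gs-2},'' but that analog is not supplied and is the real content here. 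Moreover, reshifting to left-compressed between applications (which you invoke) changes the boundary generating family in ways that are hard to track, and Lemma~\ref{lem:shifting-frankl} as stated is about the shifting operator $\shift_{i,j}$, not the generating-set operation of Lemma~\ref{lem:gs-2}, so the analog is not merely a relabeling. The paper resolves this without any reshifting: since $n+t$ is odd, one can remove \emph{all} of $\cG^*$ at once (for each pair, adding exactly one of $\{S\setminus\{m\}: S\in\cG^*_a\}$ or $\{S\setminus\{m\}: S\in\cG^*_b\}$) and the result $\cH$ is $t$-intersecting, monotone, of extent $\le n-1$, and of maximum measure; hence $\cH = \cF_{t,r}$ by the $n=t+2r$ case and Lemma~\ref{lem:compression}. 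This holds regardless of the choices made, yet if some $\cG^*_a \ne \emptyset$ the two choices at that pair yield provably distinct families $\cH$ (one contains the sets $S\setminus\{m\}$ of size $b-1$, the other contains the sets of size $a-1$, and these cannot both appear). Thus $\cG^* = \emptyset$, contradicting $m = n$. You should either adopt this argument or fully develop the rigidity lemma you propose; as written the step is missing.
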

\begin{proof}
 Let $m$ be the extent of $\cF$, and $\ell$ be its symmetric extent.

 Suppose first that $n = t + 2r$. Lemma~\ref{lem:sym-3plus} shows that $\ell = m = n$, which easily implies $\cF = \cF_{t,r}$.	
 
 Suppose next that $n = t + 2r + 1$. If $m \leq t + 2r$ then the previous case $n = t + 2r$ shows that $\cF = \cF_{t,r}$, so suppose that $m = n$. Since $m + t$ is odd, Lemma~\ref{lem:gs-2} shows that there is a family $\cH$ of extent at most $m - 1 = t + 2r$ such that $\mu_p(\cH) \geq \mu_p(\cF)$. In view of the preceding case, this shows that $\cH = \cF_{t,r}$, and so $\mu_p(\cF) \leq \mu_p(\cF_{t,r})$. It remains to show that $\cF = \cF_{t,r}$ when $\mu_p(\cF) = \mu_p(\cF_{t,r})$.
 
 The family $\cH$ is constructed by repeatedly applying the following operation, where $\cG^*$ is the boundary generating family of $\cF$, and $a + b = n + t$: remove $\cG^*_a$ and $\cG^*_b$, and add either $\{S \setminus \{m\} : S \in \cG_a^*\}$ or $\{S \setminus \{m\} : S \in \cG_b^*\}$. All options must lead eventually to the same family $\cF_{t,r}$, and this can only happen if $\cG^*_a = \cG^*_b = \emptyset$ for all $a,b$. However, in that case the extent of $\cF$ is in fact $n-1$, contradicting our assumption.
\end{proof}

Ahlswede and Khachatrian~\cite{AK4} give a different shifting proof of this theorem (without characterizing the equality cases).

\subsection{The case $p > 1/2$} \label{sec:plarge}

In this section we prove Theorem~\ref{thm:weighted-main} in the case $p > 1/2$. The proof in this case differs from that of the other cases: it uses a different shifting argument, also due to Ahlswede and Khachatrian~\cite{AK4}, who used it for the case $p = 1/2$.

The idea is to use a different kind of shifting.

\begin{definition} \label{def:heavy-shifting}
 Let $\cF$ be a family on $n$ points. For two disjoint sets $A,B \subseteq [n]$, the shift operator $\shift_{A,B}$ acts on $\cF$ as follows. Let $\cF_{A,B}$ consist of all sets in $S$ containing $A$ and disjoint from $B$. Then
\begin{align*}
 \shift_{A,B}(\cF) = (F \setminus \cF_{A,B})
 &\cup \{ S : S \in \cF_{A,B} \text{ and } (S \setminus A) \cup B \in \cF \} \\
 &\cup \{ (S \setminus A) \cup B : S \in \cF_{A,B} \text{ and } (S \setminus A) \cup B \notin \cF \}.
\end{align*}
 (This is a generalization of the original shifting operator: $\shift_{i,j}$ is the same as $\shift_{\{i\},\{j\}}$.) 
 
 A family is \emph{$(s,s+1)$-stable} if $\shift_{A,B}(\cF) = \cF$ for any disjoint sets $A,B$ of cardinalities $|A| = s$ and $|B| = s + 1$.
\end{definition}

This kind of shift is useful when $p > 1/2$ for the following simple reason.

\begin{lemma} \label{lem:heavy-shifting-weight}
 If $|B| > |A|$ then $\mu_p(\shift_{A,B}(\cF)) \geq \mu_p(\cF)$ for any $p \in (1/2,1)$, with equality if only if $\shift_{A,B}(\cF) = \cF$.
\end{lemma}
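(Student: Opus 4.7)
The plan is to compute $\mu_p(\shift_{A,B}(\cF)) - \mu_p(\cF)$ directly, by isolating the sets that are actually modified by the shift. Write $\cR = \{S \in \cF_{A,B} : (S \setminus A) \cup B \notin \cF\}$ for the collection of sets in $\cF$ that are \emph{truly replaced} by the shift operation; the remaining sets in $\cF_{A,B}$ (those whose image is already in $\cF$) are left untouched, and the sets outside $\cF_{A,B}$ are also untouched. Unpacking Definition~\ref{def:heavy-shifting}, this gives the clean description
\[
 \shift_{A,B}(\cF) = (\cF \setminus \cR) \cup \{(S \setminus A) \cup B : S \in \cR\}.
\]
The crucial observation is that this union is disjoint: by the very definition of $\cR$, the images $(S \setminus A) \cup B$ all lie outside $\cF$, and the map $S \mapsto (S \setminus A) \cup B$ is a bijection on $\cR$ (one recovers $S$ as $(S' \setminus B) \cup A$).

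Next I would evaluate the measure change term by term. For each $S \in \cR$, the replacement swaps a set of size $|S|$ for a set of size $|S| - |A| + |B|$, so
\[
 \mu_p((S \setminus A) \cup B) = \left(\frac{p}{1-p}\right)^{|B|-|A|} \mu_p(S).
\]
Summing over the disjoint bijection above yields
\[
 \mu_p(\shift_{A,B}(\cF)) - \mu_p(\cF) = \left[\left(\frac{p}{1-p}\right)^{|B|-|A|} - 1\right] \sum_{S \in \cR} \mu_p(S).
\]
Because $p > 1/2$ gives $p/(1-p) > 1$ and $|B| > |A|$ makes the exponent a positive integer, the bracketed factor is strictly positive. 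Hence the difference is nonnegative, and it is strictly positive exactly when $\cR \neq \emptyset$.

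Finally I would handle the equality case. If $\cR = \emptyset$ then by the displayed description, $\shift_{A,B}(\cF) = \cF$. Conversely, if $\cR \neq \emptyset$, pick any $S \in \cR$; then $(S \setminus A) \cup B \in \shift_{A,B}(\cF) \setminus \cF$, so $\shift_{A,B}(\cF) \neq \cF$. (Note $(S \setminus A) \cup B \neq S$ since $|B| \geq 1$ and $A, B$ are disjoint.) This shows equality of measures holds iff $\shift_{A,B}(\cF) = \cF$, as required. The only step requiring any care is the disjointness/bijectivity observation in the first paragraph, which ensures no double-counting when comparing the two measures; everything else is a direct calculation.
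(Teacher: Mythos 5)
Your proof is correct and follows essentially the same idea as the paper's: the shift replaces certain sets $S$ with the strictly larger sets $(S \setminus A) \cup B$, which increases the $\mu_p$-measure when $p > 1/2$. The paper states this in two sentences; your version makes the same argument rigorous by isolating the replaced collection $\cR$, verifying that the resulting union is disjoint via the bijection $S \mapsto (S \setminus A) \cup B$, computing the exact measure increment $\left[\left(\tfrac{p}{1-p}\right)^{|B|-|A|}-1\right]\sum_{S\in\cR}\mu_p(S)$, and reading off the equality case as $\cR = \emptyset$. No gap — this is just a more explicit rendering of the argument.
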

\begin{proof}
 If $\shift_{A,B}(\cF) = \cF$ then clearly both sets have the same measure. Otherwise, the shifting replaces certain sets $S$ with the corresponding $S' = (S \setminus A) \cup B$. Since $|S'| > |S|$ and $p > 1/2$, this increases the measure of the family.	
\end{proof}

When done correctly, $\shift_{A,B}$ preserves the property of being $t$-intersecting.

\begin{lemma} \label{lem:heavy-shifting-int}
 Let $\cF$ be a $t$-intersecting family on $n$ points, and let $A,B \subseteq [n]$ be disjoint sets of cardinalities $|A| = s$ and $|B| = s+1$. If $\cF$ is $(r,r+1)$-stable for all $r < s$ then $\shift_{A,B}(\cF)$ is $t$-intersecting 	as well.
\end{lemma}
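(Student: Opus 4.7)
The plan is to carry out the three-case analysis from the proof of Lemma~\ref{lem:shifting-int}, with $A$ and $B$ playing the roles of the singletons $\{i\}$ and $\{j\}$, and to invoke the stability hypothesis only when the ``mixed'' case needs repair. Fix $S, T \in \shift_{A,B}(\cF)$ and split on whether each lies in $\cF$.

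If both $S, T \in \cF$, the conclusion is immediate. If neither lies in $\cF$, then each was produced by the shift from a precursor in $\cF$, namely $S^* = (S \setminus B) \cup A$ and $T^* = (T \setminus B) \cup A$. Since $A \subseteq S^* \cap T^*$ and $B$ is disjoint from both $S^*$ and $T^*$, a direct set computation gives $S \cap T = ((S^* \cap T^*) \setminus A) \cup B$, so $|S \cap T| = |S^* \cap T^*| - s + (s+1) \geq t+1$.

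The only genuinely new case is (WLOG) $S \in \cF$ and $T \notin \cF$, so that $T^* = (T \setminus B) \cup A \in \cF$ and $|S \cap T^*| \geq t$. Writing $a = |S \cap A|$ and $b = |S \cap B|$, and using $A \subseteq T^*$ together with $T^* \cap B = \emptyset$, one computes $|S \cap T| = |S \cap T^*| - a + b$, which settles the case $a \leq b$. Assume $a > b$; I further split on whether $(a,b) = (s,0)$. In the non-degenerate subcase $(a,b) \neq (s,0)$, one has $a - b \leq s-1$, and I would pick $A^* \subseteq S \cap A$ of size $a-b$ and $B^* \subseteq B \setminus S$ of size $a-b+1$; the latter is realisable because $|B \setminus S| = s+1-b \geq a-b+1$ iff $a \leq s$. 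Since $|A^*| = a-b < s$ and $S \in \cF \cap \cF_{A^*,B^*}$, the $(a-b, a-b+1)$-stability hypothesis forces $S_1 := (S \setminus A^*) \cup B^* \in \cF$. A further short calculation (using $A^* \subseteq S \cap T^*$ and $B^* \cap T^* = \emptyset$) yields $|S_1 \cap T^*| = |S \cap T^*| - (a-b)$, so $|S \cap T^*| \geq t + a - b$ and hence $|S \cap T| \geq t$.

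The degenerate subcase $a = s$, $b = 0$ means $A \subseteq S$ and $B \cap S = \emptyset$, so $S$ itself lies in $\cF_{A,B}$; since $S \in \cF$ is unchanged by $\shift_{A,B}$, we must already have $S' := (S \setminus A) \cup B \in \cF$, and then $|S' \cap T^*| \geq t$. A parallel calculation using $A \subseteq S \cap T^*$ and $B \cap T^* = \emptyset$ shows $|S' \cap T^*| = |S \cap T^*| - s = |S \cap T|$, finishing the proof. I expect the non-degenerate subcase~3b to be the only subtle point: the bookkeeping is that suitable $A^*, B^*$ of the correct sizes exist exactly when $(a,b) \neq (s,0)$, which matches precisely the boundary where the smaller-shift stability hypothesis ceases to be available.
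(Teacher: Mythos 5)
Your proof is correct and follows essentially the same route as the paper: the same three-case split, with the mixed case resolved by invoking a smaller $(r,r+1)$-shift on $S$ when $|S\cap A|>|S\cap B|$, and the boundary case $A\subseteq S$, $B\cap S=\emptyset$ handled by noting $(S\setminus A)\cup B$ must already lie in $\cF$. The only difference is bookkeeping: you take the shift size to be exactly $a-b$, whereas the paper takes $r=\min(|S\cap A|,\,|B\setminus S|-1)$ and then verifies $r\geq a-b$; your choice is the minimal one that works and is, if anything, slightly cleaner.
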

\begin{proof}
 If $s = 0$ then $\shift_{A,B}(\cF) \subseteq \upset{\cF}$, and so $\shift_{A,B}(\cF)$	is $t$-intersecting. Suppose therefore that $s \geq 1$. Let $S,T \in \shift_{A,B}(\cF)$. We consider three cases.
 
\textbf{Case 1:} If $S,T \in \cF$ then $|S \cap T| \geq t$ since $\cF$ is $t$-intersecting.
 
\textbf{Case 2:} If $S,T \notin \cF$ then $S' = (S \setminus B) \cup A$ and $T' = (T \setminus B) \cup A$ are both in $\cF$, and so $|S \cap T| = |S' \cap T'| - |A| + |B| \geq t+1$.
 
\textbf{Case 3:} The remaining case is when $S \in \cF$ and $T \notin \cF$. In this case $T' = (T \setminus B) \cup A \in \cF$. Since $|S \cap T| = |S \cap T'| - |S \cap A| + |S \cap B| \geq t - |S \cap A| + |S \cap B|$, if $|S \cap B| \geq |S \cap A|$ then $|S \cap T| \geq t$. Suppose therefore that $|S \cap B| < |S \cap A|$.
 
 Let $r = \min(|S \cap A|, |B \setminus S| - 1)$. Since $|B \setminus S| = |B| - |S \cap B| > |B| - |S \cap A| \geq 1$, we see that $r \geq 0$. Also, clearly $r \leq s$. If $r = s$ then $A \subseteq S$ whereas $S \cap B = \emptyset$. Therefore $S' = (S \setminus A) \cup B \in \cF$, since otherwise $S$ would have been replaced by $S'$. Then $|S \cap T| = |S' \cap T'| \geq t$.
 
 Suppose therefore that $r < s$. Let $C$ be an arbitrary subset of $S \cap A$ of size $r$, and let $D$ be an arbitrary subset of $B \setminus S$ of size $r+1$. Since $\cF$ is $r$-stable, $S' = (S \setminus C) \cup D \in \cF$. Write
\begin{align*}
 S' &= ((S \cap A) \setminus C) \cup (S \setminus A) \cup D, \\
 T' &= (T \setminus B) \cup (A \cap S) \cup (A \setminus S).	
\end{align*}
 The term $(S \cap A) \setminus C$ is a subset of $A \cap S$; the term $S \setminus A$ intersects only $T \setminus B$; and $D \subseteq B$ is disjoint from $T'$. Therefore
\[
 S' \cap T' = ((S \cap A) \setminus C) \cup ((S \setminus A) \cap (T \setminus B)).
\]
 Since $T$ is disjoint from $A$, $(S \setminus A) \cap (T \setminus B) = S \cap (T \setminus B) = (S \cap T) \setminus (S \cap B)$. Thus
\[
 |S' \cap T'| = |S \cap T| - |S \cap B| + |S \cap A| - |C|.
\]
 Since $|S' \cap T'| \geq t$, to complete the proof it suffices to show that $|S \cap A| \leq |S \cap B| + r$ (recalling that $|C| = r$). 
 If $r = |S \cap A|$ then this clearly holds, and if $r = |B \setminus S| - 1$ then
\[
 |S \cap B| + r = |S \cap B| + |B \setminus S| - 1 = |B| - 1 = |A| \geq |S \cap A|. \qedhere
\] 
\end{proof}

As in the case of the simpler shifting operator $\shift_{i,j}$, we can convert any family to a stable family while maintaining its being $t$-intersecting, by repeatedly applying a shifting operation on sets $A,B$ with minimal $|A|$. 
One way of expressing this idea is the following lemma.

\begin{lemma} \label{lem:heavy-compression}
 Let $p \in (1/2,1)$ If $\cF$ is a $t$-intersecting family on $n$ points having maximum $\mu_p$-measure then $\cF$ is $(s,s+1)$-stable for all $s$.
\end{lemma}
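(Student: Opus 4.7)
The plan is to proceed by contradiction and exploit the smallest $s$ at which stability fails. Suppose $\cF$ has maximum $\mu_p$-measure among $t$-intersecting families on $n$ points but fails to be $(s,s+1)$-stable for some $s$. Let $s_0$ be the minimal such $s$. By minimality, $\cF$ is $(r,r+1)$-stable for every $r < s_0$. By failure of stability at $s_0$, there exist disjoint sets $A,B \subseteq [n]$ with $|A|=s_0$ and $|B|=s_0+1$ such that $\shift_{A,B}(\cF) \neq \cF$.

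The two preceding lemmas then do all the work. Lemma~\ref{lem:heavy-shifting-int}, applied with the hypothesis that $\cF$ is $(r,r+1)$-stable for all $r < s_0$, guarantees that $\shift_{A,B}(\cF)$ is again $t$-intersecting. Lemma~\ref{lem:heavy-shifting-weight}, applied with $|B| > |A|$ and $\shift_{A,B}(\cF) \neq \cF$, yields the strict inequality $\mu_p(\shift_{A,B}(\cF)) > \mu_p(\cF)$. This contradicts the maximality of $\mu_p(\cF)$, so $\cF$ must be $(s,s+1)$-stable for every $s$.

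The proof is essentially a one-line combination of the previous two lemmas, so I expect no real obstacle. The only point that requires any care is verifying that the conditions of Lemma~\ref{lem:heavy-shifting-int} are satisfied, which is precisely why we chose $s_0$ to be the smallest index of stability failure. No iteration of shifts is needed, since a single non-trivial shift already violates maximality.
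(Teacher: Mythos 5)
Your proof is correct and is essentially identical to the paper's: both take the minimal $s_0$ at which stability fails, invoke Lemma~\ref{lem:heavy-shifting-int} (using stability for all $r<s_0$) to keep the family $t$-intersecting, and invoke Lemma~\ref{lem:heavy-shifting-weight} for the strict measure increase, contradicting maximality.
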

\begin{proof}
 Suppose that $\cF$ is not $(s,s+1)$-stable for all $s$. Choose the minimal $s$ such that $\cF$ is not $(s,s+1)$-stable, say $\shift_{A,B}(\cF) \neq \cF$, where $|A|=s$ and $|B|=s+1$. Lemma~\ref{lem:heavy-shifting-int} shows that $\shift_{A,B}(\cF)$ is also $t$-intersecting, and Lemma~\ref{lem:heavy-shifting-weight} shows that $\mu_p(\shift_{A,B}(\cF)) > \mu_p(\cF)$. Thus $\cF$ is not a $t$-intersecting family of maximum $\mu_p$-measure.
\end{proof}

Ahlswede and Khachatrian~\cite{AK4} in fact show that if $\cF$ is $(r,r+1)$-stable for all $r < s$ then for $|A|=s$ and $|B|=s+1$, the family $\shift_{A,B}(\cF)$ is also $(r,r+1)$-stable for all $r < s$. While this simplifies the process of compression, we do not need this result here, and refer the reader to its proof in~\cite{AK4}.

The importance of stable families is the following simple observation.

\begin{lemma} \label{lem:heavy-sizes}
 If a $t$-intersecting family $\cF$ on $n$ points is $(s,s+1)$-stable for all $s$ then every $A,B \in \cF$ satisfy $|A| + |B| \geq n+t-1$.
\end{lemma}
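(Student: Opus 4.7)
The plan is to argue by contradiction: assuming some $A,B \in \cF$ satisfy $|A|+|B| \le n+t-2$, I will exhibit a single shift $\shift_{C,D}$ whose action, by stability, must fix $\cF$, yet which forces into $\cF$ a set whose intersection with $B$ has size $t-1$.

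First I set up the combinatorial data. Write $i = |A\cap B| \ge t$, $I = A\cap B$, and $U = [n]\setminus(A\cup B)$. From $|A\cup B| = |A|+|B|-i$ and the contradiction hypothesis $|A|+|B|\le n+t-2$, I obtain
\[
 |U| \;=\; n - |A| - |B| + i \;\ge\; i - t + 2.
\]
This is the crucial room on the outside of $A\cup B$ that the failure of the size bound gives me.

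Next I pick the shift. Set $s = i-t+1$; since $t\ge 1$ I have $s \le i$, and by the display $s+1 \le |U|$. Choose any $C\subseteq I$ with $|C|=s$ and any $D\subseteq U$ with $|D|=s+1$. Because $C\subseteq A$ and $D\cap A=\emptyset$, the set $A$ lies in $\cF_{C,D}$. By $(s,s+1)$-stability, $\shift_{C,D}(\cF)=\cF$, so
\[
 A' \;:=\; (A\setminus C)\cup D \;\in\; \cF.
\]
Since $C\subseteq I\subseteq B$ and $D\subseteq U$ is disjoint from $B$, I compute $A'\cap B = (A\cap B)\setminus C = I\setminus C$, which has cardinality $i-s = t-1$. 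Together with $B\in\cF$, this contradicts the $t$-intersecting hypothesis.

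The only real obstacle is identifying the correct value of $s$. I want the shifted set $A'$ to meet $B$ in exactly $t-1$ elements, which forces $s = |A\cap B|-t+1$; the substantive content of the lemma is that this $s$ is simultaneously small enough for $D$ of size $s+1$ to fit inside $U$ (this is precisely where the hypothesis $|A|+|B|\le n+t-2$ is used) and large enough for the shift to drop $|A\cap B|$ below the threshold $t$. Everything else in the argument — the fact that $A\in\cF_{C,D}$, the identification $A'\cap B = I\setminus C$, and the use of stability — is routine once $s$ is chosen.
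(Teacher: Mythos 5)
Your proof is correct and takes essentially the same approach as the paper: both fix $A, B \in \cF$ and apply a single shift $\shift_{C,D}$ with $C \subseteq A \cap B$ and $D$ outside $A \cup B$ to deduce the size bound. The only cosmetic difference is that the paper argues directly, choosing $s = \min(|A \cap B|, n - |A \cup B| - 1)$ and then deducing $|A \cap B| \geq s + t$, while you argue by contradiction with $s = |A \cap B| - t + 1$ chosen so that the shifted set meets $B$ in exactly $t-1$ points; the two parameter choices are equivalent under the contradiction hypothesis, and your formulation also renders the paper's separate case $A \cup B = [n]$ unnecessary.
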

\begin{proof}
 Let $A,B \in \cF$. If $A \cup B = [n]$ then $|A| + |B| = |A \cup B| + |A \cap B| \geq n + t$, so suppose $|A \cup B| < n$.
 
 Define $s = \min(|A \cap B|, n-|A \cup B|-1) \geq 0$, and choose a subset $C \subseteq A \cap B$ of size $s$ and a subset $D \subseteq [n] \setminus (A \cup B)$ of size $s+1$. Since $\cF$ is $(s,s+1)$-stable, $A' = (A \setminus C) \cup D \in \cF$. We have $|A' \cap B| = |A \cap B| - |C| = |A \cap B| - s$, showing that $|A \cap B| \geq s + t$. In particular, $s = n-|A \cup B|-1$, and so
 \[ |A| + |B| = |A \cup B| + |A \cap B| \geq (n - s - 1) + (s + t) = n + t - 1. \qedhere \]
\end{proof}

We comment that the same result holds, with the same proof, for cross-$t$-intersecting families $\cF,\cG$: any $A \in \cF$ and $B \in \cG$ satisfy $|A| + |B| \geq n+t-1$.

The bound $n+t-1$ is tight: when $n = t+2r+1$, the family $\cF_{t,r}$ is $(s,s+1)$-stable for all $s$, and two sets $A,B$ of size $t+r$ satisfy $|A| + |B| = n + r - 1$.

We need one more lemma, on uniform families.

\begin{lemma} \label{lem:plarge-uniform}
 Let $\cF \subseteq \binom{[t+2r+1]}{t+r}$ be a $t$-intersecting family of maximum size, and define a family $\cF'_{t,r}$, the uniform analog of $\cF_{t,r}$, as follows:
\[
 \cF'_{t,r} = \{ S \in \binom{[t+2r+1]}{t+r} : |S \cap [t+2r]| = t+r \}.
\]

 If $t \geq 2$ then $\cF$ is equivalent to $\cF'_{t,r}$ (that is, equals a similar family with $[t+2r]$ possibly replaced by some other subset of $[t+2r+1]$ of size $t+2r$), and if $t = 1$ then $|\cF| \leq |\cF'_{t,r}|$.
\end{lemma}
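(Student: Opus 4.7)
The plan is to pass to complements and invoke the classical Erd\H{o}s--Ko--Rado theorem. Setting $n = t+2r+1$ and $k = t+r$, any two $k$-subsets $A,B$ of $[n]$ satisfy $|A \cap B| \geq 2k - n = t-1$ automatically, with equality iff $A \cup B = [n]$, equivalently iff the complements $[n] \setminus A$ and $[n] \setminus B$ (both of size $r+1$) are disjoint. Therefore $\cF$ is $t$-intersecting precisely when the complement family
\[
 \cF^c := \{[n] \setminus S : S \in \cF\} \subseteq \binom{[n]}{r+1}
\]
is a $1$-intersecting family in the ordinary sense, and $|\cF| = |\cF^c|$.

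Since $r+1 \leq n/2 = (t+2r+1)/2$ for every $t \geq 1$, the Erd\H{o}s--Ko--Rado theorem yields $|\cF^c| \leq \binom{n-1}{r} = \binom{t+2r}{t+r}$, which coincides with $|\cF'_{t,r}|$ (as $\cF'_{t,r}$ consists of all $(t+r)$-subsets of some fixed $(t+2r)$-subset of $[n]$). This establishes the cardinality bound claimed for $t = 1$.

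When $t \geq 2$ the inequality $r+1 < n/2$ is strict, so the uniqueness part of Erd\H{o}s--Ko--Rado applies: every maximum intersecting family in $\binom{[n]}{r+1}$ is a star $\{T \in \binom{[n]}{r+1} : i \in T\}$ for some $i \in [n]$. Taking complements back, a maximum $\cF$ must be $\binom{[n] \setminus \{i\}}{t+r}$, which is exactly a family equivalent to $\cF'_{t,r}$ with the distinguished $(t+2r)$-set being $[n] \setminus \{i\}$.

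I do not expect a significant obstacle: the whole argument rests on recognizing that in this uniform, parameter-critical regime ($n = 2k - t + 1$), the $t$-intersecting condition on $\cF$ is equivalent to the $1$-intersecting condition on $\cF^c$, after which both parts of the lemma follow immediately from the statement (and uniqueness clause, for $t \geq 2$) of classical EKR. The only point requiring attention is the clean identification of the extremal family with $\cF'_{t,r}$ under complementation, which is routine.
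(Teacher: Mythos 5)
Your proposal is correct and follows essentially the same approach as the paper: pass to the complement family in $\binom{[t+2r+1]}{r+1}$, observe that $t$-intersecting for $\cF$ is equivalent to $1$-intersecting for the complements (since $|\overline{A} \cap \overline{B}| = |A \cap B| - (t-1)$), and apply classical EKR together with its uniqueness clause for the strict-inequality case $t \geq 2$.
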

\begin{proof}
 Define $\cG = \{ \overline{A} : A \in \cF \}$ (where $\overline{A} = [t+2r+1] \setminus A$), so that $\cG \subseteq \binom{[t+2r+1]}{r+1}$. Since
\[
 |\overline{A} \cap \overline{B}| = |\overline{A \cup B}| = t+2r+1 - |A \cup B| = t+2r+1 - |A| - |B| + |A \cap B| = |A \cap B| - (t-1),
\]
 we see that the condition that $\cF$ is $t$-intersecting is equivalent to the condition that $\cG$ is intersecting.
 
 Since $r+1 \leq \frac{t+2r+1}{2}$ (with equality only for $t = 1$), the Erd\H{o}s--Ko--Rado theorem shows that $|\cG| \leq \binom{t+2r}{r} = \binom{t+2r}{t+r}$.
 Moreover, when $t \geq 2$, equality holds only when $\cG = \{ S \in \binom{[t+2r+1]}{r+1} : i \in S \}$ for some $i \in [t+2r+1]$. In that case, $\cF = \{ S \in \binom{[t+2r+1]}{t+r} : i \notin S \}$, and so $\cF$ is equivalent to $\cF'_{t,r}$ (in the family $\cF_{t,r}$ itself, $i = t+2r+1$).
\end{proof}

We can now prove Theorem~\ref{thm:weighted-main} in the case $p > 1/2$.

\begin{lemma} \label{lem:plarge}
 Let $\cF$ be a $t$-intersecting family on $n$ points of maximum $\mu_p$-measure, for some $p \in (1/2,1)$. Suppose that $n \in \{t+2r,t+2r+1\}$.
 
 If $t \geq 2$ or $n = t+2r$ then $\cF$ is equivalent to $\cF_{t,r}$.
 
 If $t = 1$ and $n = t+2r+1$ then $\mu_p(\cF) \leq \mu_p(\cF_{t,r})$ and $\cF = \cG \cup \binom{[t+2r+1]}{\geq t+r+1}$, where $\cG \subseteq \binom{[t+2r+1]}{t+r}$ contains exactly $\binom{t+2r}{t+r}$ sets.
\end{lemma}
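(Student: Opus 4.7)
The plan is to combine the stability machinery of Lemma~\ref{lem:heavy-compression} and Lemma~\ref{lem:heavy-sizes} with the uniform characterization in Lemma~\ref{lem:plarge-uniform}. By Lemma~\ref{lem:heavy-compression}, any $t$-intersecting family $\cF$ of maximum $\mu_p$-measure is $(s,s+1)$-stable for every $s$, so Lemma~\ref{lem:heavy-sizes} applied with $A = B$ forces $|A| \geq \lceil (n+t-1)/2 \rceil = t+r$ for every $A \in \cF$, in both of the cases $n = t+2r$ and $n = t+2r+1$.

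When $n = t+2r$ this immediately gives $\cF \subseteq \binom{[n]}{\geq t+r} = \cF_{t,r}$, and since $\cF_{t,r}$ is itself $t$-intersecting, maximality forces $\cF = \cF_{t,r}$; as $n = t+2r$ there is only one family equivalent to $\cF_{t,r}$, so this is the desired conclusion. When $n = t+2r+1$ I would next argue that $\binom{[n]}{\geq t+r+1} \subseteq \cF$: any such set $A$ intersects every $B \in \cF$ in at least $(t+r+1)+(t+r)-(t+2r+1) = t$ elements, so adjoining $A$ preserves $t$-intersection without decreasing measure, and maximality then places $A$ in $\cF$. Hence $\cF$ decomposes as $\cF = \binom{[n]}{\geq t+r+1} \cup \cG$ with $\cG \subseteq \binom{[n]}{t+r}$ a uniform $t$-intersecting family.

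Lemma~\ref{lem:plarge-uniform} then supplies $|\cG| \leq \binom{t+2r}{t+r}$. Since $\cF_{t,r}$ itself decomposes in the same way with $|\cG|$ equal to this upper bound, writing $\mu_p(\cF) = |\cG|\,p^{t+r}(1-p)^{r+1} + \mu_p(\binom{[n]}{\geq t+r+1})$ yields $\mu_p(\cF) \leq \mu_p(\cF_{t,r})$, and maximality then forces $|\cG| = \binom{t+2r}{t+r}$. For $t \geq 2$, Lemma~\ref{lem:plarge-uniform} additionally identifies $\cG$ as equivalent to $\cF'_{t,r}$, which is precisely what it means for $\cF$ to be equivalent to $\cF_{t,r}$. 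The main point of delicacy is the residual $t = 1$, $n = t+2r+1$ case: there Lemma~\ref{lem:plarge-uniform} supplies only a cardinality bound, because the uniform problem falls at the critical parameter $k = n/2$ of Erd\H{o}s--Ko--Rado where many extremal intersecting families coexist; this non-uniqueness is intrinsic to the problem and corresponds to the family of alternatives recorded in Theorem~\ref{thm:weighted-main}, so at this stage one can only record the cardinality statement rather than a structural description.
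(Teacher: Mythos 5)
Your proof is correct and follows essentially the same route as the paper's own argument: stability via Lemma~\ref{lem:heavy-compression}, the size lower bound from Lemma~\ref{lem:heavy-sizes}, the observation that all large sets can be added for free when $n = t+2r+1$, and the reduction of the residual layer $\cG \subseteq \binom{[n]}{t+r}$ to Lemma~\ref{lem:plarge-uniform}. The only place you are slightly terse is the final step for $t \geq 2$, where one should note explicitly that $\cG$ equivalent to $\cF'_{t,r}$ with defining set $X$ (of size $t+2r$) together with $\binom{[n]}{\geq t+r+1} \subseteq \cF$ forces $\cF = \{S : |S \cap X| \geq t+r\}$, since every set of size $\geq t+r+1$ automatically meets $X$ in at least $t+r$ points.
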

\begin{proof}
 Lemma~\ref{lem:heavy-compression} shows that $\cF$ is $(s,s+1)$-stable for all $s$, and so Lemma~\ref{lem:heavy-sizes} shows that any $A,B \in \cF$ satisfy $|A| + |B| \geq n+t-1$. In particular, any set $A$ has cardinality at least $\frac{n+t-1}{2}$. We now consider two cases, according to the parity of $n+t$.
 
 Suppose first that $n = t+2r$. Then $\frac{n+t-1}{2} = t + r - \frac{1}{2}$, and so all sets in $\cF$ have cardinality at least $t + r$. In other words, $\cF \subseteq \cF_{t,r}$. Since $\cF$ has maximum $\mu_p$-measure, $\cF = \cF_{t,r}$.
 
 Suppose next that $n = t+2r+1$. Then $\frac{n+t-1}{2} = t + r$, and so all sets in $\cF$ have cardinality at least $t + r$. If $|A| \geq t+r$ and $|B| \geq t+r+1$ then $|A \cap B| \geq |A|+|B|-n = t$, and so the fact that $\cF$ has maximum $\mu_p$-measure shows that $\cF = \cG \cup \binom{[n]}{\geq t+r+1}$, where $\cG \subseteq \binom{[n]}{t+r}$ is $t$-intersecting.
 
 We can now complete the proof using Lemma~\ref{lem:plarge-uniform}. If $t \geq 2$ then $\cG$ is equivalent to $\cF'_{t,r}$, say
 \[ \cG = \{ S \in \binom{[t+2r+1]}{t+r} : |S \cap X| = t+r \}, \]
 where $|X| = t+2r$. Since any set of size at least $t+r+1$ intersects $X$ in at least $t+r$ points,
 \[ \cF = \{ S \subseteq [n] : |S \cap X| \geq t+r \}, \]
 and so $\cF$ is equivalent to $\cF_{t,r}$.
 
 When $t = 1$, Lemma~\ref{lem:plarge-uniform} shows that $|\cG| \leq |\cF'_{t,r}|$, and so the same reasoning shows that $\mu_p(\cF) \leq \mu_p(\cF_{t,r})$.
\end{proof}

\section{Katona's circle argument} \label{sec:katona}

Katona~\cite{Katona1} gave a particularly simple proof of the Erd\H{o}s--Ko--Rado theorem, using what has become known as the \emph{circle method}. The same proof goes through in the $\mu_p$ setting, with much the same proof, as we show in Section~\ref{sec:katona-int}. We are able to use the same argument to obtain a description of all optimal families.

The heart of Katona's argument is the following seemingly trivial observation: if $S$ is a measurable set on the unit circumference circle in which any two points are at distance at most $p < 1/2$, then the length of $S$ is at most $p$, the optimal sets being intervals. In Section~\ref{sec:katona-cross} we give two versions of this argument for the case of two sets: one in a discrete setting, and the other in a continuous setting. These results will be used later on when we prove Ahlswede--Khachatrian theorems for families on infinitely many points (Section~\ref{sec:infinite}) and for the Hamming scheme (Section~\ref{sec:hamming}).

\subsection{Intersecting families} \label{sec:katona-int}

The Erd\H{o}s--Ko--Rado theorem, in our setting, states that if $\cF$ is an intersecting family then $\mu_p(\cF) \leq p$ for all $p \leq 1/2$.
Katona~\cite{Katona1} gave a particularly simple proof of the theorem in its original setting, and we adapt his proof to our setting.

\begin{lemma} \label{lem:ekr-mup}
 Let $n \geq 1$ and $p \in (0,1/2)$. Then $\mu_p(\cF) \leq p$ for all intersecting families $\cF$.
 
 Moreover, if $\cF$ is an intersecting family on $n$ points such that $\mu_p(\cF) = p$ then $\cF$ is equivalent to a $(1,0)$-Frankl family. In other words, if $\mu_p(\cF) = p$ then for some $i \in [n]$ we have
\[
 \cF = \{ A \subseteq [n] : A \ni i \}.
\]
\end{lemma}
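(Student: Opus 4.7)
The plan is to adapt Katona's circle argument directly to the $\mu_p$ setting via random continuous arcs on $\unitcirc$. Place $n$ points $X_1,\ldots,X_n$ i.i.d.\ uniformly on $\unitcirc$, draw $\theta\in\unitcirc$ uniformly and independently, and set $A(\theta)=\{i:X_i\in[\theta,\theta+p)\bmod 1\}$. A direct calculation gives $A(\theta)\sim\mu_p$ marginally, so
\[
\mu_p(\cF)=\EE_X\bigl[\,|R(X)|\,\bigr],\qquad R(X):=\{\theta\in\unitcirc:A(\theta)\in\cF\},
\]
and the problem reduces to showing $|R(X)|\le p$ pointwise in $X$.

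For the bound, take $\theta_1,\theta_2\in R(X)$. Because $\cF$ is intersecting and does not contain $\emptyset$, the sets $A(\theta_1),A(\theta_2)$ share some coordinate $i$, so $X_i$ lies in both length-$p$ arcs $[\theta_1,\theta_1+p)$ and $[\theta_2,\theta_2+p)$; these arcs therefore overlap, which forces the cyclic distance between $\theta_1$ and $\theta_2$ to be strictly less than $p$. Thus $R(X)$ is a measurable subset of $\unitcirc$ all of whose pairwise cyclic distances are at most $p<1/2$, and the seemingly trivial observation mentioned in Section~\ref{sec:katona} yields $|R(X)|\le p$; averaging over $X$ gives $\mu_p(\cF)\le p$.

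For the equality case, assume $\mu_p(\cF)=p$, so $|R(X)|=p$ almost surely. The function $\theta\mapsto A(\theta)$ is piecewise constant with breakpoints in $\{X_i,X_i-p:i\in[n]\}$, so $R(X)$ is a finite union of half-open arcs; the equality case of the circle observation (optimal sets are intervals) forces $R(X)$ to coincide, up to a null set, with a single $J_{i(X)}:=(X_{i(X)}-p,X_{i(X)}]$ for some index $i(X)\in[n]$. This says that $\cF$ restricted to the arcs of the cyclic order of $X$ is exactly the family of arcs containing $X_{i(X)}$. The main obstacle is globalizing this pointwise description: by perturbing $X$ any prescribed subset $S\subseteq[n]$ can be made to appear as an arc of the induced cyclic order, and the a.s.\ identity $R(X)=J_{i(X)}$ then tells us whether $S\in\cF$ in terms of the membership $i(X)\in S$. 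A continuity argument in the parameter $X$ forces $i(X)$ to be a single fixed element $i^*\in[n]$, from which $\cF=\{A\subseteq[n]:i^*\in A\}$, i.e., the desired $(1,0)$-Frankl family; making this consistency argument tight while controlling the null sets is the delicate point.
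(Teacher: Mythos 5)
Your proposal follows the same overall strategy as the paper: embed $\mu_p$ as the distribution of indices in a random length-$p$ arc on the circle, reduce the upper bound to the observation that the set $R(X)$ of good arc-starting points has all pairwise distances below $p$, and then analyze the equality case by arguing that $R(X)$ must be a length-$p$ interval almost surely. The upper bound half of your argument is correct and matches the paper's.

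The equality case, however, is left with a real gap, and you acknowledge it. Two specific things are missing. First, you write $R(X)=J_{i(X)}$ directly, but even granting that $R(X)$ is a length-$p$ interval, one must still argue that its right endpoint is some $X_i$; this follows by considering two points of $R(X)$ near the two ends of the interval, using the intersecting property to locate a common $X_i$ in the (shrinking) common arc, and taking a limit over the finitely many $X_i$. Second, and more seriously, you try to globalize the per-configuration identity $R(X)=J_{i(X)}$ by a ``continuity argument'' forcing $i(X)$ to be a fixed $i^\ast$ across configurations. Making this rigorous requires, at minimum, showing that the a.e.\ set where $R(X)$ is a genuine interval is connected enough to carry a locally-constant $i(\cdot)$ to a global constant — and $i(\cdot)$ is only defined a.e.\ in the first place, so one cannot naively apply connectedness of configuration space. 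The paper avoids this detour entirely: it fixes one good configuration $x$ to identify a single index $i$ with $x_i$ at the interval's endpoint, then exhibits a positive-measure set of perturbed configurations designed so that the resulting interval $I'$ forces $\{i\}\in\cF$, and finally closes by monotonicity (or equivalently the intersecting property): once $\{i\}\in\cF$, monotonicity gives $\cF\supseteq\{A:i\in A\}$, and the measure constraint (or the fact that any $A\in\cF$ must meet $\{i\}$) gives equality. You should replace the constancy-of-$i(X)$ step by this local argument plus monotonicity; it is both shorter and avoids the null-set bookkeeping you flag as delicate.
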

\begin{proof}
 The idea is to come up with a probabilistic model for the distribution $\mu_p$, and use it to show that $\mu_p(\cF) \leq p$. Since $\mu_p(\cF_{1,0}) = p$, this shows that $w(n,1,p) = p$. We will then use the same probabilistic model to identify all families satisfying $\mu_p(\cF) = p$.
 
 Let $\unitcirc$ be the circle of unit circumference. Choose $n$ points $x_1,\ldots,x_n$ at random on $\unitcirc$. Choose another point $t$ on $\unitcirc$ at random, and consider the arc $(t,t+p)$ (where $t+p$ is taken modulo~$1$). The set of indices of points $S_t$ which lie inside the arc has distribution $\mu_p$, and so $\mu_p(\cF) = \Pr[S_t \in \cF]$.
 
 We will prove that for \emph{each} setting of $x_1,\ldots,x_n$ we have $\Pr[S_t \in \cF] \leq p$, the probability taken over the choice of $t$. Let $I = \{ t : S_t \in \cF \}$, and note that $I$ is a union of intervals. The crucial observation is that if $t_1,t_2 \in I$ then the corresponding arcs $(t_1,t_1+p),(t_2,t_2+p)$ must intersect, and so $d(t_1,t_2) < p$, where $d(\cdot,\cdot)$ is shortest distance on the circumference of the circle. Consider now any $t_1 \in I$. All $t_2 \in I$ must lie in the interval $(t_1-p,t_1+p)$, and moreover for each $s \in (0,p)$, at most one of $t_1+s,t_1-p+s$ can be in $I$ (since $d(t_1+s,t_1+s-p) = p$). It follows that the measure of $I$ is at most $p$. In other words, $\Pr[S_t \in \cF] \leq p$, implying that $\mu_p(\cF) \leq p$.
  
 For future use, we also need to identify the cases in which $I$ has measure exactly $p$. We will show that $I$ must be an interval of length $p$. The argument above shows that $I$ is a union of non-empty intervals of two types, $J_1,\ldots,J_a \subseteq (t_1-p,t_1]$ and $K_1,\ldots,K_b \subseteq [t_1,t_1+p)$, such that the intervals $J_i+p,K_j$ together partition $[t_1,t_1+p)$. If $a = 0$ then $I = [t_1,t_1+p)$, and if $b = 0$ then $I = (t_1-p,t_1]$. If there is a unique interval $K_1$ which is of the form $[t_1,t_1+s)$ (or $[t_1,t_1+s]$) then $I = [t_1+s-p,t_1+s)$ (or $(t_1+s-p,t_1+s]$). Otherwise, there must be some interval $K_j$ whose left end-point $y$ is larger than $t_1$. There is a corresponding interval $J_i$ whose right end-point is $y-p$. However, since $p < 1/2$, a point on $J_i$ slightly to the left of $y-p$ has distance larger than $p$ from a point on $K_j$ slightly to the right of $y$, contradicting our assumptions. We conclude that $I$ must be an interval of length $p$.
 
 \smallskip
 
 We proceed to identify the families $\cF$ which achieve the upper bound, that is, satisfy $\mu_p(\cF) = p$. Clearly any family $\cF$ equivalent to $\cF_{1,0}$ satisfies $\mu_p(\cF) = p$. We will show that these are the only such families. First, notice that if $\mu_p(\cF) = p$ then $\cF$ must be monotone (otherwise its upset is an intersecting family of measure larger than $p$). Moreover, the set $I$ defined above is an interval of length $p$ almost surely, with respect to the choice of $x_1,\ldots,x_n$. In particular, there is a choice of $x_1,\ldots,x_n$, all distinct and none at distance exactly $p$, such that $I$ is an interval of length $p$, say $I = [y,y+p)$. For small $\epsilon > 0$, the arcs $(y+\epsilon,y+p+\epsilon),(y+p-\epsilon,y+2p-\epsilon)$ intersect at a small neighborhood of $y+p$. Since the corresponding sets $S_{y+\epsilon},S_{y+p-\epsilon}$ intersect, there must be some point $x_i = y+p$. We will show that $\{ x_i \} \in \cF$, and so monotonicity implies that $\cF = \{ A \subseteq [n] : A \ni i \}$.
 
 Let $A$ consist of all points in $(y,y+p)$. Thus $A \cup \{ x_i \} \in \cF$ (since $y+\epsilon \in I$ for small $\epsilon$) while $A \notin \cF$ (since $y-\epsilon \notin I$ for small $\epsilon$). Consider now the set of configurations $x'_1,\ldots,x'_n$ in which $x'_j \in (x'_i-p,x'_i)$ for all $j \in A$ and $x'_j \notin (x'_i-p,x'_i+p)$ for all $j \notin A \cup \{i\}$. This set of configurations has positive measure, and so there must exist one whose corresponding set $I'$ is an interval of measure $p$. By construction, $I'$ contains all points $t$ just to the right of $x'_i-p$ (since the corresponding $S_t$ is $A \cup \{i\}$) but not $x'_i-p$ (since the corresponding $S_{x'_i-p}$ is $A$), and so $x'_i-p$ is the left end-point of the interval. The right end-point is thus $x'_i$ itself. By construction, $S_{x'_i-\epsilon} = \{i\}$ for small enough $\epsilon > 0$, showing that $\{i\} \in \cF$. This completes the proof.
\end{proof}

Other proofs of the upper bound which employ similar arguments appear in Dinur and Friedgut~\cite{DinurFriedgut} and in Friedgut~\cite{Friedgut05}.
Unfortunately, it seems that Katona's idea doesn't extent to $t$-intersecting families for $t > 1$. For a discussion of this, see Howard, K\'arolyi and Sz\'ekely~\cite{HKS}. 


\subsection{Cross-intersecting settings} \label{sec:katona-cross}

As explained in the introduction to this section, the heart of Katona's proof is a result on sets in which any two points are close. The proof of Lemma~\ref{lem:ekr-mup} closely follows the original argument. In this section we give alternative arguments for the cross-intersecting counterparts in both discrete and continuous settings.

\subsubsection{Discrete setting} \label{sec:katona-cross-discrete}

We start with the easier, discrete setting. First, a few definitions.

\begin{definition} \label{def:s-agreeing}
 A set $A \subseteq \ZZ_m$ is \emph{$s$-agreeing} if every $a,b \in A$ satisfy $a-b \in \{-(s-1),\ldots,s-1\}$. Two sets $A,B \subseteq \ZZ_m$ are \emph{cross-$s$-agreeing} if every $a \in A$ and $b \in B$ satisfy $a-b \in \{-(s-1),\ldots,s-1\}$.
\end{definition}

\begin{definition} \label{def:intervals}
 A set $A \subseteq \ZZ_m$ is an \emph{interval} if it is of the form $\{x-\ell,\ldots,x+\ell\}$ (for $2\ell+1 < m$) or $\{x-\ell,\ldots,x+\ell-1\}$ (for $2\ell < m$). In the first case, $x$ is the \emph{center} of $A$, and in the second, $x-1/2$ is the center of $A$.
\end{definition}

The following simple lemma will simplify the argument below.

\begin{lemma} \label{lem:interval-intersection}
 Let $s \leq m/2$. If $A \subseteq \ZZ_m$ is cross-$s$-agreeing with the non-empty interval $\{x,\ldots,y\}$ of length at most $2s-1$ then $A \subseteq \{y-(s-1),\ldots,x+(s-1)\}$.
\end{lemma}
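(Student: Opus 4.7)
The plan is to translate the pointwise cross-$s$-agreeing condition into a single arc-containment statement in $\ZZ_m$ and then read off the conclusion. Fix $a \in A$, and write $k = y - x \pmod m$, so $k \in \{0, 1, \ldots, 2s - 2\}$ by the length hypothesis on the interval. The hypothesis says $a - b \in W$ for every $b \in \{x, x+1, \ldots, y\}$, where $W := \{-(s-1), \ldots, s-1\}$ is a forward arc of size $2s - 1$ in $\ZZ_m$. As $b$ runs through the interval, $a - b$ sweeps out the $k+1$ consecutive elements $a - y, a - y + 1, \ldots, a - x$, which I will call $J_a$; this is a forward arc of length $k + 1$ in $\ZZ_m$. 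So the entire hypothesis collapses to $J_a \subseteq W$.

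Next, I would argue that $J_a \subseteq W$ forces $a - y$ into the range $\{-(s-1), \ldots, (s-1) - k\}$. Since $s \leq m/2$, we have $|W| = 2s - 1 \leq m - 1 < m$, so $W$ is a proper arc, and there is a unique way for a length-$(k+1)$ forward sub-arc to sit inside it: its start must be one of the first $2s - 1 - k$ positions of $W$. Translating back, $a - y \in \{-(s-1), \ldots, s - 1 - k\}$, which gives $a \in \{y - (s-1), \ldots, y + s - 1 - k\}$. Using $y - k = x$, the right endpoint is $x + (s - 1)$, so $a \in \{y - (s-1), \ldots, x + (s-1)\}$, which is the desired conclusion.

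The main thing to watch out for is the arc-containment step. On the circle $\ZZ_m$, one has to rule out the possibility that $J_a$ sits inside $W$ by wrapping the ``long way'' around, and this is exactly where the hypothesis $s \leq m/2$ enters: it guarantees $|W| < m$, hence $W$ has non-empty complement, and so a forward sub-arc of $W$ has an unambiguous position inside $W$. Once this is settled, no further calculation is needed — the translation in the first paragraph and the unambiguous placement in the second complete the proof.
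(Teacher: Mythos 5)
Your proof is correct, and it takes a genuinely different route from the paper's. The paper argues by induction on the interval cardinality: the base case is a singleton, and the inductive step observes that an $A$ cross-$s$-agreeing with $\{x,\ldots,y+1\}$ is also cross-$s$-agreeing with the two sub-intervals $\{x,\ldots,y\}$ and $\{x+1,\ldots,y+1\}$, then intersects the two arcs produced by the inductive hypothesis. You instead fix $a\in A$ and collapse all the pointwise constraints at once into a single arc-containment $J_a\subseteq W$, where $J_a=\{a-y,\ldots,a-x\}$ and $W=\{-(s-1),\ldots,s-1\}$, and then read off the admissible range for $a-y$. Both arguments ultimately hinge on the same fact, namely that the relevant arcs have cardinality strictly less than $m$ (this is where $s\le m/2$ enters), so there is no wrap-around ambiguity; you invoke this when placing $J_a$ inside $W$, and the paper invokes it implicitly when simplifying the intersection of two shifted arcs. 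Your one-shot version is a bit more economical and avoids the bookkeeping of the inductive step, at the cost of having to state the arc-containment fact explicitly. One small wording issue: ``there is a unique way for a length-$(k+1)$ forward sub-arc to sit inside $W$'' is misleading, since there are $2s-1-k$ admissible positions; what you mean (and what the next clause says) is that the \emph{set} of admissible start positions is unambiguously $\{-(s-1),\ldots,s-1-k\}$, with no second family of positions arising from wrapping around the circle.
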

\begin{proof}
 The proof is by induction on the length of the interval. If $y = x$ then trivially $A \subseteq \{x-(s-1),\ldots,x+(s-1)\}$. Suppose now that we have already shown that if $A$ is cross-$s$-agreeing with $\{x,\ldots,y\}$ then $A \subseteq \{y-(s-1),\ldots,x+(s-1)\}$. If $A$ is cross-$s$-agreeing with $\{x,\ldots,y+1\}$ then
\[
 A \subseteq \{y-(s-1),\ldots,x+(s-1)\} \cap \{y+1-(s-1),\ldots,x+1+(s-1)\} = \{y+1-(s-1),\ldots,x+(s-1)\}. \qedhere
\]
\end{proof}

We can now state and prove the result in the discrete setting.

\begin{lemma} \label{lem:katona-cross-discrete}
 Let $m,s \geq 1$ be integers satisfying $s < m/2$. If $A,B \subseteq \ZZ_m$ are non-empty $s$-agreeing sets then $|A| + |B| \leq 2s$. Moreover, if $|A| + |B| = 2s$ then $A,B$ are intervals centered at the same point or half-point. In particular, if $A$ is $s$-agreeing then $|A| \leq s$, with equality only when $A$ is an interval.
 
 When $s = m/2$ it still holds that $|A| + |B| \leq 2s$, with equality when $B = \{ x : x + s \notin A \}$, and this holds even without the assumption that $A,B$ be non-empty. In particular, if $A$ is $s$-agreeing then $|A| \leq s$, with equality only when $A$ contains one point out of each pair $x,x+s$.
\end{lemma}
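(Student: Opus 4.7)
The plan is to handle the two cases $s < m/2$ and $s = m/2$ by quite different arguments. For $s < m/2$, I would exploit Lemma~\ref{lem:interval-intersection} by first locating a canonical interval containing $B$ and then using the cross-agreement with its endpoints to squeeze $A$ into a complementary interval. Concretely, fix any $a_0 \in A$; since every $b \in B$ satisfies $b - a_0 \in \{-(s-1),\ldots,s-1\}$, the set $B$ lies in the arc $\{a_0-(s-1),\ldots,a_0+(s-1)\}$, which has length $2s-1 < m$. Inside this arc, $B$ has a unique minimal enclosing interval $\{x,\ldots,y\}$ with $x,y \in B$ and $y-x \leq 2s-2$. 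Because $A$ is cross-$s$-agreeing with the singletons $\{x\}$ and $\{y\}$, it lies in each of the arcs $\{x-(s-1),\ldots,x+(s-1)\}$ and $\{y-(s-1),\ldots,y+(s-1)\}$; their intersection, unambiguous inside an arc of length at most $3s-2 < m$ around $x$, is $\{y-(s-1),\ldots,x+(s-1)\}$, of size $2s-1-(y-x)$. Combined with $|B| \leq y-x+1$, this yields $|A|+|B| \leq 2s$.

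Equality forces $|B|=y-x+1$ and $|A|=2s-1-(y-x)$, so $B$ and $A$ are the full intervals $\{x,\ldots,y\}$ and $\{y-(s-1),\ldots,x+(s-1)\}$. The midpoints of both intervals equal $(x+y)/2$, a point when $y-x$ is even and a half-point otherwise, confirming the claim about centres. The individual statement that an $s$-agreeing $A$ has $|A| \leq s$ (with equality only for intervals) then follows by applying the cross-$s$-agreeing bound with $B=A$, since a set is cross-$s$-agreeing with itself iff it is $s$-agreeing.

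For the boundary case $s = m/2$, the argument is combinatorial rather than geometric: since $-s \equiv s \pmod{m}$, the cross-$s$-agreeing condition $a-b \notin \{\pm s\}$ simplifies to $a \neq b+s$ for all $a \in A,\ b \in B$, i.e.\ $A$ and the translate $B+s$ are disjoint subsets of $\ZZ_m$. Since $|B+s|=|B|$, this gives $|A|+|B| \leq m = 2s$, with equality iff $A = \ZZ_m \setminus (B+s)$, equivalently $B = \{x : x+s \notin A\}$. This disjointness argument does not require $A,B$ to be non-empty, and specialising to $A=B$ recovers the one-element-per-pair description of a maximum $s$-agreeing set. The only mildly delicate point in the whole proof is the first paragraph's claim that the intersection of the two $(s-1)$-neighbourhoods is unambiguous in $\ZZ_m$; this is exactly where the strict hypothesis $s < m/2$ enters, ensuring that all the relevant arcs are proper subsets of $\ZZ_m$ and can be compared inside a single ambient arc.
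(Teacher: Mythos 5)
There is a genuine gap in the first paragraph. The claim that $\{x-(s-1),\ldots,x+(s-1)\} \cap \{y-(s-1),\ldots,y+(s-1)\}$ equals $\{y-(s-1),\ldots,x+(s-1)\}$ does not follow from $s<m/2$. The two arcs each have length $2s-1$ and their centres $x,y$ can be as far as $y-x=2s-2$ apart, so the relevant ambient arc has length up to $4s-3$, not $3s-2$ as you wrote; and $4s-3 < m$ is \emph{not} implied by $2s<m$ once $s\ge 3$. When $2s<m\le 4s-4$ the two arcs overlap again on the wrap-around side, producing a second connected piece. Concretely, take $s=3$, $m=7$, $A=\{0,3,4\}$, $B=\{5,2\}$ (these are cross-$3$-agreeing). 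With $a_0=0$ you get $x=5\equiv -2$, $y=2$, so your formula predicts $A\subseteq\{0\}$; but $\{3,4,5,6,0\}\cap\{0,1,2,3,4\}=\{0,3,4\}$ and indeed $|A|=3$. The lemma's conclusion $|A|+|B|\le 6$ still holds in this example (it is $5$), but your argument does not establish it: the constraint that $A$ must cross-agree with the interior of $B$, not just with $x$ and $y$, is essential, and you have thrown it away. Note that Lemma~\ref{lem:interval-intersection} is formulated precisely to avoid this pitfall: it assumes cross-agreement with the \emph{entire} interval $\{x,\ldots,y\}$, and its inductive proof only ever intersects two arcs shifted by one step, where wrap-around cannot occur. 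Your proposal replaces the full interval by the two endpoints, which is strictly weaker, and this is exactly where the argument breaks. The paper instead proves the $s<m/2$ case by a shifting argument that grows $A$ and shrinks $B$ one element at a time; that argument never needs to intersect two far-apart arcs. Your treatment of the boundary case $s=m/2$ via the disjointness of $A$ and $B+s$ is correct and agrees with the paper's.
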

\begin{proof}
 We start by proving that $|A| + |B| \leq 2s$ when $s < m/2$. The idea is to use a shifting argument to transform $A,B$ into intervals without decreasing $|A| + |B|$. We construct a sequence of non-empty $s$-agreeing sets, starting with $(A_0,B_0) = (A,B)$. Given $(A_i,B_i)$, note first that $A_i \neq \ZZ_m$, since otherwise $B_i$ would have to be empty. Therefore there exists a point $x \in A_i$ such that $x+1 \notin A_i$. We take $A_{i+1} = A_i \cup \{x+1\}$. If $A_{i+1},B_i$ are not cross-$s$-agreeing then there must be a point $y \in B_i$ such that $x-y \in \{-(s-1),\ldots,s-1\}$ but $x+1-y \notin \{-(s-1),\ldots,s-1\}$. This can only happen if $x-y=s-1$, and so there is at most one such point $y = x-(s-1)$. We therefore take $B_{i+1} = B_i \setminus \{x-(s-1)\}$. If $B_{i+1} = \emptyset$, then we stop the sequence at $(A_i,B_i)$, and otherwise we continue. Note that $|A_{i+1}| = |A_i| + 1$ and $|B_{i+1}| \geq |B_i| - 1$, and so $|A_{i+1}| + |B_{i+1}| \geq |A_i| + |B_i|$.
 
 Since the size of $A_i$ keeps increasing, there must be a last pair in the sequence, say $(A_t,B_t)$. Our stopping condition guarantees that $|B_t| = 1$, say $B_t = \{y\}$. This forces $A_t \subseteq \{y-(s-1),\ldots,y+(s-1)\}$, and so $|A_t| + |B_t| \leq (2s-1) + 1 = 2s$. Since $|A_0| + |B_0| \leq |A_t| + |B_t| \leq 2s$, this completes the proof that $|A| + |B| \leq 2s$.
 
 \smallskip
 
 Next, we show that if $|A| + |B| = 2s$ then $A,B$ are intervals centered at the same point. We do this by reverse induction on the sequence $(A_0,B_0),\ldots,(A_t,B_t)$. For the base case, the argument in the preceding paragraph shows that $|A_t| + |B_t| = 2s$ if $A_t = \{y-(s-1),\ldots,y+(s-1)\}$ and $B_t = \{y\}$, and so both sets are intervals centered at the point $y$.
 
 Suppose now that $(A_{i+1},B_{i+1})$ are intervals centered (without loss of generality) at $0$ or $-1/2$, depending on the parity of $|A_{i+1}|$. 
 Suppose first that $|A_{i+1}|$ is odd. Then for some $0 \leq \ell \leq s-1$ we have $A_{i+1} = \{-\ell,\ldots,\ell\}$ and $B_{i+1} = \{-(s-1-\ell),\ldots,s-1-\ell\}$. By construction, $-\ell \in A_i$. Suppose that also $\ell \in A_i$, so that $A_i = \{-\ell,\ldots,x\} \cup \{x+2,\ldots,\ell\}$. Lemma~\ref{lem:interval-intersection} shows that
\[
 B_i \subseteq \{x-(s-1),\ldots,s-1-\ell\} \cap \{-(s-1-\ell),\ldots, x+2+(s-1)\} \subseteq \{-(s-1-\ell),\ldots,s-1-\ell\},
\]
 and so $B_i \subseteq B_{i+1}$, implying that $|A_i| + |B_i| < 2s$. We conclude that $\ell \notin A_i$, and so $A_i = \{-\ell,\ldots,\ell-1\}$, corresponding to $x=\ell-1$. By construction, $B_i = B_{i+1} \cup \{x-(s-1)\} = \{-(s-\ell),\ldots,s-1-\ell\}$. Both sets are centered at $-1/2$.
 
 The second case, when $|A_{i+1}|$ is even, is similar. In this case for some $1 \leq \ell \leq s-1$ we have $A_{i+1} = \{-\ell,\ldots,\ell-1\}$ and $B_{i+1} = \{-(s-\ell),\ldots,s-\ell-1\}$. By construction, $-\ell \in A_i$. Suppose that also $\ell-1 \in A_i$, so that $A_i = \{-\ell,\ldots,x\} \cup \{x+2,\ldots,\ell-1\}$. Lemma~\ref{lem:interval-intersection} shows that
\[
 B_i \subseteq \{x-(s-1),\ldots,s-1-\ell\} \cap \{-(s-\ell),\ldots, x+2+(s-1)\} \subseteq \{-(s-\ell),\ldots,s-1-\ell\},
\]
 and so $B_i \subseteq B_{i+1}$. As before, this leads to a contradiction, and we conclude that $A_i = \{-\ell,\ldots,\ell-2\}$ is centered at $-1$. Moreover, $B_i = A_i \cup \{\ell-2-(s-1)\} = \{-(s-\ell)-1,\ldots,s-1-\ell\}$ is also centered at $-1$. This completes the proof.
 
 \smallskip
 
 It remains to consider the case $s = m/2$. The $s$-agreeing condition states that if $a \in A$ and $b \in B$ then $a - b \neq s$. Thus $B \subseteq \{x : x + s \notin A\}$, which implies $|A| + |B| \leq m = 2s$. This bound is tight only when $B = \{x : x + s \notin A\}$.
\end{proof}

As an easy corollary, we can derive the classical Erd\H{o}s--Ko--Rado theorem.

\begin{corollary} \label{cor:ekr}
 Let $n \geq k \geq 1$ be parameters such that $k \leq n/2$. If $\cF \subseteq \binom{[n]}{k}$ is intersecting then $|\cF| \leq \binom{n-1}{k-1}$. Furthermore, if $k < n/2$ and $|\cF| = \binom{n-1}{k-1}$ then $\cF$ consists of all sets containing some $i \in [n]$.
\end{corollary}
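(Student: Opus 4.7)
The plan is to execute Katona's classical cycle argument, with Lemma~\ref{lem:katona-cross-discrete} supplying the key combinatorial bound. For each cyclic ordering $\pi\colon \ZZ_n \to [n]$ (a bijection) and each $a \in \ZZ_n$, let $I_\pi^{(a)} = \{\pi(a), \pi(a+1), \ldots, \pi(a+k-1)\}$ denote the cyclic $k$-interval starting at position $a$, and set $A_\pi = \{a \in \ZZ_n : I_\pi^{(a)} \in \cF\}$. Two cyclic $k$-intervals $I_\pi^{(a)}, I_\pi^{(b)}$ intersect iff $a - b \in \{-(k-1),\ldots,k-1\}$ in $\ZZ_n$, using $k \leq n/2$. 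Hence the assumption that $\cF$ is intersecting makes $A_\pi$ a $k$-agreeing subset of $\ZZ_n$, and Lemma~\ref{lem:katona-cross-discrete} (with $s = k$, $m = n$) yields $|A_\pi| \leq k$.

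Next, a standard double count: each $S \in \cF$ equals $I_\pi^{(a)}$ for exactly $n \cdot k!\,(n-k)!$ pairs $(\pi,a)$, since one can independently choose the starting position $a \in \ZZ_n$, arrange the $k$ elements of $S$ in positions $a,\ldots,a+k-1$ in $k!$ ways, and arrange the remaining elements in $(n-k)!$ ways. Summing over all $n!$ cyclic orderings gives
\[
 |\cF| \cdot n \cdot k!\,(n-k)! \;=\; \sum_\pi |A_\pi| \;\leq\; n! \cdot k,
\]
which rearranges to $|\cF| \leq \binom{n-1}{k-1}$.

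For the uniqueness statement when $k < n/2$, equality forces $|A_\pi| = k$ for every cyclic ordering $\pi$. The equality clause of Lemma~\ref{lem:katona-cross-discrete} (which is available precisely because $s = k < m/2 = n/2$) then says that $A_\pi$ must be an interval of length $k$ in $\ZZ_n$, so for each $\pi$ the $k$ cyclic $k$-intervals of $\cF$ appearing in $\pi$ are consecutive shifts and share the single common element $\pi(b_\pi + k - 1)$, where $A_\pi = \{b_\pi,\ldots,b_\pi + k - 1\}$. To convert this local statement into the global conclusion that $\cF$ is a star, I would first shift $\cF$ to a left-compressed family $\cF'$ via Lemma~\ref{lem:left-compressed}; $\cF'$ is still intersecting and has the same cardinality $\binom{n-1}{k-1}$, so the same circle argument applies to it. Combining left-compression with the ``$A_\pi$ is an interval for every $\pi$'' structure pins down $\cF' = \{S \in \binom{[n]}{k} : 1 \in S\}$. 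Finally, I would invoke the uniform analogue of Lemma~\ref{lem:shifting-frankl} (specialised to $t=1$, $r=0$), applied in reverse to the sequence of shifts used to produce $\cF'$, to conclude that $\cF$ is equivalent to a star.

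The routine portion is the upper bound, which is a single application of Lemma~\ref{lem:katona-cross-discrete} combined with one double count. The main obstacle is the uniqueness claim, since the per-ordering ``block of $k$ consecutive intervals'' structure does not by itself point to a globally preferred element of $[n]$; the cleanest route I see is the shifting reduction just sketched, after which the extremal left-compressed family is forced to be the star centred at $1$, and the lifting back to $\cF$ is already encoded in the technique of Lemma~\ref{lem:shifting-frankl}.
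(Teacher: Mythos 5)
The upper bound is exactly the paper's: Katona's cycle method together with the single-set part of Lemma~\ref{lem:katona-cross-discrete}. You phrase it as a double count over bijections $\pi\colon\ZZ_n\to[n]$ rather than the paper's probabilistic formulation, but the content is the same. (Minor wording nit: you call the $n!$ bijections ``cyclic orderings''; since you define them as bijections the count $n!$ is correct, but they are just permutations.)

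The uniqueness argument is a genuinely different route, and as sketched it has two real gaps. First, you assert that left-compression of $\cF$ to $\cF'$, combined with the fact that every $A_\pi$ is a block of $k$ consecutive residues, pins down $\cF'=\{S\in\binom{[n]}{k}:1\in S\}$. This is not proved and is not immediate: the blocks $A_\pi$ live in $\ZZ_n$ and can wrap around $n$, which interacts awkwardly with the linear order on $[n]$ used in left-compression, and showing that a left-compressed intersecting family of size $\binom{n-1}{k-1}$ must be the star at $1$ is itself a nontrivial statement (it is essentially the content of a Hilton--Milner-type argument, which the paper does not supply). Second, and more fundamentally, you invoke ``the uniform analogue of Lemma~\ref{lem:shifting-frankl}'', but no such lemma is available in the paper. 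Lemma~\ref{lem:shifting-frankl} is stated and proved for monotone families on $n$ points, and it does not transfer to the $k$-uniform slice by passing to up-sets: the up-set of the uniform star $\{S\in\binom{[n]}{k}:\ell\in S\}$ is $\{S\subseteq[n]:\ell\in S,\ |S|\geq k\}$, which for $k\geq 2$ is not a Frankl family $\cF_{1,r}$, so the hypothesis of Lemma~\ref{lem:shifting-frankl} is never met. A genuinely uniform analogue would have to be stated and proved separately.

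The paper avoids compression entirely in the uniqueness step, and its argument is worth internalizing. After taking the identity permutation and normalizing so that $I=\{1,\ldots,k\}$, the element $k+1$ is singled out as the putative star centre. Then, for an arbitrary $S$ containing $k+1$ but not $1$, the paper builds a permutation tailored to $S$ (starting $1,\ \{2,\ldots,k\}\setminus A,\ A,\ k+1,\ B$ where $S=\{k+1\}\cup A\cup B$ with $A\subseteq\{2,\ldots,k\}$) and uses the interval structure of $I$ to force $S\in\cF$; and for an arbitrary $T$ not containing $k+1$, it builds a permutation starting $T,\ k+1,\ U$ whose first $k$ rotations all yield sets already known to be in $\cF$, forcing $T\notin\cF$. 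This identifies $\cF=\{S:k+1\in S\}$ directly and uses nothing beyond Lemma~\ref{lem:katona-cross-discrete}.
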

\begin{proof}
 The proof is very similar to the proof of Lemma~\ref{lem:ekr-mup}. Let $\pi$ be a random permutation of $[n]$, and choose $t \in [n]$ at random. Let $S_t = \{ \pi(t+1),\ldots,\pi(t+k) \}$, where indices are taken modulo $n$. Since $S_t$ is a random set from $\binom{[n]}{k}$, we see that $|\cF|/\binom{n}{k}$ is the probability that $S_t \in \cF$.
 
 For any setting of $\pi$, let $I = \{t \in [n] : S_t \in \cF\}$. Since $\cF$ is intersecting, if $a,b \in I$ then $\{\pi(a+1),\ldots,\pi(a+k)\},\{\pi(b+1),\ldots,\pi(b+k)\}$ must intersect, and this implies that $I$ is $k$-agreeing (in the sense of Definition~\ref{def:s-agreeing}). Lemma~\ref{lem:katona-cross-discrete} shows that $|I| \leq k$, and so $|\cF|/\binom{n}{k} \leq k/n$, or $|\cF| \leq \binom{n-1}{k-1}$.
 
 Suppose now that $k < n/2$ and $|\cF| = \binom{n-1}{k-1}$. Lemma~\ref{lem:katona-cross-discrete} shows that for every permutation $\pi$, the set $I$ must be an interval of length $k$. In particular, this is the case for the identity permutation. Suppose without loss of generality that in this case, $I = \{1,\ldots,k\}$. Thus $\{2,\ldots,k+1\},\ldots,\{k+1,\ldots,2k\} \in \cF$ but $\{1,\ldots,k\} \notin \cF$ (since $0 \notin I$). Let $S \in \binom{[n]}{k}$ be any set containing $k+1$ but not $1$, and write $S = \{k+1\} \cup A \cup B$, where $A \subseteq \{2,\ldots,k\}$. Let $\pi$ be any permutation which starts $1,\{2,\ldots,k\}\setminus A,A,k+1,B$. Since $\{\pi(1),\ldots,\pi(k)\} = \{1,\ldots,k\} \notin \cF$ whereas $\{\pi(2),\ldots,\pi(k+1)\} = \{2,\ldots,k+1\} \in \cF$, the set $I$ contains $1$ but not $0$, and so must be $\{1,\ldots,k\}$. This implies that $S \in \cF$.
 
 To finish the proof, let $T \in \binom{[n]}{k}$ be any set not containing $k+1$. Let $U$ be $k-1$ elements disjoint from $T$ and not containing~$1$ or~$k+1$; such elements exist since $n-(k+2) \geq (2k+1)-(k+2) = k-1$. Let $\pi$ be any permutation starting $T,k+1,U$, where we put~$1$ first if $1 \in T$. The earlier paragraph shows that $I \supseteq \{1,\ldots,k\}$, and in particular $T \notin \cF$. Thus all sets in $\cF$ contain $k+1$, and since $|\cF| = \binom{n-1}{k-1}$, it must contain all such sets. This completes the proof.
\end{proof}

\subsubsection{Continuous setting} \label{sec:katona-cross-continuous}

We proceed with a continuous analog of Lemma~\ref{lem:katona-cross-discrete}. First, the pertinent definitions.

\begin{definition} \label{def:p-agreeing}
 The \emph{unit circle} $\unitcirc$ consists of the interval $[0,1]$ with its two end-points pasted. The distance between two points $x,y \in \unitcirc$ is their distance on the unit circle. We denote the Lebesgue measure on $\unitcirc$ by $\mu$.

 Let $p \leq 1/2$. Two measurable sets $A,B \subseteq \unitcirc$ are \emph{$p$-agreeing} if any $a \in A$ and $b \in B$ are at distance less than $p$.
\end{definition}

We now state and prove the analog of Lemma~\ref{lem:katona-cross-discrete}. We only state and prove the upper bound part, leaving the identification of optimal sets to the reader.

\begin{lemma} \label{lem:katona-cross-continuous}
 Let $p < 1/2$. If two non-empty measurable sets $A,B \subseteq \unitcirc$ are $p$-agreeing then $\mu(A) + \mu(B) \leq 2p$. 
\end{lemma}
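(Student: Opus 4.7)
The plan is to reduce to a real-line picture where both sets fit inside a short open arc, and then to bound the sum of their measures by the total length of the enclosing interval.

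First, since $B$ is non-empty I will pick any $b_0 \in B$. The $p$-agreeing condition forces $A \subseteq (b_0 - p, b_0 + p)$, an arc of length $2p < 1$. Fixing a cut of the circle disjoint from this arc, I can parametrize the arc by a real interval and regard $A$ as a bounded subset of $\mathbb{R}$; at such small distances the circle metric coincides with the usual real distance. Next pick $a_0 \in A$; the same argument places $B \subseteq (a_0 - p, a_0 + p) \subseteq \mathbb{R}$ under the same parametrization (possibly after extending the identification slightly, since $|a_0 - b_0| < p$ guarantees that $A \cup B$ lies in a common arc of length strictly less than $1$). So both $A$ and $B$ are realized as measurable subsets of $\mathbb{R}$ with $|a - b| < p$ for every $a \in A$, $b \in B$.

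Now set $\alpha_1 = \inf A$, $\alpha_2 = \sup A$, $\beta_1 = \inf B$, $\beta_2 = \sup B$. The elementary bound $\mu(A) \leq \alpha_2 - \alpha_1$ and $\mu(B) \leq \beta_2 - \beta_1$ is immediate. Taking limits in $|a - b| < p$ along sequences $a_n \to \alpha_2$ in $A$ and $b_n \to \beta_1$ in $B$ gives $\alpha_2 - \beta_1 \leq p$; symmetrically $\beta_2 - \alpha_1 \leq p$. The key identity is simply
\[
(\alpha_2 - \alpha_1) + (\beta_2 - \beta_1) = (\alpha_2 - \beta_1) + (\beta_2 - \alpha_1),
\]
so combining the two one-sided bounds yields
\[
\mu(A) + \mu(B) \leq (\alpha_2 - \beta_1) + (\beta_2 - \alpha_1) \leq 2p.
\]

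The only mildly delicate point is the parametrization step: one has to be sure that after choosing a single cut of $\mathbb{T}$, both $A$ and $B$ sit inside one common arc of length strictly less than $1$ so that the circle distance and the real distance agree on $A \cup B$. This follows because fixing any $a_0 \in A$, the set $B$ lies in $(a_0 - p, a_0 + p)$ and $A$ lies in $(b_0 - p, b_0 + p)$ for any $b_0 \in B$; together with $|a_0 - b_0| < p$ this places $A \cup B$ inside an arc of length at most $3p < 3/2$, and in fact inside $(a_0 - p, a_0 + p) \cup (b_0 - p, b_0 + p)$, which is an arc of length less than $3p$. If $3p \geq 1$ the statement $\mu(A) + \mu(B) \leq 2p$ is still meaningful but the reduction needs a little care; however, since $p < 1/2$ and we only ever need inequalities between elements of $A$ and elements of $B$, the circle distance does equal the real distance on $A \times B$, and the linear argument above goes through without modification. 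This is the step I would write up most carefully.
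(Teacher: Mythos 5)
The argument has a genuine gap in the parametrization/reduction step, and the patch you offer at the end does not work for $p$ close to $1/2$.

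The issue is that the intersection $\bigcap_{b \in B}(b-p,b+p)$ of circular arcs of length $2p$ need not be a single arc when $2p \geq 1/2$. Concretely, take $p = 0.45$, $B = \{0, 1/2\}$, and $A = (0.05, 0.45) \cup (0.55, 0.95)$. You can check directly that every $a \in A$ and $b \in B$ are at circle-distance less than $0.45$, so $(A,B)$ is $p$-agreeing; and indeed $\mu(A)+\mu(B) = 0.8 \leq 2p = 0.9$, so the lemma holds. But after any cut of $\unitcirc$, the real interval $[\inf A, \sup A]$ has length at least $0.9$, and the two farthest-apart points you would feed into the bound $\beta_2 - \alpha_1 \leq p$ (a point of $A$ near one extreme and a point of $B$ near the other) have real distance about $0.95$ while their circle-distance is about $0.05$. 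The claim that ``the circle distance does equal the real distance on $A \times B$'' is therefore simply false in this regime, and the chain $(\alpha_2-\alpha_1)+(\beta_2-\beta_1) = (\alpha_2-\beta_1)+(\beta_2-\alpha_1) \leq 2p$ breaks at the second inequality. Your own $3p$-bound already signals the problem: an ``arc of length less than $3p$'' is only a proper arc when $3p < 1$, and circle-distance only agrees with real distance within an arc of length at most $1/2$, which would require $p < 1/6$; there is no choice of cut that repairs this once $p \geq 1/4$, because the union $A \cup B$ genuinely wraps around the circle in the example above. The paper avoids this entirely by a different route: it approximates $A$ and $B$ from outside by finite unions of intervals, discretizes at scale $1/M$ to reduce to the cyclic-group Lemma~\ref{lem:katona-cross-discrete}, and then sends $M \to \infty$ and the approximation error to $0$. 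Your linear argument could be salvaged in the regime $p < 1/4$ (where arcs of length $2p$ are minor arcs, so the relevant intersections are single arcs and everything lives in an arc of length $< 1/2$), but for $1/4 \leq p < 1/2$ you would need a genuinely new idea, not a tightening of the parametrization step.
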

\begin{proof}
 The proof uses an approximation argument. Let $\epsilon > 0$ be a parameter satisfying $p + \epsilon < 1/2$. Since $A$ is measurable, there is a sequence $A_i$ of intervals of total length smaller than $\mu(A) + \epsilon$ such that $A \subseteq \bigcup_i A_i$. Note that every point in $\bigcup_i A_i$ is at distance at most $\epsilon/2$ from a point in $A$, since otherwise $\mu(\bigcup_i A_i \setminus A) \geq \epsilon$. Similarly, there is a sequence $B_j$ of intervals of total length at most $\mu(B) + \epsilon$ such that $B \subseteq \bigcup_j B_j$, and any point in $\bigcup_j B_j$ is at distance at most $\epsilon/2$ from a point in $B$. Thus $\bigcup_i A_i$ and $\bigcup_j B_j$ are cross-$(p+\epsilon)$-agreeing.
 
 Choose $I$ so that $\sum_{i>I} \mu(A_i) < \epsilon$, and set $A^* = \bigcup_{i=1}^I A_i$. Similarly, choose $J$ so that $\sum_{j>J} \mu(B_j) < \epsilon$, and set $B^* = \bigcup_{j=1}^J B_j$. Thus $A^*,B^*$ are cross-$(p+\epsilon)$-agreeing, and each is a union of finitely many intervals.
 
 Let $M$ be a large integer, and define $A^*_M = \bigcup_{x=0}^{M-1} \{[x/M,(x+1)/M] : [x/M,(x+1)/M] \subseteq A^*\}$. Note that $\mu(A^*_M) \geq \mu(A^*) - 2I/M$. Define $B^*_M$ similarly. We can view $A^*_M,B^*_M$ as subsets of $\ZZ_M$. These subsets are cross-$\lfloor (p+\epsilon)M \rfloor$-agreeing: if $[a/M,(a+1)/M] \in A^*_M$ and $[b/M,(b+1)/M] \in B^*_M$ then $a/M,(b+1)/M$ are at distance less than $p+\epsilon$. Lemma~\ref{lem:katona-cross-discrete} thus shows that $\mu(A^*_M) + \mu(B^*_M) \leq 2(p+\epsilon)$. On the other hand, $\mu(A^*_M) + \mu(B^*_M) \geq \mu(A^*) + \mu(B^*) - 2(I+J)/M$. Taking the limit $M\to\infty$, we deduce that $\mu(A^*) + \mu(B^*) \leq 2(p+\epsilon)$. Since $\mu(A^*) + \mu(B^*) \geq \mu(A) + \mu(B) - 2\epsilon$, we conclude that $\mu(A) + \mu(B) \leq 2p + 4\epsilon$. Taking the limit $\epsilon\to0$, we deduce the lemma.
\end{proof}

As a corollary, we obtain the following useful result.

\begin{corollary} \label{cor:cross-intersecting}
 Let $\cF,\cG$ be cross-intersecting families on $n$ points. For any $p \leq 1/2$, $\mu_p(\cF) + \mu_p(\cG) \leq 1$.
\end{corollary}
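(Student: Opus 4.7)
The plan is to mimic the circle argument from the proof of Lemma~\ref{lem:ekr-mup}, but instead of invoking the one-set bound on arcs that appears there, invoke its two-set cross-intersecting analog Lemma~\ref{lem:katona-cross-continuous}. Concretely, I would sample $n$ points $x_1,\ldots,x_n$ independently and uniformly on $\unitcirc$, and for each threshold $t\in\unitcirc$ let $S_t = \{i : x_i \in (t,t+p)\}$ (with $t+p$ taken modulo $1$). Then $S_t$ has distribution $\mu_p$ when both the $x_i$ and $t$ are uniform, so, setting $I_\cF = \{t : S_t \in \cF\}$ and $I_\cG = \{t : S_t \in \cG\}$ for a fixed configuration,
\[
 \mu_p(\cF) + \mu_p(\cG) = \EE_{x_1,\ldots,x_n}\bigl[\mu(I_\cF) + \mu(I_\cG)\bigr].
\]
Both $I_\cF$ and $I_\cG$ are finite unions of arcs (their endpoints lie among the points $x_i$ and $x_i - p$), hence Borel measurable.

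The key step is to observe that $I_\cF$ and $I_\cG$ are $p$-agreeing in the sense of Definition~\ref{def:p-agreeing}. Indeed, if $t_1 \in I_\cF$ and $t_2 \in I_\cG$, then $S_{t_1} \in \cF$ and $S_{t_2} \in \cG$ must share some index $i$ (because $\cF,\cG$ are cross-intersecting), so $x_i$ lies in both arcs $(t_1,t_1+p)$ and $(t_2,t_2+p)$; these arcs therefore overlap, forcing $d(t_1,t_2) < p$. Provided both $I_\cF$ and $I_\cG$ are non-empty, Lemma~\ref{lem:katona-cross-continuous} then yields $\mu(I_\cF) + \mu(I_\cG) \leq 2p \leq 1$ pointwise in the configuration. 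Integrating over $(x_1,\ldots,x_n)$ gives the desired bound.

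The only thing that needs care is the edge case in which $I_\cF$ or $I_\cG$ is empty for a particular configuration, so that Lemma~\ref{lem:katona-cross-continuous} does not directly apply. In this case the inequality $\mu(I_\cF) + \mu(I_\cG) \leq 1$ is immediate from $\mu(\unitcirc) = 1$, which is exactly what is needed (and is the reason the corollary states $\leq 1$ rather than $\leq 2p$: the choices $\cF = 2^{[n]}$, $\cG = \emptyset$ show the stronger bound $\leq 2p$ fails in general). So this is not really an obstacle, just a minor case split, and the full proof is essentially one display plus the pointwise application of Lemma~\ref{lem:katona-cross-continuous}.
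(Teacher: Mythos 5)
Your argument for $p < 1/2$ is essentially identical to the paper's: same random circle construction, same observation that $I_\cF, I_\cG$ are cross-$p$-agreeing, same invocation of Lemma~\ref{lem:katona-cross-continuous}, same handling of the empty case. However, the corollary asserts the bound for all $p \leq 1/2$, and Lemma~\ref{lem:katona-cross-continuous} is stated only for $p < 1/2$, so your proof as written leaves the endpoint $p = 1/2$ uncovered. The paper dispatches $p = 1/2$ with a one-line complementation argument before running the circle argument for the strict-inequality range: cross-intersection forces $\cG$ to be disjoint from $\{\overline{A} : A \in \cF\}$, and since that complement family has the same $\mu_{1/2}$-measure as $\cF$, the bound $\mu_{1/2}(\cF) + \mu_{1/2}(\cG) \leq 1$ follows immediately. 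You would need either to add this case, or to first extend Lemma~\ref{lem:katona-cross-continuous} to $p = 1/2$ (which is itself easy: $1/2$-agreement forces $B \subseteq \unitcirc \setminus (A + \tfrac12)$, so $\mu(A) + \mu(B) \leq 1$), but as it stands the citation is out of range.
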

\begin{proof}
 We first settle the case $p = 1/2$. If $\cF,\cG$ are cross-intersecting then $\cG$ is disjoint from $\{\overline{A} : A \in \cF\}$. Since the latter set has measure $\mu_p(\cF)$, we conclude that $\mu_p(\cF) + \mu_p(\cG) \leq 1$.
 
 Suppose now that $p < 1/2$. We will follow the argument of Lemma~\ref{lem:ekr-mup}. We choose $n$ points $x_1,\ldots,x_n$ at random on $\unitcirc$, and two starting points $t_\cF,t_\cG \in \unitcirc$ at random. Let $S_t$ be the set of points that lie inside the interval $(t,t+p)$, let $I_\cF$ be the set of indices $t_\cF$ such that $S_{t_\cF} \in \cF$, and define $I_\cG$ analogously. Thus $\mu_p(\cF) + \mu_p(\cG) = \EE[\mu(I_\cF) + \mu(I_\cG)]$. Since $\cF,\cG$ are cross-intersecting, we see that $I_\cF,I_\cG$ are cross-$p$-agreeing. The lemma shows that $\mu(I_\cF) + \mu(I_\cG) \leq 2p \leq 1$ if neither $I_\cF$ nor $I_\cG$ are empty, and $\mu(I_\cF) + \mu(I_\cG) \leq 1$ trivially holds if one of the sets is empty. The corollary follows.
\end{proof}

We comment that the corollary remains holding if the families in question are on infinitely many points, in the sense of Section~\ref{sec:infinite}, with the same proof; the only difference is that we choose infinitely many points rather than just $n$.

\section{Infinite Ahlswede--Khachatrian theorem} \label{sec:infinite}

The goal of Section~\ref{sec:weighted} was to calculate the quantities $w(n,t,p)$ and $\wsup(t,p)$ which are, respectively, the maximum $\mu_p$-measure of a $t$-intersecting family on $n$ points, and the supremum $\mu_p$-measure of a $t$-intersecting family on any number of points. In this section we will consider what happens when we allow families on infinitely many points.

\begin{definition} \label{def:infinite-defs}
 The infinite product measure $\mu_p$ on $\cF \subseteq 2^\NN$ is the infinite product measure extending the finite $\mu_p$ measures.

 
 Given $p \in (0,1)$, A \emph{family on infinitely many points} is a subset $\cF \subseteq 2^\NN$ which is measurable with respect to $\mu_p$.
 The family is \emph{$t$-intersecting} if any two sets $A,B \in \cF$ have at least $t$ points in common; here $t$ can also be $\aleph_0$.
 
 A family $\cF$ is \emph{finitely determined} if it is the \emph{extension} of some family $\cG$ on $n$ points: $\cF = \{ A : A \cap [n] \in \cG \}$. Given $p \in (0,1)$, the family $\cF$ is \emph{essentially finitely determined} if it differs from a finitely determined family by a $\mu_p$-null set.
\end{definition}


\begin{definition} \label{def:winfty}
 For $t \geq 1$ and $p \in (0,1)$, the parameter $\winfty(t,p)$ is the supremum of $\mu_p(\cF)$ over all $t$-intersecting families on infinitely many points.
\end{definition}

The following theorem summarizes the results proved in this section.

\begin{theorem} \label{thm:infinity}
 Let $t \geq 1$ and $p \in (0,1)$.
 
 If $p < 1/2$ then $\winfty(t,p) = \wsup(t,p)$. Furthermore, if $\cF$ is a $t$-intersecting family on infinitely many points satisfying $\mu_p(\cF) = \winfty(t,p)$ then $\cF$ is essentially finitely determined.
 
 If $p = 1/2$ then $\winfty(t,p) = 1/2$. Furthermore, if $t>1$ then no $t$-intersecting family on infinitely many points has measure $1/2$.
 
 If $p > 1/2$ then $\winfty(t,p) = 1$. In fact, there is an $\aleph_0$-intersecting family $\cF$ on infinitely many points which satisfying $\mu_p(\cF) = 1$ for all $p > 1/2$.
\end{theorem}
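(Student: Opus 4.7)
The plan is to handle the three regimes $p>1/2$, $p=1/2$, and $p<1/2$ separately, since both the available tools and the shape of the answer differ.

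For $p>1/2$ I would exhibit a single family witnessing the claim for every such $p$ simultaneously. Set $\cF := \{A\subseteq\NN : \liminf_{n\to\infty}|A\cap[n]|/n > 1/2\}$, a tail event. By the strong law of large numbers, $|A\cap[n]|/n\to p>1/2$ for $\mu_p$-almost every $A$, so $\mu_p(\cF)=1$ for every $p>1/2$. For any $A,B\in\cF$ with lower densities $\alpha_A,\alpha_B>1/2$, the bound $|A\cap B\cap[n]|\ge|A\cap[n]|+|B\cap[n]|-n$ yields $\liminf|A\cap B\cap[n]|/n\ge\alpha_A+\alpha_B-1>0$, so $|A\cap B|=\aleph_0$, and $\cF$ is $\aleph_0$-intersecting.

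For $p=1/2$ the upper bound $\winfty(t,1/2)\le1/2$ is immediate from the infinite version of Corollary~\ref{cor:cross-intersecting} applied with $\cF=\cG$, since $t$-intersecting implies intersecting; the lower bound follows from $\wsup(t,1/2)=1/2$ (Corollary~\ref{cor:wtp}) and the trivial embedding of finite families into infinitely many points. To exclude attainment when $t\ge2$, assume $\cF$ is $2$-intersecting with $\mu_{1/2}(\cF)=1/2$. Since $\cF$ is intersecting, $\cF$ and $\cF^c:=\{A^c:A\in\cF\}$ are disjoint, and both have measure $1/2$ (as $A\mapsto A^c$ preserves $\mu_{1/2}$), so $\cF\sqcup\cF^c$ exhausts $2^\NN$ modulo a null set. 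Fix $i\in\NN$ and $A\in\cF$: by the dichotomy applied to $A\triangle\{i\}$, either $A\triangle\{i\}\in\cF$ or $(A\triangle\{i\})^c=A^c\triangle\{i\}\in\cF$; the latter is impossible because $A\cap(A^c\triangle\{i\})=A\cap\{i\}$ has cardinality at most $1$, contradicting $2$-intersection. Thus $A\triangle\{i\}\in\cF$ for $\mu_{1/2}$-a.e.\ $A\in\cF$ and every $i$. Iterating gives invariance under flipping any finite subset of coordinates modulo null, so $1_\cF$ is essentially tail-measurable, and Kolmogorov's $0$--$1$ law forces $\mu_{1/2}(\cF)\in\{0,1\}$, contradicting $1/2$.

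For $p<1/2$ the lower bound $\winfty(t,p)\ge\wsup(t,p)$ is immediate from extending an optimal Frankl family $\cF_{t,r^\ast}$ to $\NN$. For the upper bound, I would approximate $\cF$ from within by finitely determined subfamilies: set
\[
 \cF_N := \{S\subseteq[N] : \mu_p(\cF\mid A\cap[N]=S)=1\}
\]
and let $\hat\cF_N\subseteq 2^\NN$ be its cylindrical extension. Then $\hat\cF_N\subseteq\cF$ modulo null, the sequence is increasing, and L\'evy's $0$--$1$ law yields $\mu_p(\hat\cF_N)\to\mu_p(\cF)$ after first replacing $\cF$ with its Lebesgue-density-$1$ representative (which agrees with $\cF$ modulo null). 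The main technical obstacle is verifying that each $\cF_N$ is $t$-intersecting on $[N]$: given $S_1,S_2\in\cF_N$, a random pair of extensions lies in $\cF^2$ almost surely, but its tail intersection is almost surely infinite and therefore cannot be used to force $|S_1\cap S_2|\ge t$; instead one must exploit the \emph{strict} (pairwise) $t$-intersecting property of $\cF$ to extract specific extensions $A_i\in\cF$ whose tail intersection $A_1\cap A_2\cap[N+1,\infty)$ is small. Once this step is justified, Theorem~\ref{thm:weighted-main} gives $\mu_p(\hat\cF_N)\le w(N,t,p)\le\wsup(t,p)$, and letting $N\to\infty$ yields $\mu_p(\cF)\le\wsup(t,p)$. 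For the equality case, if $\mu_p(\cF)=\wsup(t,p)$ then for $N$ large enough that $w(N,t,p)=\wsup(t,p)$ one has $\mu_p(\hat\cF_N)=\mu_p(\cF)$, forcing $\cF=\hat\cF_N$ modulo null; the extremal characterization in Theorem~\ref{thm:weighted-main} identifies $\cF_N$ with a family equivalent to $\cF_{t,r^\ast}$, so $\cF$ is essentially finitely determined.
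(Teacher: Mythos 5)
Your $p>1/2$ and $p=1/2$ arguments are correct but differ from the paper's. For $p>1/2$, the tail-density family $\{A:\liminf_n |A\cap[n]|/n>1/2\}$ plus the strong law of large numbers is a valid and arguably cleaner construction than the paper's explicit one with diagonal majority constraints (Lemma~\ref{lem:infinity-plarge}), which instead uses Hoeffding and Borel--Cantelli. For $p=1/2$, $t\geq 2$, you take a genuinely different route: you pair $\cF$ with its complement family, use $2$-intersection to rule out one side of the dichotomy, deduce that $\cF$ is invariant mod null under flipping any single coordinate, and conclude essential tail-measurability. The paper (Lemma~\ref{lem:infinity-phalf}) instead projects out coordinates one by one through a nested sequence of full-measure $t$-intersecting families. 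Both end at Kolmogorov's zero--one law; yours makes the role of the Katona-type intersecting bound more transparent.

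The $p<1/2$ case contains a genuine gap. The key claim --- that $\mu_p(\hat\cF_N)\to\mu_p(\cF)$, where $\hat\cF_N$ extends the fibers of conditional measure \emph{exactly} one --- is false, and the proposed repair of replacing $\cF$ by a ``density-one representative'' cannot help, since altering $\cF$ on a null set leaves every conditional measure $\mu_p(\cF\mid A\cap[N]=S)$ unchanged. L\'evy's zero--one law gives $\mu_p(\cF\mid A\cap[N])\to 1$ for a.e.\ $A\in\cF$, but it does not force the conditional measure to reach~$1$ at any finite $N$. Concretely, choose an unbounded $g$ with $g(n)/n\to 0$ and $n_0$ large enough that $\sum_{n\geq n_0}\Pr[\bin(n,p)<g(n)]<1/2$ (e.g.\ $g(n)=\lceil\sqrt n\rceil$), and set $\cF=\{A: 1\in A\text{ and }|A\cap[2,n]|\geq g(n)\text{ for all }n\geq n_0\}$. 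This $\cF$ is monotone, intersecting, and satisfies $\mu_p(\cF)\geq p/2>0$; yet for every $N$ and every $S\subseteq[N]$, picking $n$ with $g(n)>|S|$ shows $\mu_p(\cF\mid A\cap[N]=S)\leq 1-(1-p)^{n-N}<1$, so your $\cF_N$ is empty for all $N$ and $\mu_p(\hat\cF_N)\equiv 0\neq\mu_p(\cF)$.

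The paper's proof of Lemma~\ref{lem:infinity-psmall} sidesteps this by retaining all fibers of conditional measure $>1-\sqrt\epsilon$ rather than exactly one, paying an $O(\sqrt\epsilon)$ loss that vanishes as $\epsilon\to 0$. The ``technical obstacle'' you flag --- showing the retained fibers form a $t$-intersecting family on $[N]$ --- is in fact handled immediately by Corollary~\ref{cor:cross-intersecting} (which, as the paper notes, extends to infinitely many points): two fibers each of measure $>1/2$ cannot be cross-intersecting, so the base sets must meet in at least $t$ points; there is no need to locate extensions with small tail overlap. Finally, the uniqueness statement for $p<1/2$ also requires the quantitative stability theorem of Keller--Lifshitz (Proposition~\ref{pro:ak-stability}), which your sketch does not invoke; without it, having $\mu_p(\hat\cF_N)$ close to $\wsup(t,p)$ at a single finite $N$ does not identify the limiting family.
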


Note that there are many intersecting families with $\mu_{1/2}$-measure $1/2$, for example $\cF_{1,r}$ for every $r$. There are also families which are not essentially finitely determined. One example is the family of sets $S$ such that for some $n \geq 0$,
\begin{enumerate}[(a)]
\item $S$ contains exactly one of $\{2m+1,2m+2\}$ for all $m < n$.
\item $S$ contains both $\{2n+1,2n+2\}$.	
\end{enumerate}

We prove the different cases in the theorem one by one, starting with the case $p < 1/2$. We start with the following technical proposition, which follows from the definition of $\mu_p$.

\begin{proposition} \label{pro:infinity-mup}
 Let $\cF$ be a family on infinitely many points. For every $p \in (0,1)$ and for every $\epsilon > 0$ there is a finitely determined family $\cG$ such that $\mu_p(\cF \symdiff \cG) < \epsilon$.
\end{proposition}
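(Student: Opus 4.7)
The plan is to recognize Proposition~\ref{pro:infinity-mup} as an instance of a standard measure-theoretic approximation theorem: whenever a finite measure is defined on the $\sigma$-algebra generated by an algebra $\mathcal{A}$, every measurable set can be approximated, up to arbitrarily small symmetric difference, by an element of $\mathcal{A}$. Here the relevant algebra will be the collection of finitely determined families.

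First I would check that the finitely determined families indeed form an algebra $\mathcal{A}$ of subsets of $2^\NN$. If $\cG_1,\cG_2$ are determined by the first $n_1,n_2$ coordinates respectively, then we may pass to $n = \max(n_1,n_2)$ and write both as extensions of families on $n$ points; complements, finite unions, and finite intersections are then manifestly also extensions of families on $n$ points. Moreover, the product $\sigma$-algebra underlying Definition~\ref{def:infinite-defs} is $\sigma(\mathcal{A})$, since the cylinder sets $\{A : A \cap [n] \in \cG\}$ generating the product $\sigma$-algebra are exactly the finitely determined families.

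Next I would invoke (or establish, via Carath\'eodory) the following approximation lemma: if $\mu$ is a finite measure on $\sigma(\mathcal{A})$ and $E \in \sigma(\mathcal{A})$, then for every $\epsilon>0$ there exists $A \in \mathcal{A}$ with $\mu(E \symdiff A) < \epsilon$. The proof is short: the outer measure induced by $\mu|_{\mathcal{A}}$ satisfies
\[
 \mu(E) = \inf\Bigl\{\sum_{i} \mu(A_i) : E \subseteq \bigcup_i A_i, \ A_i \in \mathcal{A}\Bigr\},
\]
so one picks a cover $\{A_i\}$ with $\sum_i \mu(A_i) \leq \mu(E) + \epsilon/3$, then chooses $N$ large enough that $\sum_{i>N} \mu(A_i) < \epsilon/3$, and sets $A = \bigcup_{i\leq N} A_i \in \mathcal{A}$. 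The containment $E \subseteq \bigcup_i A_i$ gives $\mu(E \setminus A) \leq \epsilon/3$, while $\mu(A \setminus E) \leq \mu(\bigcup_i A_i) - \mu(E) + \mu(E\setminus A) \leq 2\epsilon/3$, so $\mu(E \symdiff A) < \epsilon$.

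Applying this to $\mu = \mu_p$ and $E = \cF$ produces the required finitely determined family $\cG$. There is no real obstacle: the only thing to be careful about is matching the definition of ``$\mu_p$-measurable'' in Definition~\ref{def:infinite-defs} with measurability in the product $\sigma$-algebra, which by construction is generated by the finitely determined families; after that, the proposition is entirely a standard consequence of the Carath\'eodory extension procedure.
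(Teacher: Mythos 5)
The paper states this proposition without proof, treating it as a standard consequence of the definition of the infinite product measure; your proof supplies exactly the right standard argument. You correctly observe that the finitely determined families form an algebra $\mathcal{A}$ generating the product $\sigma$-algebra, and that the Carath\'eodory outer-measure characterization yields, for any measurable $E$ and any $\epsilon>0$, a finite union $A\in\mathcal{A}$ of covering sets with $\mu_p(E\symdiff A)<\epsilon$. The computation is sound (your bound $\mu_p(A\setminus E)\leq 2\epsilon/3$ is slightly looser than necessary --- one has $\mu_p(A\setminus E)\leq \mu_p(\bigcup_i A_i)-\mu_p(E)\leq\epsilon/3$ directly, since $E\subseteq\bigcup_i A_i$ --- but the conclusion $\mu_p(E\symdiff A)<\epsilon$ still follows). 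One small point worth flagging: Definition~\ref{def:infinite-defs} says ``measurable with respect to $\mu_p$,'' which might be read as measurability in the completion rather than in $\sigma(\mathcal{A})$ itself; in that case you would first replace $\cF$ by a $\sigma(\mathcal{A})$-measurable set agreeing with it up to a null set, and then apply your argument. This is a one-line remark and does not affect the validity of the approach.
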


Our proof will need a stability version of Theorem~\ref{thm:weighted-main}, due to Keller and Lifshitz~\cite{KellerLifshitz}.

\begin{proposition} \label{pro:ak-stability}
 Fix $p \in (0,1/2)$ and $t \geq 1$. If $\cF$ is a $t$-intersecting family on $n$ points of measure $(1-\epsilon)w(n,t,p)$ then there exists a $t$-intersecting family $\cG$ on $n$ points of measure $w(n,t,p)$ such that $\mu_p(\cF \symdiff \cG) = O(\epsilon^{\log_{1-p} p})$. Here the hidden constant depends on $p,t$ but not on $n$.
\end{proposition}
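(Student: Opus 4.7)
The plan is to follow the junta method of Keller and Lifshitz. The starting observation is that if $\cF$ is $t$-intersecting with $\mu_p(\cF) \geq (1-\epsilon)w(n,t,p)$, then $\cF$ should be close, in $\mu_p$-symmetric-difference, to a family depending only on a bounded number of coordinates, i.e.\ a \emph{junta} on a coordinate set $J \subseteq [n]$ with $|J| = C(p,t)$. Concretely, I would first establish a junta approximation lemma: there exists a family $\cJ$ depending only on coordinates in some $J$ of size $O_{p,t}(1)$ such that $\mu_p(\cF \symdiff \cJ) \leq C'(p,t) \cdot \epsilon^{\log_{1-p} p}$. The appearance of the exponent $\log_{1-p} p$ is the fingerprint of the $(2,q)$-hypercontractive inequality for the $\mu_p$ measure: in the level-$d$ inequality one trades $\|f\|_2^2$ against $\|f\|_q^2$ with $q = 1 + (1-p)/p$, and the resulting amplification factor is exactly $(1-p)/p$ per Fourier level, which after optimization gives the stated exponent.

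The second step is to make the junta $\cJ$ itself $t$-intersecting. For this I would use a straightforward removal argument: the non-$t$-intersecting pairs in $\cJ$ can come only from a set of $\mu_p$-measure $O(\epsilon^{\log_{1-p} p})$, since every such pair has a witness pair $(A,B) \in \cF \times \cF$ at Hamming distance $O_{p,t}(1)$ (because $\cJ$ is a junta), which would violate the $t$-intersecting property of $\cF$ up to an event of small measure. Removing these sets yields a $t$-intersecting junta $\cJ'$ on $J$ with $\mu_p(\cJ \symdiff \cJ') = O(\epsilon^{\log_{1-p} p})$.

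The third step is a finite, stability-free analysis. Since $\cJ'$ is $t$-intersecting and its $\mu_p$-measure is within $O(\epsilon^{\log_{1-p} p})$ of $w(n,t,p)$, and since $\cJ'$ lives on $|J| = O_{p,t}(1)$ coordinates, Theorem~\ref{thm:weighted-main} applied on the junta coordinates, together with the gap estimate in Lemma~\ref{lem:frankl-mup} between the best Frankl family for $p$ and any inferior one, forces $\cJ'$ to be equivalent to $\cF_{t,r}$ for the correct $r = r(p,t)$, up to a further error of $O(\epsilon^{\log_{1-p} p})$ (the gap between the optimum and the next-best value is a positive constant $c(p,t)$, so any deviation from a Frankl family shows up as a constant-factor loss). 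Taking $\cG$ to be this Frankl family completes the proof.

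The main obstacle is the first step: proving a junta approximation theorem with the sharp polynomial exponent $\log_{1-p} p$. Weaker approximations (with worse polynomial, or even logarithmic, dependence on $\epsilon$) can be obtained via the Kindler--Safra theorem and its $p$-biased extensions, but obtaining the sharp exponent requires the full strength of the Keller--Lifshitz hypercontractive argument, which combines a level-$d$ inequality for $\mu_p$ with a pseudo-junta bootstrapping procedure. The second and third steps, by contrast, are routine given a junta approximation in hand and the structural results already proved in Section~\ref{sec:weighted}.
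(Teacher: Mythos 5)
The paper does not prove this proposition at all: it is stated and attributed directly to Keller and Lifshitz~\cite{KellerLifshitz}, with no argument supplied. Your sketch is a sensible road map of the Keller--Lifshitz junta method, but, as you yourself concede, the crux --- a $p$-biased junta approximation theorem with the sharp exponent $\log_{1-p}p$ --- is not actually proved in your proposal; it is deferred to the same source the paper cites. So there is no real discrepancy between your route and the paper's only because the paper treats the result as a black box, whereas you have expanded the black box into a three-step outline and then closed it again. Steps two (a removal argument to restore the $t$-intersecting property on the junta) and three (a finite-dimensional analysis using Theorem~\ref{thm:weighted-main} and the gap in Lemma~\ref{lem:frankl-mup}) are indeed routine once step one is in hand, and your description of them is fine. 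If the goal were a self-contained proof, the hypercontractive junta approximation of step one is what would need to be supplied in full; absent that, what you have is an annotated citation rather than a proof, which happens to be exactly the level of detail the paper itself provides.
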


This will allow us to settle the case $p < 1/2$.

\begin{lemma} \label{lem:infinity-psmall}
 Let $t \geq 1$ and $p \in (0,1/2)$. Then $\winfty(t,p) = \wsup(t,p)$. Furthermore, if $\cF$ is a $t$-intersecting family on infinitely many points satisfying $\mu_p(\cF) = \winfty(t,p)$ then $\cF$ is essentially finitely determined.
\end{lemma}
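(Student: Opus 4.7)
The lower bound $\winfty(t,p) \geq \wsup(t,p)$ is immediate: the extension $\cF_{t,r}^\infty := \{A \subseteq \NN : |A \cap [t+2r]| \geq t+r\}$ is a $t$-intersecting family on $\NN$ with the same $\mu_p$-measure as $\cF_{t,r}$. For the upper bound, let $\cF$ be any $t$-intersecting family on $\NN$. The plan is a conditional-expectation argument. For each $n$ and $H \subseteq [n]$, set $\cF_H := \{B \subseteq \NN \setminus [n] : H \cup B \in \cF\}$, let $\phi_n(H) := \mu_p(\cF_H)$ (viewing $\NN \setminus [n]$ as a copy of $\NN$), fix $\delta \in (0,1/2)$, and define $\cH^*_n := \{H \subseteq [n] : \phi_n(H) \geq 1-\delta\}$. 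The key step is to show $\cH^*_n$ is $t$-intersecting: if $H_1, H_2 \in \cH^*_n$ had $|H_1 \cap H_2| < t$, then $\cF_{H_1}, \cF_{H_2}$ would be cross-$(t-|H_1 \cap H_2|)$-intersecting on $\NN \setminus [n]$, and in particular cross-intersecting, so that $\mu_p(\cF_{H_1}) + \mu_p(\cF_{H_2}) \leq 1$ by Corollary~\ref{cor:cross-intersecting} (applied in the infinite setting, as noted after its proof). This contradicts $\phi_n(H_1) + \phi_n(H_2) \geq 2(1-\delta) > 1$. Therefore $\mu_p(\cH^*_n) \leq w(n,t,p) \leq \wsup(t,p)$.

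Next, by L\'evy's upward martingale convergence theorem applied to $\mathbf{1}_\cF$ and the filtration generated by the first $n$ coordinates, the conditional expectation $\phi_n(A \cap [n]) = \EE[\mathbf{1}_\cF \mid A \cap [n]]$ converges $\mu_p$-a.s. to $\mathbf{1}_\cF(A)$; since the limit is $\{0,1\}$-valued, $\mu_p(\{H : \phi_n(H) \in (\delta,1-\delta)\}) \to 0$. Splitting $\mu_p(\cF) = \EE[\phi_n]$ across the three ranges $[1-\delta,1]$, $(\delta,1-\delta)$, $[0,\delta]$ yields
\[
 \mu_p(\cF) \leq \mu_p(\cH^*_n) + \mu_p(\{H : \phi_n(H) \in (\delta,1-\delta)\}) + \delta \leq \wsup(t,p) + o_n(1) + \delta.
\]
Letting $n \to \infty$ and then $\delta \to 0$ gives the desired upper bound.

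For the characterization of extremals, assume $\mu_p(\cF) = \wsup(t,p)$. The inequality above forces $\mu_p(\cH^*_n) \to \wsup(t,p)$. Since $w(n,t,p) = \wsup(t,p)$ for all sufficiently large $n$, Proposition~\ref{pro:ak-stability} yields $t$-intersecting extremal families $F^{(n)} \subseteq 2^{[n]}$ with $\mu_p(\cH^*_n \symdiff F^{(n)}) = o(1)$. By Theorem~\ref{thm:weighted-main}, each $F^{(n)}$ is equivalent to a Frankl family $\cF_{t,r}$ for some $r = r(p,t)$ depending only on $p$ and $t$ (in the critical case $p = \frac{r+1}{t+2r+1}$ where two consecutive values of $r$ are extremal, only one of them can be realized along a subsequence, since otherwise $\cF$ would be close in $\mu_p$ to two families with distinct structure). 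Thus $F^{(n)} = \{H \subseteq [n] : |H \cap S_n| \geq t+r\}$ for some $S_n \subseteq [n]$ of size $t+2r$. Setting $F_{S_n}^\infty := \{A \subseteq \NN : |A \cap S_n| \geq t+r\}$, the a.s.\ convergence of $\phi_n(A \cap [n])$ combined with $\cH^*_n \approx F^{(n)}$ forces $\mathbf{1}_{F_{S_n}^\infty} \to \mathbf{1}_\cF$ in $\mu_p$-measure. A direct calculation using the independence of coordinates in $S \symdiff S'$ shows that $\mu_p(F_S^\infty \symdiff F_{S'}^\infty)$ is bounded below by a positive constant depending only on $p,t,r$, uniformly over distinct $S, S' \subseteq \NN$ with $|S| = |S'| = t+2r$. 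Hence $S_n$ must eventually stabilize at some $S \subseteq \NN$, and $\cF = F_S^\infty$ up to a $\mu_p$-null set, so $\cF$ is essentially finitely determined.

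The main obstacle is the step that $\cH^*_n$ is $t$-intersecting, which is precisely where the hypothesis $p < 1/2$ enters (via the cross-intersecting bound of Corollary~\ref{cor:cross-intersecting}). Once that is in place, the upper bound is a routine application of martingale convergence, and the extremal characterization is a straightforward consequence of the Keller--Lifshitz stability input combined with the discreteness of the set of Frankl families under the $\mu_p$-metric.
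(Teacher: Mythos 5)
Your proof is correct and follows essentially the same strategy as the paper: decompose by fibers over $[n]$, use Corollary~\ref{cor:cross-intersecting} to show that the bases of fibers of measure more than $1/2$ must $t$-intersect, and invoke Proposition~\ref{pro:ak-stability} together with the discreteness of the family of Frankl families to pin down the extremals. The one packaging difference is that you invoke L\'evy's upward martingale convergence theorem where the paper appeals to Proposition~\ref{pro:infinity-mup} plus a first-moment (Markov) bound; these are two phrasings of the same measure-theoretic fact, and yours is arguably cleaner. One small point to tighten in the characterization: since $\cH^*_n$ depends on $\delta$, the inequality $\mu_p(\cF)\leq\mu_p(\cH^*_n)+o_n(1)+\delta$ only gives $\mu_p(\cH^*_n)\geq\wsup(t,p)-\delta-o_n(1)$, not $\mu_p(\cH^*_n)\to\wsup(t,p)$ outright; this is easily fixed by first establishing (for fixed $\delta<1/2$) that $\mu_p(\cF\symdiff\cH^*_n)\to0$, applying stability to obtain $\mu_p(\cF\symdiff F^{(n)})=O(\delta^{c})+o_n(1)$, and then using the uniform separation of Frankl families to send $\delta\to0$.
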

\begin{proof}
 Clearly $\winfty(t,p) \geq \wsup(t,p)$. For the other direction, let $\cF$ be a $t$-intersecting family on infinitely many points. Let $\epsilon \in (0,1/4)$ be a parameter. Proposition~\ref{pro:infinity-mup} shows that there is a family $\cG$ depending on $N$ points such that $\mu_p(\cF \symdiff \cG) < \epsilon$. We can assume that $\cG$ depends on the first $N$ points. For $S \subseteq [N]$, let $\cF_S = \{ A \in 2^{\NN \setminus [N]} : S \cup A \in \cF \}$.
 
 Let $\dist(x,\{0,1\}) = \min(|x|,|x-1|)$, and notice that
\[
 \EE_{S \sim \mu_p([N])}[\dist(\mu_p(\cF_S),\{0,1\})] \leq \EE_{S \sim \mu_p([N])}[\mu_p(\cF_S \symdiff \cG_S)] = \mu_p(\cF \symdiff \cG) < \epsilon.
\]
 Thus with probability at least $1-\sqrt{\epsilon}$, $\dist(\mu_p(\cF_S),\{0,1\}) < \sqrt{\epsilon}$.
 
 Define now $\cF' = \{ A \in \cF : \mu_p(\cF_{A \cap [N]}) > 1-\sqrt{\epsilon} \}$. In words, $\cF'$ is the subset of $\cF$ obtained by removing all fibers $\cF_S$ whose $\mu_p$-measure is at most $1-\sqrt{\epsilon}$. In particular, we remove all fibers whose $\mu_p$-measure is less than $\sqrt{\epsilon}$, and all fibers whose measure is between $\sqrt{\epsilon}$ and $1-\sqrt{\epsilon}$. The former have measure at most $\sqrt{\epsilon}$, and the latter also have a measure at most $\sqrt{\epsilon}$ due to the preceding paragraph. Thus $\mu_p(\cF \setminus \cF') \leq 2\sqrt{\epsilon}$.
 
 If $S,T \subseteq [N]$ are such that $\mu_p(\cF_S),\mu_p(\cF_T) > 1-\sqrt{\epsilon} > 1/2$ then $\cF_S,\cF_T$ cannot be cross-intersecting, due to Corollary~\ref{cor:cross-intersecting} (while we stated the corollary for finite families, it holds for infinite families with exactly the same proof). It follows that $|S \cap T| \geq t$, and so the family $\cG = \{ A : \mu_p(\cF_{A \cap [N]}) > 1-\sqrt{\epsilon} \}$ containing $\cF'$ is $t$-intersecting. Since $\cF'$ is contained in a $t$-intersecting family depending on $N$ points, $\mu_p(\cF') \leq \wsup(t,p)$, and so $\mu_p(\cF) \leq \wsup(t,p) + 2\sqrt{\epsilon}$. Taking the limit $\epsilon \to 0$, we deduce that $\mu_p(\cF) \leq \wsup(t,p)$, and so $\winfty(t,p) = \wsup(t,p)$.
 
 \smallskip
 
 Suppose now that $\mu_p(\cF) = \wsup(t,p)$. For every $\epsilon > 0$, we have constructed above a $t$-intersecting family $\cF' = \cF'_\epsilon$ which is contained in a $t$-intersecting family $\cG_\epsilon$ depending on $N_\epsilon$ points and satisfies $\mu_p(\cF'_\epsilon) \geq \mu_p(\cF) - 2\sqrt{\epsilon}$. In particular, $\mu_p(\cG_\epsilon) \geq \mu_p(\cF) - 2\sqrt{\epsilon}$. For small enough $\epsilon > 0$, this can only happen if $w(N_\epsilon,t,p) = \wsup(t,p)$. Furthermore, Proposition~\ref{pro:ak-stability} shows that $\mu_p(\cG_\epsilon \symdiff \cH_\epsilon) = O(\epsilon^{O(1)})$ for some $t$-intersecting family $\cH_\epsilon$ on $N$ points of $\mu_p$-measure $\wsup(t,p)$. Corollary~\ref{cor:wtp} shows that such a family is a Frankl family with appropriate parameters. In particular, $\cH_\epsilon$ depends on a constant number of coordinates (depending on $t,p$).
 
 Notice that $\mu_p(\cG_\epsilon \setminus \cF'_\epsilon) \leq \sqrt{\epsilon}$ (since each fiber we retained had $\mu_p$-measure at least $1-\sqrt{\epsilon}$), and so $\mu_p(\cG_\epsilon \symdiff \cF) = O(\sqrt{\epsilon})$. Thus $\mu_p(\cF \symdiff \cH_\epsilon) = O(\epsilon^{O(1)})$. In particular, if $\epsilon_1,\epsilon_2 \leq \epsilon$ we get $\mu_p(\cH_{\epsilon_1} \symdiff \cH_{\epsilon_2}) = O(\epsilon^{O(1)})$. Since $\cH_{\epsilon_1} \symdiff \cH_{\epsilon_2}$ depends on a constant number of coordinates, for small enough $\epsilon$ this forces $\cH_{\epsilon_1} = \cH_{\epsilon_2}$. There is therefore a Frankl family $\cH$ satisfying $\mu_p(\cF \symdiff \cH) = O(\epsilon^{O(1)})$ for all small enough $\epsilon > 0$. Taking the limit $\epsilon \to 0$ concludes the proof.
\end{proof}

We now consider the case $p = 1/2$. We thank Shay Moran for help with the proof of the following lemma.

\begin{lemma} \label{lem:infinity-phalf}
 For all $t \geq 1$, $\winfty(t,1/2) = 1/2$. Furthermore, if $t \geq 2$ then no $t$-intersecting family on infinitely many points has $\mu_p$-measure $1/2$.
\end{lemma}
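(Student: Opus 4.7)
The plan is to establish both the value $\winfty(t,1/2) = 1/2$ and, for $t \geq 2$, the non-attainment claim. For the \emph{lower bound}, extend any Frankl family $\cF_{t,r}$ on $t+2r$ points to the finitely determined family $\tilde\cF_{t,r} = \{A \subseteq \NN : A \cap [t+2r] \in \cF_{t,r}\}$ on infinitely many points; this preserves $t$-intersection and $\mu_{1/2}$-measure, so Corollary~\ref{cor:wtp} (which gives $w(t,1/2)=1/2$) yields $\winfty(t,1/2) \geq 1/2$. For the \emph{upper bound}, observe that the complement map $c(A) = \NN \setminus A$ preserves $\mu_{1/2}$; since any $t$-intersecting $\cF$ is in particular intersecting, $\cF \cap c(\cF) = \emptyset$, so $2\mu_{1/2}(\cF) = \mu_{1/2}(\cF) + \mu_{1/2}(c(\cF)) \leq 1$.

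For the second assertion, assume $t \geq 2$ and suppose for contradiction that some $t$-intersecting $\cF$ on $\NN$ achieves $\mu_{1/2}(\cF) = 1/2$. For each $i \in \NN$ define the two fibers
\[
 \cF_i^+ = \{A \subseteq \NN \setminus \{i\} : A \cup \{i\} \in \cF\}, \qquad \cF_i^- = \{A \subseteq \NN \setminus \{i\} : A \in \cF\}.
\]
A routine decomposition of $\mu_{1/2}$ over coordinate $i$ gives $\mu_{1/2}(\cF_i^+) + \mu_{1/2}(\cF_i^-) = 2\mu_{1/2}(\cF) = 1$. A short case check on the intersections $(A \cup \{i\}) \cap B$ and $(A \cup \{i\}) \cap (B \cup \{i\})$ shows that $\cF_i^+ \cup \cF_i^-$ is $(t-1)$-intersecting; using $t \geq 2$, this family is intersecting, so by the upper bound just proved $\mu_{1/2}(\cF_i^+ \cup \cF_i^-) \leq 1/2$. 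Inclusion--exclusion then yields $\mu_{1/2}(\cF_i^+ \cap \cF_i^-) \geq 1/2$, and combining with containment and the sum identity forces $\mu_{1/2}(\cF_i^+) = \mu_{1/2}(\cF_i^-) = 1/2$ and $\cF_i^+ = \cF_i^-$ modulo a $\mu_{1/2}$-null set. Equivalently, $1_\cF$ is invariant under the involution $T_i(A) = A \symdiff \{i\}$ up to a null set, simultaneously for every $i \in \NN$.

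The last step is to deduce that such an $\cF$ must be essentially $\mu_{1/2}$-trivial. I will use Walsh--Fourier analysis: expand $1_\cF \in L^2(2^\NN, \mu_{1/2})$ in the orthonormal basis $\chi_S(A) = (-1)^{|A \cap S|}$ indexed by finite $S \subseteq \NN$, which is complete because cylinder functions on finite initial segments are dense in $L^2$ (e.g.\ by martingale convergence) and every such cylinder is a finite Walsh polynomial. Since $\chi_S \circ T_i = (-1)^{[i \in S]} \chi_S$, the essential equality $1_\cF = 1_\cF \circ T_i$ forces $\hat{1_\cF}(S) = 0$ for every $S \ni i$; ranging $i$ over $\NN$ eliminates every nonempty finite $S$. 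Hence $1_\cF = \hat{1_\cF}(\emptyset) = \mu_{1/2}(\cF) = 1/2$ almost surely---contradicting that $1_\cF$ is $\{0,1\}$-valued.

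The expected main obstacle is controlling the accumulation of exceptional null sets when iterating the single-coordinate invariance over the infinite index set $\NN$; the Walsh--Fourier route bypasses this cleanly, since each Fourier coefficient is a single number and the countable conjunction of conditions ``$\hat{1_\cF}(S) = 0$'' carries no null-set bookkeeping. An alternative formulation is that coordinate-wise essential invariance makes $1_\cF$ measurable with respect to the tail $\sigma$-algebra modulo null sets, whereupon Kolmogorov's $0$--$1$ law forces $\mu_{1/2}(\cF) \in \{0,1\}$; but the Fourier version is both shorter and more self-contained.
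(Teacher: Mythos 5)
Your proof is correct, and the non-attainment argument for $t \geq 2$ takes a genuinely different route from the paper's. The lower and upper bounds for $\winfty(t,1/2) = 1/2$ match the paper. For non-attainment, the paper first passes to the up-set of $\cF$ so it can use monotonicity (which gives $\cF_- \subseteq \cF_+$), observes that $\cF_+$ is intersecting when $t \geq 2$ and so has measure at most $1/2$, and then constructs an explicit decreasing chain of $t$-intersecting families $\cF \supseteq \cF_1 \supseteq \cF_2 \supseteq \cdots$, each of measure $1/2$ and genuinely independent of one more coordinate; the intersection is a tail event of measure $1/2$, contradicting Kolmogorov's zero--one law. You avoid the monotone reduction by instead showing that $\cF_i^+ \cup \cF_i^-$ is $(t-1)$-intersecting and combining the sum identity with inclusion--exclusion to conclude $\cF_i^+ = \cF_i^-$ up to null sets, i.e., essential invariance under $T_i$ for each $i$. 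You then finish with Walsh--Fourier analysis: each nonempty finite $S$ has $\hat{1_\cF}(S) = 0$, so $1_\cF$ is a.e.\ the constant $1/2$, contradicting that it is $\{0,1\}$-valued. As you note, this sidesteps the null-set bookkeeping that your version of the zero--one-law argument would require, because each ``$\hat{1_\cF}(S)=0$'' is a single numeric condition; the paper's chain construction achieves the same cleanliness by building \emph{genuinely} coordinate-independent subfamilies rather than a.e.-invariant ones. Both proofs are correct and of comparable length; yours is more self-contained modulo knowing that the Walsh characters form an orthonormal basis of $L^2(2^\NN,\mu_{1/2})$, while the paper's leans directly on a named probabilistic theorem.
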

\begin{proof}
 If $\cF$ is an intersecting family on infinitely many points then $\cF$ is disjoint from $\{ \overline{A} : A \in \cF \}$. Since both families have the same measure, it follows that $\mu_p(\cF) \leq 1/2$. Thus $\winfty(t,1/2) \leq 1/2$. On the other hand, clearly $\winfty(t,1/2) \geq \wsup(t,1/2)$, and so $\winfty(t,1/2) = 1/2$ due to Corollary~\ref{cor:wtp}.
 
 Suppose now that $t \geq 2$ and that $\cF$ is a $t$-intersecting family on infinitely many points of $\mu_{1/2}$-measure $1/2$. By possibly taking the up-set of $\cF$, we can assume that $\cF$ is monotone (see Definition~\ref{def:upset} for the relevant definitions). Let $\cF_- = \{ A \in 2^{\NN \setminus \{1\}} : A \in \cF \}$ and $\cF_+ = \{ A \in 2^{\NN \setminus \{1\}} : A \cup \{1\} \in \cF \}$. Since $\cF$ is monotone, $\cF_- \subseteq \cF_+$. Since $t \geq 2$, $\cF_+$ is intersecting, and so $\mu_{1/2}(\cF_+) \leq 1/2$. It follows that $\mu_{1/2}(\cF_+) = \mu_{1/2}(\cF_-) = 1/2$ as well, and so $\cF_1 = \{ A \in 2^\NN : A \cap (\NN \setminus \{1\}) \in \cF_- \}$ also has $\mu_{1/2}$-measure~$1/2$. Note that $\cF_1$ does not depend on $1$. Note also that $\cF_1 \subseteq \cF$ and that $\cF_1$ is $t$-intersecting (since $\cF_-$ is).
 
 Starting with $\cF_1$ but working with the element $2$ instead of $1$, construct a $t$-intersecting family $\cF_2 \subseteq \cF_1$ of $\mu_{1/2}$-measure~$1/2$, and note that $\cF_2$ depends on neither~$1$ nor~$2$. Continuing in this way, we construct a sequence of families $\cF_n$ such that $\cF_n \subseteq \cF_{n-1}$ is a $t$-intersecting family of $\mu_{1/2}$-measure~$1/2$ that does not depend on $1,\ldots,n$. Thus $\cF' = \bigcap_{n \in \NN} \cF_n$ is a $t$-intersecting family of $\mu_{1/2}$-measure~$1/2$ which represents a tail event. However, such a family must have measure~$0$ or~$1$ due to Kolmogorov's zero-one law. This contradiction shows that the original family $\cF$ could not have existed.
\end{proof}

Finally, we dispense of the case $p > 1/2$.

\begin{lemma} \label{lem:infinity-plarge}
 There exists a family $\cF$ on infinitely many points which is $\aleph_0$-intersecting and satisfies $\mu_p(\cF) = 1$ for all $p \in (1/2,1)$.
\end{lemma}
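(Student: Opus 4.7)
The plan is to exhibit an explicit family defined by a density condition. Concretely, I would set
\[
 \cF = \left\{ A \subseteq \NN : \liminf_{n \to \infty} \frac{|A \cap [n]|}{n} > \tfrac{1}{2} \right\},
\]
which can equivalently be written as $\cF = \bigcup_{k \geq 1} \cF_k$ where $\cF_k = \{A : \liminf_n |A \cap [n]|/n \geq 1/2 + 1/k\}$. Each $\cF_k$ is a countable intersection of countable unions of cylinder sets, hence Borel and therefore $\mu_p$-measurable for every $p$, and $\cF$ is a countable union of these.

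Next, I would verify the measure claim. Fix $p > 1/2$. Under $\mu_p$, the indicator random variables $\mathbf{1}[i \in A]$ for $i \in \NN$ are i.i.d.\ Bernoulli($p$), so by the strong law of large numbers,
\[
 \lim_{n \to \infty} \frac{|A \cap [n]|}{n} = p > \tfrac{1}{2} \qquad \text{for } \mu_p\text{-almost every } A.
\]
Therefore $\mu_p(\cF) = 1$ for every $p \in (1/2, 1)$.

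Finally, I would verify that $\cF$ is $\aleph_0$-intersecting. Let $A, B \in \cF$, and pick $\epsilon > 0$ small enough that both $\liminf_n |A \cap [n]|/n$ and $\liminf_n |B \cap [n]|/n$ exceed $1/2 + \epsilon$. By inclusion-exclusion, for all sufficiently large $n$,
\[
 |A \cap B \cap [n]| \geq |A \cap [n]| + |B \cap [n]| - n \geq 2\epsilon \cdot n,
\]
so $\liminf_n |A \cap B \cap [n]|/n \geq 2\epsilon > 0$. In particular $A \cap B$ is infinite, i.e.\ $|A \cap B| = \aleph_0$, as required.

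There is essentially no obstacle here: the only thing to be mildly careful about is ensuring that a single family works simultaneously for every $p > 1/2$, which is the reason for using the condition $\liminf > 1/2$ rather than $\lim = p$.
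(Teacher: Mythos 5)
Your proof is correct. It uses the same underlying idea as the paper's proof — define $\cF$ by a density-type condition on initial segments that is $\mu_p$-almost-surely satisfied when $p > 1/2$, and forces any two members to overlap heavily — but with a cleaner realization. The paper takes an auxiliary function $f$ with $f(r) = \omega(1)$ and $f(r) = o(r)$ and defines $\cF$ by the condition $|A \cap [2r+f(r)]| \geq r + f(r)$ for all large $r$, then proves $\mu_p(\cF) = 1$ via Hoeffding's inequality and Borel--Cantelli, and proves $\aleph_0$-intersection from $|A \cap B \cap [2r+f(r)]| \geq f(r)$. Your version (lower density strictly greater than $1/2$) invokes the strong law of large numbers directly, which is shorter, and your inclusion-exclusion argument for the intersection is just as clean; your family is in fact a subfamily of the paper's (since $\liminf |A \cap [n]|/n > 1/2$ eventually forces $|A\cap[2r+f(r)]| \geq r+f(r)$ once $f(r)=o(r)$), but still has full measure, so nothing is lost. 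Minor nit: $\cF_k$ is a countable intersection of countable unions of countable \emph{intersections} of cylinder sets rather than of cylinder sets themselves, but measurability goes through all the same.
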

\begin{proof}
 Let $f(r)$ be any integer function which is $\omega(1)$ and $o(r)$, for example $\lfloor \sqrt{r} \rfloor$. We define $\cF$ to consist of all sets $A$ such that $|A \cap [2r+f(r)]| \geq r+f(r)$ for all large enough $r$. For any $A,B \in \cF$ it holds that for large enough $r$, $|A \cap B \cap [2r+f(r)]| \geq 2(r+f(r)) - (2r+f(r)) = f(r)$, and so the intersection $A \cap B$ is infinite.
 
 Hoeffding's bound states (in one version) that $\Pr[\bin(n,p) \leq qn] \leq e^{-2n(p-q)^2}$. This implies that for $A \sim \mu_p$,
\[
 \Pr[|A \cap [2r+f(r)]| < r + f(r)] \leq \exp \left[ -2(2r+f(r)) \left(p-\frac{r+f(r)}{2r+f(r)}\right)^2 \right].
\]
 Since $(r+f(r))/(2r+f(r)) \to 1/2$, there exists $\epsilon > 0$ such that for large enough $r$ we have $\Pr[|A \cap [2r+f(r)]| < r + f(r)] \leq e^{-4r\epsilon^2}$. This shows that $\sum_r \Pr[|A \cap [2r+f(r)]| < r + f(r)]$ converges. The Borell--Cantelli lemma thus shows that almost surely, only finitely many of these ``bad events'' happen, and so $\mu_p(\cF) = 1$.
\end{proof}

\section{Ahlswede--Khachatrian theorem for the Hamming scheme} \label{sec:hamming}

Chung, Frankl, Graham, and Shearer~\cite{CFGS} considered the difference between \emph{intersecting} and \emph{agreeing} families. Let us say that a family $\cF$ on $n$ points is \emph{$\cG$-intersecting} if any $A,B \in \cF$ satisfy $A \cap B \in \cG$, where $\cG$ is some monotone family on $n$ points. Thus $t$-intersecting families are $\cG$-intersecting for $\cG = \{ S \subseteq [n] : |S| \geq t \}$. A family $\cF$ is \emph{$\cG$-agreeing} if any $A,B \in \cF$ satisfy $\overline{A \symdiff B} \in \cG$. Since $\overline{A \symdiff B} = (A \cap B) \cup (\overline{A} \cap \overline{B})$, every $\cG$-intersecting family is a~fortiori $\cG$-agreeing, but the converse does not hold in general. Nevertheless, Chung et al.\ showed that the maximum $\mu_{1/2}$-measure of a $\cG$-agreeing family is the same as the maximum $\mu_{1/2}$-measure of a $\cG$-intersecting family, using a simple shifting argument. 
This explains why many arguments, such as ones using Shearer's lemma, seem to work not only for $\cG$-intersecting families but also for $\cG$-agreeing families. The latter are easier to work with since the definition is more symmetric, and this is taken to full advantage in~\cite{EFF0}, for example.

\smallskip

Ahlswede and Khachatrian~\cite{AK5}, motivated by the geometry of Hamming spaces, considered a more general question: what is the largest subset of $\ZZ_m^n$ of diameter $n-t$ with respect to Hamming distance? Two vectors $x,y \in \ZZ_m^n$ have Hamming distance $n-t$ if they agree on exactly $t$ coordinates. Thus a subset of diameter $n-t$ is the same as a collection of vectors, every two of which agree on at least $t$ coordinates. They showed that this corresponds (roughly) to $\mu_p$ for $p = 1/m$ (though they stated this in a different language).

Motivated by the recent success of Fourier-analytic techniques to analyze questions in extremal combinatorics, Alon, Dinur, Friedgut and Sudakov~\cite{ADFS} studied independent sets in graph products, and in particular rederived the results of~\cite{AK5} for the case $t = 1$. Shinkar~\cite{Shinkar} extended this to general $t$ and $m \geq t+1$, using methods of Friedgut~\cite{Friedgut}. Unfortunately, these spectral techniques cannot at the moment yield all results of~\cite{AK5}.

\smallskip

Any bound on $t$-agreeing families of vectors in $\ZZ_m^n$ readily yields matching results on the $\mu_{1/m}$-measure of $t$-intersecting families on $n$ points. Indeed, given a family $\cF \subseteq \{0,1\}^n$, let $\cG \subseteq \ZZ_m^n$ consist of all vectors obtained by taking each binary vector in $\cF$ and replacing each $0$ coordinate with one of the $m-1$ values $\{2,\ldots,m\}$. The resulting family is $t$-agreeing, and $|\cG| = m^n \mu_{1/m}(\cF)$. Therefore a bound on $|\cG|$ yields a bound on $\mu_{1/m}(\cF)$. In other words, $t$-agreeing families give a discrete model for $\mu_{1/m}$.

We extend these results in Section~\ref{sec:hamming-discrete}, by giving a discrete model for $\mu_{s/m}$ for any integers $s,m$ satisfying $s/m < 1/2$. Extending the work of~\cite{AK5}, we prove an Ahlswede--Khachatrian theorem in this setting. Section~\ref{sec:hamming-continuous} describes a similar continuous model for $\mu_p$ for any $p < 1/2$, and proves an Ahlswede--Khachatrian theorem in that setting.

\subsection{Discrete setting} \label{sec:hamming-discrete}

We start by defining the new discrete model.

\begin{definition} \label{def:ts-agreement}
 A \emph{family on $\ZZ_m^n$} is a subset of $\ZZ_m^n$. For $s \leq m/2$, a family $\cF$ on $\ZZ_m^n$ is \emph{$t$-agreeing up~to~$s$} if every $x,y \in \cF$ have $t$ coordinates $i_1,\ldots,i_t$ such that $x_{i_j} - y_{i_j} \in \{-(s-1),\ldots,s-1\}$ for $1 \leq j \leq t$.
 
 The uniform measure on $\ZZ_m$ is denoted by $\mu_m$, and the uniform measure on $\ZZ_m^n$ by $\mu_m^n$.
 
 The maximum measure of a $t$-agreeing up~to~$s$ family on $\ZZ_m^n$ is denoted $w(\ZZ_m^n,t,s)$.
\end{definition}

Our goal is to show that $w(\ZZ_m^n,t,s) = w(n,t,s/m)$ whenever $s/m \leq 1/2$, and to identify the optimal families when possible. Our candidate optimal families are obtained in the following way.

\begin{definition} \label{def:sigma}
 Fix $n,m \geq 1$ and $s \leq m/2$. For $y \in \ZZ_m^n$, the mapping $\sigma_y\colon \ZZ_m^n \to 2^{[n]}$ is given by $\sigma_y(x) = \{i \in [n] : 1 \leq x_i-y_i \leq s\}$.
 
 If $\cF$ is a family on $\ZZ_m^n$ and $\cG$ is a family on $n$ points then $\cF \approx \cG$ (or, $\cF$ is \emph{equivalent} to $\cG$) if $\cF = \sigma_y^{-1}(\cG)$ for some vector $y \in \ZZ_m^n$. In words, $\cF$ arises from $\cG$ by going through all sets $S \in \cF$, replacing each $i \in S$ by all values in $\{y_1+1,\ldots,y_1+s\}$, and each $i \notin S$ by all other values.
 
 We say that $\cF$ is equivalent to a $(t,r)$-Frankl family if $\cF \approx \cG$ for some family $\cG$ equivalent to a $(t,r)$-Frankl family (see Definition~\ref{def:frankl}).
\end{definition}

We will prove the following theorem.

\begin{theorem} \label{thm:hamming-discrete}
 Let $n,m,t \geq 1$ and $s \leq m/2$. Then $w(\ZZ_m^n,t,s) = w(n,t,s/m)$. Furthermore, if $s < m/2$ and $\cF$ is a family on $\ZZ_m^n$ of measure $w(n,t,s/m)$ then $\cF$ is equivalent to a $t$-intersecting family on $n$ points of $\mu_{s/m}$-measure $w(n,t,s/m)$. The same holds if $m=2$, $s=1$ and $t > 1$.
\end{theorem}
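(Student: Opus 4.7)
My plan is to handle the two inequalities of $w(\ZZ_m^n,t,s) = w(n,t,s/m)$ in turn and then the equality characterization. The easy direction $w(\ZZ_m^n,t,s) \geq w(n,t,s/m)$ is a direct construction: let $\cG$ be a $t$-intersecting family on $[n]$ achieving $\mu_{s/m}(\cG) = w(n,t,s/m)$ (existing by Theorem~\ref{thm:weighted-main}), and take $\cF := \sigma_0^{-1}(\cG)$, where $\sigma_0$ denotes $\sigma_y$ with $y = (0,\ldots,0)$. For any $x,x' \in \cF$, each of the at least $t$ coordinates $i \in \sigma_0(x) \cap \sigma_0(x')$ satisfies $x_i, x'_i \in \{1,\ldots,s\}$, hence $x_i - x'_i \in \{-(s-1),\ldots,s-1\}$; thus $\cF$ is $t$-agreeing up to $s$. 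Since $|\sigma_0^{-1}(S)| = s^{|S|}(m-s)^{n-|S|}$, summing over $\cG$ gives $\mu_m^n(\cF) = \sum_{S \in \cG} (s/m)^{|S|}(1-s/m)^{n-|S|} = \mu_{s/m}(\cG)$.

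For the reverse inequality, I would adapt the Katona-style circle argument of Section~\ref{sec:katona}, using Lemma~\ref{lem:katona-cross-discrete} as the one-dimensional building block. The measure-theoretic bridge is the identity $\mu_m^n(\sigma_y^{-1}(\cH)) = \mu_{s/m}(\cH)$, valid for every $y \in \ZZ_m^n$ and every $\cH \subseteq 2^{[n]}$. Given $\cF \subseteq \ZZ_m^n$ $t$-agreeing up to $s$, the plan is to extract, via averaging over $y$, a $t$-intersecting family $\cG$ on $[n]$ with $\mu_{s/m}(\cG) \geq \mu_m^n(\cF)$; Theorem~\ref{thm:weighted-main} then gives $\mu_{s/m}(\cG) \leq w(n,t,s/m)$. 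Lemma~\ref{lem:katona-cross-discrete} provides the per-coordinate control: for any $x_i, x'_i$ with $x_i - x'_i \in \{-(s-1),\ldots,s-1\}$, the $y_i$ that place both into a common length-$s$ window form an interval in $\ZZ_m$ of cardinality $s - |x_i - x'_i|$, and the product structure across coordinates composes these local intervals into the required global $t$-intersection. For the equality characterization (when $s < m/2$, or in the listed case $m = 2, s = 1, t > 1$), equality in the upper bound forces equality throughout the averaging; the interval-rigidity part of Lemma~\ref{lem:katona-cross-discrete} and the Frankl-family uniqueness from Theorem~\ref{thm:weighted-main} combine to pin $\cF$ down as $\sigma_y^{-1}(\cG)$ for some $y$ and some $\cG$ equivalent to a Frankl family. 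The exclusion of generic $s = m/2$ from the uniqueness statement mirrors the delicacy of $p = 1/2$ in Theorem~\ref{thm:weighted-main}.

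The main obstacle is the upper bound, because $\sigma_y(\cF)$ is generally not $t$-intersecting for any single $y$: the witnesses of $t$-agreement for distinct pairs in $\cF$ typically demand incompatible values of $y$. A naive partition of $\ZZ_m^n$ into the $m^{n-1}$ cosets of the diagonal subgroup $\{(a,a,\ldots,a) : a \in \ZZ_m\}$, with Lemma~\ref{lem:katona-cross-discrete} applied on each coset, yields only $|\cF| \leq s m^{n-1} = w(n,1,s/m) m^n$, which matches the target bound for $t = 1$ but is far too weak for $t \geq 2$. Tightening this to the full $w(n,t,s/m) m^n$ bound for $t \geq 2$ requires a genuinely multi-coordinate averaging that synthesizes the per-coordinate cross-$s$-agreeing information from Lemma~\ref{lem:katona-cross-discrete} into a single global $t$-intersecting family on $[n]$; this synthesis, along with tracking equality through it back to the Frankl characterization, is the main technical work of the proof.
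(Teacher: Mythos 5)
Your proposal correctly handles the easy direction and correctly identifies the main obstacle, but it does not actually overcome it: the proposal stops at acknowledging that a ``genuinely multi-coordinate averaging'' is needed, without providing one. This is a genuine gap, and it is not a small one. In fact the paper explicitly remarks at the end of Section~\ref{sec:katona-int} that Katona's circle idea does not seem to extend to $t$-intersecting families for $t > 1$; so the averaging route you sketch runs into a wall for exactly the cases that matter. Your own calculation confirms this: the coset average already exhausts the circle method at $t=1$.

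The paper's actual proof of the upper bound is not an averaging argument at all. It is an induction that replaces one $\ZZ_m$ coordinate with one $\{0,1\}$ coordinate at a time (Lemma~\ref{lem:hamming-discrete-ub-ind}), working with hybrid families on $\ZZ_m^{n'} \times \{0,1\}^{\ell}$. Fixing all coordinates but the $n$th, each fiber $\cF_x \subseteq \ZZ_m$ is constrained to be cross-$s$-agreeing with $\cF_y$ whenever $x,y$ already $s$-agree on only $t-1$ of the remaining coordinates; Lemma~\ref{lem:katona-cross-discrete} turns each such pair into a linear constraint $v_x + v_y \leq 1$ on the normalized fiber sizes $v_x = |\cF_x|/(2s)$. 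The crucial structural tool is then the half-integrality of the stable set polytope (Proposition~\ref{pro:half-integrality}, Nemhauser--Trotter): one rounds $v$ to a $\{0,1/2,1\}$-valued maximizer $w$, and $w$ dictates how to replace the $\ZZ_m$ fiber by a $\{0,1\}$ fiber of equal or larger weighted mass. That LP/polytope rounding is the synthesis step you anticipated but did not supply, and it is a fundamentally different mechanism from a circle-method average over $y$. The equality characterization likewise goes through this induction, using connectivity and non-bipartiteness of the associated graph (Lemma~\ref{lem:hamming-discrete-uniqueness-gr}) to propagate rigidity, rather than ``tracking equality through the averaging.'' To repair your proof you would need to abandon the averaging-over-$y$ strategy and instead develop the coordinate-replacement induction together with the polytope half-integrality argument.
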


When $s = m/2$ and $s > 1$ there can be exotic families of maximum measure. For example, when $s = 2$ and $m = 4$ the family $\{00,01,12,13,20,21,32,33\}$ is intersecting up~to~$2$ but doesn't arise from any family on two points. Adding another coordinate ranging over $\{0,1\}$, we get a $2$-intersecting up~to~$2$ family which doesn't arise from any family on three points.

When $m = 2$, $s = 1$ and $t = 1$, a maximum measure family is one which contains exactly one vector of each pair of complementary vectors.

\smallskip

On the way toward proving the theorem, we will need to consider hybrid families in which some of the coordinates come from $\ZZ_m$, and others from $\{0,1\}$.

\begin{definition} \label{def:hybrid}
 A \emph{family on $\ZZ_m^n \times \{0,1\}^\ell$} is a subset of $\ZZ_m^n \times \{0,1\}^\ell$. Such a family $\cF$ is \emph{$t$-agreeing up~to~$s$} if every $x,y \in \cF$ have $t$ coordinates $i_1,\ldots,i_t$ such that $x_{i_j} - y_{i_j} \in \{-(s-1),\ldots,s-1\}$ (if $i_j \leq n$) or $x_{i_j} = y_{i_j} = 1$ (if $i_j > n$) for $1 \leq j \leq t$.
 
 More generally, two vectors $x,y \in \ZZ_m^n \times \{0,1\}^\ell$ $s$-agree on a coordinate $i \leq n$ if $x_i - y_i \in \{-(s-1),\ldots,s-1\}$, and they $s$-agree on a coordinate $i > n$ if $x_i = y_i = 1$. Thus a family on $\ZZ_m^n \times \{0,1\}^\ell$ is $t$-agreeing up~to~$s$ if every $x,y \in \cF$ $s$-agree on at least $t$~coordinates.
 
 We measure families on $\ZZ_m^n \times \{0,1\}^\ell$ using the product measures $\mu_{s,m}^{n,\ell} = \mu_m^n \times \mu_{s/m}^\ell$.
 
 For $y \in \ZZ_m^n$, the mapping $\sigma_y \colon \ZZ_m^n \times \{0,1\}^\ell \to 2^{[n+\ell]}$ is the product of $\sigma_y$ and the identity mapping.
 
 If $\cF$ is a family on $\ZZ_m^n \times \{0,1\}^\ell$ and $\cG$ is a family on $n+\ell$ points then $\cF \approx \cG$ if $\cF = \sigma_y^{-1}(\cG)$ for some vector $y \in \ZZ_m^n$. In words, $\cF$ arises from $\cG$ by going through all sets $S \in \cF$, and for each $i \leq n$, replacing each $i \in S$ by all values in $\{y_1+1,\ldots,y_1+s\}$, and each $i \notin S$ by all other values.
\end{definition}

We start by proving $w(\ZZ_m^n,t,s) = w(n,t,s/m)$. The identification of optimal families will require further refining the proof, but we present the two proofs separately for clarity. We start with a technical result about the stable set polytope.

\begin{proposition} \label{pro:half-integrality}
 Let $G = (V,E)$ be a graph, and let $\mu\colon V \to \RR$ be a set of weights. Consider the program
\[
\begin{array}{lll}
 \max & \sum_{x \in V} \mu_x v_x \\
 \mathrm{s.t.} \quad & 0 \leq v_x \leq 1 & \text{for all $x \in V$} \\
 & v_x + v_y \leq 1 & \text{for all $(x,y) \in E$}
\end{array} 
\]
The maximum of the program is attained (not necessarily uniquely) at some half-integral point (a point in which all entries are $\{0,1/2,1\}$).

Moreover, if there is a unique half-integral point at which the objective is maximized, then this point is the unique maximum.
\end{proposition}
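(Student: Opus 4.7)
My plan is to establish the stronger Nemhauser--Trotter-style half-integrality statement: every vertex of the feasible polytope $P := \{v \in [0,1]^V : v_x + v_y \leq 1 \text{ for all } (x,y) \in E\}$ is half-integral. Both claims of the proposition follow from this.

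The heart of the proof is a perturbation argument showing that any non-half-integral $v^* \in P$ is not a vertex. Partition $V = V_+ \sqcup V_0 \sqcup V_-$ by whether $v^*_x > 1/2$, $= 1/2$, or $< 1/2$; by hypothesis $V_+ \cup V_- \neq \emptyset$. Feasibility of $v^*$ forbids any edge with both endpoints in $V_+$ and any edge joining $V_+$ to $V_0$, since in both cases $v^*_x + v^*_y > 1$. Define
\[
 v^\pm_x = \begin{cases} v^*_x \pm \epsilon & x \in V_+, \\ v^*_x \mp \epsilon & x \in V_-, \\ v^*_x & x \in V_0. \end{cases}
\]
For $\epsilon > 0$ below a positive threshold, the box constraints hold, and each surviving edge type remains feasible: edges between $V_+$ and $V_-$ have their perturbations cancel, while $V_0$-$V_0$, $V_-$-$V_0$, and $V_-$-$V_-$ edges either are not moved or have pre-existing slack. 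Hence $v^\pm \in P$, and since $v^* = (v^+ + v^-)/2$ with $v^\pm \neq v^*$, the point $v^*$ is not a vertex of $P$.

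With half-integrality of vertices in hand, the first claim is immediate: $P$ is a bounded nonempty polytope, so the linear objective attains its maximum at some vertex of $P$, necessarily half-integral. For the uniqueness claim, let $F \subseteq P$ be the face of optimal points. Every vertex of $F$ is a vertex of $P$ --- if $v \in F$ equals $(u+w)/2$ with $u, w \in P$, linearity of the objective forces $u, w \in F$ --- so the vertices of $F$ are half-integral. If the half-integral optimum is unique, $F$ has exactly one vertex, and a bounded polytope coincides with the convex hull of its vertices, so $F$ is that single point, giving uniqueness of the maximum.

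The only delicate part of this plan is the edge-case analysis in the perturbation step, which crucially uses feasibility of $v^*$ to rule out edges within $V_+$ and from $V_+$ to $V_0$; everything else is standard polyhedral theory.
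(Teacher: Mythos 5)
Your overall strategy matches the paper's: reduce to the Nemhauser--Trotter claim that every vertex of the stable-set polytope $P$ is half-integral, via a perturbation showing that a non-half-integral point is a proper convex combination of two distinct feasible points. Your treatment of the uniqueness clause via the optimal face $F$ is also fine. However, the perturbation step as you have written it has a genuine gap: you partition $V$ by whether $v^*_x > 1/2$, $=1/2$, or $< 1/2$, so $V_+$ contains coordinates with $v^*_x = 1$ and $V_-$ contains coordinates with $v^*_x = 0$. For such coordinates the perturbed point $v^+$ violates the box constraints --- e.g.\ if $v^*_x = 1$ then $v^+_x = 1+\epsilon > 1$ for \emph{every} $\epsilon > 0$, so no positive threshold exists. (Take $V = \{a,b\}$ with no edges, $v^*_a = 1$, $v^*_b = 1/4$: then $v^*$ is not half-integral but $v^+_a > 1$.) The assertion ``for $\epsilon > 0$ below a positive threshold, the box constraints hold'' is therefore false in general.

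The fix, which is exactly what the paper's sketch (following Schrijver) does, is to perturb only the strictly fractional coordinates: set $I = \{x : 0 < v^*_x < 1/2\}$ and $J = \{x : 1/2 < v^*_x < 1\}$, and consider $v^* \pm \epsilon(1_I - 1_J)$. Then all box constraints have strict slack at perturbed coordinates, the edge analysis goes through as in your proposal (feasibility of $v^*$ still rules out $J$-$J$ edges and $J$-$\{v^*_x \geq 1/2\}$ edges), and non-half-integrality of $v^*$ means $I \cup J \neq \emptyset$, so the perturbed points are distinct from $v^*$. With this repair your proof is correct and essentially identical to the paper's.
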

\begin{proof}
 The stable set polytope of $G$ is defined as the set of all $V$-indexed vectors which satisfy the constraints stated in the program.
 Every linear functional over the polytope is maximized at some vertex. Moreover, if there is a unique vertex at which the maximum is attained then this vertex is the unique maximum.
 Therefore the proposition follows from the fact that each vertex of the polytope is half-integral.
 This classical fact is due to Nemhauser and Trotter~\cite{NemhauserTrotter}. We briefly sketch the proof below, following Schrijver~\cite[Theorem~64.7]{Schrijver}.

 Let $v$ be a vertex of the stable set polytope. Let $I = \{ x : 0 < v_x < 1/2 \}$ and $J = \{ y : 1/2 < v_y < 1 \}$. For small enough $\epsilon>0$, both $v + \epsilon (1_I - 1_J)$ and $v - \epsilon (1_I - 1_J)$ are in the stable set polytope (where $1_I$ is the characteristic function of $I$), and so $v$ can only be a vertex if $I = J = \emptyset$.
\end{proof}

The heart of the proof of $w(\ZZ_m^n,t,s) = w(n,t,s/m)$ is the following lemma.

\begin{lemma} \label{lem:hamming-discrete-ub-ind}
 Let $\cF$ be a family on $\ZZ_m^n \times \{0,1\}^\ell$ which is $t$-agreeing up~to~$s$, where $s \leq m/2$ and $n \geq 1$ (but possibly $\ell = 0$). There is a family $\cH$ on $\ZZ_m^{n-1} \times \{0,1\}^{\ell+1}$ which is $t$-agreeing up~to~$s$ and satisfies $\mu_{s,m}^{n-1,\ell+1}(\cH) \geq \mu_{s,m}^{n,\ell}(\cF)$.
\end{lemma}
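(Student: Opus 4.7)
The plan is to reduce one $\ZZ_m$-valued coordinate (say the $n$-th) to a $\{0,1\}$-valued coordinate measured by $\mu_{s/m}$, working fiber-by-fiber. For each $(x,w) \in \ZZ_m^{n-1} \times \{0,1\}^\ell$ I set $A_{(x,w)} = \{z \in \ZZ_m : (x,z,w) \in \cF\}$, and I classify pairs $(x,w), (x',w')$ by the number of coordinates on which they $s$-agree outside the $n$-th position: call such a pair ``Case~$2$'' if they $s$-agree on exactly $t-1$ positions, and ``Case~$3$'' if they $s$-agree on fewer. The $t$-agreement of $\cF$ then translates into: (i) for each Case~$2$ pair, the fibers $A_{(x,w)}$ and $A_{(x',w')}$ are cross-$s$-agreeing subsets of $\ZZ_m$, hence $|A_{(x,w)}| + |A_{(x',w')}| \leq 2s$ by Lemma~\ref{lem:katona-cross-discrete}; (ii) for each Case~$3$ pair, at least one of $A_{(x,w)}, A_{(x',w')}$ is empty. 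Consequently the support $V = \{(x,w) : A_{(x,w)} \neq \emptyset\}$ is $(t-1)$-agreeing in the non-$n$ coordinates.

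I then build $\cH$ by selecting, for each $(x,w) \in V$, a subset $g(x,w) \subseteq \{0,1\}$, and setting $\cH = \{(x,b,w) : b \in g(x,w)\}$. A natural first attempt partitions $V$ into Type~A vertices (those with no Case~$2$ neighbor in $V$) and Type~B vertices (those with at least one), assigning $g(x,w) = \{0,1\}$ on Type~A and $g(x,w) = \{1\}$ on Type~B. A case analysis verifies that $\cH$ is $t$-agreeing: Case~$3$ pairs contribute nothing because one side of every such pair lies outside $V$; Case~$2$ pairs can only contribute $(x,1,w), (x',1,w') \in \cH$, so the new coordinate supplies the missing $s$-agreement; and any pair already $s$-agreeing on $\geq t$ non-$n$ coordinates is trivially fine.

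The main obstacle is the measure inequality $\mu_{s,m}^{n-1,\ell+1}(\cH) \geq \mu_{s,m}^{n,\ell}(\cF)$. Type~A contributions are favorable since $\mu_{s/m}(\{0,1\}) = 1 \geq |A_{(x,w)}|/m$. For Type~B, however, $\mu_{s/m}(\{1\}) = s/m$, while a Type~B fiber can be as large as $|A_{(x,w)}| = 2s - 1$, so individual contributions can lose measure. To overcome this I would refine the construction as suggested by Lemma~\ref{lem:katona-cross-discrete}: let $L = \{v \in V : |A_v| > s\}$, which is independent in the Case~$2$ graph since on any Case~$2$ edge $|A_v| + |A_{v'}| \leq 2s$ forces at most one of the fibers to exceed $s$. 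Then promote $L$ to $g(v) = \{0,1\}$ and set their Case~$2$ neighbors $K$ (which necessarily have $|A| < s$) to $g = \emptyset$; remaining Type~B vertices still get $\{1\}$ and remaining Type~A still get $\{0,1\}$. The inequality then follows from pairing each $v' \in K$ with a $v \in L$ it is adjacent to and using $|A_v| + |A_{v'}| \leq 2s$ together with $s/m \leq 1/2$. The subtlety I expect to wrestle with is the weights $\mu(x,w) = m^{-(n-1)} \mu_{s/m}^\ell(w)$, which vary across $w$: the pairing argument closes cleanly when the promoted vertex's weight dominates its killed neighbor's, but in the opposite case one likely needs either to average over a shift $y \in \ZZ_m$ (using the projection $\sigma_y\colon \ZZ_m \to \{0,1\}$ introduced in Definition~\ref{def:sigma} and the identity $\mathbb{E}_y[\mu_{s/m}(\sigma_y(A))] \geq |A|/m$) or to solve a small weighted LP on the Case~$2$ graph to choose the optimal set to promote.
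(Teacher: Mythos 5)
Your setup matches the paper's exactly: fiber decomposition over the non-$n$ coordinates, the ``Case~2'' graph (edges on pairs $s$-agreeing in exactly $t-1$ coordinates), the observation that Case~2 fibers are cross-$s$-agreeing so $|A_v|+|A_{v'}| \leq 2s$, and the fact that the support is $(t-1)$-agreeing. You also correctly diagnose why the naive Type~A/Type~B assignment fails. But the refinement you propose does not close, and your final sentence hedges between two fixes, neither of which is worked out.

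The $L/K$ pairing argument is genuinely unsound. A vertex $v$ with $|A_v| = s+1$ and small weight $\mu(v)$ can have several Case~2 neighbors $v_1,\ldots,v_k$ with $|A_{v_i}| = s-1$ and large weights; promoting $v$ gains only $\mu(v)\,(1 - (s+1)/m)$ while killing the $v_i$ loses $\sum_i \mu(v_i)\,(s-1)/m$, which can easily dominate. You acknowledge this but the alternative you suggest --- averaging the choice of shift $y$ --- does not preserve $t$-agreement, since applying $\sigma_y$ to a single $\ZZ_m$ coordinate maps $s$-agreeing elements to pairs that can be $(1,0)$ or $(0,0)$, which do not ``agree'' in the Boolean sense. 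Your last alternative (``solve a small weighted LP on the Case~2 graph'') is precisely what the paper does, but the missing ingredient is the structural fact that makes this LP usable: every vertex of the polytope $\{0 \leq w_x \leq 1,\ w_x + w_y \leq 1 \text{ on edges}\}$ is half-integral (Nemhauser--Trotter, stated as Proposition~\ref{pro:half-integrality} in the paper). Starting from the feasible point $v_x = |\cF_x|/(2s)$, a half-integral maximizer $w_x \in \{0,1/2,1\}$ dominates it, and the three values map cleanly to fibers $\emptyset$, $\{1\}$, $\{0,1\}$; this is what replaces your ad~hoc $L/K$ partition. Without half-integrality the LP solution need not correspond to any assignment of $\{0,1\}$-fibers, so this is the step you cannot omit. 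You also skip the degenerate case $n+\ell = t$, where the Case~2 graph has self-loops (each nonempty fiber is constrained against itself to be $s$-agreeing, hence $|A_v| \leq s$ by Lemma~\ref{lem:katona-cross-discrete}), and the construction assigns $\{1\}$ to every support vertex; this case needs separate treatment since the pruned graph $G'$ is not defined there.
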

\begin{proof}
 The first step is to decompose $\cF$ according to the values of all coordinates but the $n$'th one:
\[
 \cF = \bigcup_{x_1 \in \ZZ_m^{n-1}} \bigcup_{x_2 \in \{0,1\}^\ell} \{x_1\} \times \cF_{x_1,x_2} \times \{x_2\}.
\]
 It will be convenient to refer to the pair $x_1,x_2$ as a single vector $x = (x_1,x_2) \in \ZZ_m^{n-1} \times \{0,1\}^\ell$.
 
 We now construct a graph $G = (V,E)$ as follows. The vertices are $V = \{x \in \ZZ_m^{n-1} \times \{0,1\}^\ell : \cF_x \neq \emptyset\}$. We connect two vertices $x,y$ (possibly $x=y$) if $\{x,y\}$ is \emph{not} $t$-agreeing up~to~$s$; in other words, if the vectors $x,y$ $s$-agree on exactly $t-1$ coordinates. If $(x,y) \in E$ then the sets $\cF_x,\cF_y \subseteq \ZZ_m$ must be $s$-agreeing, in the terminology of Definition~\ref{def:s-agreeing}.
 
 There are now two cases to consider: the degenerate case $n + \ell = t$, and the non-degenerate case $n + \ell > t$. In the degenerate case the graph $G$ consists of isolated vertices with self-loops, since the only way that vectors $x,y$ of length $t-1$ can $s$-agree on $t-1$ coordinates is if $x = y$. Thus each set $\cF_x$ must be $s$-agreeing, and so $|\cF_x| \leq s$ by Lemma~\ref{lem:katona-cross-discrete}. We form the new family $\cH$ as follows:
\[
 \cH = \bigcup_{x_1 \in \ZZ_m^{n-1}} \bigcup_{x_2 \in \{0,1\}^\ell} \{x_1\} \times \cH_{x_1,x_2} \times \{x_2\}, \quad
 \cH_x = \begin{cases} \{1\} & \text{if } \cF_x \neq \emptyset, \\ \emptyset & \text{otherwise}. \end{cases}
\]
 By construction, $\mu_{s,m}^{1,0}(\cF_x) \leq \mu_{s,m}^{0,1}(\cH_x)$: either both sets are empty, or $\cF_x$ has measure at most $s/m$, while $\cH_x$ has measure $s/m$. Thus $\mu_{s,m}^{n,\ell}(\cF) \leq \mu_{s,m}^{n-1,\ell+1}(\cH)$. Moreover, it is not hard to verify that $\cH$ is $t$-agreeing up~to~$s$. This completes the proof in the degenerate case.
 
 The proof in the non-degenerate case is more complicated. First, notice that we no longer have self-loops, since every $x \in V$ agrees with itself on $n-1+\ell \geq t$ coordinates. If $(x,y) \in E$ then the non-empty sets $\cF_x,\cF_y$ are $s$-agreeing, and so $|\cF_x| + |\cF_y| \leq 2s$ by Lemma~\ref{lem:katona-cross-discrete}. In particular, if $|\cF_x| \geq 2s$ then $x$ must be an isolated vertex. This suggests pruning the graph by removing all isolated vertices. We denote the resulting graph by $G' = (V',E')$; note that $V'$ could be empty. All vertices $x \in V'$ now satisfy $0 < |\cF_x| < 2s$.
 
 Let $v_x = |\cF_x|/(2s)$. Then $0 \leq v_x \leq 1$, $v_x + v_y \leq 1$ for all edges, and
\[
 \mu_{s,m}^{n,\ell}(\cF) = \sum_{x \in V \setminus V'} \mu_{s,m}^{n-1,\ell}(x) \mu_m(\cF_x) + \frac{2s}{m} \sum_{x \in V'} \mu_{s,m}^{n-1,\ell}(x) v_x.
\]
 Proposition~\ref{pro:half-integrality} states that there exists a $\{0,1/2,1\}$-valued vector $w$ which satisfies $0 \leq w_x \leq 1$, $w_x + w_y \leq 1$ for all edges, and
 $\sum_{x \in V'} \mu_{s,m}^{n-1,\ell}(x) v_x \leq \sum_{x \in V'} \mu_{s,m}^{n-1,\ell}(x) w_x$.
 We will construct $\cH$ according to this vector:
\[
 \cH = \bigcup_{x_1 \in \ZZ_m^{n-1}} \bigcup_{x_2 \in \{0,1\}^\ell} \{x_1\} \times \cH_{x_1,x_2} \times \{x_2\}, \quad
 \cH_x =
 \begin{cases}
 \{0,1\} & \text{if } x \in V \setminus V', \\
 \{0,1\} & \text{if } x \in V' \text{ and } w_x = 1, \\
 \{1\} & \text{if } x \in V' \text{ and } w_x = 1/2, \\
 \emptyset & \text{if } x \in V' \text{ and } w_x = 0, \\
 \emptyset & \text{if } x \notin V'.
 \end{cases}
\]
 Note that the \emph{support} of $\cH$, which is the set of $x \in \ZZ_m^{n-1} \times \{0,1\}^\ell$ such that $\cH_x \neq \emptyset$, is a subset of the support of $\cF$.

 We start by showing that $\cH$ is $t$-agreeing up~to~$s$. Let $(x_1,\alpha,x_2),(y_1,\beta,y_2) \in \cH$, where $x_1,y_1 \in \ZZ_m^{n-1}$, $\alpha,\beta \in \{0,1\}$, and $x_2,y_2 \in \{0,1\}^\ell$. If there is no edge between $(x_1,x_2)$ and $(y_1,y_2)$ then clearly $(x_1,\alpha,x_2),(y_1,\beta,y_2)$ $s$-agree on at least $t$~coordinates. If there does exist an edge then $x = (x_1,x_2)$ and $y = (y_1,y_2)$ agree on $t-1$ coordinates, and so it suffices to show that $\alpha = \beta = 1$. By construction, $0 \in \cH_x$ if either $x \in V \setminus V'$ or $w_x = 1$. In the former case, since the support of $\cH$ is contained in the support of $\cF$, $x$ is isolated, and so there cannot be an edge $(x,y)$. In the latter case, $w_y = 0$ (since $w_x + w_y \leq 1$ for all edges), and so $y$ is not in the support of $\cH$. We conclude that $\cH$ is $t$-agreeing up~to~$s$.
 
 We proceed by comparing the measures of $\cF$ and $\cH$:
\begin{align*}
 \mu_{s,m}^{n-1,\ell+1}(\cH) &= \sum_{x \in \ZZ_m^{n-1} \times \{0,1\}^\ell} \mu_{s,m}^{n-1,\ell}(x) \mu_{s/m}(\cH_x) \\ &=
 \sum_{x \in V \setminus V'} \mu_{s,m}^{n-1,\ell}(x) +
 \sum_{\substack{x \in V' \\ w_x = 1}} \mu_{s,m}^{n-1,\ell}(x) +
 \frac{s}{m} \sum_{\substack{x \in V' \\ w_x = 1/2}} \mu_{s,m}^{n-1,\ell}(x).
\end{align*}
 The first term is at least as large as $\sum_{x \in V \setminus V'} \mu_{s,m}^{n-1,\ell}(x) \mu_m(\cF_x)$, and the other two terms are at least $(2s/m) \sum_{x \in V'} \mu_{s,m}^{n-1,\ell}(x) w_x$. Therefore
\begin{align*}
 \mu_{s,m}^{n-1,\ell+1}(\cH) &\geq \sum_{x \in V \setminus V'} \mu_{s,m}^{n-1,\ell}(x) \mu_m(\cF_x) + \frac{2s}{m} \sum_{x \in V'} \sum_{x \in V'} \mu_{s,m}^{n-1,\ell}(x) w_x \\ &\geq
 \sum_{x \in V \setminus V'} \mu_{s,m}^{n-1,\ell}(x) \mu_m(\cF_x) + \frac{2s}{m} \sum_{x \in V'} \sum_{x \in V'} \mu_{s,m}^{n-1,\ell}(x) v_x \\ &= 
 \mu_{s,m}^{n-1,\ell+1}(\cF).
\end{align*}
 This completes the proof.
 
 We note that when $(s,m) = (1,2)$, necessarily $v_x = 1/2$ for all $x \in V'$ (since $0 < 2v_x < 2$ is an integer), and so we can use $w = v$.
\end{proof}

As a corollary, we can deduce the part $w(\ZZ_m^n,t,s) = w(n,t,s/m)$ of Theorem~\ref{thm:hamming-discrete}.

\begin{lemma} \label{lem:hamming-discrete-ub}
 Let $n \geq t \geq 1$, $m \geq 2$ and $s \leq m/2$. Then $w(\ZZ_m^n,t,s) = w(n,t,s/m)$.
\end{lemma}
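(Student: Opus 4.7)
The plan is to prove the two inequalities separately. The upper bound $w(\ZZ_m^n,t,s) \leq w(n,t,s/m)$ will follow by iterating Lemma~\ref{lem:hamming-discrete-ub-ind}, and the lower bound will follow by pulling back an optimal family on $n$ points via $\sigma_y$.

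For the lower bound, let $\cG$ be a $t$-intersecting family on $n$ points achieving $\mu_{s/m}(\cG) = w(n,t,s/m)$, and set $\cF = \sigma_0^{-1}(\cG) \subseteq \ZZ_m^n$, where $\sigma_0(x) = \{i : 1 \leq x_i \leq s\}$. If $x$ is drawn uniformly from $\ZZ_m^n$ then $\sigma_0(x)$ is distributed according to $\mu_{s/m}$ on $2^{[n]}$, so $\mu_m^n(\cF) = \mu_{s/m}(\cG)$. For $x,y \in \cF$, every $i \in \sigma_0(x) \cap \sigma_0(y)$ satisfies $x_i, y_i \in \{1,\ldots,s\}$, hence $x_i - y_i \in \{-(s-1),\ldots,s-1\}$; since $|\sigma_0(x) \cap \sigma_0(y)| \geq t$, the family $\cF$ is $t$-agreeing up~to~$s$. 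This shows $w(\ZZ_m^n,t,s) \geq w(n,t,s/m)$.

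For the upper bound, let $\cF$ be any $t$-agreeing up~to~$s$ family on $\ZZ_m^n$ of measure $w(\ZZ_m^n,t,s)$, viewed as living on $\ZZ_m^n \times \{0,1\}^0$. Apply Lemma~\ref{lem:hamming-discrete-ub-ind} iteratively: for $k = 0,1,\ldots,n-1$, produce from a $t$-agreeing up~to~$s$ family on $\ZZ_m^{n-k} \times \{0,1\}^k$ a $t$-agreeing up~to~$s$ family on $\ZZ_m^{n-k-1} \times \{0,1\}^{k+1}$ of no smaller $\mu_{s,m}^{n-k-1,k+1}$-measure. After $n$ steps we arrive at a family $\cF^\ast \subseteq \{0,1\}^n$, measured by $\mu_{s/m}^n$, that is $t$-agreeing up~to~$s$ in the hybrid sense of Definition~\ref{def:hybrid}, and $\mu_{s/m}^n(\cF^\ast) \geq \mu_m^n(\cF)$.

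Finally, interpret $\cF^\ast$ as an ordinary family on $n$ points. By Definition~\ref{def:hybrid}, two binary vectors $x,y \in \{0,1\}^n$ $s$-agree on coordinate $i$ iff $x_i = y_i = 1$, so $\cF^\ast$ being $t$-agreeing up~to~$s$ is exactly the condition that the corresponding collection of subsets is $t$-intersecting. Therefore $\mu_{s/m}^n(\cF^\ast) \leq w(n,t,s/m)$, giving $w(\ZZ_m^n,t,s) \leq w(n,t,s/m)$ and completing the proof. No step poses a real obstacle here since Lemma~\ref{lem:hamming-discrete-ub-ind} does all the heavy lifting; the only care needed is in correctly identifying the terminal hybrid family as a $t$-intersecting family on $n$ points.
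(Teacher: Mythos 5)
Your proof is correct and follows essentially the same strategy as the paper: the upper bound is obtained by iterating Lemma~\ref{lem:hamming-discrete-ub-ind} $n$ times to land in the purely Boolean hybrid setting, and the lower bound by pulling back an optimal family via $\sigma_0$. Your write-up merely spells out a few routine verifications (measure preservation under $\sigma_0^{-1}$, and identification of the terminal hybrid notion with ordinary $t$-intersection) that the paper leaves implicit.
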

\begin{proof}
 Let $\cF$ be a family on $\ZZ_m^n$ which is $t$-intersecting up~to~$s$. Applying Lemma~\ref{lem:hamming-discrete-ub-ind} $n$ times, we obtain a $t$-intersecting family $\cH$ on $\{0,1\}^n$ satisfying $\mu_m(\cF) \leq \mu_{s/m}(\cH) \leq w(n,t,s/m)$. This shows that $w(\ZZ_m^n,t,s) \leq w(n,t,s/m)$.
 
 For the other direction, let $\cH$ by a $t$-intersecting family on $n$ points satisfying $\mu_{s,m}(\cH) = w(n,t,s/m)$. Define $\cF = \sigma_0^{-1}(\cH)$, where $0$ is the zero vector. It is straightforward to verify that $\cF$ is $t$-agreeing up~to~$s$ and that $\mu_m(\cF) = \mu_{s/m}(\cH) = w(n,t,s/m)$. It follows that $w(\ZZ_m^n,t,s) \geq w(n,t,s/m)$.
\end{proof}

We proceed with the characterization of families achieving the bound $w(\ZZ_m^n,t,s)$. We start with the case $s/m < 1/2$. The idea is to prove a stronger version of Lemma~\ref{lem:hamming-discrete-ub-ind} which states that if $\cF$ and $\cH$ have the same measure and $\cH$ is equivalent to a Frankl family then so is $\cF$. This suggests analyzing the graph $G=(V,E)$ constructed in the proof of Lemma~\ref{lem:hamming-discrete-ub-ind} in the case of a family equivalent to a Frankl family.

\begin{lemma} \label{lem:hamming-discrete-uniqueness-gr}
 Let $n,t \geq 1$ and $\ell,s,m$ be given, and suppose that $s < m/2$. 
 Let $\cF$ be a family on $\ZZ_m^n \times \{0,1\}^\ell$ which is equivalent to some $(t,r)$-Frankl family, for some $r \geq 0$. The graph $G' = (V',E')$ constructed in the proof of Lemma~\ref{lem:hamming-discrete-ub-ind} is either empty (has no vertices) or is connected and non-bipartite.
 
 The same holds when $s=1$ and $m=2$.
\end{lemma}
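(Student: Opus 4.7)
My plan exploits the rigid structure of $\cF$. By Definition~\ref{def:sigma} one may write $\cF = \sigma_y^{-1}(\cG)$ for some $y \in \ZZ_m^n$ and some Frankl family $\cG = \{A \subseteq [n+\ell] : |A \cap S| \geq t+r\}$ with $|S| = t+2r$; after an alphabet shift I may assume $y = 0$. Call a position $i$ \emph{small} at $x$ when $x_i \in \{1,\ldots,s\}$ (for $i \leq n$) or $x_i = 1$ (for $i > n$), and \emph{large} otherwise. The easy case is $n \notin S$, in which $\cF$ does not depend on the $n$-th coordinate: each non-empty fiber $\cF_x$ equals all of $\ZZ_m$ and has size $m \geq 2s+1$, so every vertex of $V$ is isolated and $V' = \emptyset$. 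Henceforth assume $n \in S$ and set $S' = S \setminus \{n\}$, of size $t+2r-1$.

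A direct computation with the Frankl condition shows that $\cF_x = \{1,\ldots,s\}$ precisely when $a(x) := |\{i \in S' : i \text{ is small at } x\}| = t+r-1$, and that otherwise $\cF_x$ is either all of $\ZZ_m$ (which isolates $x$) or empty. Hence $V' = \{x : a(x) = t+r-1\}$. I would then parameterize each $x \in V'$ by a triple $(T_x, \alpha_x, \beta_x)$, where $T_x = \{i \in S' : i \text{ small at } x\}$ is a size-$(t+r-1)$ subset of $S'$, and $\alpha_x$, $\beta_x$ record the specific values of $x$ at the $\ZZ_m$-positions in $S'$ and at the free positions $[n-1+\ell] \setminus S'$, respectively. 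The $s$-agreement count between two vertices $x, x' \in V'$ then decomposes as $|T_x \cap T_{x'}|$ (forced small-small agreements) plus contributions from coinciding $\ZZ_m$-values at common-large and asymmetric positions inside $S'$ and from matching values at free coordinates, each of which is controlled by Lemma~\ref{lem:katona-cross-discrete}. The edge condition that the total be exactly $t-1$ thus translates into precise combinatorial constraints on the triples.

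To establish connectivity I would show that any two triples $(T,\alpha,\beta),(T',\alpha',\beta')$ are linked by a sequence of edges each of which changes one ingredient at a time: toggling a single $\alpha$- or $\beta$-coordinate, or swapping one element of $T$ with one from $S' \setminus T$, with the intermediate vertex's remaining coordinates adjusted so that the $s$-agreement count on each traversed edge equals $t-1$. For non-bipartiteness I would exhibit an explicit triangle: three vertices $x,y,z$ sharing a common ``anchor'' configuration on $S'$ that yields $t-1$ forced pairwise agreements, and whose values at the remaining positions are chosen so that each pair contributes no further $s$-agreement; here the hypothesis $s < m/2$ is decisive, since it supplies at least $s+1$ large $\ZZ_m$-values and, by Lemma~\ref{lem:katona-cross-discrete}, allows three pairwise non-$s$-agreeing large assignments at a chosen position. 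The exceptional case $(s,m)=(1,2)$ must be treated separately, because each of ``small'' and ``large'' is then a single value: the triangle has to be constructed by combining a swap inside $S'$ with a value change at a free binary coordinate. The main obstacle I anticipate is the boundary regime where $r$ is small or the pool of free coordinates is tight, so that there is little flexibility to juggle values; in those cases the triangle and the connecting paths must be produced by a careful combination of $S'$-swaps with free-coordinate mismatches, keeping every intermediate vertex inside $V'$.
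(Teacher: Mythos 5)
Your approach is genuinely different from the paper's: the paper proves the lemma by a two-stage induction (first the base case $(n,\ell)=(1,t+2r-1)$, then induction on $\ell$ with $n=1$, then on $n$, carrying an auxiliary invariant called property~Z throughout), whereas you propose a direct combinatorial analysis of the graph structure via the parameterization of $V'$ by small/large patterns. The initial reduction (handling $n\notin S$, identifying $\cF_x\in\{\emptyset,\{1,\ldots,s\},\ZZ_m\}$ according to $a(x)$) is correct, though the identity $V'=\{x:a(x)=t+r-1\}$ is asserted rather than proved — one must still exhibit an incident edge (or self-loop) for every such $x$. However, the proposal is a plan rather than a proof: the connectivity and non-bipartiteness arguments are stated as intentions ("I would show...", "I would exhibit...") without execution, and the one concrete construction you do outline fails.

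Specifically, the triangle construction for non-bipartiteness cannot work when $r>0$. If $x,y,z$ share an anchor $U\subseteq S'$ of size $t-1$ on which they agree, and no pair is allowed any further $s$-agreement, then for every $i\in S'\setminus U$ and every pair (say $x,y$) we cannot have both $i\in T_x$ and $i\in T_y$, since two small values always $s$-agree. Hence $T_x\cap T_y=T_y\cap T_z=T_x\cap T_z=U$, so $T_x\setminus U,T_y\setminus U,T_z\setminus U$ are pairwise disjoint subsets of $S'\setminus U$, each of size $r$, inside a set of size only $2r$ — impossible for $r>0$. The paper avoids this by producing an odd cycle as a direct edge together with an even-length path (using vertex-transitivity of the relevant Johnson-type graph), not a triangle. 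In addition, you do not address self-loops, which genuinely occur (e.g., in the degenerate case $n+\ell=t$, and for $r=0$ when a vertex has zeros at all $\{0,1\}$-coordinates outside $S'$); the paper treats the degenerate case separately exactly because of this. These are the points where the paper's inductive scaffolding and property~Z do the real work, and your direct route would need comparable detail to close them.
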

\begin{proof}
 Fix $m,t,r$. We first consider the case $s < m/2$.

 We will first prove the lemma for $n=1$ and $\ell = t+2r-1$. Then we will prove it for $n=1$ and arbitrary $\ell$. Finally, we will tackle the general case.
 
 Recall that the graph $G'$ was constructed by first constructing a larger graph $G = (V,E)$ and then removing all isolated vertices (vertices having no edges). 
  We will assume throughout that $\cF$ depends on the $n$th coordinate (that is, the underlying Frankl family has $n$ in its support), since otherwise $G$ has no edges and so $G'$ is empty.

 We will prove one more property of $G'$, which we call \emph{property~Z}. This property states that if $x \in V'$ and $x_i = 0$ for $i > n-1$ (that is, $i$ is a coordinate taking values in $\{0,1\}$) then $x$ is connected to some $y \in V'$ with $y_i = 1$.
 
 \smallskip
 
 We start with the case $(n,\ell) = (1,t+2r-1)$. If $r = 0$ then $G$ consists of a single vertex $[t-1]$ with a self-loop (we identify zero-one vectors with sets), and so it is connected and non-bipartite. If $r > 0$ then $V = \binom{[t+2r-1]}{\geq t+r-1}$. If $A \in \binom{[t+2r-1]}{\geq t+r-1}$ and $B \in \binom{[t+2r-1]}{\geq t+r}$ then $|A \cap B| \geq (t+r-1)+(t+r)-(t+2r-1) = t$, and so $V' \subseteq \binom{[t+2r-1]}{t+r-1}$; symmetry dictates that $V' = \binom{[t+2r-1]}{t+r-1}$. Moreover, the sequence
\[
 \{1,\ldots,t-1,t,\ldots,t+r-1\}, \{1,\ldots,t-1,t+r,\ldots,t+2r-1\}, \{2,\ldots,t-1,t,\ldots,t+r\}
\]
 corresponds to a path connecting $\{1,\ldots,t+r-1\}$ and $\{2,\ldots,t+r\}$. This shows that $G'$ is connected. Since the path has even length, we conclude that there is a path of even length connecting any two vertices. In particular, there is a path of even length connecting $\{1,\ldots,t-1,t,\ldots,t+r-1\}$ and $\{1,\ldots,t-1,t+r,\ldots,t+2r-1\}$, and together with the corresponding edge, we obtain an odd cycle. This shows that $G'$ is non-bipartite.
 
 To prove property~Z, we consider two cases. When $r = 0$, property~Z holds vacuously, since the only vertex contains no zero coordinates. When $r > 0$, we can assume without loss of generality that $x = \{2,\ldots,t+r\}$ and $i=1$. In that case, $x$ is connected to $y = \{1,\ldots,t-1,t+r,\ldots,t+2r-1\}$. 
 
 \smallskip

 We now prove by induction the case $n=1$ and $\ell \geq t+2r-1$. Let $G_{\ell},G'_{\ell}$ be the graphs corresponding to a particular value of $\ell$, where we assume without loss of generality that $\cF$ depends on the first $t+2r$ points (we can make this assumption since $\cF$ has to depend on the first point). Suppose that we have shown that $G'_\ell$ is connected and non-bipartite. Notice that $V_{\ell+1} = V \times \{0,1\}$ and $E_{\ell+1} = \{ ((x,i),(y,j)) : (x,y) \in E_\ell, (i,j) \neq (1,1) \}$. This shows that $V'_{\ell+1} = V'_\ell \times \{0,1\}$. Since $G'$ is connected and non-bipartite, there is an even-length path connecting any two $x,y \in V'$, which lifts to even-length paths connecting $(x,0),(y,0)$ and $(x,1),(y,1)$ (flipping the extra coordinate at each step); and an odd-length path connecting $x,y \in V'$ which lifts to paths connecting $(x,0),(y,1)$ and $(x,1),(y,0)$. This shows that $G'_{\ell+1}$ is connected. An odd-length path from $x$ to itself lifts to an odd-length path from $(x,0)$ to itself (flipping the extra coordinate at each step but the first), showing that $G'$ is non-bipartite.

 To prove property~Z, let $x \in V'_{\ell+1}$ have $x_i = 0$. If $i < \ell+1$ then property~Z for $G_\ell$ implies the existence of a neighbor $y$ with $y_i = 1$. If $i = \ell+1$ then $x$ is connected to some $y$ (since $x$ is not isolated), and since $V'_{\ell+1}$ is independent of the last coordinate, we can assume that $y_i = 1$ (the value of this coordinate does not change the number of coordinates on which $x$ and $y$ agree).

 \smallskip

 Suppose now that the lemma holds for families on $\ZZ_m^n \times \{0,1\}^\ell$. We will show that it holds for families on $\ZZ_m^{n+1} \times \{0,1\}^{\ell-1}$ as well. We denote the relevant graphs $G_n,G'_n$ and $G_{n+1},G'_{n+1}$, respectively. Without loss of generality we can assume that
\[
 V_{n+1} = \bigcup_{x_1 \in \ZZ_m^{n-1}} \bigcup_{x_2 \in \{0,1\}^{\ell-1}} \{x_1\} \times \sigma_0^{-1}(\{x' : (x_1,x',x_2) \in V_n\}) \times \{x_2\}.
\]
 In words, $V_{n+1}$ is obtained by applying $\sigma_0^{-1}$ to the $n$'th coordinate.
 
 We claim that $V'_{n+1}$ is obtained from $V'_n$ in the same way, showing that property~Z holds for $G_{n+1}$. Indeed, suppose that $(x_1,x',x_2) \in V'_n$ is not isolated, say it is connected to $(y_1,y',y_2) \in V'_n$, and let $x'' \in \sigma_0^{-1}(x')$. If $x'=y'=1$ then $(x_1,x'',x_2)$ is connected to $(y_1,x'',y_2)$. If $x'=1$ and $y'=0$ then $(x_1,x'',x_2)$ is connected to $(y_1,x''+s,y_2)$ (note $s+1 \leq x''+s \leq 2s$). If $x'=0$ then property~Z allows us to assume that $y'=1$, and so $(x_1,x'',x_2)$ is connected to $(y_1,y'',y_2)$ for some $1 \leq y'' \leq s$; indeed, Lemma~\ref{lem:interval-intersection} shows that the only elements $s$-agreeing with all of $1,\ldots,s$ are $1,\ldots,s$.
 
 


 Toward showing that $V'_{n+1}$ is connected, we prove first that $(x_1,a,x_2) \in V'_{n+1}$ is connected to $(x_1,b,x_2) \in V'_{n+1}$, by an even-length path, whenever $\sigma_0(a) = \sigma_0(b)$. Suppose first that $\sigma_0(a) = \sigma_0(b) = 1$. Let $(y_1,y',y_2) \in V'_n$ be a neighbor of $(x_1,1,x_2)$ in $G'_n$. If $\sigma_0(y') = 1$ then $(x_1,a,x_2),(y_1,a,y_2),(x_1,b,x_2)$ is a path in $G'_{n+1}$. If $\sigma_0(y) = 0$ then consider the following path in $G'_{n+1}$:
\[
 (x_1,1,x_2),(y_1,s+2,y_2),(x_1,2,x_2),(y_1,s+3,y_2),\ldots,(x_1,s-1,x_2),(y_1,2s,y_2),(x_1,s,x_2).
\]
 This contains a sub-path connecting $(x_1,a,x_2)$ and $(x_1,b,x_2)$.
 
 Suppose next that $\sigma_0(a) = \sigma_0(b) = 0$. Property~Z shows that $(x_1,0,x_2) \in V'_n$ has a neighbor $(y_1,1,y_2) \in V'_n$ in $G'_n$. Consider the following path in $G'_{n+1}$:
\[
 (x_1,s+1,x+2),(y_1,1,y_2),(x_1,s+2,x_2),(y_1,2,y_2),\ldots,(y_1,s-1,y_2),(x_1,2s,x_2),(y_1,s,y_2).
\]
 This path shows that $(x_1,a,x_2)$ and $(x_1,b,x_2)$ are connected whenever $s+1 \leq a,b \leq 2s$. Since $(y_1,s,y_2)$ neighbors $(x_1,c,x_2)$ for all $2s \leq c \leq m$, we deduce that $(x_1,a,x_2)$ and $(x_1,b,x_2)$ are connected for all $s+1 \leq a,b \leq m$.
 
 Consider now any two vertices $(x_1,x,x_2),(y_1,y,y_2) \in V'_{n+1}$ such that $(x_1,\sigma_0(x),x_2) \in V'_n$ neighbors $(y_1,\sigma_0(y),y_2) \in V'_n$ in $G'_n$. We will show that $(x_1,x,x_2)$ and $(y_1,y,y_2)$ are connected in $V'_{n+1}$ by an odd-length path by showing that $(x_1,X,x_2)$ and $(y_1,Y,y_2)$ are connected for some $X,Y \in \ZZ_m$ satisfying $\sigma_0(x) = \sigma_0(X)$ and $\sigma_0(y) = \sigma_0(Y)$. Such $X,Y$ are given by the following table:
\[
 \begin{array}{|c|c|c|c|c|}
 	\hline
 	\sigma_0(x),\sigma_0(y) & 0,0 & 0,1 & 1,0 & 1,1 \\\hline
 	X,Y & s+1,2s+1 & s+1,1 & 1,s+1 & 1,1 \\\hline
 \end{array}
\]
 This shows that $V'_{n+1}$ is connected. Moreover, any odd-length cycle in $V'_n$ lifts to an odd-length cycle in $V'_{n+1}$, showing that $G'_{n+1}$ is non-bipartite.

 When $s = 1$ and $m = 2$, notice that the general case differs from the case $n = 1$ only by a translation of the coordinates. Since the proof of the case $n = 1$ did not use the bound $s < m/2$, we conclude that the lemma holds even when $s = 1$ and $m = 2$.
\end{proof}

Now we can prove the strengthening of Lemma~\ref{lem:hamming-discrete-ub-ind}.

\begin{lemma} \label{lem:hamming-discrete-uniqueness-ind}
 Let $n,t \geq 1$ and $\ell,s,m$ be given, and suppose that $s < m/2$.
 Let $\cF$ be a family on $\ZZ_m^n \times \{0,1\}^\ell$ which is $t$-agreeing up~to~$s$, and let $\cH$ be the family on $\ZZ_m^{n-1} \times \{0,1\}^{\ell+1}$ constructed in Lemma~\ref{lem:hamming-discrete-ub-ind}. If $\mu_{s,m}^{n-1,\ell+1}(\cH) = \mu_{s,m}^{n,\ell}(\cF)$ and $\cH$ is equivalent to a $(t,r)$-Frankl family then $\cF$ is also equivalent to a $(t,r)$-Frankl family.
 
 The same holds for $(s,m) = (1,2)$, assuming that in the proof of Lemma~\ref{lem:hamming-discrete-ub-ind} we use $w_x = 1/2$ (see comment at the end of the proof).
\end{lemma}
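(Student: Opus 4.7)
The plan is to exploit the equality $\mu_{s,m}^{n,\ell}(\cF)=\mu_{s,m}^{n-1,\ell+1}(\cH)$ together with the Frankl structure of $\cH$ to reconstruct $\cF$ as equivalent to a Frankl family on $n+\ell$ points. First I would trace the chain of inequalities in the proof of Lemma~\ref{lem:hamming-discrete-ub-ind} to read off the equality conditions. Equality forces $|\cF_x|=m$ on every isolated vertex $x\in V\setminus V'$; the feasible vector $v_x:=|\cF_x|/(2s)$ to be an optimum of the fractional stable-set LP on $G'=(V',E')$ weighted by $\mu_{s,m}^{n-1,\ell}$; and (for $s<m/2$) the chosen half-integral LP optimum $w$ to satisfy $w_x\in\{0,1/2\}$ on $V'$, because the factor $1-2s/m$ hidden in the inequality $\sum_{V':w_x=1}\mu(x)+\tfrac{s}{m}\sum_{V':w_x=1/2}\mu(x)\geq\tfrac{2s}{m}\sum_{V'}\mu(x)w_x$ is strictly positive and penalizes any $w_x=1$. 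Combining with $v_x>0$ on $V'$, one rules out $w_x=0$ on $V'$ as well, so the projections of $\cF$ and $\cH$ onto $\ZZ_m^{n-1}\times\{0,1\}^\ell$ share the same support $V$.

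Next I would establish that $G'$ is connected and non-bipartite. The projection match identifies $G'$ with the graph that would arise from $\cH$ by peeling off its newly-added binary coordinate. Since $\cH$ is equivalent to a $(t,r)$-Frankl family, and the proof of Lemma~\ref{lem:hamming-discrete-uniqueness-gr} already treats peeling a $\ZZ_2$ coordinate in its $(s,m)=(1,2)$ branch (which is identical to peeling a $\{0,1\}$ coordinate), the lemma applies and yields that $G'$ is either empty (in which case $\cF$ is determined on $V\setminus V'$ alone and equivalence to a Frankl family is immediate) or connected and non-bipartite.

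Each edge $(x,y)\in E'$ corresponds to non-empty cross-$s$-agreeing fibers $\cF_x,\cF_y\subseteq\ZZ_m$, so $|\cF_x|+|\cF_y|\leq 2s$ by Lemma~\ref{lem:katona-cross-discrete}. The constraint $w\leq 1/2$ from Step~1 caps the LP maximum at $\tfrac{1}{2}\sum_{V'}\mu(x)$, so LP-optimality of $v$ gives $\sum_{V'}\mu(x)(v_x-1/2)=0$. If $A:=\{x\in V':v_x>1/2\}$ is non-empty, then by $v_x+v_y\leq 1$ on edges $A$ is an independent set and all its neighbors lie in $B:=\{x:v_x<1/2\}$; but an odd cycle of $G'$ (which exists by non-bipartiteness) must traverse at least one vertex with $v_x=1/2$, and a local perturbation argument based on LP optimality and complementary slackness rules this out, forcing $v\equiv 1/2$. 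Hence $|\cF_x|=s$ on $V'$ and every edge of $G'$ is tight; the equality case of Lemma~\ref{lem:katona-cross-discrete} makes every adjacent pair $\cF_x,\cF_y$ intervals about a common center, and connectivity of $G'$ propagates this to a single interval $I=\{y_n+1,\ldots,y_n+s\}\subseteq\ZZ_m$. Setting $y=(y',y_n)$ with $y'$ realizing $\cH=\sigma_{y'}^{-1}(\cG')$, and letting $\cG$ be the Frankl family on $n+\ell$ points built from the base set of $\cG'$ (reinterpreting the binary coordinate ``$n$'' of $\cH$ as the $\ZZ_m$ coordinate $n$ of $\cF$), a fiberwise check yields $\cF=\sigma_y^{-1}(\cG)$ and completes the reconstruction.

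The main obstacle is the LP rigidity step: forcing $v\equiv 1/2$ on $V'$ from LP-optimality, connected non-bipartiteness of $G'$, and $v_x\in(0,1)$. On arbitrary weighted connected non-bipartite graphs this conclusion can fail (vertices valued exactly $1/2$ can bridge across odd cycles), so the proof must lean on the specific Frankl-inherited structure of $G'$ and on the tensor form of the weights $\mu_{s,m}^{n-1,\ell}$ to close the argument. The $(s,m)=(1,2)$ variant is easier: the hint permits taking $w$ to be all-$1/2$ from the outset, so the LP analysis becomes trivial and Step~3 reduces to matching the already-uniform fiber structure of $\cH$ back to $\cF$.
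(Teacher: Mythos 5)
Your overall outline tracks the paper's: trace the equality conditions, force the supports of $\cF$ and $\cH$ to coincide, apply Lemma~\ref{lem:hamming-discrete-uniqueness-gr} to get structural information about $G'$, deduce $v\equiv 1/2$, and finish via the equality case of Lemma~\ref{lem:katona-cross-discrete} propagated along $G'$. The genuine gap is exactly the one you flag: your Step~3 tries to force $v\equiv 1/2$ from LP-optimality plus connectivity and non-bipartiteness of $G'$, and this does not hold for general weighted connected non-bipartite graphs. A triangle with a pendant vertex under uniform weights already admits non-constant maximizers with all coordinates in $(0,1)$, so your ``local perturbation / complementary slackness'' route will not close without additional input. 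The paper closes this differently and does not in fact use non-bipartiteness here at all. It shows that the chosen half-integral maximizer $w$ must satisfy $w\leq 1/2$ throughout (tightness kills the $(1-\tfrac{2s}{m})\sum_{w_x=1}\mu(x)$ slack), hence $w\equiv 1/2$; that any \emph{other} half-integral maximizer $w'$ with some $w'_x=1$ would produce a strictly larger $t$-agreeing family $\cH'$ via the same construction, contradicting the optimality baked into the application; so $w\equiv 1/2$ is the \emph{unique} half-integral optimum. Then the uniqueness clause of Proposition~\ref{pro:half-integrality} promotes this to the unique LP optimum, and since tightness of $\sum\mu(x)w_x\geq\sum\mu(x)v_x$ makes $v$ an optimum as well, $v\equiv 1/2$ follows. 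Your argument needs this Proposition~\ref{pro:half-integrality}-based step; without it there is no way to rule out a second fractional optimum.

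Two further inaccuracies. Your phrase ``combining with $v_x>0$ on $V'$, one rules out $w_x=0$'' conflates the two vectors: $v$ and $w$ need not have the same support a~priori. What actually rules out $w_x=0$ is that any LP maximizer satisfying $w\leq 1/2$ everywhere must be the constant-$\tfrac12$ vector, since the constant-$\tfrac12$ vector is always feasible and dominates. And your invocation of the $(s,m)=(1,2)$ branch of Lemma~\ref{lem:hamming-discrete-uniqueness-gr} to ``peel the new binary coordinate'' is not what the lemma provides; that branch concerns ambient parameters $s=1,m=2$, not a $\{0,1\}$ coordinate inside a general $(s,m)$ family. The intended reduction is to inflate the newly added $\{0,1\}$ coordinate of $\cH$ back to a $\ZZ_m$ coordinate via $\sigma_0^{-1}$ --- which yields a family equivalent to the same Frankl family --- and then apply Lemma~\ref{lem:hamming-discrete-uniqueness-gr} (with the original $s,m$) peeling that $\ZZ_m$ coordinate; the resulting graph coincides with $G'$ because you have already shown that $\cF$ and $\cH$ have identical supports. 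Your final reconstruction step and your treatment of the $(s,m)=(1,2)$ case are fine.
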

\begin{proof}
 As in the proof of Lemma~\ref{lem:hamming-discrete-ub-ind}, we consider separately the degenerate case $n + \ell = t$ and the non-degenerate case $n + \ell > t$.
 
 When $n + \ell = t$, $\cH$ must be equivalent to the family $\{[t]\}$. Without loss of generality, we can assume that $\cH = [s]^{n-1} \times \{1\}^{\ell+1}$ (this corresponds to the choice $y = 0$ in the definition of equivalence). Thus $\cF$ has the form
\[
 \cF = \bigcup_{x_1 \in [s]^{n-1}} \bigcup_{x_2 = \{1\}^\ell} \{x_1\} \times \cF_{x_1,x_2} \times \{x_2\},
\]
 where each $\cF_{x_1,x_2}$ is $s$-agreeing. We remind the reader that the support of $\cF$ is the set of pairs $(x_1,x_2)$ such that $\cF_{x_1,x_2} \neq \emptyset$; the support $\cH$ is defined in the same way.
 
 Since $\mu_{s,m}^{n-1,\ell+1}(\cH) = \mu_{s,m}^{n,\ell}(\cF)$, moreover $|\cF_{x_1,x_2}| = s$, and so Lemma~\ref{lem:katona-cross-discrete} shows that $\cF_{x_1,x_2}$ is an interval (when $s = 1$ this is trivial). If $(x_1,x_2),(y_1,y_2)$ are in the support of $\cF$ then $\cF_{x_1,x_2},\cF_{y_1,y_2}$ are cross-$s$-agreeing, and so Lemma~\ref{lem:katona-cross-discrete} shows that they are equal (again, when $s = 1$ this is trivial). This shows that $\cF$ is equivalent to $\{[t]\}$ as well.
 
 \smallskip
 
 Consider now the non-degenerate case $n + \ell > t$. Recall that in the proof of Lemma~\ref{lem:hamming-discrete-ub-ind} we constructed two graphs, $G = (V,E)$ and $G' = (V',E')$, and a vector $w\colon V' \to \RR$, and defined $\cH$ in terms of this data. When computing the measure of $\cH$, we used the estimate
\[
 \sum_{x \in V \setminus V'} \mu_{s,m}^{m-1,\ell}(x) \geq \sum_{x \in V \setminus V'} \mu_{s,m}^{m-1,\ell}(x) \mu_m(\cF_x).
\]
 This estimate can only be tight if $\cF_x = \ZZ_m$ for $x \in V \setminus V'$. 
 We also used the estimate
\[
 \sum_{\substack{x \in V' \\ w_x = 1}} \mu_{s,m}^{n-1,\ell}(x) +
 \frac{s}{m} \sum_{\substack{x \in V' \\ w_x = 1/2}} \mu_{s,m}^{n-1,\ell}(x) \geq
 \frac{2s}{m} \sum_{x \in V'} \mu_{s,m}^{n-1,\ell}(x) w_x.
\]
 If $2s < m$, this can only be tight if it is never the case that $w_x = 1$. Recall that $w_x$ was a $\{0,1/2,1\}$-valued vector maximizing $\sum_{x \in V'} \mu_{s,m}^{n-1,\ell}(x) w_x$ under the constraints $0 \leq w_x \leq 1$ and $w_x + w_y \leq 1$ whenever $(x,y) \in E'$. Since $w_x \leq 1/2$ for all $x \in V'$, we see that in fact $w_x = 1/2$ for all $V'$ (since the constant~$1/2$ vector is always feasible). This shows that $\cH$ and $\cF$ have the same support.
  
 Moreover (still assuming $2s < m$), if there were a different $\{0,1/2,1\}$-valued vector maximizing $\sum_{x \in V'} \mu_{s,m}^{n-1,\ell}(x) w_x$ then the construction of Lemma~\ref{lem:hamming-discrete-ub-ind} would have shown that $\cF$ does not have maximum measure. We conclude that $w_x$ is the unique $\{0,1/2,1\}$-valued maximizer, and so the unique maximizer according to Proposition~\ref{pro:half-integrality}.
 
 When $(s,m) = (1,2)$, $w_x = 1/2$ for all $V'$ by assumption, and so $\cH$ and $\cF$ have the same support; the property proved in the preceding paragraph won't be needed in this case. 
 
 Lemma~\ref{lem:hamming-discrete-uniqueness-gr} shows that $G'$ is either empty or connected. Recall that $v_x = |\cF_x|/(2s)$. When computing the measure of $\cH$, we used the estimate $\sum_{x \in V'} \mu_{s,m}^{n-1,\ell}(x) w_x \geq \sum_{x \in V'} \mu_{s,m}^{n-1,\ell}(x) v_x$. When $s < m/2$, $w$ is the unique maximizer, and so $v_x = 1/2$ for all $x \in V$, that is, $|\cF_x| = s$ for all $x \in V'$; when $(s,m) = (1,2)$, the same trivially holds. For any two vertices $x,y \in V'$ connected by an edge, the sets $\cF_x,\cF_y$ must be $s$-agreeing, and so Lemma~\ref{lem:katona-cross-discrete} shows that $\cF_x = \cF_y$ (when $s=1$ this is trivial). Since $G$ is connected, we see that all $\cF_x$ are equal. It follows that for some $a \in \ZZ_m$, $\cF_x = \sigma_a^{-1}(\cH_x)$ for all $x \in V$. Therefore $\cF$ is equivalent to the same $(t,r)$-Frankl family as $\cH$. 
\end{proof}

Finally, we can determine the maximum measure families.

\begin{lemma} \label{lem:hamming-discrete-uniqueness}
 Fix $n \geq t \geq 1$, $m \geq 2$, and either $s < m/2$ or both $(s,m) = (1,2)$ and $t > 1$. If $\cF$ is a family on $\ZZ_m^n$ which is $t$-intersecting up~to~$s$ and has measure $w(\ZZ_m^n,t,s)$ then $\cF$ is equivalent to a family on $n$ points whose $\mu_{s/m}$-measure is $w(n,t,s/m)$.
\end{lemma}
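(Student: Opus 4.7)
The plan is to mirror the proof of Lemma~\ref{lem:hamming-discrete-ub} (which established $w(\ZZ_m^n,t,s) = w(n,t,s/m)$ by iterating Lemma~\ref{lem:hamming-discrete-ub-ind} $n$ times), but to use the sharpened inductive step Lemma~\ref{lem:hamming-discrete-uniqueness-ind} in place of Lemma~\ref{lem:hamming-discrete-ub-ind}. Concretely, starting from the given extremal family $\cF_0 := \cF$ on $\ZZ_m^n \times \{0,1\}^0$, I will apply Lemma~\ref{lem:hamming-discrete-ub-ind} repeatedly to obtain a sequence $\cF_0,\cF_1,\ldots,\cF_n$, where $\cF_k$ is a $t$-agreeing up~to~$s$ family on $\ZZ_m^{n-k}\times\{0,1\}^k$ and $\mu_{s,m}^{n-k,k}(\cF_k)$ is non-decreasing in $k$. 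In the $(s,m)=(1,2)$ case I will make sure to run the construction with the prescribed choice $w_x=1/2$, as required by Lemma~\ref{lem:hamming-discrete-uniqueness-ind}.

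Since $\cF_n$ lives on $\{0,1\}^n$ and is $t$-intersecting there, $\mu_{s/m}(\cF_n)\leq w(n,t,s/m)$; combining this with the chain of inequalities $\mu_{s/m}(\cF_n)\geq\mu_m(\cF_0)=w(\ZZ_m^n,t,s)=w(n,t,s/m)$ (using Lemma~\ref{lem:hamming-discrete-ub}) forces equality at every step of the sequence. In particular, $\cF_n$ is a $t$-intersecting family on $n$ points of $\mu_{s/m}$-measure exactly $w(n,t,s/m)$. The hypothesis $s<m/2$ (or $(s,m)=(1,2)$ with $t>1$) translates to $p=s/m<1/2$ (respectively $p=1/2$ with $t>1$), so the uniqueness part of Theorem~\ref{thm:weighted-main} applies and tells us that $\cF_n$ is equivalent to some Frankl family $\cF_{t,r}$.

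Now I will run the uniqueness induction backwards: since $\mu_{s,m}^{n-k-1,k+1}(\cF_{k+1})=\mu_{s,m}^{n-k,k}(\cF_k)$ and $\cF_{k+1}$ is equivalent to a $(t,r)$-Frankl family, Lemma~\ref{lem:hamming-discrete-uniqueness-ind} yields that $\cF_k$ is equivalent to the same $(t,r)$-Frankl family. After $n$ applications, $\cF=\cF_0$ itself is equivalent to a $(t,r)$-Frankl family on $n$ points, and in particular equivalent to a family on $n$ points of $\mu_{s/m}$-measure $w(n,t,s/m)$, as desired. The main subtlety — already handled inside Lemma~\ref{lem:hamming-discrete-uniqueness-ind} — is not in the iteration itself but in justifying that equality propagates through each inductive step (i.e., that the linear-programming slack, the inequality $\cF_x=\ZZ_m$ on isolated vertices, and the uniqueness of the half-integral maximizer are all forced); here I simply invoke that lemma as a black box, together with the connectedness/non-bipartiteness of $G'$ supplied by Lemma~\ref{lem:hamming-discrete-uniqueness-gr}.
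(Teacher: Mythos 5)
Your proof is correct and follows essentially the same route as the paper's own: iterate Lemma~\ref{lem:hamming-discrete-ub-ind} to produce the chain $\cF_0,\ldots,\cF_n$, use $w(\ZZ_m^n,t,s)=w(n,t,s/m)$ to force equality at every step, apply Theorem~\ref{thm:weighted-main} to identify $\cF_n$ as a Frankl family, and then propagate this back through Lemma~\ref{lem:hamming-discrete-uniqueness-ind}. The only additional detail you make explicit — that in the $(s,m)=(1,2)$ case one must run the construction with the choice $w_x=1/2$ — is precisely the caveat the inductive uniqueness lemma already demands, and is consistent with the paper's intent.
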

\begin{proof}
 Let $\cF_0 = \cF, \cF_1, \ldots, \cF_n$ be the sequence of families constructed by applying Lemma~\ref{lem:hamming-discrete-ub-ind}, so that $\cF_\ell$ is a family on $\ZZ_m^{n-\ell} \times \{0,1\}^\ell$. The lemma states that $\mu_{s,m}^{n-\ell,\ell}(\cF_\ell) \leq \mu_{s,m}^{n-\ell-1,\ell+1}(\cF_{\ell+1})$. Since $\mu_{s/m}(\cF_n) \leq w(n,t,s/m) = w(\ZZ_m^n,t,s)$, we conclude that $\mu_{s,m}^{n-\ell,\ell}(\cF_\ell) = \mu_{s,m}^{n-\ell-1,\ell+1}(\cF_{\ell+1})$ for all $0 \leq \ell < n$. Also, Theorem~\ref{thm:weighted-main} shows that $\cF_n$ is equivalent to a $(t,r)$-Frankl family for some $r \geq 0$. Applying Lemma~\ref{lem:hamming-discrete-uniqueness-ind}, we conclude that the same holds for $\cF_0$.
\end{proof}

This completes the proof of Theorem~\ref{thm:hamming-discrete}.

\subsection{Continuous setting} \label{sec:hamming-continuous}

The continuous analog of the setting of Section~\ref{sec:hamming-discrete} is given by the following definitions.

\begin{definition} \label{tp-agreement}
 A \emph{continuous family on $n$ points} is a measurable subset of $\unitcirc^n$ (recall that $\unitcirc$ is the unit circumference circle). For $p \leq 1/2$, a continuous family $\cF$ on $n$ points is \emph{$t$-agreeing up~to~$p$} if any two vectors $x,y \in \cF$ have $t$ coordinates $i_1,\ldots,i_t$ such that the distance between $x_{i_j}$ and $y_{i_j}$ is less than $p$ for all $1 \leq j \leq t$.
 
 We denote the measure of a continuous family $\cF$ by $\mu(\cF)$.
 
 The maximum measure of a continuous family on $n$ points which is $t$-agreeing up~to~$p$ is denoted $w(\unitcirc^n,t,p)$.
\end{definition}

We will prove the following theorem.

\begin{theorem} \label{thm:hamming-continuous}
 Let $n,m,t \geq 1$ and $p < 1/2$. Then $w(\unitcirc^n,t,p) = w(n,t,p)$.
\end{theorem}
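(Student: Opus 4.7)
The plan is to prove the two inequalities $w(\unitcirc^n,t,p) \geq w(n,t,p)$ and $w(\unitcirc^n,t,p) \leq w(n,t,p)$ separately. The lower bound is the easier direction. Given a $t$-intersecting family $\cG$ on $n$ points achieving $\mu_p(\cG) = w(n,t,p)$, define a continuous family
\[
 \cF = \{x \in \unitcirc^n : \{i : x_i \in [0,p)\} \in \cG\}.
\]
The pushforward of the uniform measure on $\unitcirc^n$ under $x \mapsto \{i : x_i \in [0,p)\}$ is exactly $\mu_p$, so $\mu(\cF) = \mu_p(\cG) = w(n,t,p)$. If $x,y \in \cF$, then their images $\sigma(x),\sigma(y) \in \cG$ share $t$ common elements $i_1,\ldots,i_t$, and for each such $i_j$ both $x_{i_j},y_{i_j} \in [0,p)$, so $d(x_{i_j},y_{i_j}) < p$ on $\unitcirc$. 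Hence $\cF$ is $t$-agreeing up~to~$p$.

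For the upper bound, I would use a random-translation discretization to invoke Theorem~\ref{thm:hamming-discrete}. Let $\cF \subseteq \unitcirc^n$ be $t$-agreeing up~to~$p$. Fix a large integer $N$ with $pN \notin \ZZ$ (there are infinitely many such $N$), and set $s = \lceil pN \rceil$; for $N$ large, $s \leq N/2$ since $p < 1/2$. For each $\tau \in \unitcirc^n$, define the discrete family
\[
 \cF_\tau = \{ a \in \ZZ_N^n : (a/N + \tau) \bmod 1 \in \cF \} \subseteq \ZZ_N^n.
\]
Since $a/N + \tau$ is uniformly distributed on $\unitcirc^n$ for uniform $\tau$, we have $\EE_\tau[|\cF_\tau|/N^n] = \mu(\cF)$. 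On the other hand, for any $\tau$, if $a,b \in \cF_\tau$ and $\cF$ is $t$-agreeing up~to~$p$ witnessed by coordinates $i_1,\ldots,i_t$, then $d(a_{i_j}/N, b_{i_j}/N) < p$, and since this circle distance is an integer multiple of $1/N$ and $pN \notin \ZZ$, we get $|a_{i_j} - b_{i_j}|_{\ZZ_N} \leq \lfloor pN \rfloor = s-1$. Thus every $\cF_\tau$ is $t$-agreeing up~to~$s$ in the sense of Definition~\ref{def:ts-agreement}, so by Theorem~\ref{thm:hamming-discrete},
\[
 |\cF_\tau|/N^n \leq w(\ZZ_N^n,t,s) = w(n,t,s/N).
\]
Averaging over $\tau$ gives $\mu(\cF) \leq w(n,t,s/N)$.

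To conclude, I would let $N \to \infty$ along integers with $pN \notin \ZZ$, so that $s/N = \lceil pN \rceil / N \to p$ from above. The function $q \mapsto w(n,t,q)$ is a finite maximum of the polynomials $\mu_q(\cF_{t,r})$ (over $r$ with $t+2r \leq n$) and is therefore continuous in $q$, so $w(n,t,s/N) \to w(n,t,p)$, giving $\mu(\cF) \leq w(n,t,p)$. The main obstacle is really just bookkeeping: verifying that $s = \lceil pN \rceil$ correctly matches the ``up to $s$'' threshold when translating circle distances on $\unitcirc$ to the $\ZZ_N$ notion, and handling the need for $pN$ non-integer so that the strict inequality $d(x,y)<p$ survives the discretization; the measurability of $\cF$ ensures $a \in \cF_\tau$ is a well-defined event so Fubini justifies the averaging step.
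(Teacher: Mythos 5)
Your proof is correct. The lower bound direction is essentially identical to the paper's (pull back an optimal $t$-intersecting family via the map $x \mapsto \{i : x_i \in [0,p)\}$). For the upper bound the paper and you both reduce to Theorem~\ref{thm:hamming-discrete}, but via genuinely different discretizations. The paper first uses outer regularity of Lebesgue measure to cover $\cF$ by cylinder sets of nearly the same total measure, truncates to a finite union $\cF^*$, then keeps only the cubes of a fixed $1/M$-grid that lie entirely inside $\cF^*$, and pays for the resulting distortion by inflating the agreement threshold to $p+\epsilon^{1/n}$ before applying the discrete theorem; three separate error terms ($\epsilon$ from covering, $O(I/M)$ from the grid, $\epsilon^{1/n}$ in the threshold) must be controlled simultaneously. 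You instead discretize with a \emph{random} translation $\tau$ and observe that $\EE_\tau\bigl[|\cF_\tau|/N^n\bigr] = \mu(\cF)$ is exact, by translation invariance and Fubini (this is the same averaging device that underlies Katona's circle method in Section~\ref{sec:katona}). Because each $\cF_\tau$ is $t$-agreeing up~to $s=\lceil pN\rceil$ purely by the integrality of $\ZZ_N$-distances, no threshold inflation and no covering/truncation are needed, and the only error is $s/N - p = O(1/N)$, absorbed by the continuity of $q \mapsto w(n,t,q)$ (a finite maximum of the polynomials $\mu_q(\cF_{t,r})$). Your route is shorter and avoids the $\epsilon^{1/n}$ bookkeeping; the one hypothesis you must remember is to take $N$ large enough that $\lceil pN\rceil \le N/2$, so that Theorem~\ref{thm:hamming-discrete} applies, which you note. (As a minor remark, the condition $pN \notin \ZZ$ is not actually needed: even when $pN$ is an integer, the strict inequality $d(a_{i_j}/N,b_{i_j}/N) < p$ together with integrality already gives $|a_{i_j}-b_{i_j}|_{\ZZ_N} \le \lceil pN\rceil - 1$, so $s=\lceil pN\rceil$ always works.)
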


The proof uses a reduction to Theorem~\ref{thm:hamming-discrete} which is very similar to the one used to deduce Lemma~\ref{lem:katona-cross-continuous} from Lemma~\ref{lem:katona-cross-discrete}.

\begin{proof}
 We start with the easy direction: $w(\unitcirc^n,t,p) \geq w(n,t,p)$. Let $\cF$ be a $t$-intersecting family on $n$ points with $\mu_p(\cF) = w(n,t,p)$. Define a mapping $\tau(\unitcirc) \to \{0,1\}$ by $\tau([0,p)) = 1$ and $\tau([p,1)) = 0$. It is not hard to check that $\tau^{-1}(\cF)$ is a continuous family on $n$ points which is $t$-agreeing up~to~$p$ and has measure $w(n,t,p)$. Thus $w(\unitcirc^n,t,p) \geq w(n,t,p)$.
 
 For the other direction, let $\cF$ be a continuous family on $n$ points which is $t$-agreeing up~to~$p$. Let $\epsilon > 0$ be a parameter satisfying $p + \epsilon^{1/n} < 1/2$. Since $\cF$ is measurable, there is a sequence $F_i$ of cylindrical sets of total measure at most $\mu(\cF) + \epsilon$ which covers $\cF$. Since an $L_\infty$ ball of radius $\delta$ around any point has volume $(2\delta)^n$, it follows that any point in $\bigcup_i F_i$ is at $L_\infty$-distance $\epsilon^{1/n}/2$ from $\cF$. This implies that $\bigcup_i F_i$ is $t$-agreeing up~to~$p+\epsilon^{1/n}$.
 
 Choose $I$ so that $\sum_{i>I} \mu(F_i) < \epsilon$, and let $\cF^* = \bigcup_{i \leq I} F_i$. Let $M$ be a large integer, and partition $\unitcirc^n$ into $M^n$ cubes of side length $1/M$. Let $\cF^*_M$ consist of the union of all cubes contained entirely inside $\cF^*$. Thus $\mu(\cF^*_M) \geq \mu(\cF^*) - O(I/M) \geq \mu(\cF) - \epsilon - O(I/M)$. We can view $\cF^*_M$ as a family on $\ZZ_M^n$, and this family is $t$-agreeing up~to~$\lfloor (p+\epsilon^{1/n}) M \rfloor$. Theorem~\ref{thm:hamming-discrete} thus shows that $\mu(\cF^*_M) \leq w(n,t,\lfloor (p+\epsilon^{1/n}) M \rfloor/M)$. Therefore
\[
 \mu(\cF) \leq w(n,t,\lfloor (p+\epsilon^{1/n}) M \rfloor/M) + \epsilon + O\left(\frac{I}{M}\right).
\]
 Since $w(n,t,q)$ is continuous for $q \leq 1/2$, taking the limit $M\to\infty$ we deduce that $\mu(\cF) \leq w(n,t,p+\epsilon^{1/n}) + \epsilon$. Taking the limit $\epsilon\to0$, we conclude that $\mu(\cF) \leq w(n,t,p)$, and so $w(\unitcirc^n,t,p) \leq w(n,t,p)$.
\end{proof}

It is tempting to try and prove Theorem~\ref{thm:hamming-continuous} directly, along the lines of Theorem~\ref{thm:hamming-discrete}. Besides requiring a stronger version of Lemma~\ref{lem:katona-cross-continuous}, one would also need a version of Proposition~\ref{pro:half-integrality} for infinite polytopes, which we doubt holds.

\section{Arguments to infinity} \label{sec:reduction}

Suppose that we know an intersection theorem such as the Ahlswede--Khachatrian theorem in the uniform setting, that is for subsets of $\binom{[n]}{k}$. Can we deduce a matching theorem in related settings? This question has been considered by Ahlswede and Khachatrian~\cite{AK5} in their work on sets in Hamming space having small diameter (see the introduction to Section~\ref{sec:hamming}), and by Dinur and Safra~\cite{DinurSafra} in their seminal work on the inapproximability of vertex cover (see also Tokushige~\cite{Tokushige} for a similar argument). Ahlswede and Khachatrian were interested in $t$-agreeing families on $\ZZ_m^n$, while Dinur and Safra were interested in $t$-intersecting families on $n$ points, measured according to the $\mu_p$ measure. In this section, we describe their arguments in an abstract framework, and explain their power and limitations.

We describe and motivate the abstract framework in Section~\ref{sec:abstract-framework}. The argument of Ahlswede and Khachatrian appears in Section~\ref{sec:reduction-ak}, and the one of Dinur and Safra in Section~\ref{sec:reduction-ds}.

\paragraph{Notation} We use the falling power notation $n^{\ul{k}} = n!/(n-k)!$. For a family $\cF$, we denote by $\cF^{=i}$ its subfamily consisting of all subsets of size $i$.

\subsection{Abstract framework} \label{sec:abstract-framework}

The abstract framework described below originates in the author's PhD thesis~\cite{Filmus}. 

\begin{definition} \label{def:abstract-framework}
 Let $\cX$ be a monotone family on infinitely many points. A family $\cF$ on $n$ points is \emph{$\cX$-intersecting} if all $A,B \in \cF$ satisfy $A \cap B \in \cX$.
 
 For example, a family is $t$-intersecting iff it is $\cX_t$-intersecting, where $\cX_t$ is the collection of all subsets of $\NN$ containing at least $t$ points.
 
 For $0 \leq k \neq n$, we define $w(n,\cX,k)$ as the maximum of $\cF/\binom{n}{k}$ over all $\cX$-intersecting $\cF \subseteq \binom{[n]}{k}$.
 
 For all $p \in (0,1)$, we define $w(n,\cX,p)$ as the maximum of $\mu_p(\cF)$ over all $\cX$-intersecting families $\cF$ on $n$ points.
\end{definition}

Our goal is to relate $w(n,\cX,k)$ and $w(n,\cX,p)$. We can expect such a relation in the following situation.

\begin{definition} \label{def:abstract-framework-ub}
 Let $\cX$ be a monotone family on infinitely many points, and let $p \in (0,1)$. We write
\[
 w(n,\cX,k) \xrightarrow{k/n \to p} w
\]
if whenever $(k_i,n_i)$ is a sequence such that $n_i\to\infty$ and $k_i/n_i \to p$ then $w(n_i,\cX,k_i) \to w$.
\end{definition}

As an example, consider the Ahlswede--Khachatrian theorem~\cite{AK2,AK3}.

\begin{definition} \label{def:uniform-frankl}
 Let $t \geq 1$, $r \geq 0$, and $n \geq k \geq t+2r$. The $(t,r)$-Frankl family $\cF_{t,r}^{n,k}$ is
\[
 \cF_{t,r}^{n,k} = \{ S \in \binom{[n]}{k} : |S \cap [t+2r]| \geq t+r \}.
\]
 A family $\cF \subseteq \binom{[n]}{k}$ is \emph{equivalent} to a $(t,r)$-Frankl family if it equals the family in the definition of $\cF_{t,r}^{n,k}$ with $[t+2r]$ replaced by any other subset of $[n]$ of size $t+2r$.
\end{definition}

\begin{proposition}[Ahlswede and Khachatrian~\cite{AK2,AK3}] \label{pro:ak}
 Let $1 \leq t \leq k \leq n$, and suppose that $\cF \subseteq \binom{[n]}{k}$ is $t$-intersecting.
 
 If
\[
 \frac{r}{t+2r-1} < \frac{k-t+1}{n} < \frac{r+1}{t+2r+1}
\]
 then $|\cF| \leq |\cF_{t,r}^{n,k}|$, with equality only if $\cF$ is equivalent to a $(t,r)$-Frankl family.
 
 If
\[
 \frac{k-t+1}{n} = \frac{r+1}{t+2r+1}
\]
 then $|\cF| = |\cF_{t,r}^{n,k}| = |\cF_{t,r+1}^{n,k}|$, with equality only if $\cF$ is equivalent to a $(t,r)$-Frankl family or to a $(t,r+1)$-Frankl family.
\end{proposition}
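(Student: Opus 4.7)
The plan is to prove Proposition~\ref{pro:ak} by adapting the techniques developed in Section~\ref{sec:weighted} for the weighted case back to the uniform setting, following the two original proofs of~\cite{AK2,AK3}. The argument tracks the structure of the proof of Theorem~\ref{thm:weighted-main} very closely, with $\mu_p$-measure replaced throughout by cardinality inside $\binom{[n]}{k}$.

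First I would apply standard shifts $\shift_{i,j}$ with $i>j$ to reduce to the case that $\cF$ is left-compressed, exactly as in Lemma~\ref{lem:shifting-int} and Lemma~\ref{lem:left-compressed}: the shifts preserve cardinality and $t$-intersection and map each set to a set of the same size, so $\cF$ remains inside $\binom{[n]}{k}$. The uniform analog of Lemma~\ref{lem:shifting-frankl}, proved by the identical case analysis (with $\cF_{t,r}$ replaced by $\cF_{t,r}^{n,k}$), lets us pull back any equivalence-to-Frankl conclusion from the shifted family to the original $\cF$, as in Lemma~\ref{lem:compression}.

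Next I would port the generating-sets argument of Section~\ref{sec:generating-sets} and the pushing-pulling argument of Section~\ref{sec:symmetrization} to the uniform setting. The structural statements Lemma~\ref{lem:gs-1} and Lemma~\ref{lem:sym-1} do not involve the measure and transfer verbatim: in a left-compressed $t$-intersecting family of extent $m$ (respectively symmetric extent $\ell$), two sets of the boundary that intersect in exactly $t$ elements must together cover $[m]$ (respectively $[\ell]$). The modifications described in Lemmas~\ref{lem:gs-2},~\ref{lem:gs-3},~\ref{lem:sym-2}, and~\ref{lem:sym-3} take exactly the same form, but in each gain computation the factor $(1-p)/p$ is replaced by the ratio of binomial coefficients arising from counting $k$-supersets. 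Simplifying these ratios at the marginal values of $|A|,|B|$ dictated by Lemmas~\ref{lem:gs-1} and~\ref{lem:sym-1}, the two critical thresholds become $(k-t+1)/n = (r+1)/(t+2r+1)$ and $(k-t+1)/n = r/(t+2r-1)$, matching the hypothesis of the proposition. Combining both techniques, exactly as in Lemma~\ref{lem:psmall}, forces the optimal left-compressed family to equal $\cF_{t,r}^{n,k}$, or to lie in $\{\cF_{t,r}^{n,k},\cF_{t,r+1}^{n,k}\}$ in the boundary case.

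The main obstacle is verifying that the ratios of binomial coefficients which replace $(1-p)/p$ in each gain calculation simplify, at the relevant values of $|A|,|B|$, to precisely the threshold $(k-t+1)/n$ rather than to the more naive $k/n$. The shift by $t-1$ in the numerator comes from the combinatorial identity that a set $S$ of size $s$ has $\binom{n-s}{k-s}$ $k$-supersets, so the ratio for $s$ versus $s-1$ behaves like $(n-k)/(k-s+1)$; evaluating at $s=(m+t)/2$ (generating sets) or $s=(\ell+t)/2$ (pushing-pulling) produces the stated fractions. Once these threshold computations are carried out correctly, every other step is a routine translation from Section~\ref{sec:psmall}, and the equivalence-to-Frankl conclusion for arbitrary $\cF$ follows from the uniform analog of Lemma~\ref{lem:compression}.
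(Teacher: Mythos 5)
The paper does not prove Proposition~\ref{pro:ak}; it cites it directly to Ahlswede and Khachatrian~\cite{AK2,AK3}, so there is no in-paper proof to compare against. Your plan essentially asserts that the weighted proof of Section~\ref{sec:weighted} can be translated back to the uniform setting with only a change of weights, and that claim needs more care than you give it.

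Two concrete issues. First, the measure bookkeeping is structurally different, not merely a substitution of weights. In the weighted, monotone setting, removing a boundary generating set $A \in \cG^*$ removes exactly the single set $A$ from $\cF$ (Lemma~\ref{lem:gs-0a}), and its contribution to $\mu_p(\cF)$ is $p^{|A|}(1-p)^{n-|A|}$. In the uniform setting a family $\cF \subseteq \binom{[n]}{k}$ of extent $m$ has $|\cF| = \sum_{S \in \upset{\cG}} \binom{n-m}{k-|S|}$, so deleting or adding a set of size $s$ changes $|\cF|$ by $\binom{n-m}{k-s}$, which depends on $m$ as well as $s$. Your ratio ``$(n-k)/(k-s+1)$'' for the gain when a generating set is shaved from size $s$ to $s-1$ is therefore not the relevant quantity; the actual ratio is $\binom{n-m}{k-s+1}/\binom{n-m}{k-s} = (n-m-k+s)/(k-s+1)$. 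Moreover, in the uniform analog of Lemma~\ref{lem:gs-2} the two alternatives give a pair of inequalities that must be multiplied together, producing a quadratic condition in $n,k,m,t$ rather than a single ratio comparison. It does turn out that at $s = (m+t)/2$ with $m = t+2r+2$ the averaged gain is positive exactly when $(r+1)n > (k-t+1)(t+2r+1)$, matching the stated threshold, but this requires a genuine calculation that your sketch does not supply and your intermediate formula would not produce.

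Second, and more fundamentally, the route you propose (use generating sets and pushing-pulling together, no complementation) is not how either of the cited proofs works. The paper explicitly records that in the classical setting ``each of these techniques suffices by itself,'' using the complementation $\cF \mapsto \{[n]\setminus S : S\in\cF\}$, which sends a $t$-intersecting $k$-uniform family to an $(n-2k+t)$-intersecting $(n-k)$-uniform family; this trick is unavailable in the weighted setting, which is precisely why Section~\ref{sec:weighted} needs both techniques. Reversing the direction as you propose is an honest alternative approach, but you should acknowledge that it departs from~\cite{AK2,AK3}, and you would still need to treat the degenerate regime $n \leq 2k-t+1$ (where $\binom{[n]}{k}$ is itself $t$-intersecting or nearly so, and the averaged gain in the uniform Lemma~\ref{lem:gs-2} analog is nonpositive) as a separate base case, something your outline does not mention.
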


The theorem shows that the maximum measure of a $t$-intersecting family roughly depends only on $k/n$. Indeed, as we show below, it implies that for $\frac{r}{t+2r-1} <  p < \frac{r+1}{t+2r+1}$ we have $w(n, \cX_t, k) \xrightarrow{k/n \to p} \mu_p(\cF_{t,r})$. 

The following definition captures the situation depicted in the theorem more abstractly.

\begin{definition} \label{def:abstract-framework-ub-2}
 Let $\cX$ be a monotone family on infinitely many points, and let $\cH$ be an $\cX$-intersecting family on $m$ points. For $p \in (0,1)$, we say that $\cH$ is \emph{$p$-optimal} (with respect to $\cX$) if whenever $(k_i,n_i)$ is a sequence such that $n_i\to\infty$ and $k_i/n_i \to p$, for large enough $i$ it holds that
\[
 w(n_i,\cX,k_i) = \frac{1}{\binom{[n_i]}{k_i}} |\{ A \in \binom{[n_i]}{k_i} : A \cap [m] \in \cH \}|.
\]
 We say that $\cH$ is \emph{uniquely $p$-optimal} if in the former situation, for large enough $i$ the bound $w(n_i,\cX,k_i)$ is attained only on families of the given form, and their equivalents (obtained by applying any permutation on the $n_i$ points).
\end{definition}

\begin{lemma} \label{lem:abstract-framework-opt-defs}
 Let $\cX$ be a monotone family on infinitely many points, let $\cH$ be an $\cX$-intersecting family on $m$ points, and suppose that $\cH$ is $p$-optimal for some $p \in (0,1)$. Then
\[
 w(n,\cX,k) \xrightarrow{k/n \to p} \mu_p(\cH).
\]
 Moreover, $w(n,\cX,p) \geq \mu_p(\cH)$ for all $n \geq m$.
\end{lemma}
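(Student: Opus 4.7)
The plan is to verify the two assertions separately, both following almost directly from the $p$-optimality hypothesis together with elementary estimates; neither step should be a real obstacle.

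For the second assertion $w(n,\cX,p) \geq \mu_p(\cH)$, I would exhibit an explicit family achieving this. The natural choice is the lift
\[
 \cF = \{ A \subseteq [n] : A \cap [m] \in \cH \}.
\]
Two verifications are needed. First, $\cF$ is $\cX$-intersecting: any $A,B \in \cF$ satisfy $A \cap B \supseteq (A \cap [m]) \cap (B \cap [m]) \in \cX$, and monotonicity of $\cX$ then gives $A \cap B \in \cX$. Second, using the product structure of $\mu_p$ on $2^{[n]} = 2^{[m]} \times 2^{[n]\setminus[m]}$, we have $\mu_p(\cF) = \mu_p(\cH) \cdot \mu_p(2^{[n]\setminus [m]}) = \mu_p(\cH)$, where on the right $\mu_p$ refers to the product measure on $m$ points.

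For the first assertion, fix a sequence $(n_i,k_i)$ with $n_i \to \infty$ and $k_i/n_i \to p$. By the definition of $p$-optimality, for all sufficiently large $i$,
\[
 w(n_i,\cX,k_i) = \binom{n_i}{k_i}^{-1} \bigl|\{A \in \tbinom{[n_i]}{k_i} : A \cap [m] \in \cH\}\bigr| = \binom{n_i}{k_i}^{-1} \sum_{B \in \cH} \binom{n_i-m}{k_i-|B|},
\]
partitioning the lifted family according to $A \cap [m] \in \cH$. I would then rewrite each summand using falling factorials as
\[
 \frac{\binom{n_i-m}{k_i-j}}{\binom{n_i}{k_i}} = \frac{k_i^{\ul{j}} (n_i-k_i)^{\ul{m-j}}}{n_i^{\ul{m}}},
\]
where $j = |B|$. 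Since $m$ and $j$ are fixed while $k_i/n_i \to p$ and $n_i \to \infty$, this ratio tends to $p^j(1-p)^{m-j}$. Summing over $B \in \cH$ yields
\[
 \lim_{i\to\infty} w(n_i,\cX,k_i) = \sum_{B \in \cH} p^{|B|}(1-p)^{m-|B|} = \mu_p(\cH),
\]
completing the proof. The only substantive point is the standard hypergeometric-to-product-measure limit, which is routine because $m$ stays bounded as $n_i \to \infty$.
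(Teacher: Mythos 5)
Your proof is correct and follows essentially the same approach as the paper: exhibiting the lift of $\cH$ for the lower bound, and using the definition of $p$-optimality together with the falling-factorial computation $\binom{n_i-m}{k_i-j}/\binom{n_i}{k_i} = k_i^{\ul{j}}(n_i-k_i)^{\ul{m-j}}/n_i^{\ul{m}} \to p^j(1-p)^{m-j}$ for the limit. The only difference is that you spell out the verification that the lift is $\cX$-intersecting and has $\mu_p$-measure $\mu_p(\cH)$, which the paper treats as immediate.
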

\begin{proof}
 It is clear that $w(n,\cX,p) \geq \mu_p(\cH)$, since we can extend $\cH$ to a family on $n$ points having the same measure.
 
 Let now $(k_i,n_i)$ be a sequence satisfying $n_i\to\infty$ and $k_i/n_i \to p$. Define $\cF_i = \{ A \in \binom{[n_i]}{k_i} : A \cap [m] \in \cH \}$. Without loss of generality, we can assume that $w(n_i,\cX,k_i) =|\cF_i|/\binom{n_i}{k_i}$ for all $i$ (rather than just for large enough $i$). Notice that
\[
 \cF_i = \bigcup_{S \in \cH} \{ S \} \times \binom{[n_i-m]}{k_i-|S|},
\]
 and so
\[
 \frac{|\cF_i|}{\binom{[n_i]}{k_i}} = \sum_{S \in \cH} \frac{\binom{[n_i-m]}{k_i-|S|}}{\binom{[n_i]}{k_i}} = \sum_{S \in \cH} \frac{k_i^{\ul{|S|}} (n_i-k_i)^{\ul{m-|S|}}}{n_i^{\ul{m}}}.
\]
 The right-hand side tends to
\[
 \sum_{S \in \cH} p^{|S|} (1-p)^{m-|S|} = \mu_p(\cH). \qedhere
\]
\end{proof}

When $p \in (0,1/2)$ is not of the form $\frac{r}{t+2r-1}$, Lemma~\ref{lem:abstract-framework-opt-defs} shows that $w(n,\cX_t,k) \xrightarrow{k/n \to p} w(n,\cX_t,p)$, using both Proposition~\ref{pro:ak} and its weighted analog Theorem~\ref{thm:weighted-main}.

\smallskip

Lemma~\ref{lem:abstract-framework-opt-defs} gives a lower bound on $w(n,\cX,p)$. The following subsections describe two matching upper bounds.

\subsection{Argument of Ahlswede and Khachatrian} \label{sec:reduction-ak}

The first argument we describe originates from Ahlswede and Khachatrian~\cite{AK5}.

\begin{lemma} \label{lem:reduction-ak}
 Let $\cX$ be a monotone family on infinitely many points, let $\cH$ be an $\cX$-intersecting family on $m$ points, and suppose that $\cH$ is $p$-optimal for some $p \in (0,1)$. Then $w(m,\cX,p) = \mu_p(\cH)$.
 
 Moreover, for each $m$ there is a finite set $P_m$ such that the following holds. If $\cH$ is uniquely $p$-optimal for some $p \notin P_m$ and $\cF$ is an $\cX$-intersecting family on $m$ points satisfying $\mu_p(\cF) = \mu_p(\cH)$ then $\cF$ is equivalent to $\cH$ (obtained from $\cH$ by applying a permutation).
\end{lemma}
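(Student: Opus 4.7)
The plan is to push an $\cX$-intersecting family $\cF$ on $m$ points into a large uniform ground set and invoke the asymptotic optimality of $\cH$. For the first part, given such $\cF$, for each $N \geq 2m$ and $m \leq K \leq N-m$ form the cylindrical extension
\[
\cF_N = \{A \in \tbinom{[N]}{K} : A \cap [m] \in \cF\}.
\]
Monotonicity of $\cX$ makes $\cF_N$ itself $\cX$-intersecting, since $A_1 \cap A_2 \supseteq (A_1 \cap [m]) \cap (A_2 \cap [m]) \in \cX$ whenever $A_1, A_2 \in \cF_N$. A routine count yields
\[
\frac{|\cF_N|}{\binom{N}{K}} = \sum_{A \in \cF} \frac{K^{\ul{|A|}} (N-K)^{\ul{m-|A|}}}{N^{\ul{m}}} \xrightarrow{N\to\infty,\,K/N\to p} \mu_p(\cF).
\]
Combining $|\cF_N|/\binom{N}{K} \leq w(N,\cX,K)$ with the convergence $w(N,\cX,K) \to \mu_p(\cH)$ furnished by $p$-optimality of $\cH$ yields $\mu_p(\cF) \leq \mu_p(\cH)$; together with the lower bound $w(m,\cX,p) \geq \mu_p(\cH)$ from Lemma~\ref{lem:abstract-framework-opt-defs}, this gives $w(m,\cX,p) = \mu_p(\cH)$.

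For the uniqueness part, I would define $P_m$ as the finite set of ``coincidence'' values of $p$ at which two families on $m$ points with distinct size profiles can have equal $\mu_p$-measure. Concretely, for every pair of distinct integer tuples $(a_0,\ldots,a_m) \neq (b_0,\ldots,b_m)$ arising as size profiles $(|\cG^{=i}|)_i$ of some families $\cG$ on $m$ points, the polynomial $\sum_i (a_i - b_i) q^i(1-q)^{m-i}$ is a nonzero polynomial in $q$ of degree at most $m$, so it has at most $m$ roots in $(0,1)$. There are only finitely many such size profiles, hence $P_m$ is finite. Its key property is that for $p \notin P_m$, any two families on $m$ points with equal $\mu_p$-measure share the same size profile $(|\cF^{=i}|)_{i=0}^m$.

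Suppose now that $p \notin P_m$, $\cH$ is uniquely $p$-optimal, and $\cF$ is an $\cX$-intersecting family on $m$ points with $\mu_p(\cF) = \mu_p(\cH)$. By the previous paragraph, $\cF$ and $\cH$ have the same size profile, so $|\cF_N| = |\cH_N|$ for every $N,K$. Unique $p$-optimality then gives, for large $N,K$ with $K/N \to p$, that $|\cH_N|/\binom{N}{K} = w(N,\cX,K)$, so $\cF_N$ is itself a maximum $\cX$-intersecting subfamily of $\binom{[N]}{K}$; invoking unique $p$-optimality once more, some $\pi \in S_N$ satisfies $\pi(\cF_N) = \cH_N$. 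The main obstacle is the final descent back to $[m]$: because $\cH_N$ is invariant under permutations of $[N] \setminus [m]$ and (after replacing $\cH$ by its canonical representative without redundant coordinates) has $[m]$ as its honest effective support for all sufficiently large $N,K$, any such $\pi$ may be multiplied by an element of $S_{N-m}$ until $\pi([m]) = [m]$, after which the restriction $\pi|_{[m]}$ witnesses the desired equivalence between $\cF$ and $\cH$. Verifying that the effective support is indeed pinned down by $\cH_N$ for large $N,K$ (and thereby closing off the possibility that $\pi$ genuinely relocates the support) is the delicate point; it can be handled by showing that if $\cH_N$ admits cylindrical descriptions over two different $m$-subsets, then $\cH$ itself is invariant under swaps across the symmetric difference, which can be absorbed into the choice of canonical representative.
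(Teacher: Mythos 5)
Your proposal follows the same overall route as the paper: extend $\cF$ to a uniform family $\cF_N\subseteq\binom{[N]}{K}$ with $K/N\to p$, compute $|\cF_N|/\binom{N}{K}\to\mu_p(\cF)$, and compare against $w(N,\cX,K)\to\mu_p(\cH)$ to get the first part; then for uniqueness, use the finite set $P_m$ of roots of the finitely many polynomials $\sum_i(a_i-b_i)q^i(1-q)^{m-i}$ to force equal size profiles $|\cF^{=i}|=|\cH^{=i}|$, hence $|\cF_N|=|\cH_N|$, and invoke unique $p$-optimality to obtain $\pi$ with $\pi(\cF_N)=\cH_N$. Up to this point the proposal and the paper coincide essentially verbatim (the explicit verification that monotonicity of $\cX$ makes $\cF_N$ $\cX$-intersecting is a nice touch that the paper leaves implicit).

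Where the two diverge is in the final descent to $\pi([m])=[m]$. You correctly flag this as the delicate step, but your treatment remains at the level of a sketch: you appeal to the ``effective support'' of the uniform family $\cH_N$ and to ``canonical representatives,'' without pinning down what support means for a subset of $\binom{[N]}{K}$ or why $\pi$ must map it to itself. The paper closes this gap concretely: after normalizing so that every coordinate of $[m]$ is essential to $\cH$, it takes an inclusion-minimal $S\in\cH$ containing a given $j\in[m]$, lifts it to $S'=S\cup\{m+1,\ldots,m+K-|S|\}\in\cH_N$ so that $\pi(S')\in\cF_N$, and observes that if $\pi(j)>m$ then (for $N$ large) one may replace $\pi(j)$ by a fresh coordinate $\ell>m$ with $\pi^{-1}(\ell)>m$, landing back in $\cF_N$ and thus forcing $S\setminus\{j\}\in\cH$, contradicting minimality. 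Your ``support'' heuristic points in the same direction (a permutation conjugating one family to another must match their supports), but to turn it into a proof you would need a definition of support for $K$-uniform families that is preserved by $\pi$ and provably equal to $[m]$ for $\cH_N$ — which is exactly what the inclusion-minimal-sets argument accomplishes. So: same approach, with one step left as a sketch where the paper supplies the concrete argument.
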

\begin{proof}
 In view of Lemma~\ref{lem:abstract-framework-opt-defs}, it suffices that show that if $\cF$ is an $\cX$-intersecting family on $m$ points then $\mu_p(\cF) \leq \mu_p(\cH)$.

 Define a sequence $(k_i,n_i)$ by taking $k_i = m + i$ and $n_i = \lfloor k_i/p \rfloor$, so that $n_i \to \infty$ and $k_i/n_i \to p$ (the exact choice is arbitrary). Extend $\cF$ to a family $\cF_i$ just as in the proof of Lemma~\ref{lem:abstract-framework-opt-defs}:
\[
 \cF_i = \bigcup_{S \in \cF} \{ S \} \times \binom{[n_i-m]}{k_i-|S|}.
\]
 As in the proof of Lemma~\ref{lem:abstract-framework-opt-defs}, $|\cF_i|/\binom{n_i}{k_i} \longrightarrow \mu_p(\cF)$. Since $w(n_i,\cX,k_i) \longrightarrow \mu_p(\cH)$ by Lemma~\ref{lem:abstract-framework-opt-defs}, we deduce that $\mu_p(\cF) \leq \mu_p(\cH)$.
 
 \smallskip
 
 Suppose now that $\cH$ is uniquely $p$-optimal and $\mu_p(\cF) = \mu_p(\cH)$. Then
\[
 \sum_{i=0}^m p^i (1-p)^{m-i} (|\cF^{=i}| - |\cH^{=i}|) = 0.
\]
 If it is not the case that $|\cF^{=i}| = |\cH^{=i}|$ for all $i$ then $p$ is a root of one of finitely many polynomials (whose exact number depends only on $m$). Assuming that $p$ is not one of these finitely many exceptions, we deduce that $|\cF^{=i}| = |\cH^{=i}|$ for all $i$. This implies that $|\cF_i| = |\cH_i|$ for all $i$, where $\cH_i$ is defined analogously to $\cF_i$. Since $\cH$ is uniquely $p$-optimal, it follows that $\cF_i$ is equivalent to $\cH$ for large enough $i$.
 
 Suppose that $\cF_i$ is equivalent to $\cH$. We can assume that $\cH$ depends on all $m$ points (otherwise, replace $\cH$ by a family on fewer points). Let $\cF_i$ be obtained from the family in Definition~\ref{def:abstract-framework-ub-2} by applying the permutation $\pi$ on $[n_i]$. We want to show that $\pi([m]) = [m]$, and so $\cF$ is equivalent to $\cH$.
 
 Indeed, consider any $j \in [m]$. Since $\cH$ depends on $j$, there exists an inclusion-minimal set $S \in \cH$ containing $j$. Thus $\pi(S \cup \{m+1,\ldots,m+k_i-|S|\}) \in \cF_i$. If $\pi(j) > m$ then, since $\cF_i$ is invariant under permutations of the last $n_i-m$ coordinates, we can find (assuming $i$ is large enough) some coordinate $\ell > m$ such that $\pi(S \setminus \{j\} \cup \{m+1,\ldots,m+k_i-|S|,\ell\}) \in \cF_i$, implying that $S \setminus \{j\} \in \cH$, which contradicts the fact that $S$ is inclusion-minimal.
\end{proof}

Lemma~\ref{lem:reduction-ak} allows us to deduce the value of $\wsup(t,p)$ for all $p \in (0,1/2)$ not of the form $\frac{r}{t+2r-1}$. For $p = \frac{r}{t+2r-1}$, we can deduce $\wsup(t,p)$ by continuity. The lemma also describes all the families of maximum measure on a large enough number of points, but only for most values of $p \in (0,1/2)$; for the countably many values $\bigcup_{m \geq 1} P_m \cup \{ \frac{r}{t+2r-1} : r \geq 0 \}$, the lemma fails to describe the families having maximum measure.

Compared to Theorem~\ref{thm:weighted-main}, the lifted version of Proposition~\ref{pro:ak} using Lemma~\ref{lem:reduction-ak} suffers from three shortcomings. First, it describes $\wsup(t,p)$ but not $w(n,t,p)$. Second, it works only for $p \leq 1/2$. Third, it does not describe the maximum measure families for all values of $p$. It seems that the only way to handle these shortcomings is to prove Theorem~\ref{thm:weighted-main} directly in the weighted setting.

\subsection{Argument of Dinur and Safra} \label{sec:reduction-ds}

The second argument we describe originates from Dinur and Safra~\cite{DinurSafra}, and also appears in Tokushige~\cite{Tokushige}.

\begin{lemma} \label{lem:reduction-ds}
 Let $\cX$ be a monotone family on infinitely many points, let $\cH$ be an $\cX$-intersecting family on $m$ points, and suppose that $\cH$ is $p$-optimal for some $p \in (0,1)$. Then $w(m,\cX,p) = \mu_p(\cH)$.
\end{lemma}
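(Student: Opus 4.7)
The plan is to prove the non-trivial direction $\mu_p(\cF) \leq \mu_p(\cH)$ for any $\cX$-intersecting family $\cF$ on $m$ points; the reverse inequality is already in Lemma~\ref{lem:abstract-framework-opt-defs}. My strategy --- different in flavor from that of Lemma~\ref{lem:reduction-ak} --- is to decompose the $\mu_p$-measure as an average over uniform slices and apply concentration of the binomial distribution together with $p$-optimality of $\cH$, rather than passing through a single lifted family at each scale.

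Concretely, for each $N \geq m$ I extend $\cF$ to $\tilde{\cF}_N = \{A \subseteq [N] : A \cap [m] \in \cF\}$. This preserves the $\mu_p$-measure, and it also preserves the $\cX$-intersecting property: if $A,B \in \tilde{\cF}_N$ then $(A \cap [m]) \cap (B \cap [m]) \in \cX$, and monotonicity of $\cX$ upgrades this to $A \cap B \in \cX$. For every $K$, the slice $\tilde{\cF}_N \cap \binom{[N]}{K}$ is then an $\cX$-intersecting subfamily of $\binom{[N]}{K}$, so its density is at most $w(N,\cX,K)$. Summing by slice size gives
\[ \mu_p(\cF) = \mu_p(\tilde{\cF}_N) = \sum_{K=0}^N \binom{N}{K} p^K (1-p)^{N-K} \cdot \frac{|\tilde{\cF}_N \cap \binom{[N]}{K}|}{\binom{N}{K}} \leq \EE_{K \sim \bin(N,p)}\bigl[w(N,\cX,K)\bigr]. \]

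It then suffices to show the right-hand side tends to $\mu_p(\cH)$ as $N \to \infty$. Fix $\epsilon > 0$. Because $\cH$ is $p$-optimal, Lemma~\ref{lem:abstract-framework-opt-defs} and a standard extraction-of-subsequences argument produce $\delta > 0$ and $N_0$ with $w(N,\cX,K) \leq \mu_p(\cH) + \epsilon$ whenever $N \geq N_0$ and $|K/N - p| \leq \delta$. Hoeffding's inequality gives $\Pr_{K \sim \bin(N,p)}[|K/N - p| > \delta] \to 0$, and on the bad event we have the trivial bound $w(N,\cX,K) \leq 1$. Splitting the expectation accordingly yields $\mu_p(\cF) \leq \mu_p(\cH) + \epsilon + o(1)$. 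Sending $N \to \infty$ and then $\epsilon \to 0$ finishes the proof.

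The only mild obstacle is upgrading the sequential convergence $w(n,\cX,k) \to \mu_p(\cH)$ (for $n\to\infty$, $k/n\to p$) to uniform convergence in a window around $p$, as needed for the bulk term in the expectation. A direct contradiction argument suffices: if uniformity failed, one could extract a sequence $(n_j,k_j)$ with $n_j\to\infty$, $k_j/n_j\to p$, and $|w(n_j,\cX,k_j) - \mu_p(\cH)|$ bounded away from zero, contradicting $p$-optimality. Note that this route, unlike that of Lemma~\ref{lem:reduction-ak}, makes no attempt to characterize the extremal families and so yields only the measure equality, which is exactly what Lemma~\ref{lem:reduction-ds} asserts.
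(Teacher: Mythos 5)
Your argument is correct and is essentially the proof the paper gives: the paper likewise extends $\cF$ to $\cF_N$ on $N$ points, decomposes $\mu_p(\cF_N)$ over slices, bounds the density of each slice near $K/N \approx p$ via $p$-optimality (using a window of width $1/N^{1/3}$ rather than your fixed $\delta$), and applies Hoeffding to the tail. Your explicit passage through $w(N,\cX,K)$ and the subsequence-extraction argument for uniformity are slightly more careful renderings of steps the paper states tersely, but the route is the same.
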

\begin{proof}
 In view of Lemma~\ref{lem:abstract-framework-opt-defs}, it suffices that show that if $\cF$ is an $\cX$-intersecting family on $m$ points then $\mu_p(\cF) \leq \mu_p(\cH)$.

 For a large integer $N$, extend $\cF$ to an $\cX$-intersecting family $\cF_N$ on $N$ points having the same $\mu_p$-measure. Observe that
\[
 \mu_p(\cF) = \mu_p(\cF_N) = \sum_{k=0}^N p^k (1-p)^{N-k} |\cF_N^{=k}|.
\]
 Fix $\epsilon > 0$. Since $\cH$ is $p$-optimal, for large enough $N$ we can bound $|\cF_N^{=k}|/\binom{N}{k} \leq \mu_p(\cH) + \epsilon$ for all $k$ satisfying $|k/N-p| \leq 1/N^{1/3}$ (the specific error term $1/N^{1/3}$ is  arbitrary; any $o(1/N^{1/2})$ error term would work). Hoeffding's bound implies that $\Pr[|\bin(N,p)/N-p| > 1/N^{1/3}] < \epsilon$ for large enough $N$, and so
\[
 \mu_p(\cF) \leq \sum_{k \colon |k/N-p| \leq 1/N^{1/3}} p^k (1-p)^{N-k} \binom{N}{k} (\mu_p(\cH) + \epsilon) + \epsilon \leq \mu_p(\cH) + 2\epsilon.
\]
 Taking the limit $\epsilon\to0$, we deduce $\mu_p(\cF) \leq \mu_p(\cH)$.
\end{proof}

One can also derive the second part of Lemma~\ref{lem:reduction-ak} using this technique, with a similar but more complicated argument which we leave to the reader.

\bibliographystyle{plain}
\bibliography{AK}

\begin{thebibliography}{10}

\bibitem{HKS}
Towards a {K}atona type proof for the 2-intersecting
  {E}rd{\H{o}}s--{K}o--{R}ado theorem.
\newblock {\em Electronic Journal of Combinatorics}, 8(1):Paper \#R31, 2001.

\bibitem{AhlswedeBlinovsky}
Rudolf Ahlswede and Valdimir Blinovsky.
\newblock {\em Lectures on Advances in Combinatorics}.
\newblock Springer, 2008.

\bibitem{AK1}
Rudolf Ahlswede and Levon~H. Khachatrian.
\newblock The complete nontrivial-intersection theorem for systems of finite
  sets.
\newblock {\em J. Comb. Theory A}, 76(1):121--138, 1996.

\bibitem{AK2}
Rudolf Ahlswede and Levon~H. Khachatrian.
\newblock The complete intersection theorem for systems of finite sets.
\newblock {\em Eur. J. Comb.}, 18(2):125--136, 1997.

\bibitem{AK5}
Rudolf Ahlswede and Levon~H. Khachatrian.
\newblock The diametric theorem in {H}amming spaces---optimal anticodes.
\newblock {\em Adv. Appl. Math.}, 20:429--449, 1998.

\bibitem{AK3}
Rudolf Ahlswede and Levon~H. Khachatrian.
\newblock A pushing-pulling method: New proofs of intersection theorems.
\newblock {\em Combinatorica}, 19(1):1--15, 1999.

\bibitem{AK4}
Rudolf Ahlswede and Levon~H. Khachatrian.
\newblock Katona's intersection theorem: Four proofs.
\newblock {\em Combinatorica}, 25(1):105--110, 2005.

\bibitem{ADFS}
Noga Alon, Irit Dinur, Ehud Friedgut, and Benny Sudakov.
\newblock Graph products, {F}ourier analysis and spectral techniques.
\newblock {\em Geometric and functional analysis}, 14(5):913--940, 2004.

\bibitem{CFGS}
Fan R.~K. Chung, Ronald~L. Graham, Peter Frankl, and James~B. Shearer.
\newblock Some intersection theorems for ordered sets and graphs.
\newblock {\em J. Comb. Theory, Ser. A}, 43(1):23--37, 1986.

\bibitem{DinurFriedgut}
Irit Dinur and Ehud Friedgut.
\newblock Proof of an intersection theorem via graph homomorphisms.
\newblock {\em Electronic Journal of Combinatorics}, 13:Note \#N6, 2006.

\bibitem{DinurSafra}
Irit Dinur and Shmuel Safra.
\newblock On the hardness of approximating minimum vertex cover.
\newblock {\em Ann. Math.}, 162(1):439--485, 2005.

\bibitem{EFF0}
David Ellis, Yuval Filmus, and Ehud Friedgut.
\newblock Triangle-intersecting families of graphs.
\newblock {\em Journal of the European Mathematical Society}, 14(3):841--885,
  2012.

\bibitem{EFP}
David~C. Ellis, Ehud Friedgut, and Haran Pilpel.
\newblock Intersecting families of permutations.
\newblock {\em Journal of the American Mathematical Society}, 24:649--682,
  2011.

\bibitem{EKR}
Paul Erd{\H{o}}s, Chao Ko, and Richard Rado.
\newblock Intersection theorems for systems of finite sets.
\newblock {\em Quart. J. Math. Oxford (2)}, 12:313--320, 1961.

\bibitem{Filmus}
Yuval Filmus.
\newblock {\em Spectral Methods in Extremal Combinatorics}.
\newblock PhD thesis, University of Toronto, 2013.

\bibitem{Frankl78}
P\'eter Frankl.
\newblock The {E}rd{\H{o}}s--{K}o--{R}ado theorem is true for {$n=ckt$}.
\newblock In {\em Combinatorics, Proceedings of the Fifth Hungarian Colloquium
  on Combinatorics, Keszthely}, pages 365--375, Amsterdam, 1978. North-Holland.

\bibitem{FranklWilson}
P\'eter Frankl and Richard~M. Wilson.
\newblock The {E}rd{\H{o}}s--{K}o--{R}ado theorem for vector spaces.
\newblock {\em Journal of Combinatorial Theory, Series A}, 43:228--236, 1986.

\bibitem{Friedgut05}
Ehud Friedgut.
\newblock A {K}atona-type proof of an {E}rd{\H{o}}s--{K}o--{R}ado-type theorem.
\newblock {\em Journal of Combinatorial Theory, Series A}, 111(2):239--244,
  2005.

\bibitem{Friedgut}
Ehud Friedgut.
\newblock On the measure of intersecting families, uniqueness and stability.
\newblock {\em Combinatorica}, 28(5):503--528, 2008.

\bibitem{GodsilMeagher-book}
Chris Godsil and Karen Meagher.
\newblock {\em {E}rd{\H{o}}s--{K}o--{R}ado Theorems: Algebraic Approaches}.
\newblock Cambridge University Press, 2015.

\bibitem{Katona2}
Gyula O.~H. Katona.
\newblock Intersection theorems for systems of finite sets.
\newblock {\em Acta Math. Acad. Sci. Hungar.}, 15:329--337, 1964.

\bibitem{Katona1}
Gyula O.~H. Katona.
\newblock A simple proof of the {E}rd{\H{o}}s-{C}hao {K}o-{R}ado theorem.
\newblock {\em J. Comb. Theory, Ser. B}, 13:183--184, 1972.

\bibitem{KellerLifshitz}
Natan Keller and Noam Lifshitz.
\newblock Stability result for the complete intersection theorem.
\newblock Manuscript, 2016.

\bibitem{NemhauserTrotter}
George~Lann Nemhauser and Leslie~Earl Trotter, Jr.
\newblock Properties of vertex packing and independence system polyhedra.
\newblock {\em Mathematical Programming}, 6(1):48--61, 1974.

\bibitem{Schrijver}
Alexander Schrijver.
\newblock {\em Combinatorial Optimization: Polyhedra and Efficiency}.
\newblock Springer, 2003.

\bibitem{Shinkar}
Igor Shinkar.
\newblock Intersecting families, independent sets and coloring of certain graph
  products.
\newblock Master's thesis, Weizmann Institute of Science, 2009.

\bibitem{Tokushige}
Norihide Tokushige.
\newblock Intersecting families --- uniform versus weighted.
\newblock {\em Ryukyu Mathematical Journal}, 18:89--103, 2005.

\bibitem{Wilson}
Richard~M. Wilson.
\newblock The exact bound in the {E}rd{\H{o}}s-{K}o-{R}ado theorem.
\newblock {\em Combinatorica}, 4:247--257, 1984.

\end{thebibliography}

\end{document}